\newcommand{\qee} {\hspace*{2mm}\hfill \ding{109}}
\renewcommand{\iff}{\leftrightarrow}
\renewcommand{\leq}{\leqslant}
\renewcommand{\geq}{\geqslant}
\renewcommand{\preceq}{\preccurlyeq}
\renewcommand{\phi}{\varphi}
\renewcommand{\Theta}{\varTheta}
\renewcommand{\Phi}{\varPhi}
\renewcommand{\Psi}{\varPsi}
\renewcommand{\Xi}{\varXi}
\renewcommand{\Omega}{\varOmega}
\renewcommand{\Gamma}{\varGamma}
\newcommand{\qedright}{\belowdisplayskip=-12pt}
\newtheorem{theorem}{Theorem}[section]
\newtheorem{define}[theorem]{Definition}
\newtheorem{exa}[theorem]{Example}
\newtheorem{exerc}[theorem]{Exercise}
\newtheorem{conj}[theorem]{Conjecture}
\newtheorem{ques}[theorem]{Open Question}
\newtheorem{lem}[theorem]{Lemma}
\newtheorem{cor}[theorem]{Corollary}
\newtheorem{rem}[theorem]{Remark}
\newenvironment{remark}{\begin{rem} \rm}{\qee\end{rem}}
\newcommand{\mc}[1]{\mathcal #1}
\newcommand{\num}[1]{{\underline {#1}}}
\newcommand{\mf}[1]{{\mathfrak {#1}}}
\newcommand{\verz}[1]{\{ #1 \}}
 \newcommand{\tupel}[1]{{\langle #1 \rangle}}
 \newcommand{\bles}{\mathbin{<}}
\newcommand{\bleq}{\mathbin{\leq}}
\newcommand{\axpam}{{\sf pa}}
\newcommand{\pam}{\ensuremath{{\sf PA}^-}}
\newcommand{\pamo}{\ensuremath{{\sf PA}^-}}
\newcommand{\pamj}{\ensuremath{{\sf PA}^-_{\sf jer}}}
\newcommand{\pamres}{\ensuremath{{\sf PA}^-_{\sf euc}}}
\newcommand{\pamsmu}{\ensuremath{{\sf PA}^-_{\sf smu}}}
\newcommand{\axtc}{{\sf tc}}
\newcommand{\axtcu}{{\sf tcu}}
\newcommand{\axtcl}{{\sf tc$\lambda$}}
\newcommand{\tc}[1]{\ensuremath{{\sf TC}_{#1}}}
\newcommand{\tcc}[1]{\ensuremath{{\sf TC}^{\sf c}_{#1}}}
\newcommand{\tcb}[1]{\ensuremath{{\sf TC}^{\sf b}_{#1}}}
\newcommand{\tcla}[1]{\ensuremath{{\sf TC\Lambda}_{#1}}}
\newcommand{\tclac}[1]{\ensuremath{{\sf TC\Lambda}_{#1}^{\sf c}}}
\newcommand{\utc}[1]{\ensuremath{{\sf TCU}_{#1}}}
\newcommand{\futc}[1]{\ensuremath{{\sf TCFU}_{#1}}}
\newcommand{\pcomp}{\mathbin{\textcolor{gray}{\bullet}}}
\newcommand{\pemp}{\ominus}
\newcommand{\redux}{\rightarrowtriangle}
\newcommand{\estr}{\text{\footnotesize $\varoslash$}}
\newcommand{\emp}{\ensuremath{\oslash}}
\newcommand{\X}{\mathsf{X}}
\newcommand{\jer}{Je\v{r}\'abek}
\newcommand{\sing}[1]{[#1]}
\newcommand{\eva}[1]{{\sf ev}(#1)}
\newcommand{\divi}[2]{\lfloor {#1}/{#2}\rfloor}
\newcommand{\rema}[2]{#1 \,\mathsmaller{\sf mod}\, #2}
\newcommand{\psing}[1]{\lfloor #1\rceil}
\newcommand{\polo}{\upvarpi}
\renewcommand{\omega}{\upomega}
\newcommand{\norm}[1]{\llbracket #1 \rrbracket}
\newcommand{\nnorm}[2]{\llceil #1,#2 \rrceil}
\newcommand{\ato}{{\sf atom}}
\newcommand{\spl}[4]{\begin{pmatrix}
  #1 & #2\\[0.2cm]
  #3 & #4
\end{pmatrix}}  
\newcommand{\sspl}[4]{\big(\begin{smallmatrix}
  #1 & #2\\
  #3 & #4
\end{smallmatrix}\big)}
\newcommand{\scol}[2]{\big(\begin{smallmatrix}
  #1\\
  #2
\end{smallmatrix}\big)}
\newcommand{\pamp}{\ensuremath{{\sf PA}^{-}_{\sf uc}}}
\newcommand{\docr}{{\sf DOCR}}
\title{From Numbers to Container Strings}
\author{Albert Visser}
 \address{Philosophy, Faculty of Humanities,
                Utrecht University,
               Janskerkhof 13,
                3512BL~~Utrecht, The Netherlands}
\email{a.visser@uu.nl}
\date{\today}
\begin{document}

\keywords{discretely ordered commutative ring, beta function, G\"odel, Smullyan, Markov}

\subjclass[2010]{03F25,
03F30,
03F40
}

\thanks{I am grateful to Lev Beklemishev, Daan van Gent, Hendrik Lenstra, Juvenal Murwanashyaka, and Vincent van Oostrom,  who all helped
me, at some stage  with the research for this paper. Lev's interest in this project was very encouraging.
Special thanks go to Emil \jer\ whose comments on a version of the paper
led to substantial improvement. I worked with ChatGPT on the proof of Lemma~\ref{diamantsmurf}. See Remark~\ref{chatsmurf} for details.}

\begin{abstract}
In this paper we examine two ways of coding sequences in arithmetical theories.
We investigate under what conditions they work. To be more precise, we study the creation of objects of 
a data-type that we call
\emph{container strings}, roughly sequences where the components are ordered but where we do not have
an explicitly given projection function and length function.

First, we have a brief look at the $\upbeta$-function which was already carefully studied by Emil \jer, who used it to show that
(a sub-theory of) \pam\ is sequential. We consider
in detail two target constructions. These constructions both employ theories of strings. The first is based on
Smullyan coding and the second on the representation of binary strings in the special linear monoid of
the non-negative part of discretely ordered commutative rings. The insight that this special linear monoid behaves as the
free monoid on two generators is due to Jakob Nielsen. It was used by Markov in a metamathematical context.
We call it Markov coding.
We employ Markov coding to obtain an alternative proof that \pam\ is sequential. 
\end{abstract}

\maketitle

\section{Introduction}
We examine two  ways of coding sequences in arithmetical theories.
We investigate under what conditions they work by isolating convenient base theories to
develop and verify the coding. 

Coding sequences is, in many cases,  the first step of arithmetisation. When we have it, undecidability and incompleteness follow.
However, for arithmetisation, we do not need sequences of \emph{all} objects of the domain of the given theory. Sequences for
a suitable  definable sub-domain suffice. So, why would one want to have sequences for the whole domain? The main reason is that
such sequences allow us to build partial satisfaction predicates for the given theory inside the given theory. 
Also, they allow us to \emph{extend} models of the given theory with full satisfaction predicates. 
The partial satisfaction predicates deliver the many good properties of sequential theories. See, e.g., \cite{viss:what13}
and \cite{viss:smal19}.  Extending models with satisfaction predicates has been intensely studied for models of
Peano Arithmetic. See, e.g., \cite{cies:epis17}.
 
Theories with sequences for all objects are \emph{sequential theories}. The paper \cite{viss:what13}
provides historical background and further references. 

This paper focusses on the construction of sequence-coding in very weak theories. 
We opt for the weak theories in order to see more precisely what mathematical principles are involved in
the construction of the sequences. Thus, the choice of weak theories makes our project into a study of
reasoning. In many cases, the weak theories correspond to theories known from algebra, thus strengthening the
interconnectivity of this kind of project with mathematics.

G\"odel's original way of coding sequences using the $\upbeta$-function is still one of the best
approaches.  We will outline some of its virtues below. The $\upbeta$-function is carefully
studied in \cite{jera:sequ12}. In the present paper, we zoom in on two different strategies. Both strategies
develop sequence codes via the interpretation of theories of strings, but in markedly different ways. 
The first route is what I call \emph{Smullyan coding}. See \cite{smul:theo61}. Here, we interpret a suitable theory of binary strings
in our base arithmetical theory. The interpretation of the strings employs the lexicographic  length-first ordering. It is a system
of binary notations without the zero. When we have developed the strings, we construct our sequences from the strings. This can be
done in multiple ways, but we opt for one way which is simple and delivers good properties.
The second route is what I call \emph{Markov coding}. In fact this coding was first discovered by Jakob Nielsen, but it was used for metamathematical
purposes for the firs time by Andrej Markov jr. See \cite{niel:isom18} and \cite{mark:theo54}. It expands on the insight that ${\sf SL}_2(\mathbb N)$, the special linear monoid
of $2\times 2$ matrices of natural numbers (non-negative integers) with determinant 1, is isomorphic to the monoid of binary strings. As we will see, this way of coding strings is markedly
different from the Smullyan way. The use of Markov coding provides a new proof that the weak theory \pam\ is sequential.

\begin{remark}{\small
It would be great to be able to do something like the reverse mathematics of coding. However, this aim is too high 
for the moment. A modest reverse mathematics result can be found in Subsection~\ref{reversosmurf}.}
\end{remark}

The present paper is as much about finding the appropriate modularisation to obtain the desired results as it is about the concrete results themselves.

\subsection{Sets, Container Strings, Sequences}
The notion of \emph{sequence}, as we use it in this paper, demands that we have numbers, an explicit length function
that gives the length of the sequences, and an explicit projection function that sends numbers smaller than the
length of the sequence to the appropriate elements of the sequence.

A weaker notion is  \emph{container string}. This is the main notion studied in this paper.
The container string is a string where all elements of a designated sort ---the ur-elements--- are embedded in its alphabet.
We \emph{do} have concatenation of container strings and  an ordering on the occurrences of the elements, but we need not have
length- and projection-functions. Thus, a container string can be viewed as a multi-set with ordered occurrences.
The difference between a string and a container string is, in essence, one of perspective: the container string
is a carrier of its elements.
 For sequentiality, we are interested in the case where the sort that is embedded in the letters is
the universal sort of all objects. We will see in the paper how this idea of universality can be implemented using  Frege
functions.

We can always construct sequences from container strings (see below), but this construction has some costs.
Specifically, concatenation becomes a more complicated operation.  It seems to me, however, that
one almost never needs sequences. container strings suffice in most circumstances. For example,
think of a computation of a Turing machine. We can represent it as a container string of triples,
where the first component represents the content of the tape to the left of the head, the second component
the content of the tape to the right, and the third component the state. The main thing we need to
say is that a given triple is preceded either directly or indirectly by another triple satisfying so-and-so.
A second example is a Hilbert-style system. One never needs the exact places of the earlier
stages, just the fact that they \emph{are} earlier. Finally, consider the  assignments we need for a
partial truth predicate. These can be very well modeled as finite sets of pairs of variables and values.
The basic operation on these is union under the condition that we preserve functionality.
For this purpose, we can use the sets based on the container strings. Of course, also sequences
are used to represent assignments: here we assign values to the first $n$ variables,
where $n$ is the length of the sequence. Note however that concatenation is not a
very meaningful operation on sequences in this role. So, for this specific purpose,  the set-of-pairs representation
is better. 

One way to construct our sequences from container strings is by first constructing (not necessarily extensional) sets from container strings.
We simply forget the order and the multiplicity of the components. We only need these `sets' to satisfy the
modest demands of Adjunctive Set Theory (see Section~\ref{rijtjessmurf}). Then, we do a bootstrap
that will deliver sequences from sets. The usual representation of a sequence as a function from  a set of numbers
to the elements of the sequence is part of this construction. 

\subsection{Note on Terminology}
In this paper, I use \emph{string} for strings of letters. These are like sequences of letters but without projection and length functions.
In much of the literature, the word \emph{word} is used here. I do not like the word \emph{word} so much in this context since I believe
that words, in the everyday sense, are \emph{not} strings of letters. Change of spelling does not change the word hoard. 
The use of the word \emph{string} in our context is reasonably well established. It is used, in our sense, e.g., by
Corcoran, Ferreira, \v{S}vejdar, and Murwanashyaka. I also found it used in Wikipedia.

\subsection{Desiderata}

There is a sense in which it does not matter how we obtain the sequences or container strings. The sequences or container strings will deliver 
the goods as long as they have the 
desired properties. In this paper, we opt for a somewhat sharper focus. The paper is a reflection of the following further
desiderata. 

\begin{itemize}
\item
\emph{Scope:} We want our construction to work for as wide a class of theories as possible. In this, as we will see, G\"odel's $\upbeta$-function is the
current winner.  Emil \jer\ in \cite{jera:sequ12} has shown how to construct sequences 
over the very weak theory \pam\ minus the subtraction action.
We construct sequences using Markov coding in
\pam, but we seem to need the subtraction axiom. The base theory for Smullyan coding is still stronger. It is not even  a subtheory of {\sf IOpen}.  
\item
\emph{Simplicity:} We want both our constructions and their results to be as simple as possible. In the
constructions, we want to minimise the use of Solovay-style shortening of
definable cuts.  In the results, we want the definitions of adjunction and concatenation to be as elementary as possible.

Emil \jer,  in \cite{jera:sequ12} produces sequences with adjunction. He uses Solovay-style shortening 
of the length of the sequences to verify adjunction. This means that
the sequence lengths are explicitly constrained to a definable cut that is shorter than the surface numbers of the theory.
To get sequences with concatenation we have to shorten one step further.
The resulting concatenation is somewhat complex since we have to update two underlying numbers used in coding
the component sequences of the concatenation.

The Smullyan construction gives container strings with concatenation without any further tricks.
The concatenation is directly derived from the concatenation of the binary strings that function as an
intermediate step in the construction. The cost of the good properties of the Smullyan construction is, regrettably,
that we need a theory that is not contained in {\sf IOpen}. From a didactic standpoint the Smullyan coding
is very good since it is closely related to the familiar binary notations for numbers.

The theory \pam\ will be our base for the Markov construction. It gives us
container strings with full concatenation without any need for Solovay-style shortenings.
Markov concatenation is a simple and familiar thing, to wit matrix multiplication.
Markov coding has the advantage that the basic definitions are quantifier-free.

If we want to go from container strings to sequences, some of the disadvantages of the $\upbeta$-function
reappear. Concatenation becomes a somewhat complex operation.
\end{itemize} 

\subsection{Exponentiation}
Peter Aczel, in the context of constructive mathematics, coined the word \emph{taboo} for a principle that you do not
want to be derivable in your system more or less as if it were \emph{falsum}. For us, in this paper, the totality of exponentiation is such a
taboo. We insist on slow growth. Of course, exponentiation does still throw its platonic shadow downwards. For example,
in the treatment of Smullyan coding, we do use the notion \emph{being a power of 2}. 

In the context of the development of the $\upbeta$-function in a very weak theory, Emil \jer\ avoids exponentiation by shortening the length
of the sequences. In the context of  Smullyan and Markov coding, we have the following issue. We want to code
 container strings of numbers. We want to make the strings code sequences of all numbers.
One way in which the numbers appear in the binary strings is as \emph{tally numbers}, strings of {\tt a}'s, where the number $n$ is
coded by ${\tt a}^n$. So, why not code $n_0,\dots,n_{k-1}$ simply as ${\tt b}{\tt a}^{n_0} \dots {\tt b}{\tt a}^{k-1}$? 
As we will
see, in the case of Markov coding, this plan works perfectly. The reason is that the string representing 
${\tt a}^n$ is Markov-coded by the matrix $\sspl 1n01$, a perfectly small object in terms of $n$.
On the other hand, the same string is Smullyan-coded by the number $2^n-1$. So, the codes of tally numbers grow exponentially in the given number.
In the Smullyan case, the solution is as follows. Our coding of binary strings in the numbers  is 1-1. So, we represent sequences of numbers as sequences of 
the corresponding
strings. Thus, we need to code sequences of all strings as strings. There are several ways to do that. We will discuss those in Subsection~\ref{lerendesmurf}.  

We do not need explicit shortening in the construction of the Smullyan and of the Markov coding. This does not mean that 
we can have sequences of any length. For example, in the Markov case, the string $({\tt ba})^n$ is definable (by a Diophantine predicate) as a function of $n$
(as proved in by Yuri Matiyasevich), but this involves exponential growth.
Thus, the $n$ for which Markov-style $({\tt ba})^n$ exists,  can only live in a logarithmic cut. 
The same holds already for ${\tt a}^n$ in the Smullyan case.

\section{Basics}

In this section we briefly introduce some basic definitions and results. 

\subsection{Theories, Translations, Interpretations}
In this paper, we are only dealing with specific theories. Moreover, no intensional phenomena will be studied. As a consequence,
we do not have to worry all that much about what precisely a theory is. For specificity, let us say that a theory is given by a signature plus a set of axioms.

\emph{A translation $\tau$} from a signature $\Theta$ to a signature $\Xi$ provides a domain-formula $\updelta_\tau$ in the language of signature
$\Xi$. This domain consists of (syntactic) sequences of length $n$. This $n$ gives  \emph{the dimension} of the interpretation.
The translation sends a $k$-ary predicate symbol $P$ in $\Theta$ to a formula $\phi(\vv x_0,\dots, \vv x_{k-1})$ in the language of signature $\Xi$.
Here the length of the $\vv x_i$ is the dimension $n$ of the translation.
We will designate this $\phi$  by $P_\tau$. We allow identity to be translated in the same way as the other predicate symbols.
We lift the interpretation in the obvious way to the full language relativising the quantifiers to the domain of the translation. 
For example, suppose $\tau$ is 2-dimensional and $P$ in $\Theta$ is binary. Then,
$(\forall x \, P(x,y))^\tau$ is the formula:
$ \forall x_0\,\forall x_1\,(\updelta_\tau(x_0,x_1) \to P_\tau(x_0,x_1,y_0,y_1))$.
Of course, we need some appropriate convention here for the choice of the variables $x_0,x_1,y_0,y_1$.

A translation $\tau$ from $\Theta$ to $\Xi$ carries \emph{an interpretation} of a $\Theta$-theory $U$ in a $\Xi$-theory
$V$ iff  $U \vdash \psi$ implies $V \vdash \psi^\tau$, for all $\Theta$-sentences $\psi$.

We will also consider translations and interpretations between many-sorted languages. The only difference here is
that each sort gets its own domain. Conceivably, the $\tau$-domain of one $\Theta$-sort could consist of sequences
of objects coming from different $\Xi$-sorts, but, in this paper, this does not happen.

In the many-sorted case, we will assume that there always is a designated object-sort $\mf o$.
We treat the non-sorted case as the one-sorted case with the single sort $\mf o$.
A translation $\nu$ is \emph{$\mf o$-direct} if it is one-dimensional on $\mf o$, sends the object-sort to the object-sort,
 $\updelta_\nu^{\mf o}(x^{\mf o})$ is $x^{\mf o}=_{\mf{oo}}x^{\mf o}$, and  $=_{\mf {oo}}$ translates to
$x_0^{\mf o} =_{\mf {oo}} x_1^{\mf {o}}$.

See   \cite{viss:card09} or \cite{viss:predi09} or \cite{viss:what13} for more details on translations and interpretations
for many-sorted theories.

\subsection{Sequentiality}\label{rijtjessmurf}
A theory is \emph{sequential} whenever it has a good theory of sequences. This means that that we need to have sequences, numbers, projection-functions from
numbers to objects and a length-function. We want to make sequences for all objects of the domain of the theory.
We demand more than just that, for every standard $n$, we can form sequences of length $n$. That idea delivers the weaker notion of \emph{Vaught theory}.
We ask that, theory-internally, whenever we have a sequence, we can adjoin any object at the end of the sequence.

As we see, the notion of \emph{sequentiality} is rather complex. Fortunately, it can be given a much simpler definition. Given that a theory satisfies this simple definition,
we can, via a huge bootstrap, build the numbers, sequences, projections, length from it.\footnote{One can show that
it is essential here that the translation is not identity preserving for the numbers.} We treat the simple definition for the one-sorted case. A one-sorted theory is sequential iff
it $\mf o$-directly interprets the theory {\sf AS}. 
The theory {\sf AS} is a theory in the language with one binary predicate $\in$, axiomatised by the following principles.

\begin{enumerate}[{\sf as}1.]
\item
$\vdash \exists x\, \forall y\; y\not\in x$
\item
$\vdash \exists z\, \forall u\, (u\in z \iff (u \in x \vee u=y))$
\end{enumerate}

It is nicer to give {\sf AS} a two sorted format, which we call {\sf FAC} or Frege-style Adjunctive Class Theory.
It is a two-sorted theory with a sort $\mf o$ of objects and a sort $\mf c$ of classes/concepts. 
We use capital letters to range over the sort $\mf c$.
We have a binary predicate $\in$ of type $\mf{oc}$ and a Frege function $\digamma$ if type $\mf{co}$.
The theory is axiomatised as follows.
\begin{enumerate}[{\sf fac}1.]
\item
$\vdash \exists X\, \forall y\, y\not \in X$
\item
$\vdash \exists Z\, \forall u\, (u \in Z \iff (u\in X\vee u=y))$
\item
$\vdash \digamma (X) = \digamma(Y) \to X=Y$
\end{enumerate}

We note that we can demand extensionality for classes, but the cost of that is that we need a total injective  Frege \emph{relation}
rather than a function (which is not really a problem).
A one-sorted theory $U$ will be sequential if it $\mf o$-directly interprets {\sf FAC}.

We note that {\sf AS} and {\sf FAC} are mutually $\mf o$-directly interpretable. The translation $\upzeta$
of {\sf AS} in {\sf FAC} has $x\in_\upzeta y :\iff \exists Y\, (\digamma(Y)= y \wedge x \in Y)$.

There is also a Frege-function-free approach. Let {\sf AC} be {\sf FAC} without the Frege Axiom.
Then, a one-sorted $U$ is sequential iff $U$ $\mf o$-directly interprets {\sf AC} where the interpretation for the
sort of classes is one-dimensional. See \cite{viss:what13} for a detailed discussion of sequentiality.

\begin{rem}
{\small We can extend the notion of sequentiality to many-sorted theories as follows. Consider a many-sorted $U$.
We expand $U$ by a universal sort which is stipulated to be the new object sort. We have collectively injective
Frege-functions from the old sorts to the new sort. We demand that $U$ interprets its extended version with an
interpretation that is identical on the old sorts. Finally, we ask that extended $U$ $\mf o$-directly interprets {\sf AS} or
{\sf FAC}. An example of a two-sorted sequential theory is ${\sf ACA}_0$.
}
\end{rem}

We will consider two strengthenings of {\sf FAC} to wit ${\sf FAC}^{\sf f}$ and  ${\sf FAC}^{\sf f+}$.
The theory ${\sf FAC}^{\sf f}$ is the functional version of {\sf FAC}. It has an additional constant $\emptyset$ of type $\mf c$ and
a binary function {\sf adj}, of type $\mf{coc}$. It is axiomatised as follows.
\begin{enumerate}[{\sf facf}1.]
\item
$\vdash  y\not \in \emptyset$
\item
$\vdash u \in {\sf adj}(X,y) \iff (u\in X\vee u=y)$
\item
$\vdash \digamma (X) = \digamma(Y) \to X=Y$
\end{enumerate}

The theory  ${\sf FAC}^{\sf f+}$ is the union version of  ${\sf FAC}^{\sf f}$. It has, in addition to the signature of {\sf FAC},
 a constant $\emptyset$ of type $\mf o$,
a unary function $\verz\cdot$ of type $\mf {oc}$,  and a binary function $\cup$ of type $\mf{ccc}$.
It is axiomatised as follows.
\begin{enumerate}[{\sf facfp}1.]
\item
$\vdash   y\not \in \emptyset$
\item
$\vdash u \in \verz x \iff u=x$
\item
$\vdash u \in X\cup Y \iff (u\in X\vee u\in Y)$
\item
$\vdash \digamma (X) = \digamma(Y) \to X=Y$
\end{enumerate}

\noindent Clearly, both ${\sf FAC}^{\sf f}$ and ${\sf FAC}^{\sf f+}$ $\mf o$-directly interpret {\sf FAC}.

\section{Arithmetical Theories and the $\upbeta$-Function}\label{emilwerk}
In Section~\ref{arithsmurf}, we introduce the arithmetical language and the various arithmetical theories we are working with.
In Section~\ref{betasmurf}, we briefly sketch Emil \jer's work on \pamj\ and the $\upbeta$-function. Section~\ref{moresmurf}
gives a few extra facts about the $\upbeta$-function. Section~\ref{megasmurf}
collects various useful results on the arithmetical theories under consideration. 

\subsection{Theories}\label{arithsmurf}
The number theoretic language that we will be using is given by the constants $0$ and  $1$, the binary operations $+$ and $\times$, and 
the binary relation $\leq$.
We will often write $xy$ or  $x\cdot y$ for $x\times y$ and we will follow the usual rules for omitting brackets. 
We  employ the following abbreviations.
\begin{itemize}
\item
$x< y$ iff $x\leq y \wedge x\neq y$.
\item
$x\mid y$ iff $\exists z\, z\cdot x = y$.
\item 
$\num 0:= 0$, $\num 1 :=1$, $\num {n+2} := \num{n+1}+1$.
\item
${\sf pow}_2(x)$ iff $\forall y \, (y\mid x \to (y=1\vee \num 2\mid y))$.\footnote{For more information about the ins and outs of this
definition of powers of 2, see Subsection~\ref{megasmurf}.}
\end{itemize}

The theories we are going to consider, will be axiomatised by selections of the following principles.
\begin{enumerate}[\axpam1.]
\item
$\vdash x+0 = x$ \label{plimp1}
\item
$\vdash x+y=y+x$\label{plimp2}
\item
$\vdash (x+y)+z = x+(y+z)$\label{plimp3}
\item
$\vdash x\cdot 1 = x$\label{plimp4}
\item
$\vdash x \cdot y = y \cdot x$\label{plimp5}
\item
$\vdash (x\cdot y) \cdot z = (x \cdot y) \cdot z$\label{plimp6}
\item
$\vdash x\cdot(y+z)= x\cdot y + x\cdot z$\label{plimp7}
\item
$\vdash x \leq y \vee y \leq x$\label{plimp8}
\item $
\vdash (x\leq y \wedge y \leq z) \to x \leq z$ \label{plimp8A}
\item
$\vdash x+1 \not \leq x$\label{plimp10}
\item
$\vdash y\leq x \to (y=x \vee y+1 \leq x)$\label{plimp9}
\item
$\vdash y\leq x \to y+z \leq x + z$\label{plimp11}
\item
$\vdash y \leq x\to y\cdot z \leq x\cdot z$\label{plimp12}
\item
$\vdash x=0 \vee \exists y\;\, x=y+1$ \label{presmurf}
\item
$\vdash y\leq x \to \exists z\; \,x=y+z$ \label{plimp15}
\item
$\vdash  \exists z \,\exists r\, (r<y \wedge x = z\cdot \num n + r)$, for all non-zero natural numbers $n$  \label{zringax}
\item
$\vdash y \neq 0 \to \exists z \,\exists r\, (r<y \wedge x = z\cdot y + r)$  \label{plimp16} \label{kleinebeersmurf}
\item
 $ \vdash x\mid y\cdot z \to \exists u\,\exists v\,(x= u\cdot v \wedge u \mid y\wedge v \mid z)$ \label{smurferella}
\item\label{smurfin}
$\vdash  \exists y\, ({\sf pow}_2(y) \wedge y \leq x+1 < \num 2\cdot y)$
\item\label{babysmurf}
$\vdash ( {\sf pow}_2(x) \wedge {\sf pow}_2(y)  \wedge x\leq y) \to x \mid y$
\item\label{supersmurf}
$\vdash   \exists z\, \forall u\, (u \mid z \iff (u\mid x \wedge u\mid y))$
\end{enumerate} 

\medskip
We give names to some of our principles and, for some, we provide a little bit of background.
\begin{itemize}
\item
\axpam\ref{presmurf} is \emph{the Predecessor Principle}.
\item 
\axpam\ref{plimp15} is  \emph{the Subtraction Principle}.
\item
\axpam\ref{zringax} is  \emph{the Standard Euclidean Division Principle}.
A discretely ordered ring with the property  \axpam\ref{zringax} is called  \emph{a $\mathbb Z$-ring}. 
In this paper, we will mainly use the consequence that every number is odd or even.
\item  
\axpam\ref{plimp16} is  \emph{the Euclidean Division Principle}.
\item
\axpam\ref{smurferella} is  \emph{the Primality Principle}. 
An integral domain that satisfies  \axpam\ref{smurferella} is  \emph{a pre-Schreier domain}. 
\item
\axpam\ref{smurfin} is  \emph{the Powers Existence Principle}. This principle is called Pow$_2$-IP in \cite[Section 6]{jera:theo24} (modulo a minor difference
in formulation).
\item
\axpam\ref{babysmurf} is  \emph{the Powers Division Principle}. This principle is called Pow$_2$-Div in  \cite[Section 6]{jera:theo24}.
\item
\axpam\ref{supersmurf} is  \emph{the GCD Principle}.
A domain that satisfies \axpam\ref{supersmurf} is  \emph{a GCD domain}. 
\end{itemize}

\begin{remark}{\small
Emil \jer\ shows that $\mathrm I{\sf E}_1 + \forall x\, \exists y \mathbin{>} x\; {\sf pow}_2(y)$ proves 
\axpam\ref{smurfin} and \axpam\ref{babysmurf}. See  \cite[Theorem 6.2]{jera:theo24}. He also shows that  $\mathrm I{\sf E}_1 + \forall x\, \exists y \mathbin{>} x\; {\sf pow}_2(y)$ 
is included in $\mathrm I{\sf E}_2$. Our Appendix~\ref{luiesmurf} gives a quick argument that {\sf IOpen} does not prove \axpam\ref{smurfin}.
This fact also follows from \cite[Example 4.9]{jera:theo24} as we will explain in Appendix~\ref{luiesmurf}.}
\end{remark}

We will meet the following four arithmetical theories in this paper.
\begin{itemize}
\item
  \pamj\  is axiomatised by \axpam1-13. It is the theory of a discretely ordered commutative semiring with least element.
  It is also known as \jer 's ${\sf PA}^-$.
  \item
\pamo\  is axiomatised by \pamj\ plus \axpam\ref{plimp15}. It is the theory of the non-negative part of a discretely ordered commutative ring.
 \item
The theory \pamres\ is axiomatised by \pamo\, + \axpam\ref{plimp16}.
\item
The theory \pamsmu\ is axiomatised by \pamo\, +  \axpam\ref{smurfin} + \axpam\ref{babysmurf}.
\end{itemize}

The theory  \pamo\ is a good base for weak arithmetical theories. We note that all axioms except \axpam\ref{plimp15} are universal. 
See e.g. \cite{kaye:mode91}.
The system without \axpam\ref{plimp15}, that is our \pamj, was studied by Emil \jer\ in \cite{jera:sequ12}.
Emil \jer\ verifies that we have  a good sequence coding in \pamj, i.o.w., that this theory is sequential. 
We will briefly sketch this below. The theory \pamres\ is a reasonable extension of \pamo. It is well-known that
\pamres\ is a part of {\sf IOpen}, the theory of induction for open formulas. 
See \cite[Lemma 1.15(4), p34]{haje:meta91}.
The theory \pamsmu\ adds two principles
to \pamo\ that are more or less the obvious choices to get Smullyan coding going. These principles also occur in
\jer's paper \cite{jera:theo24}.

\begin{ques}{\small
Does \pamsmu\ prove the Euclidean Division Principle \axpam\ref{plimp16}\hspace{0.03cm}?
In other words, does \pamsmu\ extend \pamres?}
\end{ques}

\subsection{Emil Je\v{r}\'abek  on the $\upbeta$-function}\label{betasmurf}
In this subsection, we briefly review the results of \cite{jera:sequ12}. 

\begin{lem}[\pamj, Je\v{r}\'abek]\label{vreugdigesmurf}
\begin{enumerate}[i.]
\item
$\vdash (x\leq y \wedge y \leq x) \to x=y$
\item
$\vdash x+z \leq y+z \to x\leq y$
\item
$\vdash 0\leq x$
\item
$\vdash (z \neq 0 \wedge x \cdot z \leq x\cdot z) \to x \leq y$
\item
 $\vdash x\leq y+1 \iff (x\leq y \vee x=y+1)$.
\end{enumerate}
\end{lem}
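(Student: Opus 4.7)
These are standard properties of discretely ordered commutative semirings, and I would prove the five items in the order listed, since each feeds into the next. Reflexivity $x\leq x$ is immediate by taking $y:=x$ in \axpam8.

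For (i), I would assume $x\leq y$, $y\leq x$ and, for contradiction, $x\neq y$. Applying \axpam{11} to $x\leq y$ with $x\neq y$ gives $x+1\leq y$; transitivity then yields $x+1\leq x$, contradicting \axpam{10}. For (ii), the plan is to use linearity \axpam8 to reduce to the case $y\leq x$. If additionally $y\neq x$, then \axpam{11} gives $y+1\leq x$, so \axpam{12} gives $(y+z)+1\leq x+z$; combined with the hypothesis $x+z\leq y+z$, this again contradicts \axpam{10} via transitivity. The equality case is trivial.

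Item (iii) is the subtle one, since the order and additive axioms alone do not force $0$ to be the least element; the multiplicative monotonicity \axpam{13} must be invoked. I would first derive $0\leq 1$: by \axpam8 we have $0\leq 1$ or $1\leq 0$, and the latter is $0+1\leq 0$, contradicting \axpam{10}. Next, $0\cdot a=0$: distributivity gives $0\cdot a+0\cdot a=(0+0)\cdot a=0\cdot a=0\cdot a+0$, and two applications of (ii) together with (i) then force $0\cdot a=0$. Finally, \axpam{13} applied to $0\leq 1$ with $z:=a$ yields $0=0\cdot a\leq 1\cdot a=a$. As a by-product, \axpam{12} applied to $0\leq 1$ with $z:=y$ gives $y\leq y+1$, which will be needed for (v).

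Item (iv), which I read as $(z\neq 0\wedge x\cdot z\leq y\cdot z)\to x\leq y$ (the displayed formula appears to have a misprint on its right-hand side), proceeds in parallel with (ii): by linearity assume $y\leq x$ with $y\neq x$; then \axpam{11} and \axpam{13} give $yz+z\leq xz$, which together with $xz\leq yz=yz+0$ and (ii) yields $z\leq 0$; then (iii) and (i) give $z=0$, contradicting the hypothesis. For (v), the forward direction uses \axpam{11} on $x\leq y+1$: either $x=y+1$ and we are done, or $x+1\leq y+1$ and (ii) gives $x\leq y$. The backward direction is immediate, using the previously derived $y\leq y+1$ and transitivity in the case $x\leq y$. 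The main obstacle across the five items is (iii): one must recognise that, absent a direct least-element axiom, \axpam{13} is the tool that forces $0$ to sit at the bottom, via the stepping stone $0\leq 1$.
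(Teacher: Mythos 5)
Your proof is correct; the paper itself states this lemma without proof, simply citing Je\v{r}\'abek's paper, so there is nothing to compare against, but your derivation from \axpam1--13 is sound, including the key observation that (iii) requires the multiplicative monotonicity axiom via the stepping stone $0\leq 1$ and $0\cdot a=0$. You are also right that the displayed item (iv) contains a misprint and should read $x\cdot z\leq y\cdot z$ in the antecedent.
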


The next lemma gives  the functionality of Euclidean Division, but not the existence of it.

\begin{lem}[\pamj, \jer]\label{unismurf}
Suppose $zy+r = z'y+r'$, $r <y$, and $r'<y$. Then, $z=z'$ and $r=r'$.
\end{lem}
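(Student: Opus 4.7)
The plan is a standard trichotomy argument, taking care that we work in \pamj\ where subtraction is not available, so everything must be done via the additive cancellation and monotonicity already recorded in Lemma~\ref{vreugdigesmurf} together with the ring-style axioms \axpam\ref{plimp11} and \axpam\ref{plimp12}. By axiom \axpam\ref{plimp8}, either $z\leq z'$ or $z'\leq z$; the two cases are symmetric, so I would assume $z\leq z'$.

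The first step is to rule out strict inequality. Suppose for contradiction that $z\neq z'$; then by \axpam\ref{plimp9} we have $z+1\leq z'$. Monotonicity of multiplication \axpam\ref{plimp12} gives $(z+1)\cdot y\leq z'\cdot y$, i.e.\ $z\cdot y+y\leq z'\cdot y$. Next I would argue $z\cdot y+r<z\cdot y+y$: from $r<y$, i.e.\ $r+1\leq y$, monotonicity of addition \axpam\ref{plimp11} yields $z\cdot y+r+1\leq z\cdot y+y$, which is what we want. Finally, from $0\leq r'$ (Lemma~\ref{vreugdigesmurf}(iii)) and \axpam\ref{plimp11} we get $z'\cdot y\leq z'\cdot y+r'$. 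Chaining these inequalities gives
\[ z\cdot y+r\;<\;z\cdot y+y\;\leq\;z'\cdot y\;\leq\;z'\cdot y+r', \]
contradicting the assumed equality $z\cdot y+r=z'\cdot y+r'$. Hence $z=z'$.

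Having $z=z'$, the equation reduces to $z\cdot y+r=z\cdot y+r'$. This gives both $z\cdot y+r\leq z\cdot y+r'$ and $z\cdot y+r'\leq z\cdot y+r$; cancellation of addition (Lemma~\ref{vreugdigesmurf}(ii)) yields $r\leq r'$ and $r'\leq r$, so by antisymmetry (Lemma~\ref{vreugdigesmurf}(i)) we conclude $r=r'$.

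There is no real obstacle, but the one point to watch is that without \axpam\ref{plimp15} one cannot simply ``subtract $zy$ from both sides''; instead one must route the step through Lemma~\ref{vreugdigesmurf}(ii), and in the strict case through the explicit additive chain above rather than through a subtractive manipulation.
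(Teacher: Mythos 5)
Your argument is correct, and it is the standard one. Note that the paper itself gives no proof of this lemma --- it is quoted from Je\v{r}\'abek's paper \cite{jera:sequ12} --- so there is nothing to compare against; your derivation (trichotomy on $z,z'$, using discreteness \axpam\ref{plimp9} and the monotonicity axioms \axpam\ref{plimp11}, \axpam\ref{plimp12} to squeeze $zy+r$ strictly below $z'y+r'$ when $z+1\leq z'$, then additive cancellation via Lemma~\ref{vreugdigesmurf}(ii) for $r=r'$) goes through in \pamj\ exactly as you write it, and you are right that the only point requiring care is replacing subtraction by the explicit additive chain.
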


This means that the $z$ and $r$ claimed to exist in \axpam\ref{plimp16} are unique.
We also note that it implies, as a special case, that no number is both odd and even.

We briefly present \jer's results for \pamj\ concerning pairing and sequence coding.
First, \jer\ proves that $\tupel{x,y} := (x+y)^2+x$ is a pairing function in \pamj.
We note that, in \pamj, we have $x\leq \tupel{x,y}$ and $y \leq \tupel{x,y}$.
We consider the following version of the $\upbeta$-function.
\begin{itemize}
    \item {\small
   $\upbeta(x,i,w) :\iff \exists u,v,q\, (w=\tupel{u,v} \wedge u= q(1+(i+1)v)+x \wedge x \leq (1+(i+1)v))$. 
   }
\end{itemize}
Clearly, $u$, $v$ and $q$ can be bounded by $w$. So, this gives us a $\Delta_0$-definition or, more precisely,
an ${\sf E}_1$-definition.
Also, necessarily $x \leq w$.
\cite[Lemma 4(iii)]{jera:sequ12} tells us that $\upbeta$ is functional from $i,w$ to $x$.

We define:
\begin{itemize}
    \item 
    ${\sf seq}^\ast(s) :\iff \exists z,w \bleq s\, (s = \tupel{z,w} \wedge \forall i \bles z \,\exists x\bleq w  \, \beta(x,i,w))$. 
\end{itemize}

In case $s$ is an $\ast$-sequence, we write ${\sf length}(s)$ for the first component of $s$.
Clearly, the length function has a $\Delta_0$-graph. We also write $\uppi(s,i) =x$, whenever $s$ is an $\ast$-sequence $\tupel{n,w}$, $i<n$, and
$\upbeta(x,i,w)$. We note that the graph of $\uppi$ is $\Delta_0$ and $\uppi$ is total on the $i<n$, whenever $s$ is an $\ast$-sequence.

A crucial result is \cite[Lemma 8]{jera:sequ12}. In our terminology, this says that, in \pamj, for a certain definable inductive class $J$, we have: (\dag)
for all $\ast$-sequences $s$ where the length of $s$ is $z$ in $J$ and for all $x$, there is an $\ast$-sequence $s'$ of length $z+1$, such that
$\uppi(s,i)=\uppi(s',i)$ for all $i<z$ and $\uppi(s',z)=x$. 

\begin{rem}{\small
At first sight, Jerabek's result seems pretty miraculous. Inspecting a traditional presentation of the $\upbeta$-function,
we see that it employs large numbers that are exponential both in the length and in the components of the sequence.
Since we want sequences of all numbers, this seems to tell us that we really need the totality of exponentiation. But we do not have it in our weak
theories.

A closer look at the verification of the correctness of the $\upbeta$-function reveals that the large numbers need to be exponential only in the
 \emph{length} of the sequence.
This opens the road towards considering two sorts of numbers: \emph{the surface numbers} which qualify as sequence components and \emph{the grotto numbers}
that live in the depths which can function as lengths of the sequence. We can give the grotto numbers many desirable extra properties. For example, \jer's grotto numbers
do satisfy the subtraction principle \axpam\ref{plimp15}. 

Of course, this rough idea leaves many details open which are solved in \jer's paper.}
\end{rem}

\subsection{Further Remarks on the $\upbeta$-Function}\label{moresmurf}
The discovery of the $\upbeta$-function was prompted by
John von Neumann's question whether one could prove G\"odel's result purely arithmetically.
We submit that G\"odel's answer is of great beauty. It uses a classical insight in number theory, the
Chinese Remainder Theorem, and the solution he produced is quite simple in nature.

If we inspect \jer's presentation of the graph of the function above, one sees that the existential quantifiers
can all be bounded by $w$. This makes the definition of the graph ${\sf E}_1$: a block of bounded existential
quantifiers followed by a quantifier-free formula. This enables G\"odel's sequence-coding
to function in quite weak theories.

This is a good occasion to direct the reader to the paper  \cite{bezb:gode76} by Amala Bezboruah and John Shepherdson.
The paper shows that the theory \pamo\ does not prove the consistency of \emph{any} theory, let alone itself, under very light
conditions on the proof system, where the proofs are coded as sequences using the $\upbeta$-function. It is striking how
effortless and natural the argument is, which illustrates that the $\upbeta$-sequences work very well in mathematically
meaningful models of weak theories. The paper also contains, as for as I know, the only discussion of the rather obvious question
why G\"odel used sequences coded by products of powers of prime numbers, when he already had the $\upbeta$-sequences.
This is especially puzzling in the light of the fact that G\"odel needs the $\upbeta$-sequences to define the
exponents-of-primes-sequences. 

For further discussion see also \cite{viss:bezb26}. In this paper, I also show that we can get a Bezboruah-{\&}-Shepherdson-style
result using the Markov coding of sequences. This result works for the theory \pamo\ plus all true universal sentences.

\subsection{Further Useful Insights}\label{megasmurf}
In this subsection, we collect a number of useful insights.

\begin{theorem}[\pamo] \label{lilasmurf}
Suppose $x \mid ax+b$. Then, $x\mid b$. 
\end{theorem}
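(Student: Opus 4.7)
Unpacking the divisibility hypothesis, fix $c$ with $cx = ax+b$. The plan is to compare $a$ and $c$ using the totality of $\leq$ (\axpam\ref{plimp8}) and then apply the subtraction principle (\axpam\ref{plimp15}) on whichever inequality holds, so as to reduce the claim to additive cancellation (Lemma~\ref{vreugdigesmurf}(ii)).

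\textbf{Case 1:} $a \leq c$. By \axpam\ref{plimp15}, pick $d$ with $c = a+d$. Then $cx = ax + dx$, so $ax + b = ax + dx$. Cancelling $ax$ on the left via Lemma~\ref{vreugdigesmurf}(ii) (used as an equality, reading both inequalities), we obtain $b = dx$, so $x\mid b$.

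\textbf{Case 2:} $c \leq a$. By \axpam\ref{plimp15}, pick $e$ with $a = c+e$. Substituting, $cx + b = cx + ex + b = ax + b = cx$, and additive cancellation gives $ex + b = 0$. The small routine fact I would verify next is that, in \pamo, $u+v = 0$ implies $v = 0$: otherwise $0 \leq v$ (Lemma~\ref{vreugdigesmurf}(iii)) and $v \neq 0$ yield $1 \leq v$ via \axpam\ref{plimp9}, whence \axpam\ref{plimp11} gives $u+1 \leq u+v = 0 \leq u$, contradicting \axpam\ref{plimp10}. Applied with $u = ex$, $v = b$, this forces $b = 0$, so trivially $x\mid b$.

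I do not expect any genuine obstacle here; the only delicate point is the little auxiliary fact that a sum equals $0$ only when both summands do, which is needed to dispose of Case 2 cleanly. Everything else is a direct application of subtraction plus additive cancellation, and the argument uses \axpam\ref{plimp15} essentially, which is why the lemma is stated for \pamo\ rather than \pamj.
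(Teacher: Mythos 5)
Your proof is correct and uses essentially the same idea as the paper's: unpack the divisor, apply the Subtraction Principle to the comparison of $a$ and $c$, and finish by additive cancellation; the paper just avoids your Case~2 by noting that $ax\leq ax+b=cx$ forces $a\leq c$ outright (via Lemma~\ref{vreugdigesmurf}(iv), after disposing of $x=0$ separately), whereas you case-split on \axpam\ref{plimp8} and show $b=0$ in the other branch. One small slip: the leading ``$cx+b=$'' in your Case~2 chain is unjustified and should be deleted --- the valid part of the chain, $cx+ex+b=ax+b=cx$, already yields $ex+b=0$ by cancellation, so the argument stands.
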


\begin{proof}
The case that $x=0$ is immediate. So assume $x\neq 0$.
Suppose $cx = ax+b$.  It follows that $ax\leq cx$ and, hence, that $a\leq c$. 
So, $c=a+d$, for some $d$. It follows that $(a+d)x= ax+b$. \emph{Ergo}, $dx=b$.
\end{proof}

\noindent
One way of reading Theorem~\ref{lilasmurf} is simply that we have  distributivity for subtraction. 

\begin{theorem}[\pamj\ plus \axpam\ref{presmurf} plus \axpam\ref{plimp16}]
We have the Subtraction Principle \axpam\ref{plimp15}.
In other words, the theory axiomatised by \pamj\ plus the Predecessor Principle and  Euclidean Division 
coincides with \pamres.
\end{theorem}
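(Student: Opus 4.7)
The plan is to split on whether $y=0$. If $y=0$, then axiom \axpam\ref{plimp1} (together with commutativity \axpam\ref{plimp2}) gives $x = 0+x = y+x$, so $z := x$ witnesses the Subtraction Principle. The substantive case is $y \neq 0$, where I will feed the hypothesis $y \leq x$ into Euclidean Division and then use Predecessor to peel off one copy of $y$ from the quotient.

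More concretely, assuming $y \neq 0$, apply \axpam\ref{plimp16} to obtain $q,r$ with $x = qy + r$ and $r < y$. The key subclaim is that $q \neq 0$. If, toward contradiction, $q=0$, then (using that $0 \cdot y = 0$, a routine fact of \pamj) we would have $x = r$, hence $x \leq y$ with $x \neq y$; combined with the assumed $y \leq x$, the antisymmetry of $\leq$ recorded in Lemma~\ref{vreugdigesmurf}.i yields $x = y$, contradicting $x \neq y$. Hence $q \neq 0$, and \axpam\ref{presmurf} (Predecessor) supplies some $q'$ with $q = q' + 1$. Then, using commutativity and distributivity,
\[
x = (q'+1)\cdot y + r = (q'\cdot y + 1 \cdot y) + r = y + (q'\cdot y + r),
\]
so $z := q'\cdot y + r$ is the desired witness for \axpam\ref{plimp15}.

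The whole argument is very short; the only place where care is needed is the subclaim $q \neq 0$, which is where the hypothesis $y \leq x$ is actually used. Everything else is bookkeeping with the associative, commutative and distributive laws already available in \pamj. Since \pamres\ is by definition \pamo\ plus \axpam\ref{plimp16}, and \pamo\ is \pamj\ plus the Subtraction Principle \axpam\ref{plimp15}, deriving \axpam\ref{plimp15} from the stated hypotheses is exactly what is needed to conclude that the theory axiomatised by \pamj\ together with Predecessor and Euclidean Division coincides with \pamres.
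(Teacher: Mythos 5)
Your proof is correct and follows essentially the same route as the paper's: handle $y=0$ separately, apply Euclidean Division for $y\neq 0$, rule out a zero quotient using $y\leq x$, and peel off one $y$ via the Predecessor Principle. The only cosmetic difference is that your justification of $q\neq 0$ goes through antisymmetry where the paper simply notes $x<y$ contradicts the hypothesis.
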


\begin{proof}
We work in \pamj\ plus \axpam\ref{presmurf} plus \axpam\ref{plimp16}.

Suppose $y \leq x$. If $y=0$, we have $y+x=x$. Suppose $y \neq 0$.
There is a $z$ and an $r<y$ such that $zy+r= x$. If 
$z=0$, we have $r=x$, so $x<y$. \emph{Quod non}. If $z= u+1$, we have
$y + (uy+r) = x$. So, $uy+r$ is the desired difference of $x$ and $y$.
\end{proof}

We note that, trivially, we have the uniqueness of the greatest common divisor.
An element $x$ of a domain is \emph{primal} if $x\mid yz$, then there are $u$ and $v$ such that
$x=uv$ and $u\mid y$ and $v\mid z$. So, modulo translation to our context, \axpam\ref{smurferella} says that every element is primal.
It is well-known, that, in a domain, the existence of all greatest common divisors implies that all elements are primal. 

We remind the reader that a domain that satisfies \axpam\ref{supersmurf} is a \emph{GCD domain}. An integral domain that
satisfies  \axpam\ref{smurferella} is a \emph{pre-Schreier domain}. So, an integral GCD-domain is pre-Schreier.
We can extend models of $\pamo$ by a standard construction to a discretely ordered ring. The construction
allows us to transfer the insight that GCD implies pre-Schreier. So, over \pamo, we find that  \axpam\ref{supersmurf} 
implies \axpam\ref{smurferella}.  Inspecting the proofs, we see that even \pamj\ supports the argument.
We repeat almost verbatim two arguments given in stackexchange, see:
{\small
\begin{itemize}
\item
{\tt https://math.stackexchange.com/questions/1137305/}\\
{\tt divisor-of-a-product-of-integers-is-a-product-of-divisors},
\item
{\tt https://math.stackexchange.com/questions/705862/}\\
{\tt prove-that-ma-mb-ma-b-gcd-lcm-distributive-law}.
\end{itemize}
}

\begin{theorem}[\pamj\ plus \axpam\ref{supersmurf}]
We have the Primality Principle  \axpam\ref{smurferella}.
\end{theorem}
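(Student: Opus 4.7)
The plan is to follow the classical route sketched in the cited stackexchange posts: first establish the GCD distributive law $\gcd(mb,mc)=m\cdot \gcd(b,c)$ over $\pamj\,+$ \axpam\ref{supersmurf}, then deduce the Primality Principle from it. Throughout, I would rely on the cancellation law $z\neq 0\wedge xz=yz\to x=y$, which is Lemma~\ref{vreugdigesmurf}(iv), and on the key reading of \axpam\ref{supersmurf}: the element $z$ whose existence is asserted is characterised by the fact that its divisors are \emph{exactly} the common divisors of $x$ and $y$, so I shall write $\gcd(x,y)$ for any such $z$.

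For the distributive law, let $d=\gcd(mb,mc)$. Since $m$ itself is a common divisor of $mb$ and $mc$, the characterisation of $d$ gives $m\mid d$, hence $d=me$ for some $e$. I then want to show $e=\gcd(b,c)$. In the case $m\neq 0$, from $me=d\mid mb$ I extract (via cancellation applied to the witness of divisibility) that $e\mid b$, and similarly $e\mid c$; and for any common divisor $f$ of $b$ and $c$, $mf$ is a common divisor of $mb, mc$, so $mf\mid me$, and cancellation yields $f\mid e$. Thus $e$ satisfies the defining property of $\gcd(b,c)$. The degenerate case $m=0$ is handled separately (both sides are $0$).

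For the Primality Principle, suppose $x\mid yz$. Put $u:=\gcd(x,y)$ and write $x=uv$, $y=ua$ (using $u\mid x$ and $u\mid y$). From $x\mid yz$ we get $uv\mid uaz$; if $u\neq 0$, cancellation gives $v\mid az$. Applying the distributive law to $\gcd(uv,ua)=u$ and cancelling $u$ yields $\gcd(v,a)=1$. A second application, now to $\gcd(vz,az)=z\cdot \gcd(v,a)=z$, combined with the two facts $v\mid vz$ and $v\mid az$, gives $v\mid z$. Taking the witnessing pair $(u,v)$ then satisfies \axpam\ref{smurferella}. The corner case $u=0$ forces $x=0$ and $y=0$; here $x=0\cdot 1$ with $0\mid y$ and $1\mid z$ witnesses the conclusion trivially.

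The main obstacle, as usual when reasoning in $\pamj$, is to make sure every cancellation step is genuinely available: the cancellation law needs a nonzero factor, so I must tread carefully and split off the degenerate branches ($m=0$ in the distributive law and $u=0$ in the primality argument). Once this bookkeeping is in place, the rest is a routine transcription of the standard domain-theoretic argument, and no further properties beyond those listed in $\pamj\,+$ \axpam\ref{supersmurf} are needed.
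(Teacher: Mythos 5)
Your proposal is correct and follows essentially the same route as the paper: first establish the distributive law $\gcd(zx,zy)=z\cdot\gcd(x,y)$ from \axpam\ref{supersmurf} (handling $z=0$ separately), then set $u=\gcd(x,y)$, $x=uv$ and use the distributive law plus cancellation to conclude $v\mid z$. The only differences are cosmetic — you verify the distributive law by cancelling divisibility witnesses rather than via the paper's chain of equivalences, and you invoke it twice (via $\gcd(v,a)=1$) where the paper uses the single step $x\mid\gcd(xz,yz)=uz$ — so no further comment is needed.
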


\begin{proof}
We first prove the distribution law for the greatest common divisor ${\sf gcd}(x,y)$.
We have ${\sf gcd}(zx, zy) = z\cdot  {\sf gcd}(x, y)$. If $z=0$, this is immediate. Suppose $z\neq 0$. Clearly, $z$ divides
 ${\sf gcd}(zx, zy)$. Say, $w$ is the unique number such that $zw =  {\sf gcd}(zx, zy) $.
 We have:
 \begin{eqnarray*}
 u \mid {\sf gcd}(x,y) & \iff & u\mid x  \wedge u \mid y \\
 & \iff & zu \mid zx \wedge zu \mid zy \\
 & \iff & zu \mid {\sf gcd}(zx,zy) \\
 & \iff & u \mid w
 \end{eqnarray*}
 It follows that ${\sf gcd}(x,y) =w$. Ergo, $z\cdot {\sf gcd}(x,y) = {\sf gcd}(zx,zy)$.
 
 Now suppose  $x\mid yz$. We may assume that none of $x,y,z$ is 0.
  Let $u:= {\sf gcd}(x,y)$ and let $v$  be the unique number such that
 $x= uv$.  We have: $x\mid xz$ and $x\mid yz$. Hence, $x \mid {\sf gcd}(xz,yz) = {\sf gcd}(x,y)z =uz$.
 It follows that $uv \mid  uz$. So, $v \mid z$.  
 \end{proof}

We turn to some insights concerning powers of 2. We briefly consider an alternative definition of being
a power of 2.
\begin{enumerate}[A.]
\item
${\sf pow}_2(x)$ iff $\forall y \, (y\mid x \to (y=1\vee \num 2\mid y))$.
\item
${\sf pow}_2^{\sf tar}(x)$ iff $\forall y \,\forall z\;  x\neq (\num 2\cdot y+\num 3)\cdot z$.
\end{enumerate}

The first definition is used by Raymond Smullyan in his classical book \cite{smul:theo61}. It is also used by, e.g.,
 Edward Nelson in \cite{nels:pred86} and by Emil \jer\ in \cite{jera:theo24}. 
 \jer\ calls the $x$ satisfying ${\sf pow}_2$ \emph{oddless}. The second definition is due to Tarski.
 
 The next two theorems give the relationship between the two definitions.

\begin{theorem}[\pamj]
Any $x$ in ${\sf pow}_2$ is also in ${\sf pow}^{\sf tar}_2$.
\end{theorem}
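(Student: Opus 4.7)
The plan is to assume ${\sf pow}_2(x)$ together with a putative counterexample to ${\sf pow}_2^{\sf tar}(x)$, namely $x = (\num 2 y+\num 3)z$ for some $y, z$, and derive a contradiction by applying the hypothesis to the factor $\num 2 y+\num 3$. Since $(\num 2 y+\num 3) \mid x$, the definition of ${\sf pow}_2$ gives the dichotomy $\num 2 y+\num 3 = 1$ or $\num 2 \mid (\num 2 y+\num 3)$, and I would close each disjunct separately.

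The first disjunct I would rule out by the bound $\num 2 y+\num 3 \geq \num 3$, available in \pamj: apply \axpam\ref{plimp11} to $0 \leq \num 2 y$ to get $\num 2 \leq \num 2 y+\num 2$ and hence $\num 3 \leq \num 2 y+\num 3$. Combined with $\num 3 \neq 1$, which follows from \axpam\ref{plimp10} via the cancellation clause Lemma~\ref{vreugdigesmurf}(ii), this rules out $\num 2 y+\num 3 = 1$.

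For the second disjunct, let $w$ witness $\num 2 w = \num 2 y+\num 3$. The key step is the rewriting $\num 2 y+\num 3 = \num 2 (y+1)+1$, which recasts the equation as $\num 2 w + 0 = \num 2 (y+1) + 1$. Since $0 < \num 2$ and $1 < \num 2$ are straightforwardly available in \pamj, Lemma~\ref{unismurf} applies with divisor $\num 2$ and forces $0 = 1$, the familiar impossibility.

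The argument is very short, and the only obstacle is the micro-bookkeeping that verifies $\num 3 \neq 1$, $0 < \num 2$ and $1 < \num 2$ inside \pamj; once these are in hand, the rewriting $\num 2 y+\num 3 = \num 2(y+1)+1$ discharges the substantive case via Lemma~\ref{unismurf} in a single step. Conceptually, the proof just records the observation that Tarski's characterisation excludes exactly the nontrivial odd factors of $x$, while ${\sf pow}_2$ excludes the same factors via its two escape clauses ($y=1$ or $\num 2 \mid y$).
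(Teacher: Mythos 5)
Your proof is correct and follows essentially the same route as the paper's: apply the ${\sf pow}_2$ hypothesis to the divisor $\num 2y+\num 3$, discard the trivial disjunct, and derive that $\num 2y+\num 3$ is both even and odd, contradicting Lemma~\ref{unismurf}. The paper's version is just terser, leaving the bookkeeping steps (such as $\num 2y+\num 3\neq 1$) implicit.
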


\begin{proof}
Suppose $x$ is in ${\sf pow}_2$ and $2y+3$ divides $x$. It follows that $2$ divides $2y+3$.
So, $2y+3$ is both even and odd. This is impossible by Theorem~\ref{unismurf}.
\end{proof}

\begin{theorem}[\pamj + \axpam\ref{zringax}]
${\sf pow}_2$ and ${\sf pow}^{\sf tar}_2$ coincide.
\end{theorem}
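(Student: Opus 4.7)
The forward direction ${\sf pow}_2 \to {\sf pow}_2^{\sf tar}$ is the preceding theorem, proved in \pamj\ alone. I will concentrate on the converse ${\sf pow}_2^{\sf tar}(x) \to {\sf pow}_2(x)$ and argue by contraposition: given a divisor $y$ of $x$ with $y \neq 1$ and $2 \nmid y$, I will produce $u, v$ with $x = (2u+3)v$, contradicting ${\sf pow}_2^{\sf tar}(x)$. Fix $w$ with $x = yw$.

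The first move is to apply \axpam\ref{zringax} with $n = 2$ to $y$: together with the uniqueness of Euclidean division from Lemma~\ref{unismurf}, the hypothesis $2 \nmid y$ forces $y = 2a + 1$, and the hypothesis $y \neq 1$ gives $a \neq 0$. Apply \axpam\ref{zringax} with $n = 2$ to $a$ as well, obtaining either $a = 2b + 1$ or $a = 2b$ (with $b \neq 0$ in the latter case). In the first sub-case a direct computation gives $y = 2(2b) + 3$, so setting $u = 2b$ and $v = w$ produces the desired witness.

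The substantive sub-case is $a = 2b$ with $b \neq 0$, i.e.\ $y = 4b + 1$. Here the natural move is to iterate: apply \axpam\ref{zringax} to $b$; the outcome $b = 2c + 1$ closes off as $y = 8c + 5 = 2(4c+1) + 3$, while the outcome $b = 2c$ with $c \neq 0$ yields $y = 8c + 1$ and continues the descent $a > b > c > \ldots$, each nonzero and each half of the previous.

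The main obstacle is closing off this descent inside the theory. Standard-model intuition (the $2$-adic valuation of $y - 1$ is finite) is not available in an arbitrary nonstandard model of so weak a theory. My plan is to extract, as an internal lemma, the statement \emph{every odd $y$ with $y \neq 1$ has the form $2u + 3$}; in \pamo + \axpam\ref{zringax} this is immediate via the Subtraction Principle (write $y = 3 + (y - 3)$ with $y - 3$ even), so the real content of the theorem is to recover enough of that subtraction from the uniqueness of Euclidean division by $2$ supplied by \axpam\ref{zringax} together with the discreteness axioms of \pamj. I expect this is the delicate step, likely executed as a bootstrap of a predecessor property for $2b$ when $b \neq 0$, after which $y = 4b + 1 = 2(2b - 1) + 3$ gives the witness directly; the alternative would be to exploit the divisibility $y \mid x$ together with the observation that ${\sf pow}_2^{\sf tar}(x)$ forces $x$ to be coprime to every standard odd $\geq 3$, and use this to avoid having to represent $y$ itself in the form $2u + 3$.
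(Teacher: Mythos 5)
Your reduction is the right one, but the proposal stops exactly where the proof has to be finished: you never establish, over \pamj\ plus \axpam\ref{zringax}, that an odd $y\neq 1$ can be written as $2u+3$. The $2$-adic descent you set up ($y=2a+1$, then $a=2b$ or $a=2b+1$, \dots) does not terminate on nonstandard elements, as you yourself observe, and neither of your two proposed escape routes is carried out: a ``bootstrap of a predecessor property for $2b$ when $b\neq 0$'' is precisely the Predecessor Principle \axpam\ref{presmurf}, which is not among the axioms, and coprimality of $x$ to the \emph{standard} odd numbers $\geq 3$ says nothing about a nonstandard odd divisor. So, as it stands, the argument is incomplete at its only substantive point.

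For comparison: the paper's own proof is the two-liner ``in case $y$ is odd, it must be $1$,'' whose implicit justification is $y=2a+1$ with $a\neq 0$, hence $y=2(a-1)+3$ --- exactly the subtraction step you flagged. With the Subtraction Principle \axpam\ref{plimp15} (i.e., over \pamo\ plus \axpam\ref{zringax}) this closes instantly and both arguments are fine. Without it, the lemma ``odd and $\neq 1$ implies of the form $2u+3$'' genuinely fails: consider the discretely ordered semiring of polynomials in $\X$ over $\mathbb Q$ whose constant term is a non-negative integer and whose remaining coefficients are non-negative rationals, ordered by the sign of the leading coefficient of the difference. One checks that this satisfies \pamj\ and \axpam\ref{zringax} (divide the positive-degree part by $\num{n}$ exactly and the constant term with remainder), yet $\X+1$ is odd, has no divisors besides $1$ and itself, and is not of the form $(2u+3)v$; so $\X+1$ is a Tarski power of $2$ but not a Smullyan one. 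Your instinct that the delicate step cannot be recovered from \pamj\ plus \axpam\ref{zringax} alone is therefore correct; the statement needs the Predecessor or Subtraction Principle among its hypotheses, and once that is granted your first sub-case computation already finishes the proof.
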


\begin{proof}
Suppose $x$ is in ${\sf pow}^{\sf tar}_2$ and $y$ divides $x$.
In case $y$ is even, we are done. In case $y$ is odd,  it must be 1. 
\end{proof}

We note that in $\mathbb Z[\X]^{\mathsmaller{\geq 0}}$ we have that $\X$ is a Tarski power of 2, but not a Smullyan one.
So, the two notions are not the same over $\pamo$.

We end this subsection with three insights concerning powers of 2. 
Our first theorem is, in essence,  contained in the proof of  \cite[Theorem 6.2]{jera:theo24}.
Our proof is the same as \jer's proof.

\begin{theorem}[\pamj\  plus \axpam\ref{supersmurf}]\label{mangasmurf}
Suppose ${\sf pow}_2(x)$ and ${\sf pow}_2(y)$ and $x\leq y$. Then, we have $x\mid y$.
I.o.w., we have \axpam\ref{babysmurf}.
\end{theorem}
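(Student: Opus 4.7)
The plan is to use the previous theorem (which gives us the Primality Principle and, in its proof, the distribution law ${\sf gcd}(zx,zy) = z\cdot {\sf gcd}(x,y)$) together with the definition of ${\sf pow}_2$ and the non-existence of a number that is both odd and even (Theorem~\ref{unismurf}).

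First, I would set $d := {\sf gcd}(x,y)$, which exists by \axpam\ref{supersmurf}. Since $x$ is ${\sf pow}_2$, we have $x \neq 0$ (else $3 \mid 0$ would force $3 = 1$ or $2\mid 3$, both impossible), and similarly $y \neq 0$. From $d \mid x$ we get $d \neq 0$, because $0 \mid x$ forces $x = 0$. So we may write $x = da$ and $y = db$, where $a,b$ are uniquely determined by Lemma~\ref{unismurf}. By the distribution law for gcd proved in the previous theorem, $d = {\sf gcd}(da, db) = d\cdot {\sf gcd}(a,b)$, and cancelling the nonzero factor $d$ (via Lemma~\ref{vreugdigesmurf}(iv) and uniqueness) yields ${\sf gcd}(a,b) = 1$.

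Next, since $a \mid x$ and ${\sf pow}_2(x)$, either $a = 1$ or $\num 2 \mid a$; likewise, $b = 1$ or $\num 2 \mid b$. The case where both $\num 2\mid a$ and $\num 2\mid b$ is ruled out: we would have $\num 2 \mid {\sf gcd}(a,b) = 1$, yielding a $z$ with $\num 2\cdot z = 1$, which is impossible (if $z=0$, then $0=1$; if $z \geq 1$, then $\num 2 \leq \num 2 \cdot z = 1$, contradicting the discrete order).

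It remains to dispose of the surviving cases. If $a = 1$ then $x = d$, and since $d \mid y$ we conclude $x \mid y$. If $b = 1$ then $y = d$, so $y \mid x$; writing $x = cy$ with $c \neq 0$ (as $x \neq 0$), we obtain $y \leq cy = x \leq y$, hence $x = y$ by Lemma~\ref{vreugdigesmurf}(i), which certainly gives $x \mid y$. I do not expect a genuine obstacle here — the only delicate points are confirming $d \neq 0$ so that cancellation is available, and confirming $\num 2\nmid 1$ in the base theory, both of which are immediate in \pamj.
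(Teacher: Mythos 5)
Your proof is correct and follows essentially the same route as the paper: take $d = {\sf gcd}(x,y)$, write $x=da$, $y=db$, observe that $a$ and $b$ are coprime and hence not both even, so one of them is $1$, and use $x\leq y$ to land in the favourable case. You fill in details the paper leaves implicit (non-vanishing of $d$, the derivation of ${\sf gcd}(a,b)=1$ from the distribution law), which is fine.
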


\begin{proof}
Suppose $x$ and $y$ are powers of 2 and $x\leq y$.
Let $d$ be the greatest common divisor of $x$ and $y$.
Say, we have $x = dw$ and $y=dz$. We find $w\mid x$ and $z \mid x$.
since $w$ and $z$ must be relative prime, they cannot be both be divisible
by $2$. Hence, at least one of them is $1$. Since, $x \leq y$, we find that $w=1$.
\end{proof}

\begin{theorem}[\pamj\ plus \axpam\ref{smurferella}]\label{hapjessmurf}
Suppose ${\sf pow}_2(x)$ and ${\sf pow}_2(y)$. Then, ${\sf pow}_2(x\cdot y)$.
\end{theorem}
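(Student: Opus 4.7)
The plan is to apply the Primality Principle \axpam\ref{smurferella} to any divisor of $x\cdot y$ and then use the hypothesis on each of the two factors that arise.

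Concretely, suppose $d\mid x\cdot y$. We want to show $d=1$ or $\num 2\mid d$. By \axpam\ref{smurferella}, there exist $u,v$ such that $d=uv$, $u\mid x$, and $v\mid y$. Since ${\sf pow}_2(x)$, the divisor $u$ of $x$ satisfies $u=1$ or $\num 2\mid u$; similarly, $v=1$ or $\num 2\mid v$. If $u=1$ and $v=1$, then $d=1$. Otherwise at least one of $u,v$ is divisible by $\num 2$, and since $d=uv$, we get $\num 2\mid d$ from divisibility of a factor.

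The only thing to be careful about is the last inference: if $\num 2\mid u$, say $u=\num 2\cdot w$, then $d=uv=\num 2\cdot(wv)$, so $\num 2\mid d$; the symmetric case uses commutativity of multiplication (axiom \axpam\ref{plimp5}). These are routine manipulations available in \pamj. The case split exhausts the possibilities because each of $u$ and $v$ must fall into one of the two clauses defining ${\sf pow}_2$.

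No obstacle is expected: the Primality Principle is exactly the tool that lets us factor an arbitrary divisor of a product as a product of divisors of the factors, which is precisely what is needed to transfer the ${\sf pow}_2$ property across multiplication. No appeal to Euclidean division, subtraction, or the powers-of-$2$ axioms \axpam\ref{smurfin}, \axpam\ref{babysmurf} is required; the argument lives entirely in the semiring fragment together with \axpam\ref{smurferella}.
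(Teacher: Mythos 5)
Your proof is correct and follows exactly the paper's argument: apply the Primality Principle to factor the divisor $d=uv$ with $u\mid x$ and $v\mid y$, then use the ${\sf pow}_2$ hypothesis on each factor and conclude $\num 2\mid d$ via divisibility of a factor of a product. No further comment is needed.
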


\begin{proof}
Suppose $z \mid x\cdot y$. By \axpam\ref{smurferella}, we have $z = u\cdot v$ and $u \mid x$ and $v \mid y$.
In case $u$ and $v$ are both 1, we find $z=1$, and we are done. Suppose, e.g., $u\neq 1$. Then, since $u\mid x$,
we have $2\mid u$ and, hence, $2\mid x$. The case $v\neq 1$ is similar.
\end{proof}

We can prove the closure of the powers of 2 under multiplication in a second way from different principles.

\begin{theorem}[\pamsmu]\label{delicatessesmurf}
Suppose ${\sf pow}_2(x)$ and ${\sf pow}_2(y)$. Then, ${\sf pow}_2(x\cdot y)$.
\end{theorem}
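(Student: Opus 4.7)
The plan is to use the Powers Existence Principle \axpam\ref{smurfin} to extract a distinguished power of $2$ close to $xy$ and show that it must coincide with $xy$ itself (up to a trivial degeneracy). Apart from $\pamo$, the only machinery available in $\pamsmu$ is \axpam\ref{smurfin} and Powers Division \axpam\ref{babysmurf}, so the proof turns on comparing $x$, $y$, and $xy$ with this distinguished power of $2$ and invoking \axpam\ref{babysmurf} at each comparison. A useful preliminary, immediate from the definition of ${\sf pow}_2$ and the transitivity of divisibility in $\pamj$, is that any divisor of a power of $2$ is itself a power of $2$; this lets us feed cofactors back into \axpam\ref{babysmurf}. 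One first disposes of the trivial cases $x=0$ and $y=0$, in which $xy$ coincides with the relevant hypothesis; so assume $x,y\neq 0$, whence $xy\neq 0$ in $\pamo$.

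Apply \axpam\ref{smurfin} to $xy$ to obtain a power of $2$ $p$ with $p\leq xy+1< 2p$, and split on whether $p=xy+1$ or $p\leq xy$. In the first case, $x\leq xy<p$ (using $y\geq 1$), so \axpam\ref{babysmurf} gives $x\mid p$; since also $x\mid xy=p-1$, subtraction \axpam\ref{plimp15} forces $x\mid 1$, hence $x=1$. Symmetrically $y=1$, so $xy=1$, which is trivially a power of $2$. In the second case $p\leq xy<2p$, split further via the trichotomy on $p$ versus $x$ and $p$ versus $y$. If $p\leq x$ or $p\leq y$, then \axpam\ref{babysmurf} makes $p$ divide the relevant factor, hence $p\mid xy$; writing $xy=kp$, the bound $p\leq kp<2p$ forces $k=1$, so $xy=p$. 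Otherwise both $x<p$ and $y<p$; then \axpam\ref{babysmurf} gives $x\mid p$, so $p=ax$ with $a$ a power of $2$ by the preliminary observation. From $p\leq xy$ one cancels $x\neq 0$ to get $a\leq y$, \axpam\ref{babysmurf} supplies $a\mid y$, say $y=ak$, and then $xy=pk$ together with $xy<2p$ forces $k=1$, so $xy=p$ again.

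The main obstacle is the final sub-case, where both $x$ and $y$ lie strictly below the distinguished power of $2$ $p$. The critical observation is that the cofactor $a=p/x$ remains a power of $2$, so \axpam\ref{babysmurf} can be re-applied between $a$ and $y$; without this re-use of Powers Division, the bound $p\leq xy<2p$ alone does not appear to pin down $xy$.
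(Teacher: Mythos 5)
Your proof is correct and follows essentially the same route as the paper's: both anchor on the power of two $u$ with $u\leq xy+1<2u$ supplied by \axpam\ref{smurfin}, rule out $u=xy+1$, and then pin down $xy=u$ by applying \axpam\ref{babysmurf} twice --- once to compare $x$ with $u$ and once, crucially, to the cofactor $u/x$, which is again a power of $2$ because it divides one. The only cosmetic difference is that the paper eliminates the case $u=xy+1$ by a parity argument, whereas you do it by noting that $x$ would divide two consecutive numbers.
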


\begin{proof}
Suppose $x$ and $y$ are powers of 2. 
Let $u$ be a power of 2 such that $u \leq xy+1 < 2u$.
In case $x=1$ or $u=1$, we are easily done. Suppose
$x\neq 1$ and $u \neq 1$. Then, both $x$ and $u$ are even,
and, hence $xy+1$ is odd. It follows that $u\neq xy+1$. 
We may conclude $u \leq xy <2u$. We consider two cases.

Suppose $u\leq x$. Then, since both $x$ and $u$ are powers of 2,   for some $w$, we have $uw=x$.
We note that $w\neq 0$. We find $u \leq uwy< 2u$. So, $1\leq wy < 2$. So, $y=1$ and, thus,
$xy=x$. So, $xy$ is a power of 2.

Suppose $x\leq u$. Then, $xv=u$ for some $v$.
We find $xv\leq xy < 2xv$. So, $v\leq y <2v$.
Trivially, since $v$ divides $u$, we find that $v$ is a power of 2. So,
$vz=y$, for some $z$. Clearly, $z\neq 0$.
We have $v\leq vz < 2v$, so $1\leq z <2$. \emph{Ergo},
$z=1$ and $v=y$. Hence, $u=xv=xy$ and we may conclude that
$xy$ is a power of 2.
\end{proof}

\section{String Theory}

We switch our perspective from numbers to strings. We present a number of principles concerning strings, formulate
a selection of theories, and derive some consequences of the theories.

\subsection{Language and Principles}
The string-language has a constant $\estr$, for the empty string, 
a binary operation $\ast$, for concatenation.

Here are some basic abbreviations.
\begin{itemize}
\item
$x\preceq y$ iff $\exists u\,\exists v\,\; y = (u\ast x) \ast v$.
\item
$x\preceq_{\sf ini}y$ iff $\exists v\;\,  y = x \ast v$.
\item
$x\preceq_{\sf end}y$ iff $\exists u\;\,  y = u \ast x$.
\item
$a$ is an atom or ${\sf atom}(a)$ iff $a \neq \estr$ and, for all $y$, 
if $x\ast y = a$, then $x=\estr$ or $y= \estr$.
\end{itemize}

Here is the list of principles that we will consider in this article.

\begin{enumerate}[\axtc1.]
\item
$\vdash \estr \ast x = x \wedge x\ast \estr = x$
\item
$\vdash x\ast y = \estr \to (x=\estr \wedge y = \estr)$.
\item 
$\vdash (x\ast y) \ast z = x\ast (y \ast z)$
\item\label{weakeditors}
$ \vdash  x \ast y = u \ast v \to  
\exists w\;(x\ast w = u \vee x= u \ast w)$
\item\label{editors}
$\vdash x \ast y = u \ast v \to  
\exists w\;((x\ast w = u \wedge y = w\ast v) \vee
(x= u \ast w \wedge  w\ast y = v))$
\item\label{wcancellsmurf}
$\vdash x\ast y=x \to y=\estr$.
\item\label{cancellsmurf}
$\vdash x\ast y=x\ast z \to y=z$.
\item
$\vdash x=\estr \vee \exists y\, \exists a\, (\ato(a) \wedge x=y\ast a)$ \label{thorsmurf}
\end{enumerate}

\noindent
Some of our principles have names:
\begin{itemize}
\item
Principle \axtc\ref{weakeditors} is \emph{the Weak Editors Principle}.
\item
 Principle \axtc\ref{editors} is \emph{the Editors Principle}.
 \item
 Principle \axtc\ref{wcancellsmurf} is \emph{the Weak Left Cancellation Principle.}
 \item
 Principle \axtc\ref{cancellsmurf} is \emph{the Left Cancellation Principle}.
 \item
 Principle \axtc\ref{thorsmurf} is \emph{the Stack Principle}.
\end{itemize}
We note that the Weak Editors Principle  \axtc\ref{weakeditors} is equivalent to the proposition that $\preceq_{\sf ini}$ is \emph{semi-linear} (a.k.a. tree-ordered):
\begin{itemize}
\item
$\vdash (x\preceq_{\sf ini} z \wedge y \preceq_{\sf ini} z) \to (x\preceq_{\sf ini} y \vee y \preceq_{\sf ini} x)$. 
\end{itemize}

Of course, there are also the obvious principles of Weak Right Cancellation and Right Cancellation. We will not use these in our development,
but, of course, this is just an accidental feature of our design choices.
A principle \emph{Bi-Cancellation} that strictly extends both Left and Right Cancellation will be studied in
Appendix~\ref{bismurf}.

We proceed with the introduction of our string theories.
Our base theory $\tc0$ is given by Axioms \axtc1-3.    The theory \tc1\ is axiomatised by \tc0 plus the plus the Editors Principle \axtc\ref{editors}. 
The theory \tc2\ is \tc1\ plus  \axtc\ref{thorsmurf}.

A superscript {\sf c} after the name of a theory means addition of the Left Cancellation, i.e.\ Axiom
\axtc\ref{cancellsmurf}, to the given theory without the superscript.

It will be handy to have a notation for adding atoms to string theories. Suppose $U$ is a string theory.
We write, e.g., $U({\tt a}_0,\dots, {\tt a}_k)$ for $U$ where the language is extended with
constants ${\tt a}_i$, for $i\leq k$ and the axiom set is extended with the axioms
$\vdash \ato({\tt a}_i)$, for $i\leq k$, and $\vdash {\tt a}_i\neq {\tt a}_j$, for
$i<j\leq k$. We write  $U[{\tt a}_0,\dots, {\tt a}_k]$, for  $U({\tt a}_0,\dots, {\tt a}_k)$ extended with
$\vdash \forall x\, (\ato(x) \to  \bigvee_{i\leq k}x={\tt a}_i)$. 

In contexts where we do not also have multiplication of numbers to cause ambiguity, we will often omit $\ast$, writing $xy$ for $x\ast y$.

\subsection{Basics of \tc1}\label{goedzaksmurf}
In this subsection, we prove some basic facts the theory \tc1.
The theory \tc1\ is part of a theory introduced by
Alfred Tarski in \cite{tars:wahr35}. Its extension $\tc1({\tt a},{\tt b})$ is a variant of a
 theory of strings employed by Andrzej Grzegorczyk in  \cite{grze:unde05}.
Grzegorczyk's version does not have the empty string. In \cite[Appendix A]{viss:grow09}  it is shown that
$\tc1({\tt a},{\tt b})$ is bi-interpretable with Grzegorczyk's theory. See e.g.  \cite{corc:stri74} and 
 \cite{viss:grow09} for  further background.

We have both left and right cancellation \emph{for atoms} in \tc1.

\begin{theorem}[\tc1]\label{hippesmurf}\label{smeuigesmurf}
Suppose $a$ and $b$ are atoms. 
\begin{enumerate}[i.]
\item
If $xa=yb$, then $x=y$ and $a=b$.
\item
If $ax=by$, then $a=b$ and $x=y$.
\end{enumerate}
\end{theorem}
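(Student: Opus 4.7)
The plan is to apply the Editors Principle \axtc{\ref{editors}} directly to each equation, and then use the definition of atom (together with the fact that atoms are nonempty) to collapse the extra witness $w$ to $\estr$.

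For part (i), I would apply \axtc{\ref{editors}} to $xa = yb$, with $(x,y,u,v)$ instantiated as $(x,a,y,b)$. This yields a $w$ such that either (a) $xw = y$ and $a = w\ast b$, or (b) $x = yw$ and $w\ast a = b$. In case (a), $a$ being an atom applied to the factorisation $a = w\ast b$ forces $w = \estr$ or $b = \estr$; since $b$ is an atom we have $b \neq \estr$, hence $w = \estr$. Then \axtc{1} gives $a = b$ and $x = y$. Case (b) is symmetric: atom-hood of $b$ forces $w = \estr$ or $a = \estr$, and since $a$ is an atom we conclude $w = \estr$, yielding $a = b$ and $x = y$ once again.

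Part (ii) is handled in exactly the same way: apply \axtc{\ref{editors}} to $ax = by$, producing $w$ with either $aw = b$ and $x = w\ast y$, or $a = bw$ and $w\ast x = y$. In each case, atom-hood of whichever of $a$ or $b$ appears on the right of the factorisation forces one of $\{w,a\}$ or $\{w,b\}$ to be $\estr$; the fact that atoms are nonempty then pins down $w = \estr$, and \axtc{1} delivers $a = b$ and $x = y$.

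The only subtlety — and it is very mild — is bookkeeping about which of the two disjuncts one is in and which atom forbids which equation to $\estr$; there is nothing more to overcome, since the Editors Principle is precisely designed to reduce equality of concatenations to a single alignment witness, and atom-hood immediately trivialises that witness whenever one of the factors is atomic. No other axioms (in particular, no cancellation) are needed.
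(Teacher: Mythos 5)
Your proposal is correct and follows exactly the paper's own argument: apply the Editors Principle to obtain the witness $w$, then use atom-hood (and the non-emptiness of atoms) to force $w=\estr$ in both disjuncts, after which \axtc{1} gives the conclusion. No differences worth noting.
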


\begin{proof}
We prove (i).
Suppose $xa = yb$. 
For some $w$, we have ($xw=y$ and $a= wb$) or ($x=yw$ and $wa=b$).
Since $a\neq \estr$ and $b\neq \estr$, it follows in both cases that $w=\estr$ and, hence $x=y$ and $a=b$.

The proof of (ii) is similar.
\end{proof}

We can recover simple theories for sets of atoms. The atoms are treated here both as elements and  as singletons or, if one wishes, as auto-singletons.
The $\in$-relation is simulated by $\preceq$. 
We have the following insights.

\begin{theorem}[\tc1] \label{verenigingssmurf}
\begin{enumerate}[i.]
\item
Suppose $a$ is an atom, then $a\not\preceq \estr$.
\item
Suppose $a$ and $b$ are atoms. Then, $b\preceq a$ iff $a=b$.
\item
Suppose $b$ is an atom. Then $b\preceq xy$ iff $b\preceq x$ or $b\preceq y$.
\end{enumerate}
\end{theorem}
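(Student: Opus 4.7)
The plan is to handle the three parts in order. Parts (i) and (ii) use only axioms \axtc1--\axtc3; part (iii) is the one that crucially invokes the Editors Principle \axtc\ref{editors}.

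For (i), I would suppose toward a contradiction that $a \preceq \estr$. Unfolding gives $\estr = (u \ast a) \ast v$, and two applications of \axtc2 force $a = \estr$, contradicting $\ato(a)$. For (ii), the right-to-left direction is immediate from $a = \estr \ast a \ast \estr$. For the converse, I would expand $a = (u \ast b) \ast v = u \ast (b \ast v)$ via associativity; since $b \ast v \neq \estr$ (because $b \neq \estr$ and \axtc2), atomicity of $a$ forces $u = \estr$. Then $a = b \ast v$, and another appeal to atomicity of $a$ combined with $b \neq \estr$ forces $v = \estr$, giving $a = b$.

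For (iii), the right-to-left direction is routine reassociation: if $x = (u \ast b) \ast v$, then $xy = (u \ast b) \ast (v \ast y)$; symmetrically on the right. So I would focus on the converse. Suppose $xy = (u \ast b) \ast v$. I apply \axtc\ref{editors} to the equation $x \ast y = (u \ast b) \ast v$, producing a witness $w$ yielding one of two splits: either $x = (u \ast b) \ast w$ and $w \ast y = v$, or $x \ast w = u \ast b$ and $y = w \ast v$. The first split directly exhibits $b \preceq x$.

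The main obstacle is the second split, where the division point between $x$ and $y$ appears to land at or before $b$. Here I would apply \axtc\ref{editors} a second time, this time to $x \ast w = u \ast b$, producing a fresh witness $w'$ with either $(x \ast w' = u \wedge w = w' \ast b)$---in which case $y = w \ast v = (w' \ast b) \ast v$ gives $b \preceq y$ with no further work---or $(x = u \ast w' \wedge w' \ast w = b)$, where atomicity of $b$ forces $w' = \estr$ or $w = \estr$. In the first subcase, $x = u$ and $w = b$, so $y = b \ast v$, yielding $b \preceq y$; in the second subcase, $w' = b$ and $x = u \ast b$, yielding $b \preceq x$. Every case delivers $b \preceq x$ or $b \preceq y$, completing the argument.
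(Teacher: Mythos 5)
Your proposal is correct and, for part (iii), follows essentially the same route as the paper: one application of the Editors Principle to $xy=(ub)v$, then a second application to the subcase $xw=ub$, finishing with atomicity of $b$. Parts (i) and (ii), which the paper leaves to the reader, are handled correctly via \axtc 2 and atomicity.
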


\begin{proof}
We prove (iii).
The right-to-left direction is immediate. Suppose $xy = (ub)v$. Then,
there is a $w$ such that (a) $xw =ub$ and $y=wv$ or (b)
$x=ubw$ and $wy=v$.

We consider Case (a). There is a $z$ such that (aa) $xz=u$ and $w=zb$ or
(ab) $x=uz$ and $wu=b$. In case (aa), we find $b\preceq y$. In case (ab),
we find $u=b$ and, hence, $b\preceq x$, or $w=b$ and, hence $b\preceq y$. 

In Case (b), we have $b\preceq x$.
\end{proof}

\begin{remark}\label{varkentjesmurf}{\small
We can read Theorem~\ref{verenigingssmurf}\textup(i\textup) as saying that $\estr$ is an empty set \textup(of atoms\textup). It is important to note
that there may be other such empty sets unequal to $\estr$. Similarly, Theorem~\ref{verenigingssmurf} provides
functional singletons and functional union. Again, this does not mean that the functionally provided singletons
are all singletons and that there are no other unions apart from the provided ones.}
\end{remark}

\subsection{\tc1 meets other Principles}
This subsection gives some information about the interaction of  \tc1\ with other principles.

In the presence of Left Cancellation \axtc\ref{cancellsmurf},  the Editors Principle follows from the
Weak Editors Principle \axtc\ref{weakeditors} over $\tc 0$.

\begin{theorem}[ \tcc0 + \axtc\ref{weakeditors}]\label{kaartjessmurf}
We have  the Editors Axiom, \axtc\ref{editors}.
\end{theorem}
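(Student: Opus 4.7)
The plan is to show that Weak Editors hands us the witness $w$ that already appears in the statement of Editors; the extra content of the full Editors Principle is then recovered, in each of the two disjuncts, purely by Left Cancellation together with associativity.

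Concretely, I assume $x\ast y = u\ast v$ and apply \axtc\ref{weakeditors} to obtain $w$ with either $x\ast w = u$ or $x = u\ast w$. In the first case I substitute $x\ast w$ for $u$ on the right-hand side of the hypothesis to get $x\ast y = (x\ast w)\ast v$; by associativity this reads $x\ast y = x\ast (w\ast v)$, and then Left Cancellation \axtc\ref{cancellsmurf} yields $y = w\ast v$. So $x\ast w = u \wedge y = w\ast v$, which is the first disjunct of \axtc\ref{editors}. In the second case I substitute $u\ast w$ for $x$ on the left-hand side to get $(u\ast w)\ast y = u\ast v$, i.e.\ $u\ast (w\ast y) = u\ast v$ by associativity, and again Left Cancellation delivers $w\ast y = v$. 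Together with $x = u\ast w$ this gives the second disjunct.

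There is essentially no obstacle: the argument is a direct two-case analysis, and Left Cancellation is used once per case. The only thing worth flagging is that the $w$ furnished by Weak Editors may fail to be unique, but this does not matter, since \axtc\ref{editors} is itself only an existential statement, and in either case the same $w$ works for the strengthened conjunction.
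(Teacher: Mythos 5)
Your argument is correct and is essentially identical to the paper's own proof: apply Weak Editors to obtain $w$, substitute back into $x\ast y=u\ast v$ using associativity, and finish each case with Left Cancellation. Nothing further is needed.
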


\begin{proof}
We work in  \tcc0\ plus the Weak Editors Principle.
Let $xy=uv$. Then, there is a $w$ such that  $xw=u$ or  $x=uw$.
In the first case, we have $xwv = xy$. So, $wv=y$. In the second case,
we have $uwy = uv$. So $wy=v$.
\end{proof}

Left Cancellation will make the refinement guaranteed by the Editors Principle unique
as shown in the next theorem.

\begin{theorem}[\tcc1]\label{lokismurf}
Suppose $xy=uv$. Then, there is a unique $w$ such that
\textup(i\textup) $xw=u$ and $y=wu$ or \textup(ii\textup) $x=uw$ and $wy=v$.
Moreover, if both cases obtain, we have $w=\estr$.
\end{theorem}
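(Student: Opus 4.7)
The plan is straightforward: existence of a witness $w$ is immediate from the Editors Principle \axtc\ref{editors}, which is already among the axioms of \tc1, so all the new work lies in establishing uniqueness and the moreover-clause. The point of the theorem is simply that adding Left Cancellation makes the indeterminism in Theorem~\ref{plezantesmurf}'s construction vanish.

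For uniqueness, I would assume two witnesses $w_1$ and $w_2$ and split into cases on which of (i) and (ii) each satisfies. In the two ``same clause'' cases, one has $x w_1 = u = x w_2$ or $u w_1 = x = u w_2$ directly from the hypotheses, and a single application of Left Cancellation \axtc\ref{cancellsmurf} gives $w_1 = w_2$. The interesting case is mixed: say $w_1$ satisfies (i) and $w_2$ satisfies (ii), so $u = x w_1$ and $x = u w_2$. Substituting yields $x = x \ast (w_1 w_2)$, which I would rewrite as $x \ast \estr = x \ast (w_1 w_2)$ using \axtc1, and then apply Left Cancellation to conclude $w_1 w_2 = \estr$. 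The non-triviality axiom \axtc2 then forces both $w_1$ and $w_2$ to equal $\estr$, so $w_1 = w_2$ as required, and the single value of $w$ is $\estr$.

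The moreover-clause is proved by the same computation applied to a single $w$ witnessing both (i) and (ii): from $xw = u$ and $x = uw$ one deduces $x = x \ast (ww)$, hence $ww = \estr$ by Left Cancellation, and $w = \estr$ by \axtc2. The only real step that deserves attention is the mixed uniqueness case; the potential obstacle there is recognising that \axtc2 is indispensable at precisely the point where we pass from $w_1 w_2 = \estr$ to the triviality of each $w_i$ — without \axtc2, nontrivial factorisations of the empty string would survive and $w$ would not be pinned down.
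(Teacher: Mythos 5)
Your proposal is correct and follows essentially the same route as the paper: existence from the Editors Principle, uniqueness by a case split in which the same-clause cases are settled by Left Cancellation and the mixed case yields $x = x\ast(w_1 w_2)$, whence $w_1 w_2 = \estr$ and then $w_1 = w_2 = \estr$ by \axtc2. The paper leaves the appeal to \axtc2 implicit at that last step, but your argument is otherwise identical.
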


\begin{proof}
Suppose $xy=uv$. Then, there is a $w$ such that (i) $xw = u$ and $y=wv$ or
(ii) $x=uw$ and $wy=v$.
Suppose we also have (i$'$) $xw' = u$ and $y=w'v$ or
(ii$'$) $x=uw'$ and $w'y=v$.

If we have case (i) and (i$'$), we find $xw=xw'$, so $w=w'$. Similarly, for (ii) and (ii$'$).
Suppose we have (i) and (ii$'$). So, $xw=u$ and $x=uw'$. It follows that $x=xww'$ and, hence,
that $ww'=\estr$, from which we may conclude that $w=w'=\estr$.
The case of (i$'$) and (ii) is similar.
\end{proof}

In \tc1, we can simplify Left Cancellation, \axtc\ref{cancellsmurf}, to Weak Left Cancellation, \axtc\ref{wcancellsmurf}.

\begin{theorem}[\tc1+ \axtc\ref{wcancellsmurf}]\label{villeinesmurf}
We have  Left Cancellation, \axtc\ref{cancellsmurf}. 
\end{theorem}

\noindent
In other words, \tc1+ \axtc\ref{wcancellsmurf} is 
$\tc1^{\sf c}$.

\begin{proof}
We work in \tc1\ plus \axtc\ref{wcancellsmurf}.
Suppose $xy=xz$. Then, for some $w$, we have either $xw =x$ and $y=wz$, or $x=xw$ and $wy=z$. In both cases
it follows that $w=\estr$.
\end{proof}

\begin{theorem}
\begin{enumerate}[i.]
\item
In \tc1, $\preceq_{\sf ini}$ is a semi-linear tree-ordering, i.e.  $\preceq_{\sf ini}$ is reflexive, transitive and has the tree-property:
\begin{itemize}
\item
$(x\preceq_{\sf ini} z \wedge y \preceq_{\sf ini} z) \to (x\preceq_{\sf ini} y \vee y\preceq_{\sf ini}x)$.
\end{itemize}
\item
In \tcc1, $\preceq_{\sf ini}$ is a linear tree-ordering, i.e., it is a semi-linear tree-ordering that anti-symmetric.
\end{enumerate}
\end{theorem}

\noindent
We leave the simple proof to the reader. A semi-linear (pre)ordering is also known as a tree-(pre)ordering or a 
forest (pre)ordering.

We define, in the language of \tc0({\tt b}):
\begin{itemize}
\item
$x$ is {\tt b}-\emph{free} if $\tt b \not\preceq x$. 
\end{itemize}

In one of our our intended models, we have precisely two letters {\tt a} and {\tt b}. In that model, a {\tt b}-free string is an
{\tt a}-string. However, in our formal development, we do not need the {\tt a}.
We note that in $\tc1({\tt b})$, by Theorem~\ref{verenigingssmurf}, the {\tt b}-free strings are
closed under concatenation.

\begin{theorem}[$\tc1({\tt b})$]\label{scherpesmurf}
Suppose $x$ and $y$ are {\tt b}-free and $x{\tt b}u = y{\tt b}v$. Then $x=y$ and $u=v$.
\end{theorem}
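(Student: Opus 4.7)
The natural approach is two successive applications of the Editors Axiom \axtc\ref{editors}, using the fact that \texttt{b} is an atom and that $x,y$ are \texttt{b}-free to kill all unwanted cases. First, apply \axtc\ref{editors} to $x\ast({\tt b}u) = y\ast({\tt b}v)$ to obtain $w$ satisfying one of
\begin{itemize}
\item[(a)] $xw = y$ and ${\tt b}u = w\ast{\tt b}v$, or
\item[(b)] $x = yw$ and $w\ast{\tt b}u = {\tt b}v$.
\end{itemize}

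A key preliminary observation is that in \emph{both} cases $w$ is \texttt{b}-free: in case (a), $w \preceq y$ and in case (b), $w \preceq x$, so Theorem~\ref{verenigingssmurf}(iii) together with \texttt{b}-freeness of $y$ (resp.\ $x$) forces ${\tt b}\not\preceq w$. I would record this first to streamline the remaining case analysis.

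Next, in case (a), apply \axtc\ref{editors} to $({\tt b})(u) = (w)({\tt b}v)$. One sub-case yields ${\tt b}w'=w$, which contradicts \texttt{b}-freeness of $w$; the other yields ${\tt b}=ww'$, and since \texttt{b} is an atom one of $w,w'$ is $\estr$. Again \texttt{b}-freeness of $w$ forces $w=\estr$, so $w'={\tt b}$, giving $x=y$ and ${\tt b}u = {\tt b}v$. Theorem~\ref{smeuigesmurf}(ii) (atom cancellation on the left) then delivers $u=v$. Case (b) is symmetric: apply \axtc\ref{editors} to $(w)({\tt b}u)=({\tt b})(v)$, rule out $w={\tt b}w'$ by \texttt{b}-freeness of $w$, use atomicity of \texttt{b} in $ww'={\tt b}$ together with \texttt{b}-freeness of $w$ to conclude $w=\estr$, hence $x=y$ and ${\tt b}u={\tt b}v$, so again $u=v$ by Theorem~\ref{smeuigesmurf}(ii).

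The only real obstacle is bookkeeping: there are two outer cases and two inner sub-cases each, and I want to make sure every ``bad'' branch is uniformly eliminated by the same two tools (atomicity of \texttt{b}, and \texttt{b}-freeness propagated via Theorem~\ref{verenigingssmurf}(iii)). Isolating the lemma ``$w$ is \texttt{b}-free'' up front makes this uniform. Note that no appeal to Left Cancellation \axtc\ref{cancellsmurf} is needed, since atom cancellation (Theorem~\ref{smeuigesmurf}(ii)) is already available in \tc1.
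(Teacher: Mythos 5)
Your proof is correct, and it takes a different route from the paper's. The paper derives the statement from the common-refinement machinery of Theorem~\ref{plezantesmurf}: it takes a common refinement $\gamma$ of the partitions $\psing x \pcomp ({\tt b}) \pcomp \psing u$ and $\psing y \pcomp ({\tt b}) \pcomp \psing v$, observes that atomicity of {\tt b} and {\tt b}-freeness of $x$ and $y$ force the single {\tt b}-cell of $\gamma$ to sit in the same position relative to both partitions, and reads off $x=y$ and $u=v$. You instead unfold the same underlying idea into two direct applications of the Editors Axiom, with the key auxiliary observation (worth isolating, as you do) that the interpolant $w$ is {\tt b}-free because it is a factor of the {\tt b}-free string $y$ (resp.\ $x$), via Theorem~\ref{verenigingssmurf}(iii). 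All four branches are correctly eliminated: the sub-cases ${\tt b}w'=w$ and $w={\tt b}w'$ die by {\tt b}-freeness of $w$, and in the sub-cases ${\tt b}=ww'$ atomicity plus {\tt b}-freeness force $w=\estr$, after which Theorem~\ref{smeuigesmurf}(ii) finishes. Your version is more elementary and self-contained (it never invokes the partition category), at the cost of explicit case bookkeeping; the paper's version is shorter and more visual once Theorem~\ref{plezantesmurf} is in place, which is precisely the purpose the paper advertises for that theorem. Your closing remark that Left Cancellation is not needed is also accurate and consistent with the theorem being stated over $\tc1({\tt b})$ rather than $\tcc1({\tt b})$.
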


\begin{proof}
We have either (a) $x{\tt b}z = y{\tt b}$ and $u =zv$ or (b)  $x{\tt b} = y{\tt b}z$ and $zu =v$. We treat case (a).
Case (b) is entirely analogous. 

We have (aa) $x{\tt b}w=y$ and $z=w{\tt b}$ or (ab) $x{\tt b} = yw$ and $wz= {\tt b}$. Case (aa) drops out since $y$ is {\tt b}-free.
In Case (ab), we have cases (aba) $w= \estr$ and $z={\tt b}$ or (abb) $w= {\tt b}$ and $z=\estr$.
In Case (aba), we find $x{\tt b} = y$, contradicting the fact that $x$ is {\tt b}-free. Finally, in Case (abb),
we find, $x{\tt b}= y {\tt b}$ and $u=v$, and, so, by Theorem~\ref{smeuigesmurf}, $x=y$ and $u=v$ as promised.
\end{proof}

\begin{remark}{\small
It is easy to prove that our theory $\tc1({\tt b})$  only has infinite models.
We have, $\tc1({\tt b})$-verifiably,   ${\tt b}^n \neq {\tt b}^m$, whenever $n\neq m$.
However, it does have a decidable extension, since, modulo the obvious translation,
Presburger Arithmetic extends $\tc1({\tt b})$.}
\end{remark}

\subsection{Occurrences and Profiles}
Theorem~\ref{verenigingssmurf} shows that, in \tc 1, we can view strings as sets of atoms, with
empty-set, singletons and union. In this section, we discuss the more refined perspective that strings
can be viewed as multi-sets with ordered occurrences. We show that we can view strings like this over
\tcc 1. We define:
\begin{itemize}
\item
$x$ is an \emph{occurrence of atom $a$} iff $x$ is of the form $ya$. 
\item
$x$ is an occurrence of $a$ \emph{in $z$} if $x$ is an occurrence of $a$ and $x\preceq_{\sf ini} z$.
\end{itemize}

By Theorem~\ref{hippesmurf}, over \tc1, every occurrence is the occurrence of a unique atom.
The Stack Principle \axtc\ref{thorsmurf} tells us that the occurrences are precisely the non-empty strings. 

In \tc1,  the occurrences in a string $z$ form a semi-linear preorder and,
in \tcc1, this becomes a linear order. 

In the context of a model, we can  make pictures of the strings. We call such pictures  \emph{profiles}.
We work in a model $\mc M$ of $\tc {1}$.
\begin{itemize}
\item
\emph{A labeled partial preorder type} is given by a partial preorder plus a mapping from the elements to atoms, modulo
the following equivalence relation: $\tupel{X, \leq, F}$ is equivalent to $\tupel{Y, \sqsubseteq, G}$ iff there is a bijection
$\phi$ between $X$ and $Y$ such that we have (i) $x\leq x'$ iff $\phi(x) \sqsubseteq \phi(y)$, and (ii) $F(x) =G(\phi(x))$.
\item
 \emph{A profile} ${\sf pro}(s)$ of a string $s$ is given as the labeled partial preorder type of the set of occurrences in $s$ ordered by $\preceq_{\sf ini}$, labeled by their last atom.
\end{itemize}

We note that $u$ is an occurrence of $a$ in $st$ iff  $u$ is an occurrence of $a$ in $s$ or $u=sw$, where $w$ is an occurrence of $a$ in $t$.
However, the profile of $st$ need not be the sum of the profiles of $s$ and $t$, since the same occurrence may be both in the $s$-part and the $t$-part.
If we also demand Left Cancellation, the behaviour of the occurrences becomes the behaviour we know and love. 
The preordering becomes an ordering and  
 ${\sf pro}(s t)= {\sf pro}(s)+{\sf pro}(t)$, where $+$ is the obvious composition of labeled partial order types. 

We note that properties like \emph{being {\tt b}-free} can be viewed as properties of the profile of the string.

In Sections~\ref{hulksmurf} and \ref{alitersmurf}, we wil provide examples of profiles. We will see that the same profile can correspond to different strings.

\begin{rem}
{\small  In this paper we use \emph{left occurrences}. We could, of course, also have used \emph{right occurrences}.
A right occurrence in a string that starts with an atom. A right occurrence $x$ is an occurrence in a string $y$ iff
$x\preceq_{\sf end} y$. We define the following relation between left and right occurrences in $z$:
\begin{itemize}
\item
 $x\smile_z y$ iff, for some $u$, $v$, $a$, we have: $a$ is an atom, $x=ua$, $y=av$, and $z=uav$.
 \end{itemize}
 We note that $\smile_z$ is total and surjective between initial and end strings of $z$. Moreover,
 If $x_0 \smile_z x_1$ and $y_0\smile_z y_1$, then $x_0\preceq_{\sf ini}y_0$ iff $y_1\preceq_{\sf end}x_1$.
 
 If we have Left Cancellation $\smile_z$ is functional and if we have
 Right Cancellation $\smile_z$ is injective.  
 }
\end{rem}

\subsection{Partitions of Strings}
Following an idea of Pavel Pudl\'ak, we can draw a very useful consequence from the Editors Axiom \axtc\ref{editors}.
In this section, we present the basic idea.

Consider any model of \tc1.
Fix an element $w$. A \emph{partition} of $w$ is a sequence $\alpha = (w_0,\ldots,w_{k-1})$, where the $w_i$ are non-empty and
 $w_0 \ast \dots \ast w_{k-1}=w$.
We allow the empty partition, say, $\pemp$. We write $\alpha\beta$ or $\alpha \pcomp \beta$ for concatenation of partitions.\footnote{\emph{Par abus de langage}, 
we use the same variables for partitions as  for container strings.}

Let $\psing w$ be $\pemp$ if $w=\estr$, and $(w)$ if $w\neq \estr$. We define: $\eva{w_0,\ldots,w_{k-1}} = w_0\dots w_{k-1}$.

The partitions of $w$  form a category with the following morphisms.
We have $f:(u_0,\ldots, u_{n-1})\to(w_0,\ldots,w_{k-1})$ iff
$f$  is a surjective and weakly monotonic function from $n$  to $k$,
such that, for any $i< k$, we have $w_i = u_s \cdots u_{\ell}$,
where  $ f(j)=i$ iff   $ s\leq j \leq \ell$.
We write $\alpha\leq \beta$ 
for: $\exists f\; f:\alpha\to\beta$.
In this case we say that $\alpha$
is {\em a refinement}   of $\beta$.

The following insight is trivial.
\begin{theorem}
The relation $\leq$ is a partial ordering on partitions. 
\end{theorem}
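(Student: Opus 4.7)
The plan is to verify the three standard axioms of a partial order---reflexivity, transitivity, antisymmetry---directly from the definition of $\leq$. Since morphisms between partitions are external finite functions on indices while the partitions themselves are tuples of model elements, the arguments are essentially finite combinatorics combined with the associativity of $\ast$ (Axiom \axtc3), which guarantees that iterated concatenations $u_s \ast \cdots \ast u_\ell$ are unambiguous.

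For \emph{reflexivity}, given a partition $\alpha = (w_0,\ldots,w_{n-1})$, the identity $\mathrm{id}: n \to n$ is surjective and weakly monotonic, and $\mathrm{id}^{-1}(i) = \{i\}$, so the required equation $w_i = w_i$ is trivial. For \emph{transitivity}, given $f: \alpha \to \beta$ and $g: \beta \to \gamma$, I would take the composite $g\circ f$: it is surjective and weakly monotonic as a composition of such. To check the block condition, observe that for each index $i$ in $\gamma$, the preimage $g^{-1}(i)$ is a consecutive interval $\{s,\ldots,\ell\}$ in the index set of $\beta$, and each $f^{-1}(j)$ for $j\in g^{-1}(i)$ is a consecutive interval in the index set of $\alpha$; weak monotonicity of $f$ glues these intervals into a single consecutive interval, and associativity of $\ast$ lets us equate the resulting iterated concatenation with the corresponding block $w_i$ of $\gamma$.

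The main step is \emph{antisymmetry}. Suppose $f: \alpha \to \beta$ and $g: \beta \to \alpha$, where $\alpha$ has $n$ parts and $\beta$ has $k$ parts. Then $g \circ f: n \to n$ is surjective and weakly monotonic. I would record as a small lemma that any such self-map of a finite ordinal must be the identity: by monotonicity $h(0)$ cannot exceed $0$ without excluding $0$ from the image, so $h(0) = 0$, and an induction on the largest element establishes $h = \mathrm{id}$. Applying this to $g \circ f$ and to $f \circ g$ forces both composites to be identities, so $f$ and $g$ are mutually inverse bijections; hence $n = k$, and $f = \mathrm{id}_n$ by the same lemma. Plugging $f^{-1}(i) = \{i\}$ into the defining condition for $f$ yields $v_i = u_i$ for every $i < n$, whence $\alpha = \beta$.

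The only part that requires any care is the passage from ``surjective weakly monotonic self-map of $n$'' to ``identity,'' but this is entirely external combinatorics and independent of the string theory. All the theory-internal content reduces to associativity, which holds by \axtc3, so the argument goes through already in $\tc 0$; the Editors Principle is not needed for this particular statement.
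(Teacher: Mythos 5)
Your proof is correct; the paper simply declares this result trivial and omits the argument, and your direct verification of reflexivity, transitivity, and antisymmetry (with the key observation that a surjective weakly monotonic self-map of a finite ordinal is the identity) is exactly the intended routine check. You are also right that only associativity is needed, so nothing beyond $\tc0$ is used.
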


The following theorem is Theorem~3.5 of \cite{viss:grow09}.
\begin{theorem}\label{plezantesmurf}
Consider a $w$  in a model $\mc M$  of \tc1.
 Then, any two partitions $\alpha$ and $\beta$ of $w$ have a common refinement $\gamma$ in $\mc M$.
\end{theorem}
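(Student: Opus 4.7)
I would proceed by external induction on $n + k$, where $n$ and $k$ are the lengths of $\alpha = (u_0,\ldots,u_{n-1})$ and $\beta = (w_0,\ldots,w_{k-1})$. The base case $n+k = 0$ gives $\alpha = \beta = \pemp$, for which $\pemp$ is trivially a common refinement. If exactly one of $n,k$ is $0$, then $w = \estr$ while, say, $w_0 \ast (w_1 \ast \cdots \ast w_{k-1}) = \estr$; iterated use of axiom \axtc2 forces $w_0 = \estr$, contradicting the non-emptiness of the blocks of a partition.

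For the inductive step, assume $n, k \geq 1$ and apply the Editors Principle \axtc\ref{editors} to the identity $u_0 \ast (u_1\cdots u_{n-1}) = w_0 \ast (w_1\cdots w_{k-1})$. This yields an element $z$ in one of two configurations: either $u_0 \ast z = w_0$ together with $u_1\cdots u_{n-1} = z\ast (w_1\cdots w_{k-1})$, or the symmetric $u_0 = w_0 \ast z$ together with $z\ast (u_1\cdots u_{n-1}) = w_1\cdots w_{k-1}$.

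Consider the first configuration. Set $\alpha' := (u_1,\ldots,u_{n-1})$ and $\beta' := \psing{z}\pcomp (w_1,\ldots,w_{k-1})$; both are partitions of $u_1\cdots u_{n-1}$, and the bracket operator $\psing{\cdot}$ absorbs the dichotomy between $z = \estr$ and $z \neq \estr$ while guaranteeing that every block of $\beta'$ is non-empty. The combined length of $\alpha'$ and $\beta'$ is at most $n+k-1$, so the induction hypothesis supplies a common refinement $\gamma$. Then $(u_0)\pcomp \gamma$ is the desired common refinement of $\alpha$ and $\beta$: the morphism onto $\alpha$ extends the given $\gamma \to \alpha'$ by sending the fresh initial block to $u_0$; the morphism onto $\beta$ is obtained by grouping $(u_0)$ together with the initial segment of $\gamma$ that refines $\psing{z}$, so that the merged initial block concatenates to $u_0 \ast z = w_0$, after which the existing morphism from $\gamma$ onto $(w_1,\ldots,w_{k-1})$ completes the assignment. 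The second Editors configuration is handled in perfect parallel, with $(w_0)\pcomp \gamma$ playing the analogous role.

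The hard part is not the idea but the bookkeeping: one must check explicitly that prepending a single block and merging it with an adjacent block of the inductively obtained refinement preserves surjectivity and weak monotonicity of the associated index functions defining a morphism of partitions. Writing $\psing{z}$ uniformly suppresses the split on whether $z = \estr$, but the two Editors alternatives still demand separate inspection. I note that the argument invokes only the axioms of \tc1; in particular, neither cancellation nor the Stack Principle \axtc\ref{thorsmurf} is needed, which matches the hypothesis of the theorem as stated.
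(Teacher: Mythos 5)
Your proof is correct and is essentially the paper's own argument: induction on the sum of the lengths, the Editors Principle to produce the overlap $z$, the bracket $\psing{z}$ to absorb the empty/non-empty dichotomy, and a recursion on a configuration of total length at most $n+k-1$. The only (immaterial) difference is that you peel off the first blocks and attach $\psing{z}$ to the $\beta$-side, whereas the paper peels off the last blocks and attaches $\psing{z}$ to the $\alpha$-side; your explicit handling of the degenerate case where exactly one length is zero is a small point the paper leaves implicit.
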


\begin{proof}
We work in  a model of \tc1. 
We prove our Theorem by course-of-values induction on the sum of the lengths of our partitions, where we construct
$\gamma$ recursively. If the sum of lengths is zero, we are immediately done, taking $\gamma=\oslash$.

Suppose the sum of the lengths is ${>}\,0$. In this case each of the lengths is ${>}\,0$. Let $\alpha$ and $\beta$ be partitions of $w$, say with length
$n$ respectively $k$.
Suppose $\alpha=\alpha_0\pcomp (u)$ and $\beta = \beta_0\pcomp(v)$.
 By the Editors Axiom, we can find a $z$ 
such that  (a) $\eva{\alpha_0} z = \eva{\beta_0}$  and 
$u= z v$, or (b) $\eva{\alpha_0} = \eva{\beta_0} z$   and $z u= v$.
We note that $z$ is not uniquely determined and that  both (a) and (b) may apply simultaneously.
We choose some such  $z$ and one case that applies.

By  symmetry, we only need to consider case (a).
Let $\alpha_1 := \alpha_0 \pcomp \psing{z}$. We find that $\eva{\alpha_1} = \eva{\beta_0}$.
By the induction hypothesis, there is a common refinement
$\gamma_0$ of  $\alpha_1$ and $\beta_0$. Let the length of $\gamma_0$ be $m$.
Suppose $f_0$ and  $g_0$ witness that $\gamma_0$ is a common refinement.
It is easily seen that  $\gamma :=\gamma_0 \pcomp (v)$ is
the desired refinement with witnessing functions $f$  and $g$, where
$f:=f_0[m:n]$, $g:=g_0[m:k]$.
\end{proof}

Inspection of the argument shows that, if the partitions have respectively length $n$ and $k$,
then we can find a common refinement of length at most $n+k-1$ elements. Thus, the insight can also
be expressed by a big disjunction in \tc1\ without the use of the model.
The model is just there to avoid big disjunctions.

A further insight is that the result of our construction has the following property. If $x_i$ and $x_j$ are both mapped
to $u_s$ and both mapped to $v_t$, then $i=j$. In other words, if the embeddings are $f^\ast:\xi \to \nu$  and $g^\ast:\xi\to \rho$, then the mapping
$i \mapsto \tupel{f^\ast(i),g^\ast(i)}$ is injective. 

Theorem~\ref{plezantesmurf} is very useful, since it facilitates easy visualisation of arguments involving
the Editors Axiom. 

Our construction is indeterministic. If we add Left Cancellation the situation changes dramatically.
We discuss these matters in Appendix~\ref{smulpaapsmurf}.

\section{Container Strings}\label{neandersmurf}
The basic language of container strings has two sorts: the object sort $\mf o$ and the string sort $\mf s$.
We will use $x,y,z, \dots$ to range over the objects/urelements and $\alpha,\beta,\gamma,\dots$ to range over
the container strings. On the container string sort, we have basically ordinary string theory, but it is pleasant to diverge a bit notationally in order to stress that
the container strings are containers.
We have a constant $\oslash$ of type $\mf s$, a unary operation $[\cdot]$ of type $\mf{os}$, and a binary operation
$\star$ of type $\mf{ss}$. We will have an extended language in which we have the Frege function $\digamma$ of type $\mf{so}$.
We consider the following principles.
 
 \begin{enumerate}[\axtcu1.]
\item
$\vdash \oslash \star \alpha = \alpha \wedge \alpha\star \oslash = \alpha$
\item
$\vdash \alpha \star \beta = \oslash \to (\alpha=\oslash \vee \beta = \oslash)$
\item 
$\vdash (\alpha \star \beta) \star \gamma = \alpha\star (\beta \star \gamma)$
 \item
$\vdash \alpha \star \beta = \gamma \star \delta \to  
\exists \eta\;((\alpha\star \eta = \gamma \wedge \beta = \eta\star \delta) \vee
(\alpha= \gamma \star \eta \wedge \eta\star \beta = \delta))$ \label{ureditors}
\item
$\vdash {\sf atom}([x])$
\item
$\vdash [x]=[y] \to x=y$ \label{injectiesmurf}
\item\label{urthor}
$\vdash \alpha = \oslash \vee \exists \beta\,\exists x\; \alpha=\beta\star[x]$ 
\item
$\vdash \digamma(\alpha)=\digamma(\beta) \to \alpha=\beta$ \label{fregesmurf}
\end{enumerate}

We have the following theories of container strings in the base language: 
 \utc 1\ is axiomatised by
\axtcu 1-6. The theory \utc{2} is axiomatised as \utc 1 plus \axtcu\ref{urthor}. 
The theories of Fregean container strings  \futc 1  and \futc {2} are axiomatized by
respectively \utc 1\ plus \axtcu\ref{fregesmurf} and \utc 2\ plus \axtcu\ref{fregesmurf}.

Clearly,  have the following interpretations of
$\utc i$ in $\tc i+ \exists x\, \ato(x)$.
Both interpretations are based on the translation $\uptau$.
\begin{itemize}
\item
$\updelta_\uptau^{\mf o}(x) :\iff \ato(x)$
\item
$\updelta_\uptau^{\mf s}(x) : \iff x=x$
\item
$\oslash_\uptau := \estr$
\item
$[x]_\uptau := x$
\item
$x\star_\uptau y := x \ast y$
\item
$x=_\uptau^{\mf{oo}} y :\iff x=y$
\item
$x=_\uptau^{\mf{ss}}y :\iff x=y$
\end{itemize} 

\noindent
The Frege axiom delivers sequentiality.
We have:

\begin{theorem}\label{koperensmurf}
$\futc 1$ $\mf o$-directly interprets ${\sf ACF}^{\sf f+}$.
\end{theorem}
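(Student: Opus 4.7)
The plan is to define an $\mf o$-direct translation $\tau$ that interprets the class sort $\mf c$ by the ur-string sort $\mf s$, with identity on $\mf o$ and with equality on both sorts translated as identity. Concretely, I would set $\emptyset^\tau := \oslash$, $\verz x^\tau := [x]$, $(X \cup Y)^\tau := \alpha \star \beta$, $\digamma^\tau := \digamma$, and translate the membership predicate by
\[
(u \in X)^\tau \;:\iff\; [u] \preceq \alpha \;\iff\; \exists \gamma, \delta\; \alpha = \gamma \star [u] \star \delta.
\]
The four axioms of ${\sf FAC}^{\sf f+}$ then have to be verified in $\futc 1$.

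Axiom {\sf facfp}4 is an immediate instance of \axtcu\ref{fregesmurf}. For {\sf facfp}1, if $\oslash = \gamma \star [y] \star \delta$, iterated application of \axtcu 2 forces $[y] = \oslash$, contradicting the nonemptiness of atoms. For {\sf facfp}2 ($u \in \verz x \iff u = x$), the right-to-left direction is immediate; for the forward direction, I rewrite $[x] = \gamma \star [u] \star \delta$ as $(\gamma \star [u]) \star \delta$ and apply the atomicity of $[x]$ twice (using \axtcu 2 to rule out factors equal to the nonempty atom $[u]$) to conclude $\gamma = \delta = \oslash$, so $[x] = [u]$, whence $x = u$ by \axtcu\ref{injectiesmurf}.

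The main step is {\sf facfp}3, the union axiom $[u] \preceq \alpha \star \beta \iff [u] \preceq \alpha \vee [u] \preceq \beta$. The right-to-left direction follows from associativity. For left-to-right, I would assume $\alpha \star \beta = \gamma \star ([u] \star \delta)$ and apply the Editors Principle \axtcu\ref{ureditors}: the first case yields $\beta = \eta \star [u] \star \delta$, placing $[u]$ in $\beta$; the second case gives $\alpha = \gamma \star \eta$ together with $\eta \star \beta = [u] \star \delta$, whereupon a further application of \axtcu\ref{ureditors} combined with the atomicity of $[u]$ (used to decide which side of the split collapses to $\oslash$) places the occurrence of $[u]$ either on the $\alpha$-side or on the $\beta$-side. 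This is the ur-string analogue of the argument for Theorem~\ref{verenigingssmurf}(iii).

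I do not expect a serious obstacle: every step is an ur-string translation of an argument already established in Subsection~\ref{goedzaksmurf}. The only mild subtlety is that $\futc 1$ does not include Left Cancellation, so the witness $\eta$ produced by the Editors Principle is indeterministic; but since we need only the disjunctive conclusion, any choice of witness does the job.
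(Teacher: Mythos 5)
Your translation is exactly the one the paper uses (classes as ur-strings, $\emptyset\mapsto\oslash$, $\verz x\mapsto[x]$, $\cup\mapsto\star$, $\in$ via $[x]\preceq\alpha$), and your verification of the axioms — atomicity plus \axtcu\ref{injectiesmurf} for singletons, the Editors Principle for union, \axtcu\ref{fregesmurf} for Frege — is precisely the "easy verification" the paper delegates to the ur-string analogue of Theorem~\ref{verenigingssmurf}. Correct, and essentially the same approach.
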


\begin{proof}
Let our interpretation be based on $\upsigma$. We take:
\begin{itemize}
\item
$\updelta_\upsigma^{\mf o}(x) :\iff x=x$
\item
$\updelta_\upsigma^{\mf c}(\alpha) :\iff \alpha=\alpha$
\item
$\emptyset_{\upsigma} := \oslash$
\item
$\verz{x}_\upsigma := [x]$
\item
$\alpha\cup_\upsigma\beta := \alpha\star \beta$
\item
$\digamma_\upsigma(\alpha) = \digamma(\alpha)$
\item
$x=_\uprho^{\mf{oo}}y :\iff x=^{\mf{oo}}y$
\item
$\alpha=_\uprho^{\mf{cc}}\beta :\iff \alpha=^{\mf{ss}}\beta$
\item
$x \in_\upsigma \alpha :\iff [x]\preceq \alpha$
\end{itemize}
We easily verify that our interpretation is correct. We  note that  Theorem~\ref{verenigingssmurf} gives us union for
sets of atoms. This is, of course, inherited by the range of $[\cdot]$, even if this range needs not be all atoms.
\end{proof}

We end this section by giving an interpretation $\mf C$ of \utc{1} in $\tc 1({\tt b})$ that will be relevant later.
We define the translation $\upgamma$ as follows.

\begin{itemize}
\item
$\updelta^{\mf o}_{\upgamma}(x) :\iff x = \estr \vee  {\tt b}\not \preceq x$
\item
$\updelta^{\mf s}_{\upgamma}(x) :\iff x = \estr \vee  {\tt b} \preceq_{\sf i} x$
\item
$\emptyset_{\upgamma} := \estr$
\item
$[x]_{\upgamma} := {\tt b}\ast x$ 
\item
$x \star_\upgamma y := x\ast y$
\item
$x=_\upgamma^{\mf{oo}} y :\iff x=y$
\item
$x=_\upgamma^{\mf{ss}} y :\iff x=y$
\end{itemize}

\begin{theorem}[$\tc 1({\tt b})$]\label{aktievesmurf}
The translation $\upgamma$ supports an interpretation $\mf C$ of \utc{1}.
\end{theorem}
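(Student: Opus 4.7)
The plan is to show that each axiom of \utc{1} translates under $\upgamma$ to a theorem of $\tc 1({\tt b})$, after first confirming that each translated constant and operation lands in the correct sort. Sort-correctness is immediate for $\oslash_\upgamma = \estr$ and $[x]_\upgamma = {\tt b}\ast x$, while for $\star_\upgamma = \ast$ one would split on whether the first argument is $\estr$ or begins with ${\tt b}$; in either case the concatenation is again empty or begins with ${\tt b}$, so lies in the $\mf s$-domain.

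The monoid axioms \axtcu 1--3 will transfer directly from the corresponding axioms of $\tc 0$. For the atomicity of $[x]_\upgamma = {\tt b}\ast x$, I plan to argue that any factoring ${\tt b}\ast x = \beta\star_\upgamma \gamma$ with $\beta,\gamma$ both non-empty in the $\mf s$-domain would force $\beta = {\tt b}\ast\beta'$ and $\gamma = {\tt b}\ast\gamma'$; Theorem~\ref{smeuigesmurf}(ii) then yields $\beta'\ast{\tt b}\ast\gamma' = x$, whence Theorem~\ref{verenigingssmurf}(iii) gives ${\tt b}\preceq x$, contradicting the {\tt b}-freeness that defines the $\mf o$-domain. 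Injectivity \axtcu{\ref{injectiesmurf}} is again immediate from Theorem~\ref{smeuigesmurf}(ii).

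The hard part will be the Editors axiom \axtcu{\ref{ureditors}}. Given $\mf s$-domain elements with $\alpha\ast\beta = \gamma\ast\delta$, the ambient Editors principle \axtc{\ref{editors}} of \tc 1\ supplies some $w$ realising one of the two splittings, and the main obstacle is to show that this $w$ can itself be taken in the $\mf s$-domain, i.e., that $w = \estr$ or ${\tt b}\preceq_{\sf ini} w$. By the symmetry between the two splittings, it suffices to treat the case $\alpha\ast w = \gamma$ and $\beta = w\ast\delta$. If $\beta = \estr$, axiom \axtc 2 forces $w = \estr$; otherwise $\beta = {\tt b}\ast\beta'$, so $w\ast\delta = {\tt b}\ast\beta'$, and a second application of \axtc{\ref{editors}} combined with the atomicity of ${\tt b}$ gives either $w\ast\xi = {\tt b}$ (forcing $w \in \{\estr,{\tt b}\}$ by the very definition of atom) or $w = {\tt b}\ast\xi$. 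In every sub-case $w$ lies in the $\mf s$-domain and therefore witnesses the translated Editors axiom.
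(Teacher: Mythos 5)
Your proposal is correct and follows essentially the same route as the paper's proof: atomicity and injectivity of $[\cdot]_{\upgamma}$ via Theorem~\ref{smeuigesmurf}, and the translated Editors axiom \axtcu{\ref{ureditors}} via a second application of \axtc{\ref{editors}} together with the atomicity of {\tt b} to show that the witness $w$ is either $\estr$ or of the form ${\tt b}\ast z$, hence in the $\mf s$-domain. The only (welcome) addition is your explicit check that the translated constants and operations preserve the domains, which the paper leaves implicit.
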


\begin{proof}
The axioms \axtcu1,2,3 are trivial. 

We check \axtcu 4, to wit that $[x]_{\upgamma}$ is an atom. Suppose $x$ is is {\tt b}-free and 
${\tt b}\ast x = {\tt b} \ast y \ast {\tt b} \ast z$. Then, by Theorem~\ref{smeuigesmurf}, we have
$x= y \ast {\tt b}\ast z$, contradicting that $x$ is {\tt b}-free.

The axiom \axtcu\ref{injectiesmurf} is immediate from Theorem~\ref{smeuigesmurf}.

Finally we check \axtcu\ref{ureditors}. We can do this using partitions, but, in this case, a direct verification is
easy. There are a number of cases. We treat the only interesting one.
Suppose ${\tt b}\ast x\ast {\tt b}\ast y = {\tt b}\ast u\ast {\tt b}\ast v$. We apply the Editors Principle to
find a $w$ such that (${\tt b}\ast x\ast w = {\tt b}\ast u$ and ${\tt b}\ast y = w\ast {\tt b}\ast v$) or
(${\tt b}\ast x = {\tt b}\ast u \ast w$ and $w \ast {\tt b}\ast y =  {\tt b}\ast v$). We have to show that
$w$ is either $\estr$ or $w$ is of the form ${\tt b}\ast z$. 
We treat the first disjunct, the other being similar.  We apply the Editors Principle to 
${\tt b}\ast y = w\ast ({\tt b}\ast v)$. We find that, for some $z$, either ${\tt b}\ast z = w$ or $ {\tt b}= w\ast z$.
In the first case we are done. In the second case, we have $w=\estr$ or $z=\estr$. In both of these last cases we are immediately done.  
\end{proof}

We note that the range of $[x]$ in our interpretation contains all atoms of the interpreted theory. In other words, we interpret the principle:
$\alpha$ is an atom iff, for some $x$, $\alpha = [x]$. 

\section{String Theory as Intermediary}\label{mediatorsmurf}
This section is a small introduction to Sections~\ref{smulsmurf} and \ref{marcoroni}.
The idea of this paper is that we use string theory as an intermediate stage in the construction of container strings from numbers. But what does this precisely mean?
Our aim is to provide an $\mf o$-direct interpretation $N$ of our sequential theory \futc1\ in our basic arithmetical theory $\mc A$, thus showing that
$\mc A$ is sequential. We want to
do this using a string theory $\mc S$ as intermediate stage. Naively one would think that it can proceed in two stages:
we have an interpretation $K$ of $\mc S$ in $\mc A$ and an interpretation $M$ of \futc1\ in $\mc S$ and we can take
$N := K\circ M$. 
In the Smullyan case, things do indeed work out like that, but in the Markov case, we proceed slightly differently. 

Let us first describe the Smullyan case.
We provide an $\mf o$-direct  interpretation $\mf B$  in \pamsmu\ of  \tclac1, a string theory explained in Section~\ref{langesmurf}.
Then, we construct an $\mf o$-direct interpretation $\mf A$ of \futc1\ in  \tclac1. The composition $\mf B \circ \mf A$ will be our
desired $\mf o$-direct interpretation of the sequential theory \futc1\ in \pamsmu.

We outline the Markov case. We start with the theory \pam. We give an interpretation $\mf H$ of $\tc1(\tt b)$\ based on translation $\upeta$ in \pam.
(In fact, we specify $\mf H$ as an interpretation of a stronger theory.)  
We already have given an interpretation $\mf C$ of \utc1\ in  $\tc1(\tt b)$ based on translation $\upgamma$ in Section~\ref{neandersmurf}.
The composition $\upeta \circ \upgamma$ provides the kernel of our desired interpretation $\mf U$ that will be based on a translation $\uptheta$.
The object domain of $\uptheta$ will be the $\pamo$-numbers. 
The restriction of $\uptheta$ to the string domain, the constants and concatenation will coincide with $\upeta \circ \upgamma$.
We will provide two extra data. A \pamo-definable injection $G$ of the numbers into the object domain of $\upeta \circ \upgamma$ and a
\pamo-definable injection $F$ from the string domain of $\upeta \circ \upgamma$ to the numbers. 
We then define:
\begin{itemize}
\item
$[x]_\uptheta =y :\iff  \exists z \, (Gx=z \wedge ([z]=y)_{\upeta\circ \upgamma})$,
\item
$\digamma_\uptheta(u)=v :\iff Fu=v$.
\end{itemize} 

So the idea here is that the string theory provides a container string theory without a Frege function.
We expand with the Frege function and allow the $[\cdot]$-images of the numbers to be
a part of the object domain provided by the string theory. (The second example of  Section~\ref{alitersmurf} shows  that
the images of the numbers may be a strict part of the object domain of the string theory.)

\section{Smullyan Coding}\label{smulsmurf}
In this section we give an $\mf o$-direct interpretation of \futc 1\ in \pamsmu. We do this in three steps.
We first develop the extension \tclac1\ of \tcc1\ with a length-function in Section~\ref{langesmurf}. We show how we
can $\mf o$-directly interpret \futc 1\ in \tclac1\ in Section~\ref{naledismurf}. Finally, in Section~\ref{graaftelsmurf}, we show that we
can directly interpret \tclac1\ in \pamsmu. Section~\ref{lerendesmurf} gives the basic heuristics.

\subsection{Heuristics}\label{lerendesmurf}
The main aim of this paper is to develop container strings in weak arithmetical theories and see what is involved in such constructions.
We zoom in on two constructions that use string theory as auxiliary.

\subsubsection{Strings to Container Strings}\label{stus}
There 
are many ways to define container strings of numbers using strings. A first question is: how will the numbers of our object domain be
represented in the container strings we aim to construct?
Does it suffice to provide container strings for the tally numbers that correspond 1-1 to the numbers of the ambient
arithmetical theory? In the case of the Markov coding, the answer here will be a resounding \emph{yes}. 
We can represent the container string $n_0\dots n_{k-1}$ by ${\tt b}{\tt a}^{n_0}\dots {\tt b}{\tt a}^{n_{k-1}}$.
This is the simplest possible coding. However, in the Smullyan case, there is a problem with this strategy.
The map from $n$ to \emph{the number coding ${\tt a}^n$} is exponential. Thus, in the Smullyan coding,
we have to proceed differently in order not to violate our taboo against exponentiation. 
A first step towards our aim is provided by the insight  that the Smullyan coding associates all binary strings 1-1 with numbers.
Thus, it suffices to build, in the syntax theory, container strings of  strings. 

We run into the \emph{comma problem} here.
Since the comma is also a string, how do we distinguish uses-as-comma from uses-as-string?
There are various solutions to the comma problem. Here are three of them.

Willard Van Orman Quine in \cite{quin:conc46} simply stipulates that the commas are strings ${\tt b}{\tt a}^n{\tt b}$ where
$n$ is chosen in such a way that all substrings consisting of {\tt a}'s in the component strings representing numbers are
shorter in length than $n$. This is a fine solution. It is followed, in the context of weak theories, by Raymond Smullyan is his
classical book \cite{smul:theo61}.
Note that in concatenating,
we have to choose the  commas so that they are the same in both strings. 

A second strategy is to
have growing commas. Here the commas are allowed to `grow' in the representation of the
 container string. Here we do not have to update already chosen commas. 
 See \cite{viss:howt86}, \cite{viss:grow09}, and \cite{damn:mutu22} for implementations of this strategy. 
 
 A third strategy is to develop strings for an alphabet of four letters and then represent 
 the components of our container string as strings of two letters, say {\tt a} and {\tt b}, separated by a third, say {\tt c}.
 Then we translate the {\tt a},{\tt b}-strings of the four letter alphabet back to   {\tt a},{\tt b}-strings
 of the two letter alphabet and use that the coding of the last is bijective. This strategy is 
 followed by Edward Nelson in \cite{nels:pred86}.
 
We turn to our strategy in this paper. This strategy  is, for example, used in \cite[Chapter V, 3]{haje:meta91}. We code container strings as pairs
 \[({\tt b}\Lambda(w_0) \dots {\tt b}\Lambda(w_{k-1}), {\tt b}w_0 \dots {\tt b}w_{k-1}),\] where $\Lambda(w)$ is the tally-function that
 replaces all letters in $w$ by {\tt a}'s. We use the first string as a measure stick to find the appropriate places where the $w_i$ are
 stored in the second string. The bad news for this strategy is that we have to define $\Lambda$. Of course, we can do this by recursion (see 
 \cite{damn:mutu22}), but then we run in a circle because to implement the relevant recursion we need sets, container strings or sequences.
 The good news is that the Smullyan coding of string theory in arithmetic allows us to define $\Lambda$ without extra effort.
 Thus, in stead of the honest toil of defining $\Lambda$ in the string world, we get away with theft: we simply add it as primitive, smuggling it in
 from the side of arithmetic. 
 
 \subsubsection{Numbers to Strings}
 We consider strings of  {\tt  a}'s and {\tt b}'s with concatenation.
The development works equally well for any alphabet of prime cardinality.
 
We order our strings length-first. 
 \[
\begin{tabular}{|r|r||r|r||r|r||r|r|} \hline
0 & $\estr$ & 5 & {\tt ba}  & 10 & {\tt abb} & 15 & {\tt aaaa}\\ \hline
1 & {\tt a}     & 6 & {\tt bb} & 11 & {\tt baa} & 16 & {\tt aaab}  \\ \hline
2 & {\tt b}      & 7 & {\tt aaa} & 12 & {\tt bab} & 17 & {\tt aaba} \\ \hline
3 & {\tt aa}    & 8 & {\tt aab} & 13 & {\tt bba} & 18 & {\tt aabb} \\ \hline
4 & {\tt ab}     & 9  & {\tt aba} & 14 & {\tt bbb} & 19 & {\tt abaa}\\ \hline
\end{tabular}
\]

This way of coding is called the \emph{length-first} or the  \emph{dyadic} or the \emph{bijective base-2} numeration.
We can define the association of strings to numbers recursively by:
\begin{itemize}
\item $\estr^{\sf sm} := 0$
\item $(\sigma \ast {\tt a})^{\sf sm} := 2 \cdot \sigma^{\sf sm}+1$
\item $(\sigma \ast {\tt b})^{\sf sm} := 2 \cdot \sigma^{\sf sm}+2$
\end{itemize}

We define:
\begin{itemize}
\item
 $\ell(n) :=  \text{the largest power of $2$ smaller or equal to $n+1$}$.
\item
$m\circledast n:=m\cdot \ell(n)+n$.
\end{itemize}

\noindent
Then, $\ell(\sigma^{\sf sm})=2^{{\sf length}( \sigma)}$ and
$\sigma^{\sf sm}\circledast \tau^{\sf sm}:= (\sigma\ast \tau)^{\sf sm}$.
It is easy to see that
the growth rate of $\circledast$ is the same as the growth rate of multiplication.
 
We can code the development of strings using the Smullyan coding since we can
define $n$ being a power of 2 in the Smullyan way as $\forall m \, (m\mid n \to (m=1 \vee 2\mid m))$.
As a consequence, we can define $\ell$ without defining exponentiation.

 $\Lambda$ maps a string to a string of {\tt a}'s of the same length.
 We have ${\sf sm}( \Lambda({\sf sm}^{-1} (n))) = \ell(n) -1$.
 So, it easy to define the arithmetical shadow (or \emph{tracking function}) of $\Lambda$.
 
 \begin{ques}
 {\small
 In \cite{viss:grow09}, we work out the strategy of obtaining container strings with growing commas.
 This employs the theory $\tc1[{\tt a},{\tt b}]$ plus an extra axiom which says that for any string
 $x$ there is an {\tt a}-string $y$ that contains all {\tt a}-substrings of $x$, but is not contained
 in any such substring. Is this theory  also
 directly interpretable in \pamsmu?
 } 
 \end{ques}
 
 We can view our two-letter strings as dyadic number notations: we replace the {\tt a} by 1 and the {\tt b} by 2.
 I did not work this out, but it seems clear that we can also do our development of Smullyan coding using
 ordinary binary notations in stead. We code a binary notation as a pair $(\ell,x)$ where $\ell$ is a power of 2 with $x< \ell$,
 The $\ell$ is motivated as being $2^{{\sf length}(x)}$.
 E.g. $0101$ is $(16,5)$.

\subsection{Adding the Length Function}\label{langesmurf}
We extend the language of theories of concatenation with the constant {\tt b} and with
a unary function symbol $\Lambda$.
The intended meaning of $\Lambda$ is that $\Lambda(x)$ is the result of replacing all letters in
$\Lambda$ by {\tt a}'s. In our axiomatic treatment, we just use that $\Lambda(x)$ is {\tt b}-free.

We give a list of possible principles.

\begin{enumerate}[\axtcl1.]
\item
$\vdash \Lambda(\estr)= \estr$
\item
$\vdash \Lambda(x) = \estr \to x=\estr$
\item
$\vdash \Lambda(x\ast y) = \Lambda(x)\ast \Lambda(y)$ \label{spookjessmurf}
\item
$\vdash \Lambda(\Lambda(x)) = \Lambda(x)$
\item
$\vdash {\tt b} \not\preceq  \Lambda(x)$
\end{enumerate}

There are many more possible principles for $\Lambda$, but is seems we can make do with just these.

The theory \tcla i\ is axiomatised by  \tc i({\tt b})\ 
plus \axtcl1-5.
Our conventions for adding Left Cancelation and adding atoms are the same as before. \tcla i\ will automatically
contain {\tt b}, but if we add further atoms, we will also include {\tt b} in the list. E.g., we write
$\tcla i({\tt a},{\tt b})$ and $\tcla i[{\tt a},{\tt b}]$.  

If we have Weak Left Cancellation \axtc\ref{wcancellsmurf}, the principle \axtcl1 can be derived over \tc 0.\footnote{This observation is due to
Emil \jer.}

\begin{theorem}[\tc0 + \axtc\ref{wcancellsmurf} + \axtcl\ref{spookjessmurf}]
We have $\Lambda(\estr)=\estr$.
\end{theorem}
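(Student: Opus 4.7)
The plan is to combine the homomorphism axiom \axtcl\ref{spookjessmurf} applied to the empty string with the Weak Left Cancellation principle \axtc\ref{wcancellsmurf}. Since \axtc1 gives $\estr \ast \estr = \estr$, instantiating \axtcl\ref{spookjessmurf} at $x = y = \estr$ yields
\[
\Lambda(\estr) = \Lambda(\estr \ast \estr) = \Lambda(\estr) \ast \Lambda(\estr).
\]

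Now I would read this identity as $\Lambda(\estr) \ast \Lambda(\estr) = \Lambda(\estr)$, which is precisely in the shape $x \ast y = x$ required by the Weak Left Cancellation axiom \axtc\ref{wcancellsmurf} (taking both $x$ and $y$ to be $\Lambda(\estr)$). Applying this axiom delivers $\Lambda(\estr) = \estr$, which is the desired conclusion.

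There is no real obstacle here: the argument is a one-line calculation plus one application of each of the two hypotheses. The only thing to notice is why Weak Left Cancellation suffices rather than the stronger Left Cancellation — we only need to cancel a term against itself on the left, not against an arbitrary equal term, so \axtc\ref{wcancellsmurf} is exactly the right tool.
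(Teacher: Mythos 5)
Your proof is correct and is essentially the paper's own argument: the paper applies \axtcl\ref{spookjessmurf} to $x\ast\estr=x$ for arbitrary $x$ to get $\Lambda(x)=\Lambda(x)\ast\Lambda(\estr)$ and then cancels, while you simply take the special case $x=\estr$. Both use exactly the same two axioms in the same way, and your observation that Weak Left Cancellation suffices matches the paper.
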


\begin{proof}
For any $x$, we have $\lambda(x) =\Lambda(x\estr)= \Lambda(x)\Lambda(\estr)$. So, $\Lambda(\estr)= \estr$.
\end{proof}

In \tclac1\ we can strengthen the Left Cancellation Principle.

\begin{theorem}[\tclac1]\label{lachebeksmurf}
Suppose $xy=uv$ and $\Lambda(x)= \Lambda(u)$. Then,  $x=u$ and $y=v$.
\end{theorem}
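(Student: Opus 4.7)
The plan is to reduce the hypothesis $xy = uv$ using the Editors Principle \axtc\ref{editors}, which gives some $w$ such that either (a) $xw = u$ and $y = wv$, or (b) $x = uw$ and $wy = v$. In both cases, I would aim to show $w = \estr$, which then immediately yields $x = u$ and $y = v$.

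First I would handle case (a). Applying $\Lambda$ to $xw = u$ and using \axtcl\ref{spookjessmurf}, we get $\Lambda(x)\Lambda(w) = \Lambda(u)$, which by hypothesis equals $\Lambda(x)$. Since $\Lambda(x)\estr = \Lambda(x)$, Left Cancellation \axtc\ref{cancellsmurf} (available in $\tclac1$) yields $\Lambda(w) = \estr$. Then \axtcl2 forces $w = \estr$, and we are done.

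Case (b) is entirely symmetric: apply $\Lambda$ to $x = uw$, obtain $\Lambda(x) = \Lambda(u)\Lambda(w) = \Lambda(x)\Lambda(w)$, and again conclude $\Lambda(w) = \estr$ by Left Cancellation, hence $w = \estr$ by \axtcl2.

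There is no real obstacle here — the statement is essentially saying that knowing the $\Lambda$-images of the prefixes agree rigidifies the indeterminism in the Editors splitting. The only subtlety worth flagging is that Left Cancellation is being applied at the level of the $\Lambda$-images rather than at the original strings, so the argument genuinely needs both \axtc\ref{cancellsmurf} and the homomorphism axiom \axtcl\ref{spookjessmurf}, together with \axtcl2 to translate $\Lambda(w) = \estr$ back to $w = \estr$.
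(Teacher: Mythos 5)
Your proof is correct and follows essentially the same route as the paper's: apply the Editors Principle to $xy=uv$, push $\Lambda$ through the resulting equation via \axtcl\ref{spookjessmurf}, cancel to get $\Lambda(w)=\estr$, and conclude $w=\estr$ from \axtcl2. No gaps.
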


\begin{proof}
Suppose $xy=uv$ and $\Lambda(x)=\Lambda(u)$. 
By the Editors Axiom, for some $z$, we have ($xz =u$ and $y=zv$) or ($uz=x$ and $v=zy$). 
In the first case, it follows that $\Lambda(x)\Lambda(z) = \Lambda(u) = \Lambda(x)$. So, $\Lambda(z)=\estr$,
and, hence, $z=\estr$. We may conclude that $x=u$ and  $y=v$. The second case is similar.
\end{proof}

In \cite{viss:grow09}, it is shown that pairing cannot be defined in $\tc1({\tt a},{\tt b})$. So,
more principles are needed. Here we define pairing in \tclac1.
\begin{itemize}
\item
$\tupel{x,y}= \Lambda(x){\tt b}xy$.
\end{itemize}

\begin{theorem}[\tclac1]
$\tupel{x,y} = \tupel{u,v}$ iff $x=u$ and $y=v$.
\end{theorem}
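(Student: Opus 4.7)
The right-to-left direction is immediate. For the forward direction, suppose $\Lambda(x)\,{\tt b}\,xy = \Lambda(u)\,{\tt b}\,uv$. The plan is to peel this identity apart in two stages, using successively stronger cancellation results already established.

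First I would invoke axiom \axtcl5, which tells us that $\Lambda(x)$ and $\Lambda(u)$ are both \texttt{b}-free. The equation above therefore has exactly the shape required by Theorem~\ref{scherpesmurf}, with $\Lambda(x)$ and $\Lambda(u)$ playing the role of the two \texttt{b}-free prefixes and $xy$, $uv$ the suffixes. Applying that theorem directly yields two simultaneous conclusions: $\Lambda(x) = \Lambda(u)$ and $xy = uv$.

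Now the hypotheses of Theorem~\ref{lachebeksmurf} are met: we have a common concatenation $xy=uv$ together with matching $\Lambda$-images. That theorem immediately delivers $x=u$ and $y=v$, finishing the argument.

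There is no real obstacle here; the work has been done upstream. The definition of $\tupel{x,y}$ was engineered precisely so that the \texttt{b} acts as an unambiguous separator (handled by Theorem~\ref{scherpesmurf}) and so that the prefix $\Lambda(x)$ records the length of $x$ in a \texttt{b}-free format, enabling the stronger cancellation of Theorem~\ref{lachebeksmurf} to disambiguate the split of $xy = uv$. The proof should be a two-line application of these two theorems.
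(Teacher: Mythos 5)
Your proof is correct and follows exactly the paper's own argument: Theorem~\ref{scherpesmurf} (using \axtcl5 for {\tt b}-freeness of the $\Lambda$-images) to split the identity, followed by Theorem~\ref{lachebeksmurf} to cancel. Nothing to add.
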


\begin{proof}
Suppose $ \Lambda(x){\tt b}xy =  \Lambda(u){\tt b}uv$.
 By Theorem~\ref{scherpesmurf} and the fact that $\Lambda(x)$ and $\Lambda(u)$ are {\tt b}-free,
 we find that $\Lambda(x) = \Lambda(u)$ and $xy=uv$.  
 It follows, by Theorem~\ref{lachebeksmurf}, that $x=u$ and $y=v$. 
 \end{proof}

\subsection{From Strings to Container Strings}\label{naledismurf}
We  provide an $\mf o$-direct interpretation of  \futc1\ in \tclac1.
We call the relevant translation $\upalpha$ and the interpretation it carries $\mf A$.
We employ the pairing function $\tupel{\cdot,\cdot}$ introduced in Subsection~\ref{langesmurf}. 
 \begin{itemize}
 \item
 We take $\updelta^{\mf o}_{\upalpha}(x)$ to be $x=x$.
 \item
 A \emph{pre-container-string} $x$ is either of the form  $\estr$ or ${\tt b}z$.
 \item
 An \emph{container string}  $\alpha$ is a pair $(x,y)$ where $x$ is a pre-container-string and $\Lambda(x)=\Lambda(y)$. 
 We take $\updelta^{\mf {s}}_{\upalpha}$ to be the class of container strings.
 \item
 $\oslash_{\upalpha} := (\estr,\estr)$.
 \item
 $\sing u_\upalpha := ({\tt b}\Lambda(u), {\tt b}u)$.
 \item
$(x,y) \star_\upalpha (u,v) := (xu,yv)$.
\item
$\digamma_\upalpha(x,y) := \tupel{x,y}$.
\item
$x=_\upalpha^{\mf{oo}}y :\iff x=y$
\item
$(x,y) =^{\mf{ss}}_\upalpha(u,v) :\iff x=u \wedge y=v$
\end{itemize}

\noindent
We will omit the subcript $\upalpha$, wherever possible.
The round brackets in the above definitions, indicate syntactic pairs and not pairing as a binary function in the theory.
Clearly, we could also have chosen the internal pairs here, making $\digamma$ the identity function.

It is easy to see that $\oslash$, $[u]$ and $\alpha \star \beta$ are indeed container strings.

\begin{rem}{\small
In a slightly different set-up, we could have avoided the use
of our string pairing employing the Cantor Pairing provided by \pamj. }
\end{rem}

We note that the non-emptiness of the domains and the validity of the
 identity axioms under $\upalpha$ is trivial. We have \axtcu1-3.

\begin{theorem}[\tcla0]
\begin{enumerate}[i.]
\item
$\oslash \star \alpha = \alpha \star \oslash = \alpha$.
\item
$\alpha\star\beta = \oslash \to (\alpha=\oslash \wedge \beta = \oslash)$.
\item
$(\alpha\star \beta) \star \gamma = \alpha\star (\beta\star \gamma)$.
\end{enumerate}
\end{theorem}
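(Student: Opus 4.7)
The plan is to reduce each clause of the theorem to the corresponding concatenation axiom \axtc1--3 applied coordinate-wise. Recall that $\oslash_\upalpha = (\estr, \estr)$ and $(x,y) \star_\upalpha (u,v) = (xu, yv)$, so every identity on ur-strings splits into two identities between strings, one in each coordinate. This is the organising idea: ur-string operations inherit their equational laws from those of the underlying string theory.

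Concretely, for (i) I would write an arbitrary ur-string as $\alpha = (x,y)$ and compute $\oslash \star \alpha = (\estr \ast x, \estr \ast y)$, which collapses to $(x,y) = \alpha$ by \axtc1; the right-identity clause is symmetric. For (ii), if $(x,y) \star (u,v) = (xu, yv) = (\estr, \estr)$, then two applications of \axtc2 force each of $x, u, y, v$ to be $\estr$, whence both factors equal $\oslash$. For (iii), writing $\alpha = (x,y)$, $\beta = (u,v)$, $\gamma = (p,q)$, the two bracketings evaluate to $((xu)p,(yv)q)$ and $(x(up),y(vq))$, which coincide by \axtc3 applied in each coordinate.

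The one point that genuinely requires checking --- though not asserted in the statement --- is that the operations land in the ur-string domain, so that the computations above take place in $\updelta^{\mf s}_\upalpha$. Here \axtcl\ref{spookjessmurf} gives $\Lambda(xu) = \Lambda(x)\Lambda(u) = \Lambda(y)\Lambda(v) = \Lambda(yv)$, and a brief case split on whether $x$ is $\estr$ or of the form ${\tt b}z$ (using \axtc3 to re-associate $({\tt b}z)u = {\tt b}(zu)$) handles the pre-ur-string condition on the first coordinate. There is no serious obstacle anywhere: the whole theorem is a bookkeeping exercise, and its main value is simply to record that only the pure concatenation axioms \axtc1--3, together with the $\Lambda$-homomorphism law, are drawn upon.
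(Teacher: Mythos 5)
Your proof is correct and is exactly the argument the paper intends: the paper omits the proof entirely ("The proof is easy"), and the coordinate-wise reduction to \axtc 1--3, plus the separate remark that $\oslash$, $[u]$, and $\alpha\star\beta$ land in the ur-string domain, is precisely what is meant. Your explicit check of domain closure via \axtcl\ref{spookjessmurf} matches the paper's preceding remark that this is easy to see.
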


\noindent
The proof is easy. Next we verify \axtcu4.

\begin{theorem}[\tcla1]\label{snerendesmurf}\label{gekkesmurf}
$[x]$ is an atom of the $\upalpha$-interpreted theory.
\end{theorem}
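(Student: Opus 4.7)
My plan is to unpack what it means to be an atom in the $\upalpha$-interpreted theory and reduce the question to combinatorial facts about $\tc1({\tt b})$ with the $\Lambda$-axioms. An atom of the interpretation is a non-$\oslash$ ur-string $\gamma$ such that, whenever $\alpha \star_\upalpha \beta = \gamma$ (with $\alpha,\beta$ ur-strings), one of $\alpha,\beta$ is $\oslash$. For $\gamma = [x] = ({\tt b}\Lambda(x),{\tt b}x)$, non-emptiness is immediate from \axtc2 since ${\tt b}\Lambda(x)$ is non-empty. So the real task is to analyse an arbitrary factorisation.

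Suppose $\alpha = (u,v)$ and $\beta = (u',v')$ are ur-strings with $\alpha \star_\upalpha \beta = [x]$. By definition this gives the two string equations $uu' = {\tt b}\Lambda(x)$ and $vv' = {\tt b}x$, together with the ur-string constraints: $u,u'$ are pre-ur-strings (each is $\estr$ or begins with ${\tt b}$), and $\Lambda(u)=\Lambda(v)$, $\Lambda(u')=\Lambda(v')$. I will do a case split on the shapes of $u$ and $u'$.

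If $u=\estr$, then $\Lambda(v)=\Lambda(u)=\estr$ by \axtcl1, so $v=\estr$ by \axtcl2, giving $\alpha=\oslash$ as required. Symmetrically, if $u'=\estr$ then $\beta=\oslash$. The only remaining subcase is $u={\tt b}z$ and $u'={\tt b}z'$ for some $z,z'$; I expect this is the heart of the argument and will yield a contradiction. In this subcase the first coordinate reads ${\tt b}z{\tt b}z' = {\tt b}\Lambda(x)$, and since ${\tt b}$ is an atom, Theorem~\ref{smeuigesmurf}(ii) cancels it on the left to give $z{\tt b}z' = \Lambda(x)$. But ${\tt b} \preceq z{\tt b}z'$ directly from the definition of $\preceq$ (take the witnesses $z$ and $z'$), so ${\tt b} \preceq \Lambda(x)$, contradicting \axtcl5.

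The main obstacle is ensuring the atom-cancellation step is available in just \tcla1, without assuming full Left Cancellation. Fortunately Theorem~\ref{smeuigesmurf}(ii) is proved already in \tc1 and gives exactly the left cancellation needed for an atomic prefix, so the proof goes through at the claimed strength. Everything else is routine case analysis using the $\Lambda$-axioms \axtcl1, \axtcl2, \axtcl5 and Theorem~\ref{verenigingssmurf}.
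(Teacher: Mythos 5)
Your proof is correct, and its skeleton matches the paper's: first rule out $[x]=\oslash$ (both of you reduce this to ${\tt b}=\estr$ via \axtc2), then split on whether the first coordinates of the two factors are empty, closing the empty cases with \axtcl1 and \axtcl2 plus the defining condition $\Lambda(u)=\Lambda(v)$ of ur-strings. The difference lies in the remaining case, where both first coordinates have the form ${\tt b}z$ and ${\tt b}z'$. The paper invokes the partition machinery of Theorem~\ref{plezantesmurf}: a common refinement of $({\tt b},\Lambda(x))$ and $({\tt b},z,{\tt b},z')$ would have to contain two occurrences of ${\tt b}$, which is impossible since no partition of $\Lambda(x)$ contains a ${\tt b}$. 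You instead cancel the leading atom via Theorem~\ref{smeuigesmurf}(ii) --- available already in \tc1, so no hidden appeal to Left Cancellation --- to get $z\ast{\tt b}\ast z'=\Lambda(x)$, hence ${\tt b}\preceq\Lambda(x)$ straight from the definition of $\preceq$, contradicting \axtcl5. Both routes are ultimately applications of the Editors Axiom, but yours is the more economical here: it avoids the refinement apparatus entirely and reads the contradiction off syntactically. What the paper's method buys is uniformity --- the same refinement picture is reused for Theorem~\ref{scherpesmurf} and for the verification of the ur-string Editors Axiom --- whereas your cancellation step is tailored to the single leading atom, which happens to be exactly what this statement needs.
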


\begin{proof}
Suppose $[x] =\oslash$. 
It follows that $ {\tt b} \Lambda(x) = \estr$. So, ${\tt b}=\estr$. \emph{Quod non.}

Let $\alpha =(a_0,a_1)$ and $\beta = (b_0,b_1)$ be container strings. Suppose that
$\alpha\star \beta = ({\tt b} \Lambda(x), {\tt b}x)$. If both $a_0$ and $b_0$ are not $\estr$, then
they are of the form ${\tt b}a_0'$, respectively ${\tt b}b_0'$. Consider any refinement of 
$({\tt b}, \Lambda(x))$ and $({\tt b}, a_0',{\tt b}, b_0')$. This would have to contain {\tt b} twice, which is impossible, since no
partition of $\Lambda(x)$ contains a {\tt b}.
It follows that one of $a_0$ and $b_0$ is $\estr$. 
If, e.g., $a_0=\estr$, it follows that $\Lambda(a_1) = \estr$, and, thus, that $a_1=\estr$. So,
$\alpha=\oslash$. Similarly, for the case that  $b_0=\estr$.
\end{proof}

\noindent
We note that not every atom needs to be of the form $[x]$. 
We verify \axtc5.

\begin{theorem}[\tcla1]
$[\cdot]$ is injective. 
\end{theorem}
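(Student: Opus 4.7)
The proof plan is completely direct. Unfolding the definition of $[\cdot]_\upalpha$, the hypothesis $[x]_\upalpha =^{\mf{ss}}_\upalpha [y]_\upalpha$ amounts to the pair-equality $({\tt b}\Lambda(x), {\tt b}x) = ({\tt b}\Lambda(y), {\tt b}y)$, which by the definition of $=_\upalpha^{\mf{ss}}$ unfolds to the two string equations ${\tt b}\Lambda(x) = {\tt b}\Lambda(y)$ and ${\tt b}x = {\tt b}y$ in the ambient theory \tclac1.

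The second equation is all we need: since \tclac1\ includes the Left Cancellation Principle \axtc\ref{cancellsmurf}, we cancel the leading ${\tt b}$ on both sides of ${\tt b}x = {\tt b}y$ to conclude $x = y$. Since identity on objects is interpreted as identity ($x =_\upalpha^{\mf{oo}} y \iff x=y$), this yields $x =_\upalpha^{\mf{oo}} y$, which is exactly \axtcu\ref{injectiesmurf} under $\upalpha$.

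There is no obstacle here; the first coordinate carrying the $\Lambda$-measure plays no role for injectivity. The only thing to check is that Left Cancellation is indeed available, and it is, by the convention that the superscript ${\sf c}$ in \tclac1\ stands precisely for adding \axtc\ref{cancellsmurf}. Thus the proof is a one-liner given the definitions, and the work has already been done by Theorems~\ref{snerendesmurf} and~\ref{lachebeksmurf} which handle the more delicate axioms.
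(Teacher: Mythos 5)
Your proof is correct and follows essentially the same route as the paper: unfold the definition to get ${\tt b}x={\tt b}y$ and strip the leading ${\tt b}$. The only difference is in justifying that last step: the paper appeals to Theorem~\ref{hippesmurf}(ii) (cancellation of an atomic prefix, already available in \tc1\ via the Editors Principle), whereas you invoke Left Cancellation \axtc\ref{cancellsmurf}, which is fine here since the superscript {\sf c} in \tclac1\ provides it.
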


\begin{proof}
Suppose  $[x] = [y]$. It follows that ${\tt b}x={\tt b}y$ and, by Theorem~\ref{hippesmurf}, that $x=y$.
\end{proof}

We turn to the verification of the Editors Axiom, \axtcu\ref{ureditors}.

\begin{theorem}[\tclac1]
Suppose $\alpha\star\beta = \gamma \star \delta$. Then there is an $\eta$ such that either
$\alpha\star \eta = \gamma$ and $\beta = \eta \star \delta$, or $\alpha = \gamma\star \eta$ and $\eta\star \beta = \delta$.
\end{theorem}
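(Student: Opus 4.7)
The plan is to unfold the ur-string equation into a pair of string equations, apply the Editors Axiom \axtc\ref{editors} of \tclac1 to each, and glue the two witnesses using $\Lambda$. Write $\alpha=(a_0,a_1)$, $\beta=(b_0,b_1)$, $\gamma=(c_0,c_1)$, $\delta=(d_0,d_1)$, so that $a_0,b_0,c_0,d_0$ are pre-ur-strings and $\Lambda(a_0)=\Lambda(a_1)$, $\Lambda(b_0)=\Lambda(b_1)$, $\Lambda(c_0)=\Lambda(c_1)$, $\Lambda(d_0)=\Lambda(d_1)$. The hypothesis $\alpha\star\beta=\gamma\star\delta$ reads $a_0b_0=c_0d_0$ and $a_1b_1=c_1d_1$. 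Applying Editors to each yields witnesses $w$ and $w'$; there are four sub-cases to consider, organised by whether the two splits go the same way (``parallel'') or opposite ways (``crossed'').

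In a parallel sub-case, say $a_0 w=c_0$ with $b_0=wd_0$ and $a_1 w'=c_1$ with $b_1=w'd_1$, I would apply $\Lambda$ to both split equations; combining with $\Lambda(a_0)=\Lambda(a_1)$ and $\Lambda(c_0)=\Lambda(c_1)$ and using Left Cancellation \axtc\ref{cancellsmurf} gives $\Lambda(w)=\Lambda(w')$. A separate verification that $w$ is a pre-ur-string proceeds from $b_0=wd_0$: a further application of the Editors Axiom, combined with the fact that $b_0$ is $\estr$ or begins with $\tt b$ and that $\tt b$ is an atom, forces $w$ itself to be $\estr$ or to begin with $\tt b$. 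Hence $\eta:=(w,w')$ is a genuine ur-string and $\alpha\star\eta=\gamma$, $\beta=\eta\star\delta$ follow immediately. The symmetric parallel sub-case $a_0=c_0w$, $a_1=c_1w'$ is handled identically using $wb_0=d_0$, and delivers the other alternative $\alpha=\gamma\star\eta$, $\eta\star\beta=\delta$.

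In a crossed sub-case, say $a_0w=c_0$ and $a_1=c_1w'$, applying $\Lambda$ and substituting gives
\[
\Lambda(a_1)\;=\;\Lambda(c_1)\Lambda(w')\;=\;\Lambda(a_1)\Lambda(w)\Lambda(w'),
\]
so Left Cancellation followed by \axtc2 forces $\Lambda(w)=\Lambda(w')=\estr$, whence \axtcl2 gives $w=w'=\estr$. Then $\alpha=\gamma$ and $\beta=\delta$, so $\eta:=\oslash$ works for either alternative. I expect the only non-routine step to be the verification that $w$ is pre-ur; this is precisely where the special shape of pre-ur-strings and the atomicity of $\tt b$ really enter, as opposed to the merely length-tracking role played by $\Lambda$ elsewhere in the argument.
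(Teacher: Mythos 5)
Your proposal is correct and follows essentially the same route as the paper: unfold into the two coordinate equations, apply the Editors Axiom to each, use $\Lambda$ together with Left Cancellation to match the two witnesses in the parallel case, and collapse the crossed case to $w=w'=\estr$ via exactly the computation you display. The only (immaterial) divergence is in checking that the witness is a pre-ur-string, where you reapply the Editors Axiom and the atomicity of {\tt b} directly while the paper invokes a common refinement of partitions; both arguments are sound.
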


\begin{proof}
Let $\alpha = (a_0,a_1)$, $\beta = (b_0,b_1)$, $\gamma = (c_0,c_1)$ and $\delta =(d_0,d_1)$.
We have $a_0b_0 = c_0 d_0$. It follows that there is an $e_0$ such that either $a_0e_0= c_0$ and
$b_0 = e_0 d_0$, or $a_0=c_0 e_0$ and $e_0 b_0 = c_0$. We zoom in on the first case. The second
case is similar.

In case $e_0=\estr$, we have that $e_0$ is a pre-container-string.
Otherwise, $b_0\neq \estr$, and, thus, $b_0={\tt b} b_0'$. Consider a refinement of
$ ({\tt b}, b_0')$ and $(e_0, d_0)$. Since $e_0$ is non empty, the first non-empty element of
the refinement in $e_0$ must be ${\tt b}$. So, again $e_0$ is a pre-container-string. 

We have an $e_1$ such that 
 either $a_1 e_1= c_1$ and
$b_1 = e_1 d_1$, or $a_1=c_1e_1$ and $e_1 b_1 = c_1$.
In the first case we have:
\begin{eqnarray*}
\Lambda(a_0)  \Lambda(e_0) & = & \Lambda(a_0 e_0) \\
& = &  \Lambda(c_0) \\
& = & \Lambda(c_1) \\
& = & \Lambda(a_1 e_1) \\
&=& \Lambda(a_1)  \Lambda(e_1) \\
& = &  \Lambda(a_0)  \Lambda(e_1) 
\end{eqnarray*}  
It follows by Left  Cancellation, that $\Lambda(e_0) = \Lambda(e_1)$.
So, $\eta := (e_0,e_1)$ is a container string and
$\alpha\star \eta = \gamma$ and $\beta = \eta \star \gamma$.

In the second case we have $a_1 = c_1 e_1$.
So, 
\begin{eqnarray*}
\Lambda(a_1) & = &  \Lambda(c_1 e_1) \\
&=&    \Lambda(c_1)\Lambda(e_1) \\
&=& \Lambda(c_0) \Lambda(e_1) \\
& = &  \Lambda(a_0 e_0)  \Lambda(e_1) \\
& = &   \Lambda(a_0) \Lambda(e_0)  \Lambda(e_1) \\
& = &  \Lambda(a_1) \Lambda(e_0)  \Lambda(e_1) 
\end{eqnarray*}  
It follows, by Left Cancellation,  that $\Lambda(e_0) \Lambda(e_1) = \estr$. 
So,  $\Lambda(e_0) = \Lambda(e_1) =\estr$, and, hence, $e_0=e_1=\estr$.
We easily see that $\eta = \oslash$ is the desired witness.
\end{proof}

\begin{theorem}[\tclac1]
$\digamma$ is injective. 
\end{theorem}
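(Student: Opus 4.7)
The plan is to invoke the pairing theorem for $\tupel{\cdot,\cdot}$ that was established immediately before in \tclac1. Recall that for ur-strings we defined $\digamma_\upalpha(x,y) := \tupel{x,y} = \Lambda(x){\tt b}xy$, and two ur-strings $(x,y)$ and $(u,v)$ are equal (under $=^{\mf{ss}}_\upalpha$) precisely when $x=u$ and $y=v$.

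So first I take two ur-strings $\alpha = (a_0,a_1)$ and $\beta = (b_0,b_1)$ and assume $\digamma(\alpha) = \digamma(\beta)$, i.e., $\tupel{a_0,a_1} = \tupel{b_0,b_1}$. By the pairing theorem already proved, this yields $a_0 = b_0$ and $a_1 = b_1$, which is exactly $\alpha =^{\mf{ss}}_\upalpha \beta$.

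There is no real obstacle here: the heavy lifting (the fact that $\Lambda(x)$ is {\tt b}-free, so that Theorem~\ref{scherpesmurf} lets us decompose the pair, combined with Theorem~\ref{lachebeksmurf} to cancel on both sides) has already been absorbed into the pairing theorem. The proof is a one-line appeal.
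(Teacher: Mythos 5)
Your proof is correct and matches the paper's, which simply remarks that the claim "is immediate from the properties of the pairing function" — exactly the one-line appeal to the pairing theorem $\tupel{x,y}=\tupel{u,v}\iff (x=u\wedge y=v)$ that you make. You have merely spelled out the same argument in slightly more detail.
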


\noindent This is immediate from the properties of the pairing function.
Thus, we have seen that $\upalpha$ indeed supports the promised $\mf o$-direct
interpretation $\mf A$ of \futc1\ in \tclac1.

\subsection{From Numbers to Strings}\label{graaftelsmurf}
In this subsection, we will explicitly use $\ast$ and $\cdot$ to avoid confusion with the usual omitting convention. 

We define a direct interpretation $\mf B$ of \tclac1 in \pamsmu.
Our translation will deliver a bit more: we interpret $\tclac1({\tt a},{\tt b})$.
Moreover, if we extend \pamsmu\ with the Standard Euclidean Division Principle, \axpam\ref{zringax}, or, more economically, with the
principle that every number is odd or even, we find that {\tt a} and {\tt b} are the only atoms and that we have the stack principle. Thus,
we obtain
an interpretation of  $\tclac2[{\tt a},{\tt b}]$.

We define our  translation $\upbeta$.
\begin{itemize}
\item
$\ell (x) = y$ iff ${\sf pow}_2(y)$ and $y \leq x+1 < \num 2\,y$.
\item
$\updelta_{\upbeta}(x) :\iff x=x$.
\item
$\estr_\upbeta := 0$.
\item
${\tt a}_\upbeta :=1$.
\item
${\tt b}_\upbeta:=\num 2$.
\item
$ x=_\upbeta y :\iff x=y$.
\item
$\Lambda_{\upbeta}(x) = \ell(x)-1$.
\item
$x \ast_\upbeta  y := x\cdot \ell(y)+y$.
\end{itemize}

\noindent
We will omit the subscript $\upbeta$ and the underlining of $2$. In the present context, we will both exhibit the $\cdot$ of multiplication and the
$\ast$ of concatenation to avoid confusion. 

\begin{theorem}[\pamsmu]\label{ieniemieniesmurf}
$\ell$ defines a function. It follows that $\Lambda$ and $\ast$ are functions.
\end{theorem}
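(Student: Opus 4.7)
The goal is to show that $\ell$ is total and single-valued, which then yields the functionality of $\Lambda$ (defined using $\ell$ and subtraction) and of $\ast$ (defined using $\ell$, multiplication, and addition).

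For \emph{totality}, I would simply invoke the Powers Existence Principle \axpam\ref{smurfin}, which supplies for every $x$ some power of $2$ satisfying $y \leq x+1 < \num 2\cdot y$. This is exactly the defining condition of $\ell(x)$.

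For \emph{uniqueness}, suppose $y_1, y_2$ both satisfy ${\sf pow}_2(y_i)$ and $y_i \leq x+1 < \num 2\cdot y_i$. Without loss of generality (by \axpam\ref{plimp8}), assume $y_1 \leq y_2$. First I would argue $y_1 \neq 0$: otherwise $\num 2\cdot y_1 = 0$ by \axpam\ref{plimp4}--\ref{plimp7}, contradicting $x+1 < \num 2\cdot y_1$ together with $0 \leq x+1$ (Lemma~\ref{vreugdigesmurf}(iii)). Now apply the Powers Division Principle \axpam\ref{babysmurf} to obtain $y_1 \mid y_2$, so $y_2 = y_1 \cdot k$ for some $k$. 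From $y_1 \leq y_1 k$ with $y_1 \neq 0$, and from $y_1 k = y_2 \leq x+1 < \num 2\cdot y_1$, Lemma~\ref{vreugdigesmurf}(iv) gives $1 \leq k < \num 2$. By discreteness (\axpam\ref{plimp10},\ref{plimp9},\ref{plimp8A}) this forces $k = 1$, so $y_1 = y_2$.

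Having established that $\ell$ is a total function, the functionality of $\Lambda$ reduces to justifying the subtraction: since $\ell(x) \neq 0$, discreteness gives $1 \leq \ell(x)$, and the Subtraction Principle \axpam\ref{plimp15} supplies a unique $z$ with $1 + z = \ell(x)$, which is $\Lambda(x)$ (uniqueness comes from cancellation, Lemma~\ref{vreugdigesmurf}(ii)). For $\ast$, the right-hand side $x\cdot \ell(y) + y$ is built from total operations, so it inherits functionality immediately. The only nontrivial step is the uniqueness argument for $\ell$, which is precisely where \axpam\ref{babysmurf} earns its keep.
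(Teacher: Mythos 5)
Your proof is correct and follows essentially the same route as the paper's: existence from the Powers Existence Principle, and uniqueness by using the Powers Division Principle to write the larger candidate as a multiple of the smaller and then squeezing the cofactor to $1$. You merely supply a few details the paper leaves implicit (the non-vanishing of the candidates, the discreteness step forcing $k=1$, and the well-definedness of the subtraction in $\Lambda$).
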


\begin{proof}
The existence part is Axiom~\axpam\ref{smurfin}. We prove uniqueness.
Suppose $y$ and $y'$ are powers of 2 and  $y \leq x+1 < 2\cdot y$ and $y' \leq x+1 < 2\cdot y'$.
Without loss of generality, we may assume $y \leq y'$. It follows that $y'= y \cdot z$, by Axiom \axpam\ref{babysmurf}.
But, then, $z$ must be 1. 
\end{proof}

\begin{theorem}[\pamsmu]\label{spoorsmurf}
$\ell(x\ast y) = \ell(x) \cdot \ell (y)$.
\end{theorem}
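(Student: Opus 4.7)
The plan is to unpack the definitions and use the uniqueness of $\ell$ together with closure of powers of $2$ under multiplication. Let $u = \ell(x)$ and $v = \ell(y)$, so that $u$ and $v$ are powers of $2$ with
\[ u \leq x+1 < 2u, \qquad v \leq y+1 < 2v. \]
By definition $x \ast y = x\cdot v + y$, so I want to show that $u\cdot v$ witnesses $\ell(x\ast y)$, i.e.\ that $u\cdot v$ is a power of $2$ and
\[ u\cdot v \;\leq\; (x\cdot v + y) + 1 \;<\; 2\,u\cdot v. \]
Then Theorem~\ref{ieniemieniesmurf} (the uniqueness part of $\ell$) yields the conclusion.

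For the first requirement, $u\cdot v$ is a power of $2$ by either Theorem~\ref{hapjessmurf} or Theorem~\ref{delicatessesmurf}; \pamsmu\ is tailored for exactly this and Theorem~\ref{delicatessesmurf} applies directly. For the lower bound, from $u \leq x+1$ I get $u\cdot v \leq xv + v$, and since $v \leq y+1$ this gives $u\cdot v \leq xv + y + 1$, as desired.

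For the upper bound, I first upgrade the strict inequalities to weak ones using the Subtraction Principle (available in \pamsmu\ via \pamo): from $x+1 < 2u$, \pamj\ gives $x+1 \leq 2u$ with $x+1 \neq 2u$, and unpacking subtraction shows $x+2 \leq 2u$; similarly $y+2 \leq 2v$. Multiplying by $v$ yields $xv + 2v \leq 2uv$, and combining with $y+2 \leq 2v$ gives
\[ (xv + y) + 2 \;=\; xv + (y+2) \;\leq\; xv + 2v \;\leq\; 2uv, \]
which is exactly $(xv + y) + 1 < 2uv$.

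The argument is essentially calculation; the only subtle point is being careful with the strict inequality $x+1 < 2u$, which I read as $x+2 \leq 2u$ via the Subtraction Principle rather than leaving it loose. A secondary mild wrinkle is confirming $v \neq 0$ so that multiplying preserves the orderings cleanly, but this is immediate: if $0$ were a power of $2$ then \axpam\ref{babysmurf} applied to $0$ and $1$ (which is a power of $2$ as witnessed by \axpam\ref{smurfin} at $x=0$) would force $0 \mid 1$, hence $0=1$.
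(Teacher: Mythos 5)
Your proof is correct and follows essentially the same route as the paper's: establish that $\ell(x)\cdot\ell(y)$ is a power of $2$ via Theorem~\ref{delicatessesmurf}, verify the two inequalities $\ell(x)\cdot\ell(y)\leq x\cdot\ell(y)+y+1<2\cdot\ell(x)\cdot\ell(y)$, and conclude by uniqueness of $\ell$. The only cosmetic difference is that you upgrade both strict inequalities to weak ones via discreteness before multiplying, where the paper chains them slightly differently; the side remark about $v\neq 0$ is unnecessary since the needed monotonicity axioms hold for all multipliers, but it does no harm.
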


\begin{proof}
Since $\ell(x)$ and $\ell(y)$ are powers of two, so is $\ell(x) \cdot\ell(y)$, by Theorem~\ref{delicatessesmurf}.
Thus, it suffices to show that
$\ell(x)\cdot \ell(y) \leq x\cdot \ell(y) + y +1 < 2\cdot \ell(x) \cdot \ell(y)$.
We have:\qedright
\begin{eqnarray*}
\ell(x) \cdot \ell(y) & \leq & (x+1)\cdot \ell (y) \\
& = & x \cdot \ell(y)+ \ell(y) \\
& \leq & x\cdot \ell(y) + y+1 \\
& < & x\cdot \ell(y)+ \num 2\cdot \ell(y) \\
& = & (x+ 2) \cdot \ell(y) \\
& \leq &  2\cdot \ell(x) \cdot \ell(y)
\end{eqnarray*}
\end{proof}

\begin{theorem}[\pamsmu]\label{paarsesmurf}
Suppose $x \leq y$, then $\ell(x) \leq \ell(y)$, and, hence, $\ell(x) \mid \ell(y)$. 
\end{theorem}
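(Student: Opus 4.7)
The plan is to chain the two defining inequalities of $\ell$ and then use the Powers Division Principle \axpam\ref{babysmurf} to rule out $\ell(y)<\ell(x)$.

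First, I would note that $x\leq y$ gives $x+1\leq y+1$ (a routine consequence of \axpam\ref{plimp11}). Unpacking the definition of $\ell$ (Theorem~\ref{ieniemieniesmurf}), we have $\ell(x)\leq x+1$ and $y+1<\num 2\cdot\ell(y)$. Concatenating these with the previous inequality yields
\[
\ell(x)\;\leq\;x+1\;\leq\;y+1\;<\;\num 2\cdot\ell(y),
\]
so $\ell(x)<\num 2\cdot\ell(y)$. This is the key quantitative estimate.

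Next I would argue $\ell(x)\leq\ell(y)$ by trichotomy (using \axpam\ref{plimp8}). Suppose toward contradiction that $\ell(y)<\ell(x)$. Since $\ell(x)$ and $\ell(y)$ are both powers of $2$, \axpam\ref{babysmurf} gives $\ell(y)\mid\ell(x)$, so $\ell(x)=k\cdot\ell(y)$ for some $k$. Now $k\neq 0$ because $\ell(x)$ is a power of $2$ and thus nonzero (as $0$ is not in ${\sf pow}_2$, since every $y$ divides $0$ but, e.g., $\num 3$ is neither $1$ nor even), and $k\neq 1$ because $\ell(y)\neq\ell(x)$. Applying Lemma~\ref{vreugdigesmurf}(iii) and \axpam\ref{plimp9} twice (first to promote $0\leq k$ and $k\neq 0$ to $1\leq k$, then to promote $1\leq k$ and $k\neq 1$ to $\num 2\leq k$), we conclude $\num 2\leq k$. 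Hence $\ell(x)=k\cdot\ell(y)\geq\num 2\cdot\ell(y)$, contradicting the estimate above.

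Finally, having established $\ell(x)\leq\ell(y)$ with both being powers of $2$, a direct application of \axpam\ref{babysmurf} delivers $\ell(x)\mid\ell(y)$, completing the proof. The only slightly delicate point is the step ``$k\neq 0$ and $k\neq 1$ imply $\num 2\leq k$'', which relies on the fact that \pamsmu contains \pamj\ and thus the discrete-ordering Axiom \axpam\ref{plimp9}; nothing here requires the Standard Euclidean Division Principle.
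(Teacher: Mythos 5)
Your proof is correct and follows essentially the same route as the paper: the identical chain $\ell(x)\leq x+1\leq y+1<\num 2\cdot\ell(y)$ is the heart of both arguments. The only difference is cosmetic — the paper closes by observing that this chain puts $\ell(x)$ in the defining window for $\ell(y)$ and invokes the uniqueness clause of Theorem~\ref{ieniemieniesmurf}, whereas you inline that uniqueness argument by deriving $\ell(x)=k\cdot\ell(y)$ with $k\geq\num 2$ from \axpam\ref{babysmurf} and contradicting the estimate.
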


\begin{proof}
Suppose $x \leq y$ and $\ell(y) \leq \ell(x)$.
Then, \[\ell(x) \leq x+1 \leq y+1 <  2\cdot \ell(y) \leq  2\cdot\ell(x).\] So, $\ell(x)=\ell(y)$, by
Theorem~\ref{ieniemieniesmurf}. 
\end{proof} 

We verify \axtc1-4.

\begin{theorem}[\pamsmu]
\begin{enumerate}[i.]
\item
$x\ast \estr = \estr \ast x = x$.
\item
$x\ast y = \estr \to (x=\estr \wedge y = \estr)$.
\item 
$(x \ast y) \ast x = x\ast (y\ast z)$.
\item
$\ato({\tt a})$ and $\ato({\tt b})$ and ${\tt a}\neq {\tt b}$.
\end{enumerate}
\end{theorem}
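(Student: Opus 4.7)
The plan is to unwind the definitions of $\ast_\upbeta$ and $\estr_\upbeta, {\tt a}_\upbeta, {\tt b}_\upbeta$ and reduce each claim to arithmetic in \pamsmu, leaning on Theorem~\ref{spoorsmurf} for multiplicativity of $\ell$ and on Lemma~\ref{vreugdigesmurf} for the semiring inequalities.

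For (i), I first observe that $\ell(0)=1$: the number $1$ is a power of $2$ and $1\leq 0+1 < 2\cdot 1$, so by uniqueness (Theorem~\ref{ieniemieniesmurf}) this fixes $\ell(0)$. Then $x\ast \estr = x\cdot\ell(0)+0 = x$ and $\estr\ast x = 0\cdot\ell(x)+x = x$. For (ii), I argue that a sum of two non-negative elements of \pamsmu\ is $0$ only if both are $0$: if $a+b=0$ and $b\geq 1$, then $1\leq a+b=0$, contradicting $0+1\not\leq 0$. Applied to $x\cdot\ell(y)+y=0$ this gives $y=0$, whence $\ell(y)=1$ and $x=x\cdot 1=0$. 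Claim (iii) is a direct computation: by Theorem~\ref{spoorsmurf}, $\ell(y\ast z)=\ell(y)\cdot\ell(z)$, so both $(x\ast y)\ast z$ and $x\ast(y\ast z)$ reduce via associativity and distributivity to $x\cdot\ell(y)\cdot\ell(z)+y\cdot\ell(z)+z$.

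For (iv), I first dispatch ${\tt a}\neq {\tt b}$: if $1=1+1$, cancellation (Lemma~\ref{vreugdigesmurf}(ii)) yields $0=1$, contradicting the fact that $0+1\not\leq 0$ whereas $1\leq 1$. For $\ato({\tt a})$, I note $1\neq 0$ and suppose $x\cdot\ell(y)+y=1$. Using trichotomy on $y$ versus $2$ together with Lemma~\ref{vreugdigesmurf}(v), I reduce to $y\in\{0,1\}$, since $y\geq 2$ gives $x\ast y\geq y\geq 2>1$. If $y=0$ we are done; if $y=1$, then $\ell(1)=2$ (from $2\leq 2<4$), so $2x+1=1$, hence $2x=0$ and $x=0$. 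For $\ato({\tt b})$ the pattern is the same but with one extra case: $y\in\{0,1,2\}$, since $y\geq 3$ forces $x\ast y\geq 3>2$. The case $y=2$ gives $\ell(2)=2$ and $2x+2=2$, so $x=0$; the case $y=1$ gives $2x+1=2$, i.e.\ $2x=1$, which is impossible since $x=0$ yields $0=1$ and $x\geq 1$ yields $2x\geq 2$.

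The routine steps are (i)--(iii); the only mild obstacle is (iv), where one has to perform a small, finite case analysis on $y$ and to verify that the equations $2x=1$ and $y\geq 3$ with $x\ast y\in\{1,2\}$ cannot hold. All the required comparisons and cancellations are available in \pamsmu\ via Lemma~\ref{vreugdigesmurf}, and no use of the Euclidean Division Principle or of powers-of-$2$ beyond Theorems~\ref{ieniemieniesmurf} and \ref{delicatessesmurf} is needed for this batch of axioms.
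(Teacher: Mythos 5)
Your proposal is correct and follows essentially the same route as the paper: unwind $\ast_\upbeta$, use $\ell(0)=1$ and Theorem~\ref{spoorsmurf} for (i)--(iii), and for (iv) bound $y$ by the value of $x\ast y$ and run the finite case analysis on $y\in\{0,1\}$ resp.\ $y\in\{0,1,2\}$ using $\ell(1)=\ell(2)=2$. You merely spell out a few steps the paper leaves implicit (the uniqueness of $\ell(0)$, the non-negativity argument in (ii), and ${\tt a}\neq{\tt b}$).
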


\begin{proof}
Ad (i):
we have $x \cdot \ell(0) + 0 = x\cdot  1+0 =x$ and $0\cdot \ell(x) +x = x$.

Ad (ii) Suppose $x\cdot \ell(y) +y =0$, then $x$ and $y$ must be 0, since $\ell(y) \neq 0$.

Ad (iii): We have:
\begin{eqnarray*}
(x \ast y) \ast z & = &  (x\cdot \ell(y)+y) \cdot \ell(z) +z \\
& = & x\cdot \ell(y) \cdot \ell(z) + y\cdot \ell (z) +z \\
& = & x\cdot \ell(y\ast z) + y\ast z\\
& = & x \ast (y\ast z)
\end{eqnarray*}

Ad (iv): Suppose $x\ast y = {\tt a}$. Then, $x\cdot \ell(y)+y =1$. So, either $y=0$ or $y=1$. In the first case, it follows that $\ell(y)=1$ and so $y=0=\estr$ and $x=1={\tt a}$.
In the second case, we have $\ell(y)= 2$ and, so, $y=1={\tt a}$ and $x=0=\estr$.

Suppose  $x\ast y = {\tt b}$. Then, $x\cdot \ell(y)+y = 2$. So, either $y=0$ or $y=1$ or $y=2$. In the first case, it follows that $\ell(y)=1$ and, so,  $y=0=\estr$ and $x=2={\tt b}$.
In the second case, we have $\ell(y)= 2$ and, so, $x\cdot 2+1 =2$. This is impossible. In the third case, we have  
 $\ell(y)= 2$. It follows that $y= 2= {\tt b}$ and $x=0=\estr$. 
 
 Finally, ${\tt a}\neq {\tt b}$ is immediate.
\end{proof}

We verify Left Cancelation \axtc\ref{cancellsmurf}, and Right Cancellation.

\begin{theorem}[\pamsmu]\label{blauwesmurf}
\begin{enumerate}[i.]
\item
If $x\ast y = x \ast z$, then $y=z$. 
\item
If $x \ast z= y\ast z$, then $x=y$.
\end{enumerate}
\end{theorem}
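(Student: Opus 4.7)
The slick move is to apply the length function $\ell$ to both sides and use Theorem~\ref{spoorsmurf} to convert the concatenation identity into a multiplicative one.

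For (i), suppose $x\ast y = x\ast z$. Since $\ell$ is a function by Theorem~\ref{ieniemieniesmurf}, we may apply it to both sides, and Theorem~\ref{spoorsmurf} gives
\[ \ell(x)\cdot \ell(y) \;=\; \ell(x\ast y) \;=\; \ell(x\ast z) \;=\; \ell(x)\cdot \ell(z). \]
From the defining inequality $x+1 < 2\cdot \ell(x)$ we read off $\ell(x) \neq 0$, so multiplicative cancellation (Lemma~\ref{vreugdigesmurf}(iv) together with antisymmetry, Lemma~\ref{vreugdigesmurf}(i)) yields $\ell(y) = \ell(z)$. Now unfolding $\ast$ in the hypothesis gives $x\cdot \ell(y)+y = x\cdot \ell(y)+z$, and additive cancellation (Lemma~\ref{vreugdigesmurf}(ii) with antisymmetry) delivers $y=z$.

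For (ii), unfolding the hypothesis $x\ast z = y\ast z$ gives $x\cdot \ell(z) + z = y\cdot \ell(z) + z$. Additive cancellation yields $x\cdot \ell(z) = y\cdot \ell(z)$, and since $\ell(z)\neq 0$, multiplicative cancellation gives $x=y$.

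There is no serious obstacle here: the entire proof rides on Theorem~\ref{spoorsmurf} (already established for \pamsmu) plus the standard additive and multiplicative cancellation laws that are available already in \pamj\ via Lemma~\ref{vreugdigesmurf}. The only small point worth flagging is the explicit appeal to $\ell(x)\neq 0$, which is immediate from the definition since $x+1<2\ell(x)$ forces $\ell(x)\geq 1$.
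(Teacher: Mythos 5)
Your proof is correct and follows essentially the same route as the paper: apply $\ell$ to both sides, use Theorem~\ref{spoorsmurf} to get $\ell(x)\cdot\ell(y)=\ell(x)\cdot\ell(z)$, cancel to obtain $\ell(y)=\ell(z)$, and then cancel additively; part (ii) is the same direct unfolding the paper dismisses as immediate. The extra care you take in flagging $\ell(x)\neq 0$ and naming the cancellation lemmas is fine but not a departure from the paper's argument.
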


\begin{proof}
Ad (i): Suppose $x\ast y = x\ast z$.
It follows that $\ell(x)\cdot\ell(y) = \ell(x) \cdot\ell(z)$. So, $\ell(y)=\ell(z)$ and, thus, $x\cdot\ell(y) +y = x\cdot \ell(y)+z$.
\emph{Ergo}, $y=z$.

Ad (ii): This is immediate.
\end{proof}

We proceed to the verification of the Editors Axiom, \axtc\ref{editors}.

\begin{theorem}[\pamsmu]\label{editorsmurf}
Suppose $x \ast y = u \ast v$. Then there is a $w$ such that \textup($x \ast w = u$ and 
$y = w\ast v$\textup) or \textup($ x = u\ast w$ and $ w\ast y =v $\textup).
\end{theorem}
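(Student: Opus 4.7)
The plan is to proceed by trichotomy on $\ell(y)$ versus $\ell(v)$: either $\ell(y) = \ell(v)$, or $\ell(y) < \ell(v)$, or $\ell(v) < \ell(y)$. The third case reduces to the second by swapping the roles of $(x,y)$ and $(u,v)$, which swaps the two disjuncts of the conclusion, so I would only treat the first two in detail.

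In the equal case $\ell(y) = \ell(v) =: L$, I claim $x = u$ and $y = v$, so $w := 0$ witnesses the first disjunct. Taking WLOG $x \leq u$ and writing $u = x + k$ via the Subtraction Principle \axpam\ref{plimp15}, the hypothesis $xL + y = uL + v$ becomes $y = kL + v$. The bound $y \leq 2L - 2$ (from the defining inequalities of $\ell$) together with $v \geq 0$ forces $k \leq 1$; and $k = 1$ would give $v \leq L - 2$, hence $\ell(v) \leq v + 1 \leq L - 1 < L$, contradicting $\ell(v) = L$. So $k = 0$, yielding $x = u$ and $y = v$.

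For the main case $\ell(y) < \ell(v)$, set $L := \ell(y)$ and $M := \ell(v)$. The Powers Division Principle \axpam\ref{babysmurf} furnishes $m$ with $M = mL$. The bounds $y \leq 2L - 2 < M \leq v + 1$ yield $y \leq v$, so the Subtraction Principle again supplies $v - y$. Rewriting the hypothesis as $xL = (um)L + (v - y)$ and invoking Theorem~\ref{lilasmurf} yields $L \mid (v - y)$, so there is a $w$ with $wL = v - y$; equivalently, $w \ast y = v$. Cancelling $L$ from $xL = (um + w)L$ via Lemma~\ref{vreugdigesmurf} then delivers $x = um + w$.

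It remains to verify $u \ast w = x$, and the key observation is $\ell(w) = m$. Having established $w \ast y = v$, Theorem~\ref{spoorsmurf} gives $\ell(v) = \ell(w) \cdot L$; combined with $\ell(v) = mL$ and $L \neq 0$, cancellation yields $\ell(w) = m$. Hence $u \ast w = u\,\ell(w) + w = um + w = x$, producing the second disjunct. I do not anticipate a serious obstacle beyond bookkeeping; the two technical leverage points are the application of Theorem~\ref{lilasmurf} to manufacture $w$ from the given equation, and the appeal to Theorem~\ref{spoorsmurf}, which neatly sidesteps a direct estimate on $\ell(w)$ from the defining inequalities of $\ell$.
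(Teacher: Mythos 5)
Your proof is correct, and its engine is the same as the paper's: the divisibility $\ell(y)\mid\ell(v)$ combined with Theorem~\ref{lilasmurf} applied to $x\cdot\ell(y)=u\cdot\ell(v)+(v-y)$ is exactly how the paper manufactures the witness $w$ with $w\ast y=v$. You package the argument differently in two places. First, where you run a trichotomy on $\ell(y)$ versus $\ell(v)$, the paper assumes only (WLOG) $y\leq v$ and then gets $\ell(y)\mid\ell(v)$ from Theorem~\ref{paarsesmurf}; this absorbs your separate rigidity argument for the case $\ell(y)=\ell(v)$ (which is correct, but costs extra bookkeeping), since in that case the same computation simply returns a witness with $v=w\ast y$ and one does not even need to know that $w=0$. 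Second, for the remaining identity $x=u\ast w$, the paper substitutes $v=w\ast y$ back into the hypothesis, rewrites $x\ast y=u\ast(w\ast y)=(u\ast w)\ast y$ by associativity, and invokes Right Cancellation (Theorem~\ref{blauwesmurf}), whereas you compute $\ell(w)=m$ via Theorem~\ref{spoorsmurf} and verify $u\ast w=u\cdot\ell(w)+w=x$ by hand. Both finishes are sound; the cancellation route is shorter and avoids any further appeal to the multiplicativity of $\ell$, while yours has the minor virtue of exhibiting $\ell(w)$ explicitly.
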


\begin{proof}
Suppose that  $x \ast y = u \ast v$, in other words, $x\cdot \ell(y) + y = u \cdot \ell(v) +v$.
Without loss of generality, we may assume $y \leq v$.
We have $x\cdot \ell(y) = u\cdot \ell(v) + (v-y)$. By Theorem~\ref{paarsesmurf}, we find that $\ell(v)$ is divisible by $\ell(y)$.
So, by Theorem~\ref{lilasmurf}, we obtain that $(v-y)$ is divisible by $\ell(y)$, say $v-y= s\cdot \ell(y)$.
We may conclude that $v= s\cdot \ell(y) +y$, i.e., $v=s\ast y$. So,
$x\ast y = u\ast (s\ast y) = (u\ast s) \ast y$. By Right Cancellation, we have $x= u\ast s$.  
\end{proof}

Thus, we have verified that we interpret \tcc1. 
We turn to the interpretation of the $\Lambda$-axioms.

\begin{theorem}[\pamsmu]
We have $\Lambda(\estr)=\estr$ and, whenever $\Lambda(x)=\estr$, then $x=\estr$.
\end{theorem}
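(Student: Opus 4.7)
The plan is to unwind the definitions: $\estr_\upbeta=0$ and $\Lambda_\upbeta(x)=\ell(x)-1$, where $\ell(x)$ is the unique power of $2$ with $\ell(x)\leq x+1<2\cdot \ell(x)$ (unique by Theorem~\ref{ieniemieniesmurf}). So both claims reduce to computing, or constraining, the value of $\ell$.

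For the first claim, I would verify that $\ell(0)=1$. Note that $1$ is a power of $2$ since, trivially, any divisor of $1$ equals $1$. Moreover $1\leq 0+1 < 2 = \num 2\cdot 1$, so $1$ witnesses the defining condition of $\ell(0)$. By uniqueness, $\ell(0)=1$, hence $\Lambda(\estr)=\Lambda_\upbeta(0)=1-1=0=\estr$.

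For the second claim, suppose $\Lambda(x)=\estr$, i.e., $\ell(x)-1=0$, so $\ell(x)=1$. Then $x+1<\num 2\cdot 1 = 1+1$. By the discrete ordering axioms of \pamj, from $x+1\leq 1+1$ and Lemma~\ref{vreugdigesmurf}(ii) we get $x\leq 1$; since $x+1\neq 1+1$, we must have $x\neq 1$, and since we already have $x\leq 1$, applying axiom \axpam\ref{plimp9} gives $x=0=\estr$.

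There is essentially no obstacle here: both parts are direct consequences of the computation of $\ell$ at small arguments, using only the uniqueness of $\ell$ (Theorem~\ref{ieniemieniesmurf}) and basic discrete-order manipulations available in \pamj. The only thing to double-check is that one is entitled to say $1$ is a power of $2$ in the Smullyan sense, which is immediate from the definition.
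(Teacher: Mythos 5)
Your proof is correct. The paper in fact omits the argument entirely (``We leave the simple proof to the reader''), and your direct unwinding of the definitions --- computing $\ell(0)=1$ for the first claim and extracting $x+1<\num 2$ from $\ell(x)=1$ for the second --- is exactly the intended routine verification; the only point worth a passing remark is that $\ell(x)-1$ is well defined because $0$ is not a power of $2$, so $\ell(x)\geq 1$.
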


We leave the simple proof to the reader.

\begin{theorem}[\pamsmu]
$\ell(\Lambda(x)) = \ell(x)$ and, hence $\Lambda(\Lambda(x))=\Lambda(x)$.
\end{theorem}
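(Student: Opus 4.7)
The plan is to use the uniqueness of $\ell$ provided by Theorem~\ref{ieniemieniesmurf}. To prove $\ell(\Lambda(x)) = \ell(x)$, it suffices to exhibit $\ell(x)$ itself as a witness, that is, to show that $\ell(x)$ is a power of $2$ satisfying $\ell(x) \leq \Lambda(x) + 1 < 2\cdot \ell(x)$.

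The first preparatory step is to verify $1 \leq \ell(x)$. If $\ell(x) = 0$, the defining condition would give $0 \leq x+1 < 0$, which is impossible by the basic facts in Lemma~\ref{vreugdigesmurf} (we have $0 \leq x+1$ and we cannot have $x+1 < 0$). Since $\ell(x)$ is a power of $2$, this already shows $\ell(x) \neq 0$, hence $1 \leq \ell(x)$. This legitimises the subtraction in $\Lambda(x) := \ell(x) - 1$ via the Subtraction Principle, and we obtain the key identity $\Lambda(x)+1 = \ell(x)$.

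Now substitute this identity into the target inequality. We need $\ell(x) \leq \ell(x) < 2\cdot \ell(x)$. The left inequality is trivial, and the right inequality rewrites as $\ell(x) < \ell(x)+\ell(x)$, which follows from $1 \leq \ell(x)$ together with $\ell(x) + 1 \not\leq \ell(x)$ (axiom \axpam\ref{plimp10}). Since $\ell(x)$ is a power of $2$ by definition, Theorem~\ref{ieniemieniesmurf} yields $\ell(\Lambda(x)) = \ell(x)$. The second claim follows immediately: $\Lambda(\Lambda(x)) = \ell(\Lambda(x)) - 1 = \ell(x) - 1 = \Lambda(x)$.

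There is no substantive obstacle here; the only mildly delicate point is handling the cut-off subtraction carefully in the weak base theory \pamsmu, which is why the very first move is to certify $\ell(x) \geq 1$ before manipulating $\Lambda(x) = \ell(x)-1$ algebraically.
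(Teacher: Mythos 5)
Your proof is correct and is essentially the same argument as the paper's (the paper compresses it into the single line $\ell(m) = \Lambda(m)+1 < 2\cdot\ell(m)$, then invokes the uniqueness of $\ell$). Your extra care in certifying $\ell(x)\geq 1$ before using $\Lambda(x)+1=\ell(x)$ is a reasonable elaboration of the same idea, not a different route.
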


\begin{proof}
We have $\ell(m) =  \Lambda(m)+1 < 2 \cdot\ell(m)$.
\end{proof}

\begin{theorem}
$\Lambda(n\ast m) = \Lambda(n) \ast \Lambda(m)$.
\end{theorem}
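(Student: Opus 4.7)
The plan is to reduce everything to the already-established identity $\ell(n\ast m) = \ell(n)\cdot\ell(m)$ from Theorem~\ref{spoorsmurf} together with the fact $\ell(\Lambda(m)) = \ell(m)$ just proved. By definition $\Lambda(x) = \ell(x)-1$, and subtraction is available in \pamsmu\ via \axpam\ref{plimp15}, which is legitimate here because $\ell(x)$ is a power of $2$ and hence satisfies $1\leq \ell(x)$.

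First I would unfold the left-hand side:
\[
\Lambda(n\ast m) \;=\; \ell(n\ast m) - 1 \;=\; \ell(n)\cdot\ell(m) - 1,
\]
using Theorem~\ref{spoorsmurf}. Next I would unfold the right-hand side:
\[
\Lambda(n)\ast\Lambda(m) \;=\; \Lambda(n)\cdot\ell(\Lambda(m)) + \Lambda(m) \;=\; (\ell(n)-1)\cdot\ell(m) + (\ell(m)-1),
\]
where the second equality uses $\ell(\Lambda(m)) = \ell(m)$ from the preceding theorem. Distributing, and using Theorem~\ref{lilasmurf} (or just ring-style manipulation available from the \pamo-axioms) to cancel $+\ell(m)$ against $-\ell(m)$, this simplifies to $\ell(n)\cdot\ell(m) - 1$, matching the left-hand side.

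The only subtle point is the bookkeeping for subtraction in the non-negative setting: one has to check that each term from which we are subtracting dominates the subtrahend. This is immediate since $\ell(m)\geq 1$ and $\ell(n)\ell(m)\geq \ell(m)\geq 1$. No further principles beyond what has already been invoked in this subsection are needed, so this is essentially a one-line computation once the key lemmas $\ell(n\ast m)=\ell(n)\ell(m)$ and $\ell(\Lambda(m))=\ell(m)$ are in hand. The main (very mild) obstacle is just being careful that all intermediate expressions are well-defined in \pamsmu, which is handled by the above dominance inequalities.
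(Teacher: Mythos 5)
Your proposal is correct and follows essentially the same route as the paper: both reduce the identity to $\ell(n\ast m)=\ell(n)\cdot\ell(m)$ and $\ell(\Lambda(m))=\ell(m)$ and then finish with a distributivity-with-subtraction computation, the only difference being that you expand both sides toward the middle while the paper writes one chain of equalities. The bookkeeping you note about subtraction being legitimate (since $\ell(m)\geq 1$) matches the paper's implicit use of Theorem~\ref{lilasmurf}.
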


\begin{proof}
We have: \qedright
\begin{eqnarray*}
\Lambda(n \ast m) & = & \ell(n\ast m)-1 \\
& = & \ell(n)\cdot\ell(m)-1 \\
& = & (\ell(n)-1) \cdot \ell(m) + \ell(m) -1\\ 
& = & \Lambda(n) \cdot \ell(\Lambda(m)) + \Lambda(m)\\ 
& = & \Lambda(n) \ast \Lambda(m)
\end{eqnarray*}
\end{proof}

\begin{theorem}[\pamsmu]
 ${\tt b} \not\preceq \Lambda(x)$.
\end{theorem}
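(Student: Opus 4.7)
The plan is to unfold the definition of $\preceq$ arithmetically, extract a divisibility fact from the resulting equation via Theorem~\ref{lilasmurf}, and then derive a contradiction from the defining inequality $v+1 < 2\ell(v)$.

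First, I would assume for contradiction that ${\tt b} \preceq \Lambda(x)$, so that there exist $u,v$ with $(u \ast {\tt b}) \ast v = \Lambda(x)$. Translating $\ast$ and $\Lambda$ into arithmetic, this becomes
\[
(2u + 2)\cdot \ell(v) + v \;=\; \ell(x) - 1,
\]
which I would rearrange (using \axpam\ref{plimp15}) to
\[
\ell(x) \;=\; (u+1)\cdot 2\ell(v) \,+\, (v+1).
\]

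Next I would show that $2\ell(v)$ divides $\ell(x)$. Since $u+1 \geq 1$ and $v+1 \geq 1$, the displayed equation yields $2\ell(v) \leq 2(u+1)\ell(v) < \ell(x)$. By Theorem~\ref{delicatessesmurf}, $2\ell(v)$ is a power of $2$; since $\ell(x)$ is also a power of $2$ and $2\ell(v) \leq \ell(x)$, axiom \axpam\ref{babysmurf} gives $2\ell(v) \mid \ell(x)$.

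The final step applies Theorem~\ref{lilasmurf} to the form $\ell(x) = (u+1)\cdot 2\ell(v) + (v+1)$: from $2\ell(v) \mid \ell(x)$ we conclude $2\ell(v) \mid (v+1)$. But the very definition of $\ell(v)$ gives $v+1 < 2\ell(v)$, and $v+1 \geq 1$, so no positive multiple of $2\ell(v)$ can equal $v+1$ — contradiction.

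I expect no serious obstacle; the argument is short once the ingredients are lined up. The only small thing to verify carefully is the strict inequality $2(u+1)\ell(v) < \ell(x)$, which rests on $v+1 \geq 1$ and is immediate in \pamsmu.
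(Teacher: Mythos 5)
Your proof is correct and follows essentially the same route as the paper's: unfold $\preceq$ arithmetically, get $\ell(x)$ divisible by a power of $2$ via \axpam\ref{babysmurf}, transfer the divisibility to the remainder via Theorem~\ref{lilasmurf}, and contradict the bound $v+1<2\ell(v)$. The only (harmless) variation is that you work with the divisor $2\ell(v)$ — which needs Theorem~\ref{delicatessesmurf} and the easy fact ${\sf pow}_2(\num 2)$ — and finish in one step, whereas the paper uses $\ell(v)$ itself, deduces $\ell(v)=v+1$, and then contradicts via the parity of $2u+3$.
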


\begin{proof}
Suppose $y\ast {\tt b} \ast z = \Lambda(x)$. Then,
$ (y\cdot  2+  2) \cdot \ell(z)+ z + 1= \ell(x)$.
It follows that $\ell(z) \leq \ell(x)$ and, thus, by \axpam\ref{babysmurf}, that $\ell(z) \mid \ell(x)$.
So, by Theorem~\ref{lilasmurf}, we find $\ell(z) \mid z+1$.
Since $\ell(z) \leq z+1 < 2\cdot \ell(z)$, we may conclude that $\ell(z)= z+1$.
It follows that $(y\cdot 2+ 3) \cdot \ell(z)= \ell(x)$. \emph{Quod non.}
\end{proof}

This finishes or interpretation of \tclac2 in \pamsmu. 
Note that we did a bit more. We interpreted $\tclac2({\tt a},{\tt b})$.

We also have the following simple insights which are not used in our main development.

\begin{theorem}[\pamsmu]
\begin{enumerate}[i.]
\item
$\Lambda({\tt a}) = {\tt a}$, $\Lambda({\tt b}) = {\tt a}$,
\item
$\Lambda(n\ast m) = \Lambda(m \ast n)$.
\end{enumerate}
\end{theorem}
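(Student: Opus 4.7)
The plan is to handle the two parts of the theorem completely separately, as each reduces to a direct computation using facts already established in the excerpt.

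For part (i), I would simply evaluate $\ell({\tt a})$ and $\ell({\tt b})$ using the functionality of $\ell$ established in Theorem~\ref{ieniemieniesmurf}. Since ${\sf pow}_2(\num 2)$ holds (its only divisors are $1$ and $\num 2$, and $\num 2 \mid \num 2$), we have ${\tt a}=1$, and the witness $y=\num 2$ satisfies $\num 2 \leq 1+1 < \num 2 \cdot \num 2$, so $\ell({\tt a})=\num 2$ and hence $\Lambda({\tt a})= \num 2 - 1 = 1 = {\tt a}$. Similarly, for ${\tt b}= \num 2$, the witness $y=\num 2$ satisfies $\num 2 \leq \num 2+1 < \num 2 \cdot \num 2$, so $\ell({\tt b}) = \num 2$ and $\Lambda({\tt b}) = 1 = {\tt a}$.

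For part (ii), the cleanest route is to bypass the concatenation-based formula $\Lambda(n\ast m) = \Lambda(n) \ast \Lambda(m)$ altogether and instead go straight through $\ell$. By Theorem~\ref{spoorsmurf}, we have $\ell(n \ast m) = \ell(n)\cdot \ell(m)$. Hence
\[
\Lambda(n \ast m) \;=\; \ell(n \ast m) - 1 \;=\; \ell(n) \cdot \ell(m) - 1 \;=\; \ell(m) \cdot \ell(n) - 1 \;=\; \ell(m \ast n) - 1 \;=\; \Lambda(m \ast n),
\]
where the middle equality uses commutativity of multiplication from \axpam\ref{plimp5}, and the preceding subtraction is justified by the fact (implicit in the definition of $\Lambda$) that $\ell(n\ast m) \geq 1$, which is always the case since $\ell$ takes powers-of-$2$ values.

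I do not expect any real obstacle here; the only micro-point worth flagging is the legitimacy of writing $\ell(n)\cdot\ell(m)-1$ as a term, which needs a power of $2$ to be non-zero, and this follows since $1$ is the smallest candidate witness for $\ell(0)$ and $\ell$ is monotone by Theorem~\ref{paarsesmurf}. All subtractions in the displayed chain are honest applications of the Subtraction Principle \axpam\ref{plimp15}, available in \pamsmu.
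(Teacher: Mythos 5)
Your proof is correct and for part (ii) it is essentially identical to the paper's: the paper also computes $\Lambda(n\ast m) = \ell(n)\cdot\ell(m)-1 = \ell(m)\cdot\ell(n)-1 = \Lambda(m\ast n)$ via Theorem~\ref{spoorsmurf} and commutativity. Part (i), which the paper leaves to the reader, is handled correctly by your direct evaluation of $\ell(1)=\ell(\num 2)=\num 2$.
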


\begin{proof}
We treat (ii):
 $\Lambda(n\ast m) =   \ell(n)\cdot\ell(m)-1 = \ell(m)\cdot\ell(n)-1 = \Lambda(m\ast n) $.
\end{proof}

If we add Euclidean Division to \pamsmu, we can do a bit more.
We can interpret the Stack Principle \axtc\ref{thorsmurf} plus the fact that {\tt a} and {\tt b} are the only atoms.

\begin{theorem}[\pamsmu + \axpam\ref{zringax}]
Every $x$ is of one of the forms $\estr$ or $y\ast {\tt b}$ or $y\ast {\tt a}$.
\end{theorem}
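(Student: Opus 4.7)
The plan is to invoke the Standard Euclidean Division Principle \axpam\ref{zringax} with $n = 2$, which is available by hypothesis, and then classify the remainder. So, for any $x$, pick $z, r$ with $r < 2$ and $x = z \cdot \num 2 + r$. Using Lemma~\ref{vreugdigesmurf}(v), $r < 2$ means $r \leq 1$, and then again $r \leq 1$ with $0 \leq r$ (Lemma~\ref{vreugdigesmurf}(iii)) gives $r = 0$ or $r = 1$.

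If $r = 1$, I simply unfold the definitions: $\ell({\tt a}) = \ell(1) = 2$ (the unique power of $2$ in the range dictated by $\ell$ at $1$), so $z \ast {\tt a} = z \cdot \ell(1) + 1 = z \cdot \num 2 + 1 = x$, giving $x = z \ast {\tt a}$. If $r = 0$, then $x = z \cdot \num 2$, and I split on whether $z = 0$ or not. When $z = 0$, we get $x = 0 = \estr$. When $z \neq 0$, I use that \pamsmu\ extends \pamo\ and so has the Subtraction Principle \axpam\ref{plimp15}: from $z \neq 0$ and Lemma~\ref{vreugdigesmurf}(iii) we have $0 \leq z$, and then \axpam\ref{plimp9} gives $1 \leq z$, so \axpam\ref{plimp15} yields $z = y + 1$ for some $y$. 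Since $\ell({\tt b}) = \ell(\num 2) = \num 2$, we compute $y \ast {\tt b} = y \cdot \num 2 + \num 2 = (y+1) \cdot \num 2 = z \cdot \num 2 = x$, so $x = y \ast {\tt b}$.

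There is no serious obstacle here: the argument is really just the observation that the concatenation $y \ast {\tt a}$ and $y \ast {\tt b}$ are respectively $2y+1$ and $2y+2$, so the trichotomy reduces to the parity split furnished by \axpam\ref{zringax}. The only mild point of care is that the Predecessor Principle is not listed as an axiom of \pamsmu, but it is a consequence of the Subtraction Principle \axpam\ref{plimp15} already inherited from \pamo, as noted above; without \axpam\ref{zringax} the argument would fail because \pamsmu\ alone does not decide odd-versus-even (cf.\ Theorem~\ref{unismurf} only delivers uniqueness, not existence, of a Euclidean decomposition).
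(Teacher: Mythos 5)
Your proof is correct and follows essentially the same route as the paper's: the paper likewise invokes the odd-or-even dichotomy supplied by \axpam\ref{zringax} and then reads off $\estr$, $y\ast{\tt b}=\num 2\cdot y+\num 2$, or $y\ast{\tt a}=\num 2\cdot y+1$ from the three cases, with the predecessor step for a nonzero even $x$ handled exactly as you do via the Subtraction Principle inherited from \pamo. One small caveat: your closing parenthetical asserts that \pamsmu\ alone does not decide odd-versus-even, which the paper does not actually establish (it is bound up with the open question whether \pamsmu\ proves Euclidean Division), but this aside does not affect the correctness of the argument.
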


\begin{proof}
 Consider any $x$. We have that $x$ is even or odd. If $x$ is even, it is either $0$, i.e., $\estr$, or of the form $ 2 \cdot y + 2$, i.e., $y\ast {\tt b}$.
 If $x$ is odd, it is of the form $ 2\cdot y +1$, i.e., $y\ast {\tt a}$.
\end{proof}

We note that it follows that {\tt a} and {\tt b} are the only atoms. Thus, we get $\tclac2[{\tt a},{\tt b}]$.

Moreover, in \pamsmu+\axpam\ref{zringax}, we additionally find that {\tt a} and {\tt b} are all the atoms plus the Stack Principle, thus obtaining  
$\tclac2[{\tt a},{\tt b}]+\axtc\ref{thorsmurf}$. Of course, for this result `all numbers are odd or even' would have sufficed. 
We also have the following result.

\begin{theorem}[\pamsmu + \axpam\ref{zringax}]\label{projectionsmurf}
Suppose $\Lambda(x\ast y) = \Lambda z$. Then, there are $u$ and $v$ such that $\Lambda(u)=\Lambda(x)$,
$\Lambda(v)= \Lambda(y)$ and $u\ast v=z$.
\end{theorem}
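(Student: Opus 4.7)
My plan is to translate the hypothesis and goal into arithmetic, reduce the goal to a Euclidean division of a specific quantity by $\ell(y)$, and then carry out that division.

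\textbf{Arithmetic reformulation.} Using $\Lambda(a)=\ell(a)-1$ and Theorem~\ref{spoorsmurf}, the hypothesis $\Lambda(x\ast y)=\Lambda(z)$ is the identity $\ell(x)\ell(y)=\ell(z)$. Abbreviate $K:=\ell(x)$, $M:=\ell(y)$, $N:=KM=\ell(z)$; by \axpam\ref{smurfin} we have $N-1\leq z\leq 2N-2$, and by Theorem~\ref{delicatessesmurf} each of $K$, $M$, $N$ is a power of~$2$.

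\textbf{Reduction to Euclidean division.} I search for $u,v$ of the form $u=(K-1)+q$ and $v=(M-1)+r$ with $0\leq q<K$ and $0\leq r<M$. For such $u, v$ the bounds place $u+1\in[K,2K)$ and $v+1\in[M,2M)$, which by Theorem~\ref{ieniemieniesmurf} forces $\ell(u)=K$ and $\ell(v)=M$, and hence $\Lambda(u)=\Lambda(x)$ and $\Lambda(v)=\Lambda(y)$ as required. By distributivity, $u\ast v=uM+v=(N-1)+qM+r$, so $u\ast v=z$ reduces to $qM+r=t$ where $t:=z-\Lambda(z)$. By \axpam\ref{plimp15} and the upper bound $z\leq 2N-2$, we have $0\leq t\leq N-1$, i.e.\ $t<KM$; once $r<M$ is arranged, the bound $q<K$ then follows automatically. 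The whole task therefore reduces to producing the Euclidean division $t=qM+r$ with $r<M$.

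\textbf{The main obstacle.} This is the Euclidean division of $t$ by the \emph{variable} power of~$2$, $M$. \axpam\ref{zringax} only supplies division by each fixed standard numeral, so $M$ is not directly accessible. I would prove a preliminary lemma asserting division by any power of~$2$, marshalling (i) the Stack Principle---a consequence of \axpam\ref{zringax} together with the already-established fact that {\tt a} and {\tt b} are the only atoms---which supplies division by $2$ (remainder in $\{0,1\}$); (ii) \axpam\ref{babysmurf}, which gives $M\mid N$ since $M\leq N$ are both powers of~$2$; and (iii) Theorem~\ref{delicatessesmurf}, giving closure of the powers of~$2$ under multiplication. Recursion on $M$ via repeated halving would need induction, which is unavailable in \pamsmu, so the real content of the proof is devising a non-inductive substitute; one candidate is to characterise $r$ as the unique element of the $M$-element interval $[t-M+1,\,t]$ (which exists by \axpam\ref{plimp15} and $t\geq 0$, after a case split when $t<M$) whose distance from $t$ is a multiple of~$M$, with existence extracted from the divisibility $M\mid N$ and the bound $t<KM$. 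Once the division is in hand, the assembly of step~2 finishes the proof.
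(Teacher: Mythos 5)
Your construction is exactly the paper's: divide $t:=z-\Lambda(z)$ by $M:=\ell(y)$ with quotient $q$ and remainder $r<M$, set $u:=\Lambda(x)+q$ and $v:=\Lambda(y)+r$, and verify $\ell(u)=\ell(x)$, $\ell(v)=\ell(y)$, $u\ast v=z$; all of that part of your argument is correct and matches the paper step for step. The difference is what happens at the one step that carries the content. The paper's proof simply performs the Euclidean division of $z-\Lambda(z)$ by $\ell(y)$ by appeal to ``Euclidean Division'' and is done; you instead observe that \axpam\ref{zringax} licenses division only by standard numerals, declare the division by the (possibly nonstandard) power of two $M$ to be the main obstacle, and then do not overcome it. Your candidate substitute does not work as stated: every element of the interval $[t-M+1,\,t]$ lies at distance $<M$ from $t$, so ``the unique element whose distance from $t$ is a multiple of $M$'' is just $t$ itself; and more fundamentally, the existence of a remainder $r<M$ with $M\mid t-r$ is precisely the statement to be proved, so characterising $r$ by that property begs the question. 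Neither $M\mid N$ nor $t<KM$ produces such an $r$ without some further principle, and you yourself note that the obvious route (repeated halving via the Stack Principle) needs induction that \pamsmu\ does not supply. So, as written, your proof is incomplete at its decisive step.

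In fairness, you may have made the problem harder than intended. The paper introduces this block of results with ``If we add Euclidean Division to \pamsmu, we can do a bit more,'' and its proof of this theorem invokes Euclidean division for the divisor $\ell(y)$ without comment; read over a base containing the full Euclidean Division Principle \axpam\ref{plimp16} (i.e., over \pamres\ plus the Smullyan axioms), the division is immediate and your argument closes. If the hypothesis really is only \axpam\ref{zringax}, then the burden you identify is genuine --- but it is then a burden the paper's own one-line appeal does not visibly discharge either. Either way, your submission needs to either strengthen the hypothesis to \axpam\ref{plimp16} or actually prove division by powers of two in \pamsmu{}${}+{}$\axpam\ref{zringax}; at present it does neither.
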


\begin{proof}
Suppose $\Lambda(x\ast y) = \Lambda z$.
We find that $\ell(z) = \ell(x)\cdot \ell(y)$. So, $\ell(y) \leq \ell(z)$ and, thus, $\Lambda(y) \leq \Lambda(z)$. 
We note that, for any $a$, we have $\Lambda(a) \leq a$. 
By Euclidean Division There are (unique) $w$ and $t$ such that
$z -\Lambda(z) = w \cdot \Lambda(y) + t$ and $t< \Lambda(y)$.
We take $u := w+ \Lambda(x)$ and $ v := t+\Lambda(y)$.

We note that $\Lambda (z) =  \Lambda(x\ast y) = \Lambda(x)\cdot\ell(y)+ \Lambda(y)$.
So, \[z = (w+ \Lambda(x))\cdot\ell(y)+ (t+ \Lambda(y)).\]

\noindent We have $\ell(y) \leq t+ \Lambda(y) +1 = t+ \ell(y)  <  2\cdot \ell(y) $. So, $\ell(y) = \ell(t+\Lambda(y)) = \ell(u)$.
We may conclude that $z= u\cdot \ell (v) + v = u \ast v$.

Finally, we have $\Lambda(x)\ast \Lambda(y) = \Lambda(z) = \Lambda(u)\ast \Lambda(v) = \Lambda(u)\ast \Lambda(y)$.
So, by Left Cancellation, $\Lambda(x)=\Lambda(u)$.
\end{proof}

We note that the last two results justify introducing a projection function that given a string $x$ and a tally number $\Lambda(y)\preceq_{\sf i} \Lambda(x)$
produces the letter on the `$\Lambda(y)$-th place' in $x$.

\begin{rem}{\small
Reflection of the proof of Theorem~\ref{projectionsmurf} shows that we are really switching back an forth
between dyadic numbers and binary notations.}
\end{rem}

\begin{ques}
\small{We left many question open.
Over what base theory extending \pam\ can we interpret the following principles via $\upbeta$?
\begin{enumerate}[a.]
\item
If $x$ is {\tt b}-free, then $\Lambda(x)=x$.
\item
If $x$ and $y$ have the same length and the same projections, then they are equal.
\item
$x$ is empty or $x$ is {\tt b}-free or of the form $u\ast {\tt b} \ast v$, where $v$ is {\tt b}-free.
\end{enumerate}
}
\end{ques}

\section{Markov Coding}\label{marcoroni}
In this section, we study the creation of strings and container strings using Markov coding in the context
of weak theories. We prove that the sequential theory \futc1 can be interpreted in \pamo.

The idea of Markov coding is to code strings as $2\times 2$ matrices with determinant 1.
These form, in the standard case, the structure ${\sf SL}_2(\mathbb N)$.
The canonical place where the insight that ${\sf SL}_2(\mathbb N)$ is isomorphic to
the free monoid on two generators is used in a metamathematical context is Andrej Markov's book~\cite{mark:theo54}.
However, the basic insight goes back to Jakob Nielsen. See \cite{niel:isom18}.  The idea
to use the Special Linear Monoid over the non-negative part of a suitable ring to study weak theories
of concatenation is due to Juvenal Murwanashyaka. See \cite{murw:weak22},  \cite{murw:pape23}, and \cite{murw:weak24}. 
In his paper \cite{murw:weak24}, Murwanashyaka provides a Markov style interpretation of $\tc1$ in {\sf Q}. 
He uses a (well-known) shortening argument to interpret \pamo\ in {\sf Q}. To obtain the Editors Principle, he uses a (new) shortening argument
that reflects a proof by induction of the  Editors Principle. The main difference of what Murwanashyka is doing in his paper
and our present work is that we are aiming at providing a \emph{direct} interpretation. So it is better to avoid
shortening arguments.

\subsection{Heuristic Remarks}\label{snelleersmurf}
The starting point of our approach is the observation that the special linear monoid ${\sf SL}_2(\mathbb N)$ is
isomorphic to the monoid of strings over the alphabet {\tt a}, {\tt b} with concatenation.  The elements of 
${\sf SL}_2(\mathbb N)$ are $2\times 2$-matrices of non-negative integers with determinant 1.
The matrix ${\tt A} := \sspl 1101$ will play the
role of {\tt a} and the matrix ${\tt B} := \sspl 1011$ will play the role of {\tt b}.\footnote{We could have used
{\tt a} and {\tt b} simply as names for these matrices. However, typographically, the capitals look better.}
The crucial observation is that ${\tt A}^n = \sspl 1n01$. In other words: tally numerals grow linearly.

 The monoid ${\sf SL}_2(\mathbb N)$ is part of the special linear group  ${\sf SL}_2(\mathbb Z)$ that has many remarkable
 properties. See e.g. \cite{conra:SL2Z12}.

One striking feature of ${\sf SL}_2(\mathbb N)$ is its  connection with the Fibonacci numbers.
Let ${\sf F}_{-1} := 1$, ${\sf F}_0:= 0$ and ${\sf F}_{n+1} := {\sf F}_{n-1}+{\sf F}_{n}$. We have:
\begin{itemize}
\item
$({\tt BA})^n = \sspl {{\sf F}_{2n-1}}{{\sf F}_{2n}}{{\sf F}_{2n}}{{\sf F}_{2n+1}}$.
\end{itemize} 

\noindent
We see that ${\sf F}_{2n-1}{\sf F}_{2n+1} - {\sf F}_{2n}^2 = 1$.  As is well-known the Fibonacci numbers have exponential growth.
The remarkable fact that ${\tt A}^n$ is linear and $({\tt BA})^n$ is exponential combined with
Yuri Matijasevic's  beautiful result that the function $\lambda n\mathbin{.}{\sf F}_{2n}$ is Diophantine is an important
part of the proof of the MRDP Theorem. 

We will code the container string $n_0\dots n_{k-1}$ as ${\tt B}{\tt A}^{n_0}\dots {\tt B}{\tt A}^{n_{k-1}}$.
Note that the string ${\tt A}^n$ is $\sspl1n01$,  the container string $0^n$ is $\sspl10n1$, and 
the container string $1^n =   \sspl {{\sf F}_{2n-1}}{{\sf F}_{2n}}{{\sf F}_{2n}}{{\sf F}_{2n+1}}$.
So, the container string of 1's has much faster growth than the container string of 0's.

\subsection{Basics of the Special Linear Monoid}
In this subsection, we present the basics of the Special Linear Monoid. We treat this over \pamo. 
It is perhaps possible to develop some of the material over \pamj, but, since all further results will be over \pamo, we wil not explore this.

Every model $\mc M$ of \pamo\ can be standardly extended by the usual pairs construction to
a discretely ordered commutative ring ${\sf z}(\mc M)$. 
Conversely, we can restrict  a discretely ordered commutative ring $\mc R$
to its non-negative part $\mc R^{\mathsmaller{\geq 0}}$ obtaining a model of \pamo.
Modulo definable isomorphism, these operations are inverses. On the level of theories, we get a bi-interpretation
between \pamo\ and the theory of discretely ordered commutative rings. This result can even be improved to a definitional
equivalence between these theories. These strong sameness results enable us to switch between
\pamo\ and the theory of discretely ordered commutative rings as a matter of course.
See Appendix~\ref{negatievesmurf} for details.

Let $\mc M$ be a model of $\pamo$.
We consider the $2\times 2$ matrices $\alpha$ over $\mc M$. We define  ${\sf det}(\alpha) \simeq e$ iff $bc+e=ad$.
It follows that {\sf det} is functional. Moreover, in the extension $\mathsf z(\mc M)$, the function {\sf det} extends to a total function.

\begin{lem}\label{minismurf}
The $2\times 2$ matrices over a model of \pamo\  with the usual matrix multiplication and  identity element  
form a monoid. Moreover, if  ${\sf det}(\alpha)$ and ${\sf det}(\beta)$ are both defined, 
we have ${\sf det}(\alpha\beta) = {\sf det}(\alpha){\sf det}(\beta)$.
\end{lem}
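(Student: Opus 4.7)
The plan has three stages: verify associativity and identity, then establish the multiplicativity of $\sf det$ via the passage to the ring extension.

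For the monoid structure, I would just unwind the definitions. Writing $\alpha = \sspl{a}{b}{c}{d}$, $\beta = \sspl{e}{f}{g}{h}$, $\gamma = \sspl{i}{j}{k}{l}$, both $(\alpha\beta)\gamma$ and $\alpha(\beta\gamma)$ expand to entries of the form $aei + afk + bgi + bhk$ etc. All the manipulations use only commutativity, associativity, and distributivity of $+$ and $\cdot$, i.e., axioms \axpam\ref{plimp1}--\axpam\ref{plimp7}; no subtraction is required here. The identity $\sspl{1}{0}{0}{1}$ works on either side because $a\cdot 1 + b\cdot 0 = a$ and so on, using \axpam\ref{plimp1} and \axpam\ref{plimp4}. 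So this first part actually goes through already in \pamj.

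For the multiplicativity clause, the cleanest route is to pass through the ring extension $\mf Z(\mc M)$. In the ring, the determinant $\det(\alpha) := ad-bc$ is a total function, and the classical identity $\det(\alpha\beta) = \det(\alpha)\det(\beta)$ holds by a purely equational ring computation. Now suppose ${\sf det}(\alpha) \simeq p$ and ${\sf det}(\beta) \simeq q$ in $\mc M$, i.e., $bc+p = ad$ and $fg+q = eh$. Then $p$ and $q$ are precisely the ring-theoretic determinants of $\alpha$ and $\beta$ in $\mf Z(\mc M)$, and they happen to lie in the non-negative part $\mc M$. The ring identity yields $\det(\alpha\beta) = pq$, which also lies in $\mc M$ since $\mc M$ is closed under multiplication. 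Unwinding the \pamo-definition of ${\sf det}$, this means that $(ae+bg)(cf+dh) = (af+bh)(ce+dg) + pq$ in $\mc M$, which is precisely the statement ${\sf det}(\alpha\beta) \simeq pq = {\sf det}(\alpha){\sf det}(\beta)$.

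The only real obstacle is making sure that this passage back and forth is legitimate, i.e., that $\mc M$ really sits inside $\mf Z(\mc M)$ as the non-negative cone and that the \pamo-definition of ${\sf det}$ agrees with the ring definition wherever both make sense. Both of these are guaranteed by the standard bi-interpretation (definitional equivalence, actually) mentioned just before the lemma. If one wished to avoid the detour through $\mf Z(\mc M)$, one could instead expand both $(af+bh)(ce+dg)+pq$ and $(ae+bg)(cf+dh)$, cancel the common monomials $acef$ and $bdgh$ on both sides, and reduce the remaining equation to $adfg + bceh + pq = adeh + bcfg$; since $p$ and $q$ are witnessed by $bc+p=ad$ and $fg+q=eh$, two applications of Theorem~\ref{lilasmurf} (distributivity for subtraction in \pamo) identify $pq$ with $(ad-bc)(eh-fg)$ and close the argument. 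Either route works, but the ring-extension argument is more conceptual and less error-prone.
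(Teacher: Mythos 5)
Your proposal is correct and follows essentially the same route as the paper, which likewise disposes of the lemma by observing that the matrix monoid and the multiplicativity of $\sf det$ are inherited from the ring extension $\mf Z(\mc M)$ (citing a standard algebra text for the ring case). Your additional direct semiring computation, substituting the witnesses $bc+p=ad$ and $fg+q=eh$ into the expanded products, is a sound alternative the paper does not spell out.
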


\begin{proof}
The matrices of $\mc M$  inherit the desired properties from the corresponding properties of the matrices over the commutative ring
$\mathsf z(\mc M)$. That a commutative ring has these properties is proved in most Algebra textbooks. See,
e.g., \cite[Chapter XIII]{lang:alge72}.
\end{proof}

By Lemma~\ref{minismurf}, we can meaningfully define  ${\sf SL}_2(\mc M)$, \emph{the special linear monoid} of the $2\times 2$-matrices with determinant 1.

\begin{lem}\label{microsmurf}
Let $\mc M$ be a model of \pamo.
Suppose that $\alpha$ is any $2\times 2$ matrix in $\mc M$  and that $\beta$ and $\gamma$ are in ${\sf SL}_2(\mc M)$.
Suppose further that $\alpha \beta = \gamma$. Then $\alpha$ is in the special linear monoid.
\end{lem}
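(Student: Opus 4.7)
The plan is to exploit the bi-interpretation between \pamo\ and the theory of discretely ordered commutative rings that was just invoked in the proof of Lemma~\ref{minismurf}. The subtlety is that, inside $\mc M$ alone, the determinant of an arbitrary $2\times 2$ matrix need not be defined: the relation ${\sf det}(\alpha) \simeq e$ says $bc+e = ad$, which requires the off-diagonal product not to exceed the diagonal product. What we must really show, given $\alpha = \sspl abcd$, is that $ad = bc + 1$ holds in $\mc M$.

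First I would pass to the discretely ordered commutative ring $\mf Z(\mc M)$, in which every $2\times 2$ matrix has a totally defined determinant and the standard identity $\det(\alpha\beta) = \det(\alpha)\det(\beta)$ is available as cited from Lemma~\ref{minismurf}. Applying this to the hypothesis $\alpha\beta = \gamma$ gives $\det(\alpha)\det(\beta) = \det(\gamma)$ in $\mf Z(\mc M)$. Since $\beta$ and $\gamma$ lie in ${\sf SL}_2(\mc M) \subseteq {\sf SL}_2(\mf Z(\mc M))$, we have $\det(\beta) = \det(\gamma) = 1$, so the equation collapses to $\det(\alpha) = 1$ in $\mf Z(\mc M)$.

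Next I would translate this identity back down to $\mc M$. Writing $\alpha = \sspl abcd$ with $a,b,c,d \in \mc M$, the equation $\det(\alpha) = 1$ in $\mf Z(\mc M)$ reads $ad - bc = 1$, equivalently $ad = bc + 1$. Both sides of this equation are elements of $\mc M$ (as sums and products of non-negative elements), and since the inclusion $\mc M \hookrightarrow \mf Z(\mc M)$ is the identity on the non-negative part, the equation $ad = bc + 1$ holds already in $\mc M$. By the definition of the partial function ${\sf det}$ on matrices over $\mc M$, this is exactly the assertion that ${\sf det}(\alpha) \simeq 1$, so $\alpha$ lies in ${\sf SL}_2(\mc M)$.

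There is no real obstacle here: the only thing to watch is that we do not try to compute $\det(\alpha)$ inside $\mc M$ before we know that $ad \geq bc$, which is why we detour through $\mf Z(\mc M)$. Once we have the identity $ad = bc+1$ in the ring extension, the fact that it lives in the non-negative cone is automatic and transfers back to $\mc M$ without further work.
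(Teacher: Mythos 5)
Your proof is correct and follows essentially the same route as the paper's: pass to the ring extension $\mf Z(\mc M)$, where the determinant is total and multiplicative, conclude $\det(\alpha)=1$ there (the paper writes this as $\alpha=\gamma\beta^{-1}$, you cancel $\det(\beta)=1$ instead, which is the same computation), and transfer the identity $ad=bc+1$ back to $\mc M$ since all entries are non-negative. Your remark about why one must detour through the ring — that ${\sf det}$ is only partially defined on matrices over $\mc M$ — is a point the paper leaves implicit.
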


\begin{proof}
We have $\alpha = \gamma\beta^{-1}$ in ${\sf SL}_2(\mathsf z(\mc M))$. So, ${\sf det}(\alpha)\simeq 1$ in $\mathsf z(\mc M)$.
Since all entries of $\alpha$ are in $\mc M$, we find that ${\sf det}(\alpha)\simeq 1$ in $\mc M$.
\end{proof}

\begin{lem}
Let $\mc M$ be a model of \pamo.
The special linear monoid over $\mc M$ is cancellative. 
\end{lem}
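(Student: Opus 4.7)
The plan is to exploit the bi-interpretation between \pamo\ and discretely ordered commutative rings that has just been described, and push the problem into $\mf Z(\mc M)$ where inverses are available.

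First I would establish both left and right cancellation separately. For left cancellation, suppose $\alpha,\beta,\gamma \in {\sf SL}_2(\mc M)$ with $\alpha\beta = \alpha\gamma$. Lift this equation to the ring $\mf Z(\mc M)$. There, because ${\sf det}(\alpha) = 1$ is a unit, the classical adjugate formula provides an inverse $\alpha^{-1} \in {\sf SL}_2(\mf Z(\mc M))$. Multiplying on the left yields $\beta = \gamma$ as an equation in $\mf Z(\mc M)$. Since $\mc M$ embeds into $\mf Z(\mc M)$ and the entries of $\beta,\gamma$ all lie in $\mc M$, the same equation holds in $\mc M$. Right cancellation is symmetric, using multiplication on the right by $\beta^{-1}$ (or $\gamma^{-1}$).

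A subtle point I would highlight explicitly: the inverse $\alpha^{-1}$ need not have its entries in $\mc M^{\mathsmaller{\geq 0}}$, so one should not try to argue directly inside the monoid without passing to the ring. The argument genuinely uses that the ambient ring $\mf Z(\mc M)$ contains negatives, as the adjugate of $\sspl a b c d$ is $\sspl{d}{-b}{-c}{a}$. This is precisely why Lemma~\ref{minismurf} was set up over $\mf Z(\mc M)$, and it is the ingredient that makes the cancellation for free.

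I do not expect any real obstacle here. The only thing to be careful about is to make sure all the arithmetic manipulations are carried out in $\mf Z(\mc M)$ (where subtraction is available) and only the final equality $\beta = \gamma$ is transported back to $\mc M$. The proof should therefore be a short paragraph, with the bulk of the content being the appeal to Lemma~\ref{minismurf} and the observation that entries in $\mc M$ which are equal in $\mf Z(\mc M)$ are equal in $\mc M$.
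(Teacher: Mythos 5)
Your proposal is correct and matches the paper's argument, which simply notes that the result is immediate because ${\sf SL}_2(\mf Z(\mc M))$ is a group; you have merely spelled out the passage to the ring, the use of the adjugate inverse, and the transport of the resulting equality back to $\mc M$. No gap here.
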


\begin{proof}
This is immediate since ${\sf SL}_2(\mathsf z(\mc M))$ is a group.
\end{proof}

\subsection{Transposition and Anti-transposition}\label{omkeersmurf}

We work in $\pamo$.
Consider $\alpha$ in the special linear monoid.
It is easy to see that the transpose of $\alpha$ is again in the special linear monoid. 
The transpose reverses the order of concatenation and interchanges {\tt A} and {\tt B}.
For example, we have the dual of \axtc\ref{thorsmurf}.

\begin{theorem}[\pamo]\label{sterkesmurf}
Every $\alpha$ in the special monoid is of one of the forms \emp\ or ${\tt A}\beta$ or ${\tt B}\beta$, where $\beta$ is in the special monoid.
\end{theorem}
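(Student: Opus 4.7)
The plan is a direct case analysis on the entries of $\alpha = \sspl{a}{b}{c}{d}$, where $ad = bc+1$. The two candidate factorisations are
\[
\alpha = {\tt A}\beta \text{ with } \beta = \sspl{a-c}{b-d}{c}{d}, \qquad \alpha = {\tt B}\beta \text{ with } \beta = \sspl{a}{b}{c-a}{d-b},
\]
each of which is well-defined over \pamo\ exactly when the indicated differences can be formed, i.e., respectively when $c\leq a$ and $d\leq b$, or when $a\leq c$ and $b\leq d$. In either case, a short computation using the defining identity $ad = bc+1$ together with Theorem~\ref{lilasmurf} (to route the cancellations) shows that the matrix $\beta$ has determinant $1$ and hence lies in ${\sf SL}_2(\mc M)$.

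The main task is then to show that, unless $\alpha = \emp$, one of the two sign-conditions above must hold. I would apply trichotomy to the pairs $(a,c)$ and $(b,d)$ simultaneously, yielding nine sub-cases. All but two of these satisfy $c\leq a \wedge d\leq b$ or $a\leq c \wedge b\leq d$, and are dispatched by the candidate factorisations above. The remaining ``crossed'' cases, namely $a<c$ with $d<b$ and $c<a$ with $b<d$, require separate treatment.

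For $a<c$ with $d<b$, write $c = a+p$ and $b = d+q$ with $p,q\geq 1$ and substitute into $ad = bc+1$; after expansion this yields $ad = ad + dp + qa + qp + 1$, and cancelling $ad$ via Theorem~\ref{lilasmurf} leaves $0 = dp + qa + qp + 1$, impossible since $qp\geq 1$. For $c<a$ with $b<d$, write $a = c+p$ and $d = b+q$ with $p,q\geq 1$ and substitute to obtain $bc + cq + pb + pq = bc + 1$; cancelling $bc$ yields $cq + pb + pq = 1$. Since $pq\geq 1$, we must have $pq = 1$ (which forces $p = q = 1$ by monotonicity of multiplication) together with $cq = pb = 0$ (which then forces $c = b = 0$). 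Hence $a = d = 1$ and $b = c = 0$, i.e., $\alpha = \emp$.

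The main obstacle is bookkeeping rather than conceptual: because \pamo\ has only the Subtraction Principle, every difference used in the matrix computations must be backed by an explicit inequality, and every cancellation inside the determinant identity must be routed through Theorem~\ref{lilasmurf}. Once that discipline is respected, the remaining computations are routine.
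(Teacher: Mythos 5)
Your proof is correct, and its algebraic core coincides with the paper's. The paper does not prove Theorem~\ref{sterkesmurf} on the spot: it presents it as the transpose-dual of the right-handed version (Lemma~\ref{substanulsmurf}), which in turn rests on Lemma~\ref{keukensmurf} (a trichotomy on the \emph{columns} $\scol ac$, $\scol bd$, ruled by exactly the determinant computations you perform in your two crossed cases) and Lemma~\ref{ducksmurf}, which produces $\beta$ by passing to the ring extension $\mf Z(\mc M)$ and multiplying by ${\tt A}^{-1}$ or ${\tt B}^{-1}$. You instead compare \emph{rows} (the transposed condition) and construct $\beta$ explicitly inside \pamo\ via the Subtraction Principle, which makes the argument self-contained and avoids the detour through the special linear group; the cost is the nine-fold case split, which the paper compresses into the single trichotomy statement of Lemma~\ref{keukensmurf}. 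One small citation point: the cancellations you need (striking $ad$ or $bc$ from both sides of an equation, and verifying $\det\beta=1$ after substituting $a=c+e$, $b=d+f$) are \emph{additive} cancellations, i.e.\ Lemma~\ref{vreugdigesmurf}(ii) plus antisymmetry, rather than the divisibility statement of Theorem~\ref{lilasmurf}; both are available in \pamo, so nothing breaks, but the reference should be adjusted.
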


We note that  taking the anti-transpose, i.e., the transpose w.r.t the anti-diagonal is also meaningful here: it interchanges the order but keeps the letters the same.
Doing both transpose and anti-transpose  keeps the order but switches the letters. 

The transpose and the anti-transpose are operations with quantifier-free definitions. So, we can give the corresponding operations of strings
quantifier-free translations.

\subsection{From Matrices to Strings}\label{matersmurf}
 We provide an interpretation of $\tcc{2}[{\tt a},{\tt b}]$ in \pamo.  This is more than we really need for our main development, but it is nice to have the fuller picture.

Our interpretation will be given by the translation $\upeta$. 
We define:
\begin{itemize}
\item
$\updelta_\upeta$ consists of all matrices with determinant 1, given as syntactic quadruples.
\item
$\estr_\upeta := \oslash = \sspl 1001$.
\item
${\tt a}_\upeta := {\tt A} := \sspl 1101$.
\item
${\tt b}_\upeta = {\tt B} := \sspl 1011$.
\item
$\alpha\ast_\upeta \beta = \alpha\beta$
\item
$\sspl abcd =_{\upeta} \sspl {a'}{b'}{c'}{d'} :\iff a=a' \wedge b=b' \wedge c=c' \wedge d=d'$.
\end{itemize}

We note that our translation is entirely quantifier-free. This tells us that interpretation it supports provides a functor from models to models
where the arrows are embeddings of models. We have:

\begin{theorem}[\pamo]
$\upeta$ supports an interpretation $\mf H$ of  $\tcc{2}[{\tt a},{\tt b}]$.  
\end{theorem}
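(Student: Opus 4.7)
The plan is to verify each axiom of the target string theory under the translation $\upeta$ by working in an arbitrary model $\mc M$ of \pamo\ and freely passing to the ring extension $\mf Z(\mc M)$ whenever this simplifies matters. Since $\upeta$ is entirely quantifier-free, the induced map is a functor preserving embeddings, so a model-theoretic argument suffices. The algebraic axioms \axtc 1 (identity) and \axtc 3 (associativity) come directly from Lemma \ref{minismurf}. For \axtc 2, if $\alpha\beta = \oslash$ in ${\sf SL}_2(\mc M)$ then in $\mf Z(\mc M)$ we have $\beta = \alpha^{-1}$; requiring both $\alpha$ and $\alpha^{-1}$ to have entries in the non-negative cone forces the off-diagonal entries of $\alpha$ to vanish and the diagonals to be units of the cone, hence both equal to $1$. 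Left Cancellation \axtc\ref{cancellsmurf} is inherited from the cancellativity of ${\sf SL}_2(\mc M)$, already recorded above.

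For the atom axioms, ${\tt A} \neq {\tt B}$ is immediate by inspection. To show ${\tt A}$ is an atom I would suppose ${\tt A} = \alpha\beta$, expand the four entry equations in $\alpha = \sspl{a}{b}{c}{d}$, $\beta = \sspl{a'}{b'}{c'}{d'}$, and use non-negativity together with the determinant-$1$ constraint on each factor to force one of $\alpha, \beta$ to be $\oslash$; the case of ${\tt B}$ is symmetric. The Stack Principle \axtc\ref{thorsmurf} is obtained from its left-handed dual, Theorem \ref{sterkesmurf}, by applying the anti-transposition operation of Section \ref{omkeersmurf}: this operation reverses the order of multiplication while fixing the alphabet $\{{\tt A},{\tt B}\}$ and so carries the dual statement onto the stack principle itself. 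The axiom that every atom equals ${\tt A}$ or ${\tt B}$ then falls out by one application of the stack principle to any atom, since atoms are by definition non-empty.

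The main obstacle is the Editors Principle \axtc\ref{editors}. Given $\alpha\beta = \gamma\delta$, the natural candidate is $\eta := \alpha^{-1}\gamma$ computed in $\mf Z(\mc M)$; the hypothesis instantly yields that this also equals $\beta\delta^{-1}$, so once we know $\eta \in {\sf SL}_2(\mc M)$ we have both $\alpha\eta = \gamma$ and $\eta\delta = \beta$, witnessing the first disjunct of \axtc\ref{editors}, while if instead $\eta^{-1} \in {\sf SL}_2(\mc M)$ then $\eta^{-1}$ witnesses the second. The task thus reduces to proving the dichotomy: under $\alpha\beta = \gamma\delta$, either $\eta$ or $\eta^{-1}$ has all entries in $\mc M$. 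Expanding $\eta = \alpha^{-1}\gamma$ entrywise reduces this to inequalities such as $d a' \geq b c'$ (or their reversals) which must be extracted from the coordinate form of $\alpha\beta = \gamma\delta$ together with non-negativity of the entries of $\beta, \delta$ and the determinant-$1$ constraints. The whole point of the exercise is that no induction or shortening of cuts is available, so the dichotomy must be obtained by a one-shot, purely algebraic entry manipulation; an equivalent and perhaps more tractable goal is to establish the Weak Editors Principle \axtc\ref{weakeditors} and invoke Theorem \ref{kaartjessmurf} to upgrade it, using Left Cancellation, to \axtc\ref{editors}.
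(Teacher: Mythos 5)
Your overall architecture matches the paper's: identity and associativity from Lemma~\ref{minismurf}, cancellation from the group structure of ${\sf SL}_2(\mf Z(\mc M))$, atomicity by entry computation, and the Editors Principle reduced to showing that $\gamma^{-1}\alpha$ or its inverse lies in the monoid. However, there is a genuine gap at exactly the point where the theorem stops being routine: you reduce the Editors Principle to the dichotomy ``either $\alpha^{-1}\gamma$ or $\gamma^{-1}\alpha$ has non-negative entries'' and then announce that this ``must be extracted from the coordinate form of $\alpha\beta=\gamma\delta$'' --- but you never extract it. This dichotomy is the entire content of the theorem; the paper devotes Lemmas~\ref{voorsmurf} and~\ref{diamantsmurf} to it. The argument you are missing has two parts. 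First (Lemma~\ref{voorsmurf}), using determinant $1$ and discreteness, membership of $\mu=\gamma^{-1}\alpha$ or $\mu^{-1}$ in the monoid is equivalent to the two \emph{diagonal} entries of $\mu$ being positive, because a positive diagonal forces the two anti-diagonal entries to have the same sign. Second (Lemma~\ref{diamantsmurf}), writing $\mu=\sspl qrst$ and supposing $q\leq 0$, one derives $r\leq 0$ from the identity $ar=a\ell b-ajd\leq j(cb-ad)=-j$, while the equation $\mu\beta=\delta$ forces $qe+rg=m>0$ and hence $r>0$ --- a contradiction. This two-sided squeeze on $r$ is the key idea, and nothing in your proposal supplies it or a substitute for it. Reformulating the goal as the Weak Editors Principle plus Theorem~\ref{kaartjessmurf} does not help, since the Weak Editors Principle \emph{is} the dichotomy.

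Two smaller points. Your derivation of the Stack Principle from Theorem~\ref{sterkesmurf} by anti-transposition is formally legitimate (the anti-transpose fixes ${\tt A}$ and ${\tt B}$ and reverses products), but note that the paper states Theorem~\ref{sterkesmurf} only as the ``dual'' of the Stack Principle; the substantive work --- the trichotomy of Lemma~\ref{keukensmurf} ($\scol ac<\scol bd$ or $\scol bd<\scol ac$ or $\alpha=\emp$, proved from the determinant condition and discreteness) and the divisibility characterisation of Lemma~\ref{ducksmurf} --- has to be carried out on one side or the other, and your sketch does not do so. Your treatment of \axtc 2 via $\beta=\alpha^{-1}$ is a clean alternative to the paper's direct entry computation and is correct.
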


\begin{proof}
We trivially have the interpretation of  the axioms of identity,
 \axtc1, and \axtc3. The further verifications will be given in the lemmas below.
 \end{proof}
 
We first verify \axtc2\ plus the fact that {\tt A} and {\tt B} are atoms.
In combination with the obvious fact that {\tt A} and {\tt B} are different this gives us
$({\tt A},{\tt B})$.

\begin{lem}[\pamo]\label{freyasmurf}
Suppose $\alpha$ and $\beta$ are in the special linear monoid.
\begin{enumerate}[i.]
\item 
 If  $\alpha\beta = \oslash$, then $\alpha = \oslash$ and $\beta=\oslash$.
 \item
If  $\alpha\beta = {\tt A}$, then $\alpha = \oslash$ or $\beta=\oslash$.
\item
 If  $\alpha\beta = {\tt B}$, then $\alpha = \oslash$ or $\beta=\oslash$.
 \end{enumerate}
\end{lem}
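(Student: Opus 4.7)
The plan is to work in the ring extension $\mf Z(\mc M)$ and exploit the Cramer formula for the inverse of a determinant-$1$ matrix. For $\alpha = \sspl{a}{b}{c}{d}$ in ${\sf SL}_2(\mc M)$, we have $\alpha^{-1} = \sspl{d}{-b}{-c}{a}$ in $\mf Z(\mc M)$. So for $\alpha\beta = \gamma$, we get $\beta = \alpha^{-1}\gamma$. For each prescribed $\gamma \in \{\oslash, {\tt A}, {\tt B}\}$, the idea is to compute $\alpha^{-1}\gamma$ explicitly and use the constraint that $\beta$ must have non-negative entries, i.e., live in $\mc M$, to force the shape of $\alpha$ (and hence of $\beta$).

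Before the case split, I would record two elementary facts of \pamo: every element is $\geq 0$ (Lemma~\ref{vreugdigesmurf}(iii)), and every non-zero element is $\geq 1$ (apply \axpam\ref{plimp9} with $y = 0$). A direct consequence is that $xy = 1$ in $\mc M$ forces $x = y = 1$: if either factor is $0$ the product is $0$, and if both are $\geq 1$ with one being $\geq 2$ then $xy \geq 2$ by monotonicity \axpam\ref{plimp12}.

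For (i), $\beta = \alpha^{-1}$ has entries $-b$ and $-c$ in positions $(1,2)$ and $(2,1)$; these lie in $\mc M$ only when $b = c = 0$. Then the determinant equation $ad = 1$ forces $a = d = 1$ by the preliminary fact, hence $\alpha = \oslash$ and $\beta = \oslash$. For (ii), $\beta = \alpha^{-1}{\tt A} = \sspl{d}{d-b}{-c}{a-c}$. Non-negativity of the $(2,1)$-entry gives $c = 0$, whence $a = d = 1$ as before. Non-negativity of the $(1,2)$-entry gives $b \leq 1$; combining \axpam\ref{plimp9} with subtraction \axpam\ref{plimp15} then forces $b \in \{0, 1\}$. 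These two sub-cases give $(\alpha, \beta) = (\oslash, {\tt A})$ and $({\tt A}, \oslash)$ respectively. Case (iii) is immediate from (ii) by transposition: $(\alpha\beta)^T = \beta^T\alpha^T$ in ${\sf SL}_2(\mc M)$ (Subsection~\ref{omkeersmurf}) and ${\tt B}^T = {\tt A}$, so $\alpha\beta = {\tt B}$ gives $\beta^T\alpha^T = {\tt A}$; applying (ii) to the pair $(\beta^T, \alpha^T)$ yields the conclusion after transposing back.

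The main obstacle is really just notational: shuttling between sign statements in $\mf Z(\mc M)$ and non-negativity statements in $\mc M$. Nothing conceptually deeper than the monoid structure, the Cramer formula, and the elementary facts above is required; in particular, no appeal to Euclidean Division \axpam\ref{plimp16} or the $\mathbb Z$-ring axiom \axpam\ref{zringax} is needed, which is reassuring since the lemma is stated over the minimal base \pamo.
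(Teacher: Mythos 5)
Your proof is correct, and it takes a slightly different route from the paper's. The paper argues \emph{forwards}: it writes out the product $\alpha\beta = \sspl{ae+bg}{af+bh}{ce+dg}{cf+dh}$, uses the determinant condition to get $a,d,e,h>0$, concludes from the vanishing of the $(2,1)$-entry that $c=g=0$ (hence $a=d=e=h=1$), and then derives a contradiction from the $(1,2)$-entry under the assumption that both factors are non-trivial; case (iii) is dismissed as ``similar.'' You argue \emph{backwards}: pass to $\mf Z(\mc M)$, solve $\beta=\alpha^{-1}\gamma$ via the Cramer formula, and read off the shape of $\alpha$ from the requirement that the computed entries of $\beta$ be non-negative, finishing (iii) by transposition as licensed by Subsection~\ref{omkeersmurf}. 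Both are elementary entry-chasing arguments of comparable length; your version is in the same spirit as the paper's own Lemmas~\ref{microsmurf}, \ref{ducksmurf} and \ref{voorsmurf}, which also shuttle through the ring extension and the inverse. What your approach buys is a complete classification of the solutions (e.g.\ in case (ii) that $(\alpha,\beta)$ is exactly $(\oslash,{\tt A})$ or $({\tt A},\oslash)$) rather than a bare contradiction, plus a clean duality argument for (iii); what the paper's buys is staying inside the monoid without invoking the bi-interpretation with the ring. Your bookkeeping of the \pamo-facts ($0\leq x$, non-zero implies $\geq 1$, $xy=1$ implies $x=y=1$, and $b\leq 1$ implies $b\in\{0,1\}$ via \axpam\ref{plimp9}) is all sound.
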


\begin{proof}
We treat (ii). Case (i) is easier and case (ii) is similar. We have:
\[\alpha\beta =\sspl abcd \sspl efgh = \sspl{ae+bg}{af+bh}{ce+dg}{cf+dh}.\] 
Suppose both $\alpha$ and $\beta$ are not $\oslash$ and $\alpha\beta={\tt A}$. If $c \neq 0$ or $g\neq 0$,
we have $ce+dg>0$. \emph{Quod non}. So, we must have $b\neq 0$ and $f\neq 0$. It follows that
$af+bh>1$. \emph{Quod non.}
\end{proof}

We verify Left Cancellation \axtc{\ref{cancellsmurf}}.

\begin{lem}[\pamo]
Suppose $\alpha\beta = \alpha\gamma$. Then, $\beta=\gamma$.
\end{lem}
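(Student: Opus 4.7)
The plan is to reduce the claim directly to the cancellativity of ${\sf SL}_2(\mc M)$ that was established just above in the section. Because $\alpha$, $\beta$, $\gamma$ all lie in $\updelta_\upeta$, they have determinant $1$ in $\mc M$, so they are elements of the special linear monoid. The preceding cancellativity lemma then immediately yields $\beta = \gamma$ from $\alpha\beta = \alpha\gamma$, so the proof is essentially one line.

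Unpacking what that lemma is really doing, the conceptual first step is to pass to the enveloping discretely ordered ring $\mf Z(\mc M)$, in which ${\sf SL}_2(\mf Z(\mc M))$ is a group: the matrix $\alpha = \sspl{a}{b}{c}{d}$ with ${\sf det}(\alpha) = 1$ has inverse $\sspl{d}{-b}{-c}{a}$, whose entries live in $\mf Z(\mc M)$ though not in general in $\mc M$. Left-multiplying $\alpha\beta = \alpha\gamma$ by this inverse, computed in $\mf Z(\mc M)$, gives $\beta = \gamma$ as $2\times 2$ matrix identities in $\mf Z(\mc M)$. Since the entries of $\beta$ and $\gamma$ already lie in $\mc M$, this transports back to an identity in $\mc M$, which is what we wanted.

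There is no genuine obstacle here; this lemma is really a remark recording that the preceding setup buys us left cancellation for free. If one insisted on a proof internal to \pamo\ that avoids the ring extension, one could expand $\alpha\beta = \alpha\gamma$ entry by entry and use the determinant relation $ad = bc + 1$ together with the Subtraction Principle \axpam\ref{plimp15} to solve for the entries of $\beta$ in terms of those of $\alpha$ and $\alpha\beta$, concluding $\beta = \gamma$. But the route through $\mf Z(\mc M)$ is simply a cleaner packaging of the same computation.
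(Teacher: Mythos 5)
Your proof is correct and follows the same route as the paper: both pass to the enveloping discretely ordered ring $\mf Z(\mc M)$, where ${\sf SL}_2$ is a group, left-multiply by $\alpha^{-1}$, and transport the resulting identity back to $\mc M$ since the entries of $\beta$ and $\gamma$ already lie there. The paper's own proof is exactly this one-liner invoking the bi-interpretation with the ring of a discretely ordered commutative ring.
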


\begin{proof}
The existence of the bi-interpretation given by $\mathsf z$ and restriction to the non-negative part allows us
  to work with the axioms of a discretely ordered commutative ring and then return
to our original non-negative part. In the ring, $\alpha$ has an inverse and thus we get $\beta=\gamma$. 
\end{proof}

We write $\scol ac < \scol bd$ for: ($a < b$ and $c\leq d\,$) or  ($a \leq b$ and $c< d\,$).

\begin{lem}[\pamo]\label{keukensmurf}
Suppose $\alpha =\sspl abcd$ in the special linear monoid  Then, $\scol ac < \scol bd$ or $\scol bd < \scol ac$ or $\alpha = \emp$.
\end{lem}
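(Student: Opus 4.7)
The plan is to carry out a case analysis based on trichotomy for the pairs $(a,b)$ and $(c,d)$ and to exploit the determinant relation $ad = bc + \num 1$ in each case. All computations take place in \pamo, so I may freely use cancellation, subtraction \axpam\ref{plimp15}, monotonicity of multiplication \axpam\ref{plimp12}, and the refined trichotomy that $y \leq x$ implies $y = x$ or $y + 1 \leq x$ (\axpam\ref{plimp9}).

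First I would dispose of the ``consistent'' cases. Suppose $a \leq b$ and $c \leq d$. The subcase $a = b$ and $c = d$ gives $ad = bc$, hence $bc + \num 1 = bc$, which is false in \pamo. So at least one of the inequalities is strict, and $\scol ac < \scol bd$ in the sense defined. The case $b \leq a$ and $d \leq c$ is handled symmetrically, yielding $\scol bd < \scol ac$.

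The heart of the proof is the two ``crossed'' cases. First, suppose $a < b$ and $d < c$. Then $a + 1 \leq b$ and $d + 1 \leq c$, so by monotonicity and distributivity
\[ bc \;\geq\; (a+1)(d+1) \;=\; ad + a + d + 1. \]
Combined with $ad = bc + \num 1$ this yields $bc \geq bc + a + d + \num 2$, hence $a + d + \num 2 \leq 0$; but $\num 2 \leq a + d + \num 2$, contradiction. Next, suppose $b < a$ and $c < d$. The symmetric computation gives
\[ ad \;\geq\; (b+1)(c+1) \;=\; bc + b + c + 1, \]
so $bc + \num 1 \geq bc + b + c + \num 1$, which forces $b + c \leq 0$, i.e. $b = c = 0$. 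Then $ad = \num 1$; since $a \neq 0$ and $d \neq 0$ (else $ad = 0$), we have $a \geq \num 1$ and $d \geq \num 1$, so $d = \num 1 \cdot d \leq ad = \num 1$ gives $d = \num 1$, and similarly $a = \num 1$. Hence $\alpha = \oslash$.

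Finally, I would observe that trichotomy on $(a,b)$ and on $(c,d)$ produces nine combinations, each of which fits into one of the four cases already analysed (the ``consistent'' cases absorb all combinations with $a \leq b$, $c \leq d$ or with $b \leq a$, $d \leq c$, while the two strict-crossed cases cover the remainder). The only real obstacle is noticing that the two crossed cases are \emph{asymmetric}: one is plainly impossible, while the other does not yield a contradiction but instead pins the matrix down to $\oslash$, which is exactly why the lemma needs the third alternative $\alpha = \oslash$ in its conclusion.
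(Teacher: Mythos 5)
Your proof is correct and follows essentially the same route as the paper's: a trichotomy case split on $(a,b)$ and $(c,d)$, with the two crossed cases resolved via the determinant identity $ad=bc+1$ — one yielding a contradiction, the other pinning $\alpha$ down to $\oslash$. The paper merely states the two crossed cases and leaves the consistent ones implicit, so your write-up is the same argument spelled out in slightly more detail.
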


\begin{proof}
Suppose $a< b$ and $d< c$. Then $ad < bc$, contradicting $bc+1 = ad$.

Suppose $b< a$ and $c< d$. Then $(b+1) \leq a$ and $(c+1)\leq d$.
So, $bc+b+c +1 \leq ad$. Since $bc+1 = ad$, it follows that
 $b+c=0$, and, hence, $b=c=0$. Hence, $ad=1$, and, thus, $a=d=1$.
We conclude that $\alpha = \emp$.
\end{proof}

\begin{lem}[\pamo]\label{ducksmurf}
Suppose $\alpha = \sspl abcd$ is in the special linear monoid. Then,
\begin{enumerate}[i.]
\item
$\scol ac < \scol bd$ iff, for some $\beta$ in the special linear monoid, we have $\alpha=\beta {\tt A}$.
\item
$\scol bd < \scol ac$ iff, for some $\beta$ in the special linear monoid, we have $\alpha=\beta {\tt B}$.
\end{enumerate}
\end{lem}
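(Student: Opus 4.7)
The plan is to prove each direction of both biconditionals by direct matrix computation, exploiting that \pamo\ provides the Subtraction Principle \axpam\ref{plimp15}. Parts (i) and (ii) will be essentially symmetric; I will write (i) in detail and then indicate the obvious modification for (ii).

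For ($\Leftarrow$) of (i), suppose $\alpha = \beta {\tt A}$ where $\beta = \sspl efgh$ is in the special linear monoid. A direct multiplication gives
$\beta {\tt A} = \sspl efgh \sspl 1101 = \sspl e{e+f}g{g+h},$
so $a = e$, $b = e+f$, $c = g$, $d = g+h$. Hence $a \leq b$ and $c \leq d$. Moreover, $f$ and $h$ cannot both be $0$, since then $\det\beta = eh - fg = 0 \neq 1$. Thus $\scol ac < \scol bd$. For ($\Rightarrow$) of (i), assume $\scol ac < \scol bd$, which in particular yields $a \leq b$ and $c \leq d$. By \axpam\ref{plimp15}, pick $f,h$ with $a+f = b$ and $c+h = d$, and set $\beta := \sspl afch$. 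Then $\beta {\tt A} = \sspl a{a+f}c{c+h} = \alpha$ by direct computation, and
$\det\beta \;=\; ah - fc \;=\; a(c+h) - (a+f)c \;=\; ad - bc \;=\; 1,$
using $\det\alpha = 1$. So $\beta \in {\sf SL}_2$, as required.

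Part (ii) goes through by the same pattern with ${\tt B}$ in place of ${\tt A}$. For ($\Leftarrow$), compute $\beta{\tt B} = \sspl efgh\sspl 1011 = \sspl{e+f}f{g+h}h$, so $b \leq a$, $d \leq c$, and $e,g$ cannot both vanish by $eh - fg = 1$, giving $\scol bd < \scol ac$. For ($\Rightarrow$), use Subtraction to obtain $e$ with $e+b = a$ and $g$ with $g+d = c$, set $\beta := \sspl ebgd$, and verify $\det\beta = ed - bg = (a-b)d - b(c-d) = ad - bc = 1$ together with $\beta{\tt B} = \alpha$.

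There is no real obstacle: the whole argument reduces to a handful of two-by-two matrix manipulations and a single invocation of \axpam\ref{plimp15}. The only mildly delicate point is securing the \emph{strict} inequality in the two ($\Leftarrow$) directions, and this is handled uniformly by observing that the determinant condition $\det\beta = 1$ rules out the degenerate case in which the two quantities to be subtracted both vanish. Neither induction nor cut-shortening is needed; all computations live entirely inside the quantifier-free fragment supplemented by subtraction, in keeping with the directness of the interpretation $\mf H$.
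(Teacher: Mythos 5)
Your proof is correct and follows essentially the same route as the paper: the witness $\beta$ you build from the Subtraction Principle is exactly the matrix $\alpha{\tt A}^{-1}$ (resp.\ $\alpha{\tt B}^{-1}$) that the paper computes in the ring extension, and your determinant check is the same verification that this matrix lies in the special linear monoid. The only cosmetic differences are that you stay inside \pamo\ via \axpam\ref{plimp15} rather than passing to $\mf Z(\mc M)$, and that you obtain the strict inequality in the easy direction by a direct determinant argument where the paper cites Lemma~\ref{keukensmurf}; both are sound.
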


\begin{proof}
We treat (i). Item (ii) is similar.
The right-to-left direction is immediate using Lemma~\ref{keukensmurf} to get inequality.
We prove the right-to-left direction. Suppose   $\scol ac < \scol bd$. Then, moving to the extension where
we have the special linear group, we find that $\beta:= \alpha {\tt A}^{-1}$ is in the special linear monoid
and we are done. The other case is similar.
\end{proof}

We verify the fact that {\tt A} and {\tt B} are the only atoms and that we have the Stack Principle \axtc\ref{thorsmurf}.

\begin{lem}[\pamo]\label{substanulsmurf}
Every $\alpha$ in the special monoid is of one of the forms \emp\ or $\beta{\tt A}$ or $\beta{\tt B}$, where
$\beta$ is in the special monoid. 
\end{lem}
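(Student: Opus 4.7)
The plan is to read Lemma~\ref{substanulsmurf} as a direct corollary of the two preceding lemmas, namely the trichotomy for rows/columns (Lemma~\ref{keukensmurf}) and the characterisation of right-multiplication by ${\tt A}$ or ${\tt B}$ (Lemma~\ref{ducksmurf}). Both of these have already been established in \pamo, so no further matrix arithmetic should be necessary.

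First, I would take an arbitrary $\alpha = \sspl abcd$ in the special linear monoid. By Lemma~\ref{keukensmurf}, exactly one of the following three possibilities applies: either $\alpha = \oslash$, or $\scol ac < \scol bd$, or $\scol bd < \scol ac$. In the first case the conclusion of the lemma holds on the nose. In the second case I would invoke Lemma~\ref{ducksmurf}(i) to produce a $\beta$ in the special linear monoid with $\alpha = \beta{\tt A}$. In the third case I would invoke Lemma~\ref{ducksmurf}(ii) analogously to obtain $\beta$ with $\alpha = \beta{\tt B}$.

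There is no real obstacle here; the work has been front-loaded into Lemma~\ref{keukensmurf} (which hides the only non-trivial calculation, the use of $bc+1 = ad$ to rule out the diagonally mixed inequalities) and Lemma~\ref{ducksmurf} (which hides the passage to $\alpha {\tt A}^{-1}$ or $\alpha {\tt B}^{-1}$ in the ambient group ${\sf SL}_2(\mf Z(\mc M))$ and the verification that the result lies back in the non-negative part). The only thing to double-check is that the three clauses of the trichotomy are genuinely exhaustive in the statement I invoke; since Lemma~\ref{keukensmurf} delivers exactly this trichotomy without any side conditions, the proof reduces to one sentence of case analysis.
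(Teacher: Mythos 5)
Your proposal is correct and coincides exactly with the paper's own proof, which likewise derives the statement in one step from the trichotomy of Lemma~\ref{keukensmurf} combined with Lemma~\ref{ducksmurf}. Nothing further is needed.
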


\begin{proof}
By Lemma~\ref{keukensmurf}, we have $\alpha = \emp$ or $\scol ac < \scol bd$ or $\scol bd < \scol ac$.
By Lemma~\ref{ducksmurf}, we are immediately done. 
\end{proof}

Finally, we verify the Editors Principle.
 
 \begin{theorem}[\pamo]
 The translation $\upeta$ supports the Editors Principle \axtc{\ref{editors}}.
 \end{theorem}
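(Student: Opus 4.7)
The plan is to pass to the bi-interpretation with the discretely ordered commutative ring $\mf Z(\mc M)$, in which matrix inverses exist. Set $\eta := \alpha^{-1}\gamma \in {\sf SL}_2(\mf Z(\mc M))$; from $\alpha\beta = \gamma\delta$ one immediately gets $\eta\delta = \alpha^{-1}\gamma\delta = \beta$, so both $\alpha\eta = \gamma$ and $\eta\delta = \beta$ hold. If $\eta \in {\sf SL}_2(\mc M)$, then $\eta$ witnesses the first disjunct of Editors; if instead $\eta^{-1} \in {\sf SL}_2(\mc M)$, then $\alpha = \gamma\eta^{-1}$ and $\eta^{-1}\beta = \delta$ deliver the second. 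So the theorem reduces to showing that one of $\eta, \eta^{-1}$ has all non-negative entries.

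Write $\eta = \sspl pqrs$ with $ps - qr = 1$; since $\eta^{-1} = \sspl{s}{-q}{-r}{p}$, the reduction amounts to proving $p \geq 0$, $s \geq 0$, and $qr \geq 0$. A preliminary fact used throughout is that for every $\sspl{u_1}{u_2}{u_3}{u_4} \in {\sf SL}_2(\mc M)$ we have $u_1, u_4 \geq 1$: if $u_1 = 0$, then $u_1 u_4 = 1 + u_2 u_3 = 0$ gives $u_2 u_3 = -1$, impossible in the non-negative part.

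To prove $p \geq 0$, assume $p < 0$ for contradiction. From $g_1 = a_1 p + a_2 r \geq 0$ with $a_1 \geq 1$ we deduce $a_2 r \geq a_1 |p| \geq 1$, hence $a_2, r \geq 1$. From $b_1 = p d_1 + q d_3 \geq 0$ with $d_1 \geq 1$, similarly $q, d_3 \geq 1$. Since $ps = 1 + qr \geq 2 > 0$ and $p < 0$, we must have $s < 0$. From $g_4 = a_3 q + a_4 s \geq 0$ with $a_4 \geq 1$ we get $a_3 q \geq a_4 |s| \geq 1$, so $a_3 \geq 1$. Multiplying $a_2 r \geq a_1 |p|$ and $a_3 q \geq a_4 |s|$ and then substituting $a_1 a_4 = 1 + a_2 a_3$ and $ps = 1 + qr$ yields
\[
a_2 a_3 \cdot qr \;\geq\; a_1 a_4 \cdot ps \;=\; (1 + a_2 a_3)(1 + qr) \;=\; 1 + qr + a_2 a_3 + a_2 a_3 qr,
\]
so $0 \geq 1 + qr + a_2 a_3$, contradicting $qr \geq 1$ and $a_2 a_3 \geq 1$. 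A structurally identical argument starting from $s < 0$ (using $g_4, b_4, g_1$ in place of $g_1, b_1, g_4$) gives $s \geq 0$.

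Given $p, s \geq 0$, if $qr < 0$ then $ps = 1 + qr \leq 0$ forces $ps = 0$ and $qr = -1$, leaving four sub-cases ($p = 0$ or $s = 0$, with $(q,r) \in \{(1,-1),(-1,1)\}$). In each one, inspecting the corresponding column of either $\alpha\eta = \gamma$ or $\eta\delta = \beta$ reveals a strictly negative entry (using $a_i \geq 1$ or $d_i \geq 1$ on the diagonals), contradicting $\gamma, \beta \in {\sf SL}_2(\mc M)$. So $qr \geq 0$, and the proof is complete. The main obstacle is the sign analysis in the third paragraph: pooling inequalities from all four matrices $\alpha, \beta, \gamma, \delta$ and combining them, via the two determinant identities, into a single multiplied inequality that directly contradicts $ps - qr = 1$.
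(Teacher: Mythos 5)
Your proof is correct, and its overall strategy coincides with the paper's: pass to the discretely ordered ring $\mf Z(\mc M)$, form the quotient matrix (your $\eta=\alpha^{-1}\gamma$ is the inverse of the paper's $\mu=\gamma^{-1}\alpha$), and show that $\eta$ or $\eta^{-1}$ has all entries non-negative. Where you genuinely diverge is in the sign analysis. The paper splits the work into Lemma~\ref{voorsmurf}, which reduces everything to positivity of the diagonal of $\gamma^{-1}\alpha$ and disposes of the anti-diagonal in one line (mixed signs there would push the determinant above $1$), and Lemma~\ref{diamantsmurf}, which proves the diagonal positive by a short argument valid in any ordered commutative ring: from $q\leq 0$ one derives $r\leq 0$ using $\det\alpha=1$, contradicting positivity of the upper-left entry of $\mu\beta=\delta$. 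Your argument for $p,s\geq 0$ instead pools entrywise $\geq 1$ bounds drawn from all four matrices, multiplies two inequalities, and plays the determinant identities $a_1a_4=1+a_2a_3$ and $ps=1+qr$ off against each other; and since you only obtain $p,s\geq 0$ rather than $p,s\geq 1$, you need the extra case split on $qr=-1$ (where you should say explicitly that $(q,r)\in\{(1,-1),(-1,1)\}$ because the units of a discretely ordered ring are $\pm 1$ --- this is a discreteness fact, not automatic). Both routes work; the paper's buys a clean separation of what needs discreteness from what does not, while yours is self-contained and uniform in that every step is an explicit entrywise inequality, at the cost of leaning on discreteness throughout and a terminal case analysis.
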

 
 \begin{proof}
 The desired result is immediate from Lemmas~\ref{voorsmurf} and \ref{diamantsmurf}, using the fact that, due to bi-interpretability, we can switch
 back and forth between the theory of a discretely ordered commutative ring and the theory of its non-negative part.
 \end{proof}
 
 \begin{lem}\label{voorsmurf}
 Let $\mc R$ be a discretely ordered commutative ring. Let $\alpha$, $\beta$, $\gamma$ and $\delta$ be
 in ${\sf SL}_2(\mc R^{\mathsmaller{\geq 0}})$, i.e., elements of  ${\sf SL}_2(\mc R)$ with non-negative entries.
 Suppose $\alpha\beta = \gamma\delta$. Then, we have the Editors property for $\alpha$, $\beta$, $\gamma$, $\delta$ iff
 the elements of the diagonal of $\gamma^{-1}\alpha$ are positive.
 \end{lem}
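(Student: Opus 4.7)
Plan of proof. Set $M := \gamma^{-1}\alpha$, an element of ${\sf SL}_2(\mc R)$ with $\det M = 1$. Note that in the group ${\sf SL}_2(\mc R)$ we have the identity $M\beta = \gamma^{-1}(\alpha\beta) = \gamma^{-1}\gamma\delta = \delta$ and dually $\alpha = \gamma M$. Hence, if $M$ happens to lie in ${\sf SL}_2(\mc R^{\mathsmaller{\geq 0}})$, taking $\eta := M$ witnesses the second disjunct of the Editors property; if instead $M^{-1}$ lies in ${\sf SL}_2(\mc R^{\mathsmaller{\geq 0}})$, then $\alpha M^{-1} = \gamma$ and $M^{-1}\delta = \beta$, so $\eta := M^{-1}$ witnesses the first disjunct. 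So the entire argument reduces to pinning down when $M$ or $M^{-1}$ has non-negative entries.

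The bookkeeping lemma I would use is: any matrix $\sspl abcd \in {\sf SL}_2(\mc R^{\mathsmaller{\geq 0}})$ has strictly positive diagonal, i.e.\ $a,d \geq 1$. Indeed, $ad = 1 + bc$ and $bc \geq 0$ give $ad \geq 1$, and since $\mc R$ is discretely ordered an element $x \geq 0$ with $xy \geq 1$ for some $y$ must satisfy $x \geq 1$. Moreover, for $M = \sspl abcd \in {\sf SL}_2(\mc R)$ one has $M^{-1} = \sspl d{-b}{-c}a$, so the diagonal of $M^{-1}$ is just the swap of the diagonal of $M$.

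For the $(\Rightarrow)$ direction, suppose $\eta \in {\sf SL}_2(\mc R^{\mathsmaller{\geq 0}})$ witnesses the Editors property. If the second disjunct holds, then $\eta = \gamma^{-1}\alpha = M$, and by the bookkeeping lemma the diagonal of $M$ is positive. If the first disjunct holds, then $\eta = \alpha^{-1}\gamma = M^{-1}$, so again by the lemma the diagonal of $M^{-1}$, and hence (by swap) of $M$, is positive.

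For the $(\Leftarrow)$ direction, write $M = \sspl abcd$ and assume $a, d$ are positive (hence $\geq 1$). Then $bc = ad - 1 \geq 0$, and since $\mc R$ is an ordered ring the fact that the product of two elements is non-negative forces them to have the same sign: either both $b, c \geq 0$ or both $b, c \leq 0$. In the first case, $M \in {\sf SL}_2(\mc R^{\mathsmaller{\geq 0}})$ and we take $\eta := M$ as above. In the second case, $M^{-1} = \sspl d{-b}{-c}a$ has all non-negative entries, so $\eta := M^{-1}$ works. The one step that deserves a line of justification (but is not really an obstacle) is the same-sign claim for $b, c$; it follows cleanly from the usual sign rules in any linearly ordered commutative ring. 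Everything else is elementary algebra in ${\sf SL}_2(\mc R)$ exploiting the bi-interpretability between $\pamo$ and its ring extension.
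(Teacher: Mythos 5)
Your proof is correct and follows essentially the same route as the paper's: identify the witness as $\gamma^{-1}\alpha$ or its inverse, use that matrices in ${\sf SL}_2(\mc R^{\mathsmaller{\geq 0}})$ have positive diagonal (via $ad=bc+1$ and discreteness) for one direction, and use the sign of $bc=ad-1$ to place $M$ or $M^{-1}$ in ${\sf SL}_2(\mc R^{\mathsmaller{\geq 0}})$ for the other. The only (harmless) difference is that you verify both equations of each Editors disjunct directly from the matrix identities, where the paper instead reduces to the Weak Editors Property by appealing to left cancellation.
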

 
 \begin{proof}
 Let $\alpha$, $\beta$, $\gamma$, $\delta$ be as stipulated in the lemma.
 We first note that, since we have an interpretation of left cancellation, we only need to
 prove the lemma for  the Weak Editors Property given by \axtc{\ref{weakeditors}}. 
 
 Suppose $\alpha\mu=\gamma$ or $\alpha = \gamma\mu$, where $\mu$ is in
  ${\sf SL}_2(\mc R^{\mathsmaller{\geq 0}})$. Clearly, $\mu$ is either $\alpha^{-1}\gamma$ or $\gamma^{-1}\alpha$.
  We note that $(\alpha^{-1}\gamma)^{-1}= \gamma^{-1}\alpha$. Since the inverse interchanges the elements
  of the diagonal, it follows that the elements of the diagonal must be positive for both $\mu$ and $\mu^{-1}$.
  
 Conversely, suppose the elements of the diagonal of $\gamma^{-1}\alpha$ are positive. If one of the elements of the
 anti-diagonal were positive and the other negative, it would follow that the determinant of $\gamma^{-1}\alpha$
 is $>1$. \emph{Quod non}. So  both elements of the anti-diagonal are non-negative or both are non-positive.
 Hence  $\gamma^{-1}\alpha$ or its inverse $\alpha^{-1}\gamma$ is in ${\sf SL}_2(\mc R^{\mathsmaller{\geq 0}})$.
 \end{proof}
 
 In the next lemma, we do not use discreteness.
 
 \begin{lem}\label{diamantsmurf}
 Let $\mc R$ be an ordered commutative ring. Suppose $\alpha$, $\beta$, $\gamma$ and $\delta$ are
 in ${\sf SL}_2(\mc R^{\mathsmaller{\geq 0}})$ and $\alpha\beta = \gamma\delta$. Then, the elements on
 the diagonal of $\gamma^{-1}\alpha$ are positive.
 \end{lem}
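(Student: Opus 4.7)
The plan is to set $\mu := \gamma^{-1}\alpha$, written as $\mu = \sspl{p}{q}{r}{s}$ with $ps - qr = 1$, and to exploit that the hypothesis $\alpha\beta = \gamma\delta$ also gives $\delta = \mu\beta$. Writing $\gamma = \sspl{g_{11}}{g_{12}}{g_{21}}{g_{22}}$ and $\beta = \sspl{b_{11}}{b_{12}}{b_{21}}{b_{22}}$, my first observation is that $g_{11}, g_{22}, b_{11}, b_{22}$ are all strictly positive, since $g_{11}g_{22} = 1 + g_{12}g_{21} \geq 1$ and similarly for $\beta$. Throughout I will use that an ordered commutative ring has no zero divisors, so a strictly positive factor may be cancelled in an inequality.

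The main work is to rule out $p < 0$. Expanding $\alpha = \gamma\mu$ at the $(1,1)$-entry gives $pg_{11} + rg_{12} \geq 0$, i.e., $rg_{12} \geq (-p)g_{11}$, whose right-hand side is strictly positive; hence $r > 0$ and $g_{12} > 0$. The $(1,1)$-entry of $\delta = \mu\beta$ yields, analogously, $q > 0$ and $b_{21} > 0$. The identity $ps - qr = 1$ together with $p < 0$ and $qr > 0$ then forces $ps > 0$, and so $s < 0$. Now the $(1,2)$-entry of $\alpha$ gives $qg_{11} \geq (-s)g_{12}$. Multiplying this with the earlier $rg_{12} \geq (-p)g_{11}$ (both inequalities have non-negative sides, so their product is legitimate) delivers
\[
qr \cdot g_{11}g_{12} \;\geq\; (-p)(-s)\cdot g_{11}g_{12} \;=\; ps \cdot g_{11}g_{12},
\]
and cancelling the strictly positive factor $g_{11}g_{12}$ yields $qr \geq ps$, contradicting $ps - qr = 1$. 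The degenerate case $p = 0$ gives $qr = -1$; from $\alpha_{21} = rg_{22} \geq 0$ and $g_{22} > 0$ we get $r \geq 0$, and $r = 0$ is incompatible with $qr = -1$, hence $r > 0$ and $q < 0$. But then $\delta_{12} = qb_{22}$ is strictly negative, contradicting the non-negativity of $\delta$. So $p > 0$.

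An entirely analogous argument, starting from the $(2,2)$-entries of $\alpha$ and $\delta$ (or obtained via the anti-transposition of Subsection~\ref{omkeersmurf}, which swaps the roles of the diagonal entries of $\mu$ while preserving all the hypotheses), rules out $s \leq 0$ and gives $s > 0$. The main obstacle is purely algebraic bookkeeping: identifying \emph{which} pair of entrywise non-negativity inequalities to multiply so that the resulting product inequality produces the key statement $(qr - ps)\cdot g_{11}g_{12} \geq 0$. Once that combination is found, the rest is forced by ordered-ring cancellation together with the determinant relation, and no division is ever needed.
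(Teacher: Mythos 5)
Your argument is correct. It shares the paper's framework---set $\mu := \gamma^{-1}\alpha$, use $\alpha=\gamma\mu$ and $\delta=\mu\beta$, and extract sign information on the entries of $\mu$ from the non-negativity of the entries of $\alpha$ and $\delta$---but it reaches the contradiction by a different chain of inequalities. The paper treats $\mu_{11}\leq 0$ in a single case, computes $\mu$ explicitly as $\sspl{\ell}{-j}{-k}{i}\sspl abcd$, and uses $\det\alpha=1$ to conclude $\mu_{12}\leq 0$, which the positivity of $\delta_{11}$ immediately refutes; it never invokes $\det\mu$. You instead split off $\mu_{11}=0$ as a degenerate case, harvest $\mu_{12},\mu_{21}>0$ and $\mu_{22}<0$ from the $(1,1)$- and $(1,2)$-entries, and contradict $\det\mu=1$ by multiplying two entry-inequalities of $\gamma\mu$ and cancelling the positive factor $g_{11}g_{12}$. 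Both arguments use only ordered-ring cancellation (products of positives are positive), so they are equally elementary; the paper's is somewhat shorter because the determinant of $\alpha$ does the work of your product-of-inequalities step and no case split is needed. One small correction to your parenthetical: anti-transposition reverses the order of multiplication, so it turns $\gamma^{-1}\alpha$ into the anti-transpose of $\beta\delta^{-1}=\mu^{-1}$, whose $(1,1)$-entry is again $\mu_{11}$; to swap the diagonal entries of $\mu$ you must additionally interchange the two sides of $\alpha\beta=\gamma\delta$ (or use the plain transpose instead). Your primary route via the $(2,2)$-entries goes through as stated.
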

 
 \begin{proof}
  Let $\alpha$, $\beta$, $\gamma$, $\delta$ be as stipulated in the lemma. Let 
   $\alpha = \sspl abcd$, $\beta = \sspl efgh$, $\gamma = \sspl ijk\ell$, $\delta = \sspl mnop$, $\mu := \gamma^{-1}\alpha := \sspl qrst$.
We note that
\[ \mu = \sspl \ell {-j} {-k} i \sspl abcd = \sspl{\ell a-jc}{\ell b -jd}{-ka+ic}{-kb+id}.\]

Suppose $q\leq 0$. Then,  $\ell a \leq jc$. It follows that
\[ar= a \ell b-ajd \leq jcb -jad = j(cb-ad) = -j.\] 
Since $a>0$ and $j\geq 0$, we may conclude that $r\leq 0$.
On the other hand, 
\[ \mu \beta = \sspl qrst \sspl efgh = \sspl{qe+rg}{qf+rh}{se+tg}{sf+th} = \sspl mnop = \delta.\]
Since $q\leq 0$, $e> 0$,  $g \geq 0$, $m>0$, we find $r>0$. A contradiction. So, $q>0$. 

A similar argument shows that $t>0$.
 \end{proof}

\begin{rem}\label{chatsmurf}{\small
I asked ChatGPT \textup(GPT-5, OpenAI, conversation on 9 October 2025\textup)  the following.
Consider  $\alpha$, $\beta$, $\gamma$ and $\delta$ in ${\sf SL}_2(\mathbb R)$. Suppose all the entries
of these matrices are non-negative. Show that the elements on the diagonal of $\gamma^{-1}\alpha$ are positive.
In its answer, it provided  two key ideas of the proof of Lemma~\ref{diamantsmurf}.}
\end{rem}

\begin{rem}
{\small We note that Lemma~\ref{voorsmurf} uses discreteness, but Lemma~\ref{diamantsmurf} does not.
The following example shows that discreteness is essential for the Editors Property. We do not
have it in ${\sf SL}_2(\mathbb Q^{\mathsmaller{\geq 0}})$.

As usual ${\tt A} = \sspl 1101$. We have:
\[ \sspl{\frac 75}{\frac 15}{\frac 35}{\frac 15}{\tt A}  = {\tt A}\sspl {\frac 45} {\frac 15}{\frac 35}{\frac 75} \text{ and }\;
{\tt A}^{-1} \sspl{\frac 75}{\frac 15}{\frac 35}{\frac 15} = \sspl{\frac 45}{-\frac 35}{\frac 35}{\frac 45}.\]
Of course, the same example works for ${\sf SL}_2(\mathbb R^{\mathsmaller{\geq 0}})$.

Another way to see that ${\sf SL}_2(\mathbb R^{\mathsmaller{\geq 0}})$ does not satisfy the Editors Principle is as follows.
The theory of  ${\sf SL}_2(\mathbb R^{\mathsmaller{\geq 0}})$
is decidable, as follows from an observation in  \cite{murw:weak22}. On the other hand  $\tc1[{\tt a},{\tt b}]$ is essentially undecidable since it interprets {\sf Q}, as in shown
in \cite{gane:arit09}, \cite{svej:inte09}, and \cite{murw:weak24}. 
}
\end{rem}

Our translation $\upeta$ is is entirely quantifier-free. This, 
means that it is absolute. A pleasant fact is that $\preceq_{\sf i}$ is also quantifier-free (modulo
provable equivalence).

\begin{theorem}[\pamo]\label{nozelsmurf}
$(a\preceq_{\sf i}b)^{\upeta}$ is equivalent to a quantifier-free formula.
\end{theorem}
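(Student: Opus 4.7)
The plan is to unfold $(a \preceq_{\sf i} b)^{\upeta}$ to the statement that there exists a matrix $\mu$ in the special linear monoid with $a\mu = b$, and then exploit the fact that, working in the ring extension $\mf{Z}(\mc M)$ via the bi-interpretation recalled in Subsection~\ref{matersmurf}, the matrix $a$ is invertible. Any such witness $\mu$ is then uniquely determined as $\mu = a^{-1}b$, and the existential statement collapses to the quantifier-free statement that the four entries of the explicitly computed matrix $a^{-1}b$ are non-negative.

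Concretely, writing $a = \sspl{a_1}{a_2}{a_3}{a_4}$ and $b = \sspl{b_1}{b_2}{b_3}{b_4}$, we have $a^{-1} = \sspl{a_4}{-a_2}{-a_3}{a_1}$ in $\mf Z(\mc M)$, and therefore
\[ a^{-1}b = \sspl{a_4 b_1 - a_2 b_3}{a_4 b_2 - a_2 b_4}{a_1 b_3 - a_3 b_1}{a_1 b_4 - a_3 b_2}. \]
I would therefore propose as the quantifier-free equivalent the conjunction
\[ a_2 b_3 \leq a_4 b_1 \;\wedge\; a_2 b_4 \leq a_4 b_2 \;\wedge\; a_3 b_1 \leq a_1 b_3 \;\wedge\; a_3 b_2 \leq a_1 b_4, \]
taken together with the (already quantifier-free) conditions that $a$ and $b$ lie in $\updelta_{\upeta}$.

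For the left-to-right direction, given any witness $\mu$ in the special linear monoid, passing to $\mf Z(\mc M)$ one concludes $\mu = a^{-1}b$, and the four entries of $a^{-1}b$ agree with the non-negative entries of $\mu$. For the right-to-left direction, one takes $\mu$ to be the explicit matrix displayed above: the four inequalities place each entry in $\mc M$; the determinant equation $\det(\mu) = 1$ reduces (by distributing and using $a_1 a_4 = a_2 a_3 + 1$ together with $b_1 b_4 = b_2 b_3 + 1$) to a polynomial identity provable in \pamo; and $a\mu = b$ is similarly a quartet of polynomial identities verified using $a_1 a_4 - a_2 a_3 = 1$. Each of these checks can be made entry-by-entry in \pamo, so the main obstacle is only bookkeeping: the appeal to $\mf Z(\mc M)$ serves merely to discover the right $\mu$ and justify uniqueness, while every resulting equality and inequality can be certified directly inside \pamo.
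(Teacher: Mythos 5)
Your proposal is correct and follows essentially the same route as the paper: unfold $(a\preceq_{\sf i}b)^{\upeta}$ to $\exists\mu\,(a\mu=b)$, observe in the ring extension that any witness must be $a^{-1}b$, and replace the existential by the four quantifier-free non-negativity conditions on the entries of the explicitly computed $a^{-1}b$. Your displayed matrix and the resulting conjunction of inequalities match the paper's (the paper's proof even contains the same computation, with a small typo in its last entry), so nothing further is needed.
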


\begin{proof}
We reason switching back and forth to $\mathsf z$.
Suppose, for some $\gamma$,  $\alpha\gamma = \beta$. Then $\gamma =  \alpha^{-1}\beta$ has non-negative entries.
Conversely, if $\alpha^{-1}\beta$ has non-negative entries it is a $\gamma$ such that $\alpha\gamma = \beta$.
We have \[\sspl d{-b}{-c}a \sspl efgh = \sspl{de-bg}{df-bh}{-ce+ag}{-ef+ah}.\] So, $\exists \gamma\; \alpha\gamma=\beta$ is
equivalent to: $de \geq bg$, $df\geq bh$, $ag\geq ce$, $ah \geq ef$.  
\end{proof}

We note that we did not try to find the maximal string theory interpretable via $\upeta$. For the example, by the results of Section
\ref{omkeersmurf}, we can read all principles in reverse order. For example, we also have right cancellation.
Moreover, we can add functions to our string theory that invert the ordering and functions that interchange {\tt A} and {\tt B} and interpret
the resulting theory in  quantifier-free way via an extension of $\upeta$.

\subsection{About ${\tt A}^n$}
We interpolate a subsection about the treatment of ${\tt A}^n$. 
We define:
\begin{itemize}
\item
${\tt A}^n := \sspl1n01$,
\item
$[n] := {\tt B}{\tt A}^n = \sspl 1n1{n+1}$.
\end{itemize}

We say that $\alpha$ is an {\tt A}-string if ${\tt B}\not\preceq\alpha$.

\begin{theorem}[\pamo]\label{freesmurf}
${\tt A}^n$ is {\tt B}-free, in other words, we cannot have ${\tt A}^n\neq \alpha {\tt B}\beta$, for any
$\alpha$ and $\beta$ is the special monoid.
\end{theorem}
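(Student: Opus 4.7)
The plan is to proceed by a direct computation with the $2\times 2$-entries of the matrices involved. I would suppose for contradiction that ${\tt A}^n = \alpha \ast {\tt B} \ast \beta$ with $\alpha = \sspl abcd$ and $\beta = \sspl efgh$ both in the special linear monoid, and expand:
\[\alpha{\tt B} \;=\; \sspl abcd \sspl 1011 \;=\; \sspl{a+b}{b}{c+d}{d},\]
followed by
\[\alpha{\tt B}\beta \;=\; \sspl{(a+b)e+bg}{(a+b)f+bh}{(c+d)e+dg}{(c+d)f+dh}.\]
Equating this with ${\tt A}^n = \sspl 1n01$, the decisive information sits in the $(2,1)$-entry: we must have $(c+d)e + dg = 0$. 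Since all entries are non-negative in \pamo, both summands must vanish, giving $(c+d)e = 0$ and $dg = 0$.

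Next I would invoke the determinant conditions, which in the \pamo-style formulation read $bc+1 = ad$ for $\alpha$ and $fg+1 = eh$ for $\beta$. From $(c+d)e = 0$, I would split into two cases. If $e=0$, then $eh = 0$, whereas $fg + 1 \geq 1$, contradicting $fg + 1 = eh$. Otherwise $c + d = 0$, forcing $c = d = 0$; then $ad = 0$, contradicting $bc + 1 = ad$. Either horn closes the argument. (The equation $dg = 0$ plays no essential role, but it is a helpful cross-check.)

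The main obstacle is notational rather than mathematical: I need to confirm that every step remains inside \pamo\ without tacitly invoking subtraction. This is why I would phrase the determinant conditions from the outset as $bc+1 = ad$ and $fg+1 = eh$, rather than the more familiar $ad - bc = 1$. Alternatively, the bi-interpretability between \pamo\ and the theory of discretely ordered commutative rings, exploited throughout Section~\ref{marcoroni}, lets one pass to $\mf Z(\mc M)$ for the duration of the computation and return at the end.
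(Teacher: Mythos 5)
Your proposal is correct and follows essentially the same route as the paper: both expand $\alpha{\tt B}\beta$ and focus on the $(2,1)$-entry $(c+d)e+dg$, using the determinant conditions to rule out its vanishing (the paper compresses your two-case analysis into the single observation that $(c+d)e+dg\geq de\geq 1$, since $bc+1=ad$ forces $d\geq 1$ and $fg+1=eh$ forces $e\geq 1$). No gap.
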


\begin{proof}
We have:
\[ \sspl abcd \sspl 1011 \sspl efgh = \sspl \dots \dots {(c+d)e+dg}\dots.\]
 Moreover, $(c+d)e+ dg \geq de \geq 1$.
\end{proof}

\noindent
Does \pamo\ prove that every Markov {\tt A}-string is of the form ${\tt A}^n$? In Theorem~\ref{maatjassmurf}, we will show
that this is not the case. 

We have:
\begin{theorem}[\pamo] \label{unitweesmurf}
Let $\alpha$ and $\alpha'$ be in the special linear monoid.
Suppose $\alpha \sing n = \alpha'\sing {n'}$. Then $\alpha = \alpha'$ and $n=n'$.
\end{theorem}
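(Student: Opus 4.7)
The plan is to apply the Editors Principle to the equation $\alpha \sing n = \alpha' \sing{n'}$, after unfolding $\sing n$ as ${\tt B}{\tt A}^n$ and $\sing{n'}$ as ${\tt B}{\tt A}^{n'}$. Since $\mf H$ interprets a theory containing the Editors Principle \axtc{\ref{editors}} in \pamo, I would extract an element $\mu$ in the special linear monoid such that either (a) $\alpha\mu = \alpha'$ and ${\tt B}{\tt A}^n = \mu \cdot {\tt B}{\tt A}^{n'}$, or (b) $\alpha = \alpha'\mu$ and $\mu\cdot{\tt B}{\tt A}^n = {\tt B}{\tt A}^{n'}$. The two cases are interchanged by swapping $(\alpha, n) \leftrightarrow (\alpha', n')$, so I will only carry out case (a).

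To analyze case (a), I would appeal to the left-sided Stack Principle (Theorem~\ref{sterkesmurf}) and split into three subcases: $\mu = \emp$, $\mu = {\tt A}\mu'$, or $\mu = {\tt B}\mu'$. If $\mu = \emp$, then $\alpha = \alpha'$ and ${\tt B}{\tt A}^n = {\tt B}{\tt A}^{n'}$; left-cancellation then gives ${\tt A}^n = {\tt A}^{n'}$, from which $n=n'$ follows by comparing the $(1,2)$-entries of $\sspl 1 n 0 1$ and $\sspl 1 {n'} 0 1$.

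The two non-trivial subcases lead to a contradiction. In the subcase $\mu = {\tt A}\mu'$, I have ${\tt B}\cdot{\tt A}^n = {\tt A}\cdot(\mu' {\tt B}{\tt A}^{n'})$; the leading-atom uniqueness given by Theorem~\ref{hippesmurf}(ii) forces ${\tt A} = {\tt B}$, contradicting the fact established in Lemma~\ref{freyasmurf} that ${\tt A}$ and ${\tt B}$ are distinct atoms. In the subcase $\mu = {\tt B}\mu'$, left-cancelling ${\tt B}$ yields ${\tt A}^n = \mu'\cdot{\tt B}{\tt A}^{n'}$, which directly contradicts the ${\tt B}$-freeness of ${\tt A}^n$ established in Theorem~\ref{freesmurf}.

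I do not anticipate a real obstacle; the argument is a clean stringology computation once the Editors Principle and the Stack Principle are in place, and both are already interpreted in \pamo\ via $\upeta$. The one point that requires mild care is the appeal to Theorem~\ref{hippesmurf}(ii), which was proved in \tc1: since $\mf H$ supports at least $\tcc 1$, this theorem is available after relativisation, and similarly for left cancellation and the ${\tt B}$-freeness of ${\tt A}^n$. No matrix arithmetic beyond Theorem~\ref{freesmurf} should be needed.
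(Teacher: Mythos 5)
Your proof is correct, but it takes a genuinely different route from the paper's. The paper argues by trichotomy on $n$ versus $n'$: assuming $n<n'$, it passes to the ring extension, multiplies both sides on the right by ${\tt A}^{-n}$ to get $\alpha{\tt B}=\alpha'{\tt B}{\tt A}^{n'-n}$, and derives a contradiction from the fact that the left side ends with {\tt B} while the right side ends with {\tt A} (the last-letter dichotomy of Lemma~\ref{ducksmurf}); cancellation then gives $\alpha=\alpha'$. You instead stay entirely at the level of the interpreted string theory, invoking the Editors Principle, the left-sided Stack Principle of Theorem~\ref{sterkesmurf}, Theorem~\ref{hippesmurf}(ii), and the {\tt B}-freeness of ${\tt A}^n$ from Theorem~\ref{freesmurf}. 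All of these are indeed available in \pamo\ via $\upeta$, so your argument goes through; it is essentially the ``abstract'' proof that $[\cdot]_{\uptheta}$-blocks parse uniquely, and it would work verbatim in any interpretation of $\tcc1({\tt b})$ in which the tally codes are {\tt b}-free --- which is in fact how the paper handles the atomicity of $[n]_\uptheta$ in Theorem~\ref{goudensmurf} via Theorem~\ref{aktievesmurf}. The paper's proof is shorter because it exploits the concrete matrix representation (invertibility of ${\tt A}^n$ in ${\sf SL}_2(\mf Z(\mc M))$); yours is longer but more portable. One cosmetic point: the distinctness of {\tt A} and {\tt B} is not part of Lemma~\ref{freyasmurf} (which only gives atomicity); it is the ``obvious fact'' noted separately in the text, and of course immediate from $\sspl 1101\neq\sspl 1011$.
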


\begin{proof}
We have $\alpha{\tt B}{\tt A}^n = \alpha'{\tt B}{\tt A}^{n'}$. Suppose $n<n'$.
Then, 
\[ \alpha{\tt B} = \alpha{\tt B}{\tt A}^n{\tt A}^{-n}  = \alpha'{\tt B}{\tt A}^{n'}{\tt A}^{-n} = \alpha'{\tt B}{\tt A}^{n'-n'}.\] 
But this is impossible, since $ \alpha{\tt B}$ ends with {\tt B} and $\alpha'{\tt B}{\tt A}^{n'-n'}$ ends with {\tt A}.
Similarly for $n'< n$. So $n= n'$, and hence $\alpha = \alpha'$.
\end{proof}

In the next theorem, we need the power of  \pamres. In the context of \pamres, 
given $a$ and $b$, there are unique $m$ and $r$ such that $b= am+r$ and $r<a$.
We will  write $\divi ba$ for $m$ and $\rema ba$ for $r$.
 
\begin{theorem}[\pamres]\label{substasmurf}
Every $\alpha$ in the special linear monoid is of one of the forms ${\tt A}^n$ or $\beta\sing n$. 
More specifically, if $\alpha = \sspl abcd$. Then, $\alpha ={\tt A}^{\divi ba}$ or $\alpha = \beta\sing {\,{\divi ba}\,}$.
\end{theorem}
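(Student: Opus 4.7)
The plan is to use Euclidean Division to peel off a maximal trailing block ${\tt A}^n$ from $\alpha$ and then invoke Theorem~\ref{substanulsmurf} to classify what remains. Given $\alpha = \sspl abcd$ in the special linear monoid, the relation $ad = bc + 1 \geq 1$ forces $a \geq 1$, so Euclidean Division in \pamres\ produces unique $n = \divi ba$ and $q = \rema ba$ with $b = an + q$ and $q < a$. The obvious candidate witness is $\gamma := \alpha\,{\tt A}^{-n}$, computed by switching to the ring $\mf Z(\mc M)$ via bi-interpretability.

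First I would compute
\[ \gamma = \sspl abcd \sspl 1{-n}01 = \sspl a q c {d-cn}\]
and check its entries lie back in the non-negative part. The entries $a$, $q$, $c$ are manifestly non-negative; for $d - cn$, the key calculation is
\[ a(d - cn) = ad - acn = (bc + 1) - acn = c(b - an) + 1 = cq + 1 \geq 1.\]
Since $a \geq 1$ and $\mf Z(\mc M)$ is discretely ordered, this gives $d - cn \geq 1$, so $\gamma$ is in ${\sf SL}_2(\mc M)$ (its determinant being $\det(\alpha)\det({\tt A}^{-n}) = 1$).

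Next I would apply Theorem~\ref{substanulsmurf} to $\gamma$: either $\gamma = \emp$, $\gamma = \delta {\tt A}$, or $\gamma = \delta {\tt B}$. The case $\gamma = \delta {\tt A}$ is ruled out by Lemma~\ref{ducksmurf}(i), which in this situation would require $\scol ac < \scol q{d-cn}$ and hence $a \leq q$, contradicting $q < a$. If $\gamma = \emp$, then $\alpha = \gamma\,{\tt A}^n = {\tt A}^{\divi ba}$. If $\gamma = \delta {\tt B}$, then $\alpha = \delta\,{\tt B}\,{\tt A}^n = \delta\sing{\,\divi ba\,}$ with $\delta$ in the special linear monoid, as required.

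The main obstacle is really only showing that $d - cn$ lands in $\mc M$, which is where discreteness and the step into $\mf Z(\mc M)$ do the essential work; the rest is a clean combination of Euclidean Division with Theorem~\ref{substanulsmurf} and Lemma~\ref{ducksmurf}, and, reassuringly, no Solovay-style shortening is needed because all quantities are produced directly by the arithmetic of \pamres.
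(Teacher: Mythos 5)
Your proposal is correct and follows essentially the same route as the paper: your $\gamma=\alpha\,{\tt A}^{-n}=\sspl a{\rema ba}c{d-\divi ba c}$ is exactly the paper's $\beta_0$, the non-negativity of $d-\divi ba c$ is obtained from the same determinant computation $a(d-cn)=cq+1$, and the final classification via Theorem~\ref{substanulsmurf} and Lemma~\ref{ducksmurf} (ruling out a trailing ${\tt A}$ because $\rema ba<a$) is just a slightly more explicit version of the paper's concluding case split. No substantive difference.
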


\begin{proof}
Let $\alpha = \sspl abcd$. We work in the extension to the full ring. 
Let $\beta_0 := \sspl a{\rema ba}c{d-{\divi ba}c}$. 
Clearly, $\beta_0{\tt A}^{\divi ba} = \alpha$. 
We see that ${\sf det}(\beta_0)=1$ and, hence, that $d-\divi bac$ must be positive. 
If $\rema ba =0$, then either $\beta_0= \oslash$ or $\beta_0$ ends with {\tt B}.
If $\rema ba> 0$, then $\beta_0$ ends with {\tt B}. So, either $\beta_0 =\oslash$ or,
for some $\beta_0 = \beta{\tt B}$, for some $\beta$. 
\end{proof}

We will show, in Theorem~\ref{habermassmurf},  that Theorem~\ref{substasmurf} is not derivable in \pamo.

\begin{cor}[\pamres]
Every {\tt B}-free string is of the form ${\tt A}^n$.
\end{cor}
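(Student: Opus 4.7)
The plan is simply to invoke Theorem~\ref{substasmurf}, which is the strong structural result available in \pamres: every $\alpha$ in the special linear monoid is either of the form ${\tt A}^n$ or of the form $\beta\sing n$. Recall that $\sing n$ was defined as ${\tt B}{\tt A}^n$, so the second alternative unfolds to $\alpha = \beta{\tt B}{\tt A}^n$.

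Now assume $\alpha$ is {\tt B}-free, i.e.\ ${\tt B}\not\preceq \alpha$. Unfolding the definition of $\preceq$ from the start of the string-theory section, ${\tt B}\preceq \alpha$ means there exist $u,v$ with $\alpha = (u\ast {\tt B})\ast v$. In the second alternative we have precisely such a witness, taking $u := \beta$ and $v := {\tt A}^n$ (and using associativity, \axtc 3). So the second alternative is incompatible with {\tt B}-freeness, and only the first alternative $\alpha = {\tt A}^n$ survives.

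The argument is short because all of the real work — performing Euclidean division on the top row of $\sspl abcd$ to peel off an ${\tt A}^{\divi ba}$ tail and either reach the identity or expose a trailing {\tt B} — has already been done in Theorem~\ref{substasmurf}, which is where \pamres\ (specifically \axpam\ref{plimp16}) is used. There is no further obstacle; the corollary is just the contrapositive reading of that dichotomy against the definition of ${\tt B}$-free.
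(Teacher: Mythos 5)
Your proof is correct and is exactly the argument the paper intends: the corollary is stated without proof as an immediate consequence of Theorem~\ref{substasmurf}, since the alternative $\alpha=\beta\sing n=(\beta\ast{\tt B})\ast{\tt A}^n$ directly witnesses ${\tt B}\preceq\alpha$ and is thus excluded by ${\tt B}$-freeness. Nothing is missing.
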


 \subsection{Interpretation of \futc1\ in $\pamo$}\label{luckysmurf}
   
   In this section, we give an interpretation $\mf U$ of \futc1, based on a translation $\uptheta$, in 
   $\pamo$. We have already studied the translation $\upeta$ in Section~\ref{matersmurf} that supports an interpretation of
 $\tcc{2}[{\tt a},{\tt b}]$  in $\pamo$.

   We define the translation $\uptheta$.
  \begin{itemize}
\item
$\updelta_\upgamma^{\mf o}(x)$ iff $x=x$.
\item
$ {\sf D} :=\updelta_\uptheta^{\mf s}$ consists of $\oslash$ plus all matrices
of the form ${\tt B}\alpha$, where $\alpha$ is in the special linear monoid. Here the matrices are given as
syntactic quadruples. 
\item
$\oslash_\uptheta := \oslash$.
 \item
 $[n]_\uptheta := {\tt B}{\tt A}^n = \sspl 1n1{n+1}$.
 \item
 $\digamma_\uptheta \sspl abcd := \tupel{a,b,c,d} := \tupel{a,\tupel{b,\tupel{c,d}}}$,\\
 where $\tupel{x,y}:= (x+y)^2+x$ is the non-surjective variant on the Cantor Pairing employed by \jer\ in \cite{jera:sequ12}.
 \item
  $\alpha \star_\uptheta \beta := \alpha\beta$.
  \item
  $x=^{\mf{oo}}_{\uptheta}y :\iff x=y$.
  \item
  $\sspl abcd =_{\uptheta}^{\mf {ss}}\sspl{a'}{b'}{c'}{d'} :\iff a=a' \wedge b=b' \wedge c=c'\wedge d=d'$. 
\end{itemize}

As promised in Section~\ref{mediatorsmurf}, our translation $\uptheta$ coincides with $\upeta \circ \upgamma$ on
 the string part. The embedding function is obtained by taking $Gx := {\tt A}^x$. We need the fact that ${\tt A}^x$ is {\tt B}-free.
 The Frege function $F$ is as specified above. 

We note that our translation is quantifier-free. In the case of the container string domain {\sf D}, we can write:
\begin{itemize}
\item
${\sf D}(a,b,c,d) :\iff ad = bc+1 \wedge (( b=0 \wedge c=0)  \vee ( a\leq c \wedge  b\leq d))$.
\end{itemize}
This means that our interpretation is fully quantifier-free. As a consequence the corresponding interpretation will support a functor from models to models with as
morphisms embeddings of models. 
  
  \begin{theorem}[$\pamo$]\label{goudensmurf}
  The translation $\uptheta$ supports an interpretation of \futc1. 
  \end{theorem}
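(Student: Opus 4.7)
The plan is to verify each axiom of \futc1 under $\uptheta$ by leveraging the interpretation $\mf H$ of Section~\ref{matersmurf}, which already supplies all the needed string-level structure. First I would check that ${\sf D}$ is closed under the translated operations: manifestly $\oslash \in {\sf D}$ and $[n]_\uptheta = {\tt B}{\tt A}^n \in {\sf D}$, and if $\alpha, \beta \in {\sf D}$ then either one of them equals $\oslash$ (so $\alpha\beta$ equals the other) or both begin with ${\tt B}$, say $\alpha = {\tt B}\alpha'$ and $\beta = {\tt B}\beta'$, in which case $\alpha\beta = {\tt B}(\alpha'{\tt B}\beta')$ is again of the required form. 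The monoid axioms \axtcu1-3 then transfer directly from $\mf H$. Axiom \axtcu\ref{injectiesmurf} is immediate since $[n]_\uptheta = \sspl 1n1{n+1}$ determines $n$ from a single entry, and \axtcu\ref{fregesmurf} reduces to injectivity of the iterated Cantor pairing $\tupel{a, \tupel{b, \tupel{c, d}}}$, provable in \pamj\ via \jer's results recalled in Subsection~\ref{betasmurf}.

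For atomicity of $[n]_\uptheta$ (axiom \axtcu4), suppose $\alpha \star \beta = {\tt B}{\tt A}^n$ with $\alpha, \beta \in {\sf D}$ and $\alpha \neq \oslash$, so $\alpha = {\tt B}\alpha'$. Left cancellation (provided by $\mf H$) gives $\alpha'\beta = {\tt A}^n$. If $\beta$ were of the form ${\tt B}\beta'$, then $\alpha'{\tt B}\beta' = {\tt A}^n$ would contradict the ${\tt B}$-freeness of ${\tt A}^n$ from Theorem~\ref{freesmurf}; hence $\beta = \oslash$.

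The main obstacle is the editors principle \axtcu\ref{ureditors}. Given $\alpha \star \beta = \gamma \star \delta$ with all four ur-strings in ${\sf D}$, the editors principle at the string level (via $\mf H$) yields some $\eta$ in the special linear monoid satisfying, say, $\alpha\eta = \gamma$ and $\beta = \eta\delta$; the real work is to show that this $\eta$ actually lies in ${\sf D}$. I would argue as follows: if $\eta = \oslash$ we are done, so by the dual stack principle Theorem~\ref{sterkesmurf} we have $\eta = {\tt A}\eta'$ or $\eta = {\tt B}\eta'$, and the ${\tt B}$-case immediately places $\eta$ in ${\sf D}$. In the ${\tt A}$-case, $\beta = \eta\delta = {\tt A}\eta'\delta$ is nonempty (since ${\tt A}$ is atomic), forcing $\beta = {\tt B}\beta'$ by membership in ${\sf D}$, so ${\tt B}\beta' = {\tt A}\eta'\delta$; then Theorem~\ref{smeuigesmurf}(ii) applied to these leftmost atoms yields ${\tt A} = {\tt B}$, a contradiction. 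The symmetric case $\alpha = \gamma\eta$ and $\eta\beta = \delta$ is handled analogously by examining the equation for $\delta$ in place of $\beta$. Assembling these verifications yields the desired interpretation $\mf U$ of \futc1 in \pamo.
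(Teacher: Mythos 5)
Your proof is correct, and all the ingredients you use (Theorems~\ref{freesmurf}, \ref{sterkesmurf}, \ref{smeuigesmurf}, the string-level Editors Principle and Left Cancellation supplied by $\mf H$) are available over \pamo. The difference from the paper is one of organisation rather than substance: the paper does not re-verify the ur-string axioms in the Markov setting at all, but observes that $\uptheta$ coincides on the string part with the composition $\upeta\circ\upgamma$, so that Theorem~\ref{aktievesmurf} already delivers \axtcu1--3 and \axtcu\ref{ureditors} wholesale; the only Markov-specific facts it then needs are the {\tt B}-freeness of ${\tt A}^n$ (Theorem~\ref{freesmurf}), so that ${\tt A}^n$ lands in the $\upgamma$-object-domain and $[n]_\uptheta$ is an atom, plus the injectivity of $n\mapsto{\tt A}^n$ and of the Cantor pairing for the Frege axiom. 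You instead unfold that composition and redo the verification concretely. The one step where your argument is genuinely different is the heart of \axtcu\ref{ureditors}, namely showing that the editors witness $\eta$ lies in ${\sf D}$: the paper's Theorem~\ref{aktievesmurf} gets this by a \emph{second} application of the Editors Principle to ${\tt b}\ast y=w\ast({\tt b}\ast v)$, an argument that lives entirely in $\tc1({\tt b})$ and hence is portable to any interpretation of that theory, whereas you use the Markov-specific stack decomposition of Theorem~\ref{sterkesmurf} together with uniqueness of leading atoms (Theorem~\ref{smeuigesmurf}(ii)). Both are valid; the paper's route buys modularity and reusability (the same Theorem~\ref{aktievesmurf} would serve any other interpretation of $\tc1({\tt b})$), while yours is self-contained and makes visible exactly which properties of the special linear monoid are being used. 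Only a cosmetic omission: for atomicity you should also record that $[n]_\uptheta\neq\oslash$, which is immediate since its lower-left entry is $1$.
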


   \begin{proof}
   The axioms \axtcu1--4 are immediate, by Theorem~\ref{aktievesmurf}, since on the string part $\uptheta$ and $\upgamma \circ \upeta$ coincide.
   By Theorem~\ref{freesmurf}, ${\tt A}^n$ is {\tt B}-free.
   So, we can again use  Theorem~\ref{aktievesmurf} to show that $[n]$ is an atom. The injectivity of 
   $[\cdot]$ is immediate as is the validity of the Frege Axiom~\axtcu{\ref{fregesmurf}}.
   \end{proof}
   
   We note that we only used that \pam\ interprets  $\tcc{1}[{\tt a},{\tt b}]$.
   
   Theorem~\ref{goudensmurf} in combination with theorem~\ref{koperensmurf} provides an alternative proof of 
   \jer's result that $\pam$ is sequential. \jer's result is better since he proves that the weaker theory \pamj\ is sequential. 
   However,
   we can do a little bit better than our present result by proving that a theory \pamp\ that is strictly between \pamj\ and \pam\ is sequential. See Appendix~\ref{negatievesmurf}.
   
   \begin{ques}
   {\small Can a sharper analysis deliver a proof of the sequentiality of \pamj\ using Markov coding?}
   \end{ques}
   
  We note that there is also a Frege-function-free formulation of the relevant insight. There is
  an $\mf o$-direct interpretation of \utc\ in \pam\ such that the interpretion of the string part is
  one-dimensional.

\subsection{Interpretation of \futc{2}\ in \pamres}
We use the translation $\uptheta$ of Section~\ref{luckysmurf}.

\begin{theorem}[\pamres]
$\uptheta$ carries an interpretation of  theory $\futc {2}$.
\end{theorem}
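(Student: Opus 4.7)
The plan is to leverage what we already have. By Theorem~\ref{goudensmurf}, the translation $\uptheta$ already supports an interpretation of \futc{1} over \pamo, and \pamres\ extends \pamo. So all the axioms \axtcu1--\axtcu{\ref{ureditors}} together with \axtcu{\ref{fregesmurf}} carry over at once, and the only thing left to verify is the Stack Principle \axtcu{\ref{urthor}}, namely
\[ \forall \alpha\,(\alpha = \oslash \vee \exists \beta\,\exists x\; \alpha = \beta \star [x])^{\uptheta}. \]

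To verify this, I would unfold $\uptheta$. The ur-string domain {\sf D} consists of $\oslash$ together with matrices of the form ${\tt B}\gamma$ with $\gamma$ in the special linear monoid, and $[n]_{\uptheta}={\tt B}{\tt A}^n$. So, take a non-empty ur-string; it has the shape ${\tt B}\gamma$ for some $\gamma$ in the special linear monoid. Now apply Theorem~\ref{substasmurf}, which is precisely the input needing \pamres: every $\gamma$ in the special linear monoid is either of the form ${\tt A}^n$ or of the form $\delta\sing n = \delta\,{\tt B}{\tt A}^n$ for some $\delta$ in the special linear monoid. In the first case, ${\tt B}\gamma = {\tt B}{\tt A}^n = \oslash\star [n]_{\uptheta}$, and $\oslash$ is in {\sf D}. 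In the second case, ${\tt B}\gamma = {\tt B}\delta\,{\tt B}{\tt A}^n = ({\tt B}\delta)\star [n]_{\uptheta}$, and ${\tt B}\delta$ is in {\sf D} by definition. In either subcase we have exhibited $\beta \in {\sf D}$ and $n$ such that ${\tt B}\gamma = \beta \star [n]_{\uptheta}$, which is what \axtcu{\ref{urthor}} demands.

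There is no real obstacle here, since Theorem~\ref{substasmurf} does all the work: it is the ur-string analogue of the stack axiom, and its use of Euclidean division (\axpam{\ref{plimp16}}) is exactly the feature that distinguishes \pamres\ from \pamo. The only minor bookkeeping is checking that the two witnesses $\beta \in \{\oslash\} \cup {\tt B}\cdot{\sf SL}_2$ and $n$ produced from $\gamma = {\tt A}^n$ or $\gamma = \delta\sing n$ genuinely lie in the object and ur-string sorts of $\uptheta$; but this is built into the statement of Theorem~\ref{substasmurf} and the definition of {\sf D}.
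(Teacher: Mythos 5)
Your proposal is correct and follows essentially the same route as the paper: reduce everything to the Stack Principle \axtcu\ref{urthor} and derive it from Theorem~\ref{substasmurf}, which is exactly where Euclidean Division enters. The one small difference is that you apply Theorem~\ref{substasmurf} to the tail $\gamma$ of $\alpha={\tt B}\gamma$ rather than to $\alpha$ itself, which makes membership of the witness ${\tt B}\delta$ in {\sf D} immediate by definition and spares you the short extra argument (via Theorems~\ref{sterkesmurf} and \ref{hippesmurf}) that the paper's Theorem~\ref{hoplasmurf} needs.
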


We need only verify \axtcu\ref{urthor}. This follows immediately from the following theorem.

\begin{theorem}[\pamres]\label{hoplasmurf}
Consider any $\alpha$ in {\sf D}. We have $\alpha= \oslash$ or $\alpha=\beta[n]$, for some $\beta$ in {\sf D} and some $n$.
More specifically, in the second case, if $\alpha =\sspl abcd$, then $\alpha =\beta [ \,\divi ba\,]$.
\end{theorem}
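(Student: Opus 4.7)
The plan is to reduce the claim to Theorem~\ref{substasmurf} by peeling off the initial {\tt B}. The theorem at hand concerns the ur-string domain {\sf D}, which differs from the full special linear monoid only by requiring a leading {\tt B}, and Theorem~\ref{substasmurf} already tells us how any element of the special monoid decomposes as a trailing $[n]$ (or an {\tt A}-power), with $n = \divi ba$.

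Concretely, I would argue as follows. Suppose $\alpha = \sspl abcd$ lies in {\sf D} and $\alpha \neq \oslash$. Then by the definition of {\sf D} we have $\alpha = {\tt B}\gamma$ for some $\gamma$ in the special linear monoid. Passing to the ring extension $\mf Z(\mc M)$, compute
\[ \gamma = {\tt B}^{-1}\alpha = \sspl{1}{0}{-1}{1}\sspl abcd = \sspl{a}{b}{c-a}{d-b}, \]
so the top row of $\gamma$ coincides with that of $\alpha$. (We stay inside the non-negative part because $a \leq c$ and $b \leq d$ are guaranteed by the quantifier-free definition of {\sf D}.) Setting $n := \divi ba$, Theorem~\ref{substasmurf}, applied inside \pamres, gives the dichotomy $\gamma = {\tt A}^n$ or $\gamma = \beta'[n]$ for some $\beta'$ in the special linear monoid.

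In the first case, $\alpha = {\tt B}\gamma = {\tt B}{\tt A}^n = [n] = \oslash \star [n]$, and $\oslash \in {\sf D}$. In the second case, $\alpha = {\tt B}\beta'[n]$; set $\beta := {\tt B}\beta'$, which lies in {\sf D} by the very definition of that domain, and observe $\alpha = \beta \star [n]$. In either subcase the exponent is exactly $\divi ba$, which proves the ``more specifically'' clause.

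I do not anticipate a real obstacle: the only thing to check carefully is that the witness $\beta$ produced in Case~2 really belongs to {\sf D} (immediate from $\beta = {\tt B}\beta'$) and that the decomposition numerically matches the claimed $\divi ba$, which is handled by the observation that $\gamma$ and $\alpha$ share a top row and hence the division prescribed by Theorem~\ref{substasmurf} returns the same quotient.
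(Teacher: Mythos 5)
Your proof is correct, and it hinges on the same key lemma as the paper's own argument, namely Theorem~\ref{substasmurf}; the difference is only in the order of operations. You strip the leading ${\tt B}$ first (checking that ${\tt B}^{-1}\alpha = \sspl{a}{b}{c-a}{d-b}$ stays in the non-negative part because the definition of {\sf D} gives $a\leq c$ and $b\leq d$ once $\alpha\neq\oslash$), apply Theorem~\ref{substasmurf} to that quotient, and then reattach the ${\tt B}$, so that membership of the resulting prefix in {\sf D} is automatic. The paper instead applies Theorem~\ref{substasmurf} to $\alpha$ itself and then invokes Theorem~\ref{sterkesmurf} and Theorem~\ref{hippesmurf} (uniqueness of the leading atom) to exclude the case $\alpha={\tt A}^m$ with $m>0$ and to see that the prefix $\beta$ must begin with ${\tt B}$. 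Your variant trades those two string-theoretic appeals for one explicit matrix computation plus the observation that ${\tt B}^{-1}\alpha$ shares its top row with $\alpha$, which is exactly what secures the ``more specifically'' clause about the exponent $\divi ba$. Both routes are sound and of comparable length.
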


\begin{proof}
Theorem~\ref{substasmurf} tells us that $\alpha$ is of one of the forms ${\tt A}^{\divi ba}$ or $\beta[\,\divi ba\,]$, where $\beta$ is in the special monoid.

In the first case, $\alpha$ is either $\oslash$ or ${\tt A}{\tt A}^{\divi ba-1}$. If $\alpha =\oslash$, we are done. 
By Theorem~\ref{hippesmurf}, the other case is impossible, since $\alpha = {\tt B}\gamma$.

So, suppose $\alpha = \beta[\,\divi ba\,]$, where $\beta$ is in the special monoid. It suffices to show that $\beta$ is in {\sf D}.
If $\beta$ is empty, we are done. Suppose not. We have $\alpha = {\tt B}\gamma$ and, by Theorem~\ref{sterkesmurf}, that
$\beta = {\tt B}\delta$ or $\beta = {\tt A}\delta$. In the first case, we would have ${\tt A}\gamma ={\tt B}\delta[\,\divi ba\,]$. This is impossible
by Theorem~\ref{hippesmurf}. In the second case, we are done. 
\end{proof}

\subsection{Reverse Mathematics of the Stack Principe for Container Strings}\label{reversosmurf}
We treat the reverse mathematics of the Stack Principle for container strings \axtcu\ref{urthor}.
We first prove that the properties demonstrated in Theorem~\ref{substasmurf} and Lemma~\ref{hoplasmurf}  are equivalent.

\begin{theorem}[\pamo]
The following properties are equivalent.
\begin{enumerate}[i.]
\item
Every $\alpha$ in the special linear monoid is either of the form ${\tt A}^n$ or $\beta[n]$.
\item
Every $\gamma$ in {\sf D} is either $\oslash$ or of the form $\delta[n]$, where $\delta$ is in {\sf D}.
\end{enumerate}
\end{theorem}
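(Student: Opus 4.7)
The plan is to prove the two implications directly, making essential use of left cancellation in the special linear monoid and the characterisation of $\mathsf{D}$ as $\{\oslash\}$ together with all elements of the form ${\tt B}\alpha$ with $\alpha$ in the special linear monoid. Throughout, I will use that $[n] = {\tt B}{\tt A}^n$ is itself in the special linear monoid, so that any factorisation involving $[n]$ can be manipulated by the ambient group structure available via the embedding into $\mathfrak{Z}(\mathcal{M})$.

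For (i)$\Rightarrow$(ii), take $\gamma \in \mathsf{D}$ with $\gamma \neq \oslash$. By definition of $\mathsf{D}$ we have $\gamma = {\tt B}\alpha$ for some $\alpha$ in the special linear monoid. Apply (i) to $\alpha$. In the case $\alpha = {\tt A}^n$, we immediately get $\gamma = {\tt B}{\tt A}^n = [n] = \oslash\star[n]$, and $\oslash \in \mathsf{D}$. In the case $\alpha = \beta[n]$, we get $\gamma = {\tt B}\beta\star [n]$, so I need to show ${\tt B}\beta \in \mathsf{D}$. Since $\alpha$ and $[n]$ both lie in the special linear monoid, Lemma~\ref{microsmurf} applied to $\beta[n] = \alpha$ shows that $\beta$ is in the special linear monoid as well, whence ${\tt B}\beta \in \mathsf{D}$ by definition.

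For (ii)$\Rightarrow$(i), let $\alpha$ be any element of the special linear monoid and consider ${\tt B}\alpha$, which is in $\mathsf{D}$ and not equal to $\oslash$ (since by Lemma~\ref{freyasmurf} the product of two special-monoid elements can be $\oslash$ only if both factors are $\oslash$). Applying (ii) yields $\delta \in \mathsf{D}$ and an $n$ with ${\tt B}\alpha = \delta[n]$. If $\delta = \oslash$, then ${\tt B}\alpha = [n] = {\tt B}{\tt A}^n$, and Left Cancellation gives $\alpha = {\tt A}^n$. Otherwise $\delta = {\tt B}\delta'$ for some $\delta'$ in the special linear monoid, so ${\tt B}\alpha = {\tt B}\delta'[n]$ and again by Left Cancellation $\alpha = \delta'[n]$, which has the required form with $\beta := \delta'$.

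There is no real obstacle here; the argument is bookkeeping once one observes that $[n]$ belongs to the special linear monoid so that Lemma~\ref{microsmurf} and Left Cancellation apply on both sides of the correspondence. The only mild subtlety to keep in mind is that the map $\alpha \mapsto {\tt B}\alpha$ is a bijection between the special linear monoid and $\mathsf{D}\setminus\{\oslash\}$ respecting right multiplication, and the whole proof is essentially making this bijection do the work between the two formulations.
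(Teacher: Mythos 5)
Your proof is correct and takes essentially the same route as the paper: the (ii)$\Rightarrow$(i) direction is identical (the paper cancels the leading ${\tt B}$ via the atom theorem, Theorem~\ref{hippesmurf}, rather than Left Cancellation, which comes to the same thing here). The only divergence is in (i)$\Rightarrow$(ii), where you apply (i) to the tail $\alpha$ of $\gamma={\tt B}\alpha$ and then prepend ${\tt B}$, whereas the paper reruns the proof of Theorem~\ref{hoplasmurf}, applying the decomposition to $\gamma$ itself and then invoking the Stack Principle (Theorem~\ref{sterkesmurf}) to see that the resulting prefix lies in ${\sf D}$; your variant is marginally shorter and bypasses that appeal.
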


\begin{proof}
To show that (ii) follows from (i), we just need to follow the proof of Lemma~\ref{hoplasmurf}, noting that Euclidean Division only enters that proof via
the use of Theorem~\ref{substasmurf}.

We prove (i) from (ii). Suppose $\alpha$ is in the special linear monoid. Then ${\tt B}\alpha$ is in {\sf D}.
It follows that ${\tt B}\alpha = \oslash$ or ${\tt B}\alpha = \delta[n]$, where $\delta$ is in {\sf D}.
The first case is impossible. Suppose $\delta = \oslash$. In that case ${\tt B}\alpha = {\tt B}{\tt A}^n$ and it follows by
Theorem~\ref{smeuigesmurf}, that $\alpha = {\tt A}^n$.  
Suppose $\delta = {\tt B} \beta$, where $\beta$ is in the special linear monoid. It follows that ${\tt B}\alpha = {\tt B}\beta[n]$,
and, hence, by Theorem~\ref{hippesmurf}, that $\alpha =\beta[n]$.
\end{proof}

We define:
\begin{itemize}
\item
${\sf euc}(a,b) :\iff \exists x\, \exists r\, (b = ax+r \wedge r<a)$.
We call $(a,b)$ in {\sf euc} a \emph{Euclidean pair}.
\item
${\sf cop}(a,b) :\iff \exists c\,\exists d\, (ad-bc = 1)$.
Such pairs $(a,b)$ are known as \emph{co-prime} or \emph{co-maximal} pairs.
\end{itemize}
We note that \axpam\ref{plimp16}\ can be written as: $\vdash x\neq 0 \to {\sf euc}(x,y)$ and that
$\pamj \vdash {\sf cop}(x,y) \to x\neq 0$.
We consider the following weaker principle.
\begin{itemize}
\item[{\axpam \ref{plimp16}}$^-$]
$\vdash {\sf cop}(x,y) \to {\sf euc}(x,y)$  
\end{itemize}

\begin{theorem}[\pamo]\label{marathonsmurf}
The principle \axpam\ref{plimp16}$^-$ is equivalent to: 
\begin{itemize}
\item[\dag.]
Every $\alpha$ in the special linear monoid is either of the form ${\tt A}^n$ or $\beta[n]$, where $\beta$ is in the special linear monoid.
\end{itemize}
\end{theorem}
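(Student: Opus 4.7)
The plan is to prove both implications by translating between the determinant identity $ad = bc+1$ of a matrix in the special linear monoid and the Bézout/Euclidean relations on the pair $(a,b)$ sitting in its first row. The forward direction is a careful re-reading of Theorem~\ref{substasmurf} that isolates the single point where Euclidean division enters; the reverse direction packages a Bézout pair into a matrix and reads off the quotient and remainder from the \axtcu\ref{urthor}-style decomposition.

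For \axpam\ref{plimp16}$^-$ $\Rightarrow$ (\dag), suppose $\alpha = \sspl abcd$ is in the special linear monoid, so $ad = bc+1$. This exhibits $(a,b)$ as a Bézout pair with witnesses $d,c$, and \axpam\ref{plimp16}$^-$ supplies $m,r$ with $b = am+r$ and $r<a$. I would then form $\beta_0 := \sspl a r c {d-mc}$ exactly as in Theorem~\ref{substasmurf}. Substituting $b = am+r$ into the determinant condition yields $a(d-mc) = rc+1$, which both legalises the subtraction $d-mc$ inside \pamo\ (by the usual passage to $\mf Z(\mc M)$) and shows ${\sf det}(\beta_0)=1$, so $\beta_0$ is in the special linear monoid and $\beta_0 {\tt A}^m = \alpha$. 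A case split via Theorem~\ref{substanulsmurf} gives $\beta_0 = \oslash$ (hence $\alpha = {\tt A}^m$), $\beta_0 = \gamma{\tt A}$, or $\beta_0 = \gamma{\tt B}$ (hence $\alpha = \gamma[m]$). Theorem~\ref{ducksmurf} excludes the $\gamma{\tt A}$ branch: it would require $\scol a c < \scol r {d-mc}$, and in particular $a \leq r$, contradicting $r<a$.

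For (\dag) $\Rightarrow$ \axpam\ref{plimp16}$^-$, I would begin with a Bézout pair $(a,b)$, pick witnesses $c,d$ with $ad = bc+1$, and package them into $\alpha := \sspl abcd$ in the special linear monoid. Apply (\dag). If $\alpha = {\tt A}^n$, then $a=1$ and $b=n$, giving the trivial Euclidean division $b = 1\cdot n + 0$ with $0<1$. Otherwise $\alpha = \beta[n]$ with $\beta = \sspl pqrs$ in the special linear monoid, and a direct computation gives
\[
\alpha \;=\; \beta\,{\tt B}\,{\tt A}^n \;=\; \sspl{p+q}{(p+q)n+q}{r+s}{(r+s)n+s},
\]
so $a = p+q$ and $b = an + q$. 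It only remains to show $q<a$, i.e., $p \geq 1$. But $\beta$ in the special linear monoid means $ps = qr+1$, and $p=0$ would force $qr+1=0$, which is impossible in \pamo. Thus $b = an + q$ with $q < a$, as required.

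The main obstacle is the bookkeeping in the forward direction: confirming that the proof of Theorem~\ref{substasmurf}, which is stated over \pamres, really invokes Euclidean division only at the single pair $(a,b)$ and nowhere else, so that the weaker principle \axpam\ref{plimp16}$^-$ suffices. Once that check is made, everything else is routine matrix arithmetic inside the ring $\mf Z(\mc M)$ together with the quantifier-free characterisations from Theorems~\ref{keukensmurf}, \ref{ducksmurf}, and \ref{substanulsmurf}.
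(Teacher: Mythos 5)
Your proof is correct and follows essentially the same route as the paper: the forward direction re-runs the proof of Theorem~\ref{substasmurf}, observing that Euclidean division is invoked only for the B\'ezout pair $(a,b)$ given by the determinant identity, and the reverse direction packages a B\'ezout pair into a matrix and reads off quotient and remainder from the decomposition $\beta[n]$. Your reverse direction is in fact slightly cleaner than the paper's: by writing $\beta=\sspl pqrs$ and deriving $p\geq 1$ directly from ${\sf det}(\beta)=1$ you obtain the remainder condition $q<a$ in one step, whereas the paper argues via the inequalities of Lemma~\ref{ducksmurf} applied to $\beta{\tt B}$ and must separately dispose of the boundary case where the candidate remainder equals $a$.
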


\begin{proof}
That (\dag) follows from  \axpam\ref{plimp16}$^-$  is just the proof of Theorem~\ref{substasmurf} noting that its use of Euclidean Division
is limited to co-prime pairs $(a,c)$.

Suppose we have (\dag). We consider any co-prime pair $(a,b)$. Let $c$ and $d $ be such that $ad-bc=1$.
If $c=0$, we have $a=d=1$, and, so certainly ${\sf euc}(a,b)$.

Let $\alpha := \sspl abcd$. Clearly, $\alpha$ is in the special linear monoid. In case $\alpha = {\tt A}^n$, we have $c=0$ and we are immediately done. 
Otherwise, we have $c\neq 0$ and $\alpha =\beta [n]$.
Let $\beta{\tt B} = \sspl efgh$.
By Theorem~\ref{ducksmurf}, we have $ \scol fh < \scol eg$. 
We have $\alpha = \sspl e{ne+f}g{ng+h}$. It follows that $e=a$ and $g=c$ and $b= na+f$ and $d= nc+h$.
Moreover, we have (1) ($f < a$ and $h \leq c$) or (2) ($f \leq  a$ and $h < c$). In case (1) the pair $(a,b)$ is Euclidean. So, we are done. 
Suppose we are in Case (2). In case $f<a$, we are again done. So we assume that $f=a$. We have $b = na+a$, so $b= (n+1)a+0$ and,
hence $(a,b)$ is Euclidean. (Alternatively, we can argue that Case (2) cannot obtain. In case $a=f$, our matrix $\beta{\tt B}$ will be
$\sspl aach$. Since the determinant of the matrix is 1, it follows that $a=1$ and $h-c=1$, contradicting the fact that $h$ is supposed to
be $<c$.) 
\end{proof}

What happens if in the matrix $\beta{\tt B}$, in the proof of Theorem~\ref{marathonsmurf}, we have $c=h$?
Then our matrix $\beta{\tt B}$ is of the form $\sspl aecc$. It follows that $c=1$ and $a=e+1$.
So our matrix is of the form $\sspl {e+1}e11 = {\tt A}^e{\tt B}$. 

\begin{theorem}[$\pam+\axpam\ref{plimp16}^-$]\label{haasjesmurf}
Suppose $ad-bc=1$ and $c\neq 0$. Then, the pair $(c,d)$ is Euclidean.
\end{theorem}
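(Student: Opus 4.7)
The plan is to apply Theorem~\ref{marathonsmurf} directly to the matrix $\alpha := \sspl abcd$. Since $ad - bc = 1$ holds by hypothesis and all entries are non-negative in \pam, $\alpha$ lies in the special linear monoid. The hypothesis $\axpam\ref{plimp16}^-$ is precisely what Theorem~\ref{marathonsmurf} requires, so we obtain the dichotomy: either $\alpha = {\tt A}^n$ for some $n$, or $\alpha = \beta[n]$ for some $\beta$ in the special linear monoid and some $n$.

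The first branch is immediately ruled out: ${\tt A}^n = \sspl 1n01$ forces $c = 0$, contradicting $c \neq 0$. So $\alpha = \beta[n] = \beta{\tt B}{\tt A}^n$. Writing $\beta{\tt B} = \sspl efgh$ and carrying out the multiplication $\sspl efgh \sspl 1n01$ as in the proof of Theorem~\ref{substasmurf}, we obtain $a = e$, $b = en + f$, $c = g$, and $d = gn + h$. The equation $d = cn + h$ is already almost the desired Euclidean division of $d$ by $c$; only the bound $h < c$ is missing.

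This bound is supplied by Theorem~\ref{ducksmurf}(ii), applied to the matrix $\beta{\tt B}$: since this matrix ends in ${\tt B}$, we have $\scol fh < \scol eg$, i.e., either $f < e$ with $h \leq g$, or $f \leq e$ with $h < g$. In the second sub-case, $h < g = c$, so $(c,d)$ is Euclidean with witnesses $(n, h)$. In the first sub-case, if $h < g$ we are done as before; otherwise $h = g = c$, and then $d = cn + c = c(n+1) + 0$ with $0 < c$, so $(c,d)$ is Euclidean with witnesses $(n+1, 0)$.

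The main obstacle --- really the only piece of genuine content --- is the invocation of Theorem~\ref{marathonsmurf}, which packages exactly the equivalence between the structural decomposition and $\axpam\ref{plimp16}^-$. Everything else is matrix arithmetic combined with the Editors-style inequality from Theorem~\ref{ducksmurf}. One point worth flagging: the hypothesis $c \neq 0$ does double duty here, ruling out the ${\tt A}$-string branch of the decomposition and also ensuring that the boundary case $h = c$ yields a legitimate remainder $0 < c$.
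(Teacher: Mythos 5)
Your proof is correct and takes essentially the same route as the paper's: the paper likewise invokes the decomposition $\alpha=\beta{\tt B}{\tt A}^{n}$ supplied by Theorem~\ref{marathonsmurf}, reads off $d=cn+h$ with $h\leq c$ from the inequality $\scol fh<\scol eg$, and splits on $h<c$ versus $h=c$ exactly as you do. Your write-up just makes explicit the matrix computation and the elimination of the ${\tt A}^n$ branch that the paper leaves implicit.
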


\begin{proof}
It is not difficult to see this directly. We can also see it as follows. The matrix $\sspl abcd$ must be of the form $\beta{\tt B}{\tt A}^{\divi ba}$,
For  $h$ as in the proof of Theorem~\ref{marathonsmurf}, we have $h\leq c$  and $d=c\divi ba +h$. Both cases $h<c$ and $h=c$, lead
immediately to the conclusion that $(c,d)$ is Euclidean. 
\end{proof}

\begin{remark} {\small
We note that if our given discretely ordered commutative ring is a B\'ezout ring, i.e., every $x$ and $y$
can be divided by some linear combination of $x$ and $y$, then  \axpam\ref{plimp16}$^-$ is equivalent to
 \axpam\ref{plimp16}.

Suppose we are in a discretely ordered commutative ring with the Bezout property and that we have  \axpam\ref{plimp16}$^-$.
Consider any elements $a$ and $b$ with $a\neq 0$ and let $d$ be their greatest common divisor.
Let $a=da_0$ and $b=db_0$. Then, $a_0$ and $b_0$ are relatively prime. So we can find $e$ and $f$ so that
$a_0e- b_0f=1$ or $b_0f  -a_0e =1$. By Theorem~\ref{marathonsmurf} and Theorem~\ref{haasjesmurf}, we have 
Euclidean Division for $a_0$ and $b_0$. Clearly, then, we also
have Euclidean division for $a$ and $b$. }
\end{remark}

\begin{ques}{\small
Can we give a counter-model to show that, over \pamo, the principle  \axpam\ref{plimp16}$^-$ does not imply
 \axpam\ref{plimp16}\hspace{0.03cm}?}
\end{ques}

\subsection{The Second Standard Model of \pamres}\label{hulksmurf}
In this subsection, 
we give a characterisation  of the Markov strings in  the model
$\mathbb M_2 := (\mathbb Q[\X]\cdot \X + \mathbb Z)^{\mathsmaller{\geq 0}}$. 
The model $\mathbb M_2$ is the most salient model, apart from $\mathbb N$, of \pamres.
For this reason, we like to call $\mathbb M_2$ \emph{the second standard model of \pamres}.
We will verify that  $\mathbb M_2$  satisfies Euclidean Division.
The models $\mathbb M_0$ and $\mathbb M_1$ will be introduced in the next
Subsection~\ref{alitersmurf}.  

We first prove that $\mathbb M_2$ is a model of 
 \pamres, i.e. \pamo\ plus the Euclidean Division Axiom.

\begin{theorem}[$\mathbb M_2$]
We have Euclidean Division. 
\end{theorem}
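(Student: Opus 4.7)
The plan is to reduce to polynomial long division in the ambient ring $\mathbb Q[\X]$, which contains $R := \mathbb Q[\X]\cdot \X + \mathbb Z$ (whose non-negative part is $\mathbb M_2$) and in which long division is classical. The order on $R$ is determined by the leading coefficient, so the whole task is to modify the rational quotient and remainder produced by polynomial division so that both land in $R$ (i.e., have integer constant terms) while remaining non-negative.

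Concretely, given $a, b \in \mathbb M_2$ with $a > 0$, I would first write $b = \tilde q a + \tilde \rho$ with $\tilde q, \tilde \rho \in \mathbb Q[\X]$ and $\deg \tilde \rho < \deg a$. Because $a$ and $b$ both have positive leading coefficients, so does $\tilde q$ (whenever it is nonzero). Let $s \in [0,1) \cap \mathbb Q$ be the fractional part of the constant term of $\tilde q$, and set
\[
  q := \tilde q - s, \qquad r := \tilde \rho + sa.
\]
A direct calculation shows that the constant terms of both $q$ and $r$ are integers, so $q, r \in R$, and of course $b = qa + r$ still holds. When $s > 0$, the leading coefficients of $r$ and of $a - r$ work out to $sa_m$ and $(1-s)a_m$ respectively (where $a_m$ is the leading coefficient of $a$), both positive, giving $0 \leq r < a$; and $q \geq 0$ either because $\deg \tilde q \geq 1$ (so the leading coefficient is unchanged and positive) or because $\deg \tilde q = 0$ and $q = \lfloor \tilde q_0 \rfloor \geq 0$.

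The main obstacle is the boundary case $s = 0$, where $\tilde q$ already lies in $R$ but $\tilde \rho$ can still be negative (as with $b = \X + 1$, $a = \X + 2$, giving $\tilde q = 1$, $\tilde \rho = -1$). Here I would replace $(q, r)$ by $(q - 1,\, r + a)$: the new remainder has leading coefficient $a_m > 0$ and is less than $a$ because $-\tilde \rho > 0$, while the new quotient stays non-negative because in this sub-case $\tilde q \geq 1$ always (the alternative $\tilde q = 0$ would force $\tilde \rho = b \geq 0$, contradicting $\tilde \rho < 0$; and whenever $\deg \tilde q \geq 1$ or $\tilde q$ is a positive integer, one has $\tilde q \geq 1$ in the ring order). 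This elementary case split is the only non-routine content of the argument.
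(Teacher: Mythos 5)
Your argument is correct. Both you and the paper reduce the problem to ordinary long division in $\mathbb Q[\X]$ and then repair the rational quotient--remainder pair so that it lands in $\mathbb M_2$, but the repair is organized differently. The paper first splits $A = A'\X + a_0$ and divides $A'$ by $B$, so that the candidate remainder $R'\X + a_0$ automatically has an integer constant term and degree at most $\deg B$; the correction is then a shift by an integer multiple of $B$ found from the discrete ordering, with a case split on whether the candidate is $\geq B$ or $<B$ and on whether $P'=0$. You instead divide $A$ by $B$ directly and restore integrality by subtracting from the quotient the fractional part $s\in[0,1)$ of its constant term, which in one stroke puts both quotient and remainder into $R$; the sign analysis is then concentrated in the leading coefficients $sa_m$ and $(1-s)a_m$, leaving only the boundary case $s=0$ with negative remainder, fixed by one further shift by $a$. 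Your version buys a slightly tighter case analysis at the price of working with fractional parts of rationals; the paper's version builds integrality in from the start and keeps all corrections integral. The one spot to make fully explicit is the sub-case $\deg\tilde q=0$: there your opening claim that $\tilde q$ has positive leading coefficient when nonzero amounts to $\tilde q_0>0$, which needs the observation that otherwise $b=\tilde q_0 a+\tilde\rho$ would have non-positive leading coefficient (or, when $\deg a=0$, that $\tilde\rho=0$); once that is noted, both your $s>0$ and $s=0$ branches close.
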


\noindent
The simplest way of seeing this is, is simply running mentally through the obvious
long division. However, it is somewhat laborious to describe this
in detail. The proof we give uses that we already did the work for
$\mathbb Q[\X]$.

\begin{proof}
We give our proof in the context of the ambient model $\mathbb Q[\X]$. 

Let $A$ and $B \neq 0$ in $\mathbb M_2$ be given. We want to show that there are $P$ and $R$ in $\mathbb M_2$ such that
$A = PB+R$ with $R<B$. There  $A'$ in $\mathbb Q[\X]$ such that $A = A'\X+a_0$. Clearly, $A' \geq 0$.
Since $\mathbb Q[\X]$ is a Euclidean domain, there are $P'$ and $R'$ in $\mathbb Q[\X]$ such that
$A'= P'B+R'$ and the degree of $R'$ is strictly smaller than the degree of $B$. 
Clearly $P' \geq 0$.
We have:
$A= P'\X B+ R'\X + a_0$. The degree of $ R'\X + a_0$ is smaller or equal to the degree of $B$.

\begin{itemize}
\item
Suppose $B \leq R'\X + a_0$. We can find an $n\geq 0$ such that 
$0 \leq R'\X + a_0 - nB < B$. We take $P := P'\X+ n$ and $R = R'\X + a_0 - nB$. Clearly
$P$ and $R$ are in $\mathbb M_2$.
\item
Suppose $R'\X+a_0 < B$.
\begin{itemize}
\item
Suppose $P'\neq 0$. We can find an $m$ such that $0 \leq R'\X  + a_0+ mB < B$.
We take $P:= P'\X - m$ and $R := R'\X + a_0 +mB$. Clearly
$P$ and $R$ are in $\mathbb M_2$.
\item
Suppose $P'=0$. In this case $A = R'\X+a_0 < B$.
We take $P := 0$ and $R := A$. \qedhere
\end{itemize}
\end{itemize}
\end{proof}

We proceed to prove a normal form theorem for the elements of $\mathbb M_2$. We will need some preparation.
We first give a basic result about pairs of non-negative rationals. We use the result  in the proof of 
Theorem~\ref{geweldigesmurf}. We think its use can be avoided but we deem the proof with the result is a bit neater.
Rational numbers have their well-known fractional normal form, to wit the fraction $(m,k)$, where 
$m$ and $k$ are coprime. In analogy,
we consider triples $(m,n,k)$ of non-negative integers, where $k\neq 0$. These represent the rational numbers $\frac mk$ and $\frac nk$.
We define:
\begin{itemize}
\item
$(m,n,k) \sim (m',n',k')$ iff $\frac mk = \frac {m'}{k'}$ and $\frac nk = \frac {n'}{k'}$.
\item
$(m,n,k)$ is \emph{irreducible} iff the set $\verz{m,n,k}$ is co-prime, i.o.w., if the
greatest common divisor of $m$, $n$, $k$ is 1.
\end{itemize}

\begin{lem}[$\mathbb M_2$]
Each $\sim$ equivalence class contains a unique irreducible element.
\end{lem}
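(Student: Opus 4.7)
The plan is to establish existence and uniqueness of the irreducible representative separately, working purely with standard integer arithmetic inside $\mathbb M_2$.

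For existence, given any triple $(m,n,k)$ with $k \neq 0$, I would let $d := \gcd(m,n,k)$, which is a positive integer since $k > 0$. Then the triple $(m/d,\, n/d,\, k/d)$ consists of non-negative integers; it is irreducible by construction, and it is $\sim$-equivalent to $(m,n,k)$ because both fractions $m/k$ and $n/k$ are preserved under simultaneous division of numerator and denominator by $d$.

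For uniqueness, suppose $(m,n,k)$ and $(m',n',k')$ are both irreducible and $\sim$-equivalent, so that $mk' = m'k$ and $nk' = n'k$. I would set $d := \gcd(k,k')$ and write $k = da$, $k' = db$ with $\gcd(a,b) = 1$. Substituting into the first equation gives $mb = m'a$; since $\gcd(a,b) = 1$, Euclid's lemma yields $a \mid m$. The second equation likewise gives $a \mid n$. Because $a \mid k$ trivially, we conclude that $a$ divides $\gcd(m,n,k) = 1$, so $a = 1$. A symmetric argument, using $\gcd(m',n',k') = 1$, shows $b = 1$. Hence $k = k' = d$. Cancelling $k$ from $mk' = m'k$ now yields $m = m'$, and cancelling from $nk' = n'k$ yields $n = n'$.

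There is no real obstacle here: the argument is elementary integer number theory, and all of the ingredients (existence of $\gcd$ of three non-negative integers, Euclid's lemma, cancellation by a nonzero integer) are entirely standard inside $\mathbb Z \subseteq \mathbb M_2$. The only bookkeeping point worth being explicit about is that the $\sim$-relation is preserved by the simultaneous division used in the existence step, which is immediate from the definition.
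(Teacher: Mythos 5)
Your proof is correct, but it takes a genuinely different route from the paper's. The paper sets this up as an abstract rewriting argument: a reduction step divides a triple by a common prime, the steps are orthogonal and length-decreasing, so termination plus the strong diamond property give a unique normal form for each triple; uniqueness \emph{across} an equivalence class is then obtained by lifting two equivalent irreducible triples to the common denominator $k^\ast = \mathrm{lcm}(k,k')$ and observing that both are normal forms of the resulting triple. You instead argue directly: existence by dividing out $\gcd(m,n,k)$, and uniqueness by writing $k = da$, $k' = db$ with $\gcd(a,b)=1$, applying Euclid's lemma to $mb = m'a$ and $nb = n'a$ to conclude $a \mid \gcd(m,n,k) = 1$ (and symmetrically $b=1$), whence $k=k'$ and the numerators agree by cancellation. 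Your argument is shorter and more elementary, relying only on standard facts about $\gcd$ in $\mathbb Z$; the paper's confluence formulation is heavier machinery for this particular lemma but is consistent with the rewriting-theoretic style the author uses elsewhere (e.g.\ the string rewrite system in the appendix). Both are complete; the only bookkeeping point in either version is that $\sim$ is preserved under simultaneous scaling, which you note explicitly.
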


\begin{proof}
We define a reduction system on triples.
Consider a triple $(m,n,k)$. Suppose $(m,n,k)$ are all divisible by a prime $\pi$.
Then, we can make a reduction step to $(\frac m\pi,\frac n \pi,\frac k\pi)$.
Clearly, reduction sequences terminate. Moreover, the reduction steps from
$(m,n,k)$ are orthogonal, so we have the strong diamond property.
This tells us that $(m,n,k)$ reduces to a unique normal form. It is immediate that
this normal form is irreducible.

Suppose $(m,n,k)\sim (m',n',k')$ are irreducible. Let $k^\ast$ be the least common multiple of $k$ and $k'$. 
Let $(m^\ast,n^\ast,k^\ast) := (\frac mk \cdot k^\ast, \frac nk\cdot k^\ast, k^\ast)$. Since $(m^\ast,n^\ast,k^\ast)$ reduces both to
$(m,n,k)$ and to $(m',n',k')$, their normal forms are identical. 
\end{proof}

We note that the elements of a normal form of $(m,n,k)$ are less than or equal to the corresponding elements of
$(m,n,k)$, moreover, the elements of the normal form have the same ordering.

For non-negative rationals $p$, $q$ of which at least one is non-zero, we define $\nnorm pq = \ell$ if, for some $m$, $n$, $k$, the triple $(m,n,k)$ is the normal form of $(p,q)$ and
$\ell$ is the maximum of $m$ and $n$. We note that $\nnorm pq=\nnorm qp$.
\begin{lem}[$\mathbb M_2$]\label{bijtertjesmurf}
Suppose $p$ and $q$ are positive rationals with $q<p$. Suppose   $0\leq p-sq$, where $s$ is a positive integer. Then,
$\nnorm{p-sq}q < \nnorm pq$.
\end{lem}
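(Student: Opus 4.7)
The plan is to reduce everything to the irreducible normal forms. Let $(m,n,k)$ be the unique irreducible triple with $p=m/k$ and $q=n/k$, so that by definition $\nnorm{p}{q}=\max(m,n)$. Since $q<p$ we have $n<m$, so in fact $\nnorm{p}{q}=m$. The positivity of $q$ tells us $n\geq 1$.

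Next I would write the pair $(p-sq,q)$ as the triple $(m-sn,n,k)$. This is a triple of non-negative integers since $s$ is a positive integer and $p-sq\geq 0$ gives $m-sn\geq 0$. The short verification needed here is that this triple is again irreducible: any common divisor $d$ of $m-sn$, $n$ and $k$ also divides $(m-sn)+sn=m$, hence $d\mid\gcd(m,n,k)=1$; conversely every common divisor of $m,n,k$ clearly divides $m-sn,n,k$. So $\gcd(m-sn,n,k)=1$ and, by uniqueness of the normal form, $\nnorm{p-sq}{q}=\max(m-sn,n)$.

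It remains to compare the two maxima. Since $s\geq 1$ and $n\geq 1$, we have $m-sn\leq m-n<m$; and we already noted $n<m$. Hence $\max(m-sn,n)<m=\nnorm{p}{q}$, which is the desired strict inequality.

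The only non-routine point is the irreducibility check for $(m-sn,n,k)$, and that is immediate once one observes the symmetry $m=(m-sn)+sn$. I expect no real obstacle; the reason the lemma works is simply that subtracting an integer multiple of $q$ from $p$ does not change the denominator $k$ in the normal form, while the numerator strictly decreases (and stays $\geq 0$ by hypothesis), and meanwhile the other coordinate $n$ was already strictly below the old maximum $m$.
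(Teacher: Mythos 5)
Your proof is correct and follows essentially the same route as the paper: represent $(p-sq,q)$ by the triple $(m-sn,n,k)$ and observe that $\max(m-sn,n)<m=\nnorm pq$ using $1\leq n<m$ and $s\geq 1$. The only (harmless) difference is that you verify directly that $(m-sn,n,k)$ is already irreducible via the gcd argument, whereas the paper simply invokes the earlier remark that passing to the normal form can only decrease the entries, so it bounds $\nnorm{p-sq}q\leq\max(m-sn,n)$ without needing irreducibility.
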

\begin{proof}
Let $(m,n,k)$ be a normal form for $p$, $q$. Then $(m-sn,n,k)$ represents $(p-sq,q)$.
Let the normal form of $(p-sq,q)$ be $(m',n',k')$. 
We have ${\sf max}(m',n') \leq {\sf max}(m-sn, n) < m$.
\end{proof}

Consider any element pair of elements $A$, $B$ of $\mathbb M_2$, where at least one of $A$, $B$ is non-zero. We define:
\begin{itemize}
\item
$\norm{A,B}  := \omega^m+n$, where $m$ is the maximum of the degrees of $A$ and $B$ and
$n=0$, if the degrees are different, and $n = \nnorm{a_m}{b_m}$, otherwise.
Here $a_m$ is the coefficient of $\X^m$ in $A$ and $b_m$ is the coefficient of $\X^m$ in $B$.
We stipulate that the degree of the polynomial 0 is $-1$. 
\end{itemize}

\noindent
Clearly, $\norm{A,B}=\norm{B,A}$.

We note that $\mathbb Q[\X]+ \mathbb Z$ is not a factorisation ring (just think of the element $\X$), so,
\emph{a fortiori}, it is not a Euclidean domain. Yet, in $\mathbb M_2$, we get something that is a bit
like the defining property of a Euclidean domain, the difference being that our analogue of the Euclidean function is
not unary but binary and that its well-founded range is an ordinal. Here is the basic insight.

\begin{theorem}[$\mathbb M_2$]\label{dagobertsmurf}
Suppose $A = PB + R$, $B \neq 0$ and $R < B \leq A$. Then
$\norm {R,B} < \norm {A,B}$.
\end{theorem}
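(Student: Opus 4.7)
The plan is to split into cases according to the comparison of $d_A := \deg A$ and $d_B := \deg B$. From $B \neq 0$ and $B \leq A$ we have $A \geq B > 0$, hence $d_A, d_B \geq 0$; from $R < B$ and the structure of the ordering on $\mathbb{M}_2$, we have $d_R := \deg R \leq d_B$. The definition of $\norm{\cdot,\cdot}$ behaves differently depending on whether the two degrees are equal or different, so this bookkeeping drives the whole proof.

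In the easy case $d_A > d_B$, by definition $\norm{A,B} = \omega^{d_A}$ (degrees differ, so $n = 0$) while $\norm{R,B}$ equals either $\omega^{d_B}$ or $\omega^{d_B} + \nnorm{r_{d_B}}{b_{d_B}}$ for some finite head. Since $d_B < d_A$, ordinal arithmetic gives $\omega^{d_B} + n' < \omega^{d_B+1} \leq \omega^{d_A}$ for any finite $n'$, so $\norm{R,B} < \norm{A,B}$ is immediate.

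The interesting case is $d_A = d_B = d$. First I would argue that $P$ must be a positive integer: if $\deg P \geq 1$ then $\deg(PB) > d$ and since $\deg R \leq d < \deg(PB)$ we would get $\deg A > d$, contradicting $d_A = d$; and $P = 0$ collapses $A = R < B \leq A$. Since $P \in \mathbb{M}_2$ is a constant it is a non-negative integer, hence $P \geq 1$. Comparing the coefficients of $X^d$ in $A = PB + R$ gives $a_d = Pb_d + r_d$, where $r_d \geq 0$ and $r_d > 0$ iff $d_R = d$; and from $A \geq B$ (same degree) we have $a_d \geq b_d$. If $a_d = b_d$, then $(1-P)b_d = r_d \geq 0$ with $b_d > 0$ forces $P = 1$ and $r_d = 0$, so $d_R < d$ and $\norm{R,B} = \omega^d < \omega^d + \nnorm{b_d}{b_d} = \norm{A,B}$ (using $\nnorm{b_d}{b_d} \geq 1$). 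If $a_d > b_d$, apply Lemma~\ref{bijtertjesmurf} with $p = a_d$, $q = b_d$, $s = P$ to obtain $\nnorm{r_d}{b_d} < \nnorm{a_d}{b_d}$; when $r_d > 0$ both norms share the head $\omega^d$ and this inequality is exactly what we need, while when $r_d = 0$ we again fall back to $\norm{R,B} = \omega^d < \omega^d + \nnorm{a_d}{b_d} = \norm{A,B}$.

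The main obstacle is not substantive but cosmetic: one has to track the discontinuous switch in the definition of $\norm{\cdot,\cdot}$ at the boundary where degrees agree versus differ, and treat $r_d = 0$ separately in each sub-case of $d_A = d_B$. The real quantitative content — that reducing a leading coefficient by a positive integer multiple of another strictly shrinks $\nnorm{\cdot}{\cdot}$ — is fully packaged in Lemma~\ref{bijtertjesmurf}; everything else is forced by degree comparison and linearity of leading coefficients under multiplication by a non-negative integer $P$.
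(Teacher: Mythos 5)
Your proof is correct and follows essentially the same route as the paper's: a case split on whether $\deg A$ and $\deg B$ agree, with the substantive sub-case ($d_A=d_B$, $a_d>b_d$) delegated to Lemma~\ref{bijtertjesmurf} and the sub-case $a_d=b_d$ resolved by forcing $P=1$ and $\deg R<\deg B$. You are in fact slightly more careful than the paper in justifying that $P$ is a positive integer and in treating the boundary $r_d=0$, where the definition of $\norm{\cdot,\cdot}$ switches branches.
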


\begin{proof}
Suppose that the degree of $B$ is strictly less than the degree of $A$. In that case,
since, the degree of $R$ is less than or equal to the degree of $R$, the degree of $R$ is also strictly less than the degree of $A$. So, we are done.

Suppose $A$ and $B$ have the same degree, say $n$. In this case $P = m>0$.
\begin{itemize}
\item
Suppose $b_n < a_n$.
We have $\norm{A,B} = \omega^n+ \nnorm {a_n}{b_n}$ and 
$\norm{R,B} = \omega^n + \nnorm{a_n - mb_n}{b_n}$.
So, by Lemma~\ref{bijtertjesmurf}, we are done.
\item
Suppose $b_n=a_n$.  In that case $\norm{A,B} = \omega^n+ m$, for some positive $m$, since $b_n \neq 0$.
On the other hand, by the uniqueness of Euclidean division, we find that $P=1$ and the degree of $R$ is strictly smaller
than the degree of $B$. So, $\norm{R,B} = \omega^n$. So, we are done.\qedhere
\end{itemize}
\end{proof}

We can now prove our promised  theorem characterising the Markov strings of $\mathbb M_2$.
Suppose $\alpha := \sspl ABCD$. \emph{Par abus de langage}, we define $\norm \alpha := \norm{A,B}$.

\begin{theorem}[$\mathbb M_2$]\label{geweldigesmurf}
Any Markov string can be  uniquely written as finite alternating product of strings of the form
${\tt A}^P$ and ${\tt B}^Q$, where $P$ and $Q$ are non-zero elements of $\mathbb M_2$. 
 Note that we allow the empty product which delivers the empty string. 
\end{theorem}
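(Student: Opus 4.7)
The plan is to proceed by well-founded induction on the ordinal $\norm{\alpha} := \norm{A,B}$, where $\alpha = \sspl ABCD$, using Theorem~\ref{dagobertsmurf} to witness the decrease at each step. Since $\mathbb{M}_2 \models \pamres$, Theorem~\ref{substasmurf} applies, and its left-right dual---obtained by composing transposition with anti-transposition (Section~\ref{omkeersmurf}), which keeps order and swaps the letters---states that every non-empty $\alpha$ is either ${\tt B}^{\divi AB}$ or of the form $\gamma\, {\tt A}\,{\tt B}^{\divi AB}$ with $\gamma$ in the special linear monoid.

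For existence, I would dispose of $\alpha=\oslash$ with the empty product, and otherwise invoke Lemma~\ref{keukensmurf} to split into the two cases $\scol AC < \scol BD$ (``$\alpha$ ends in ${\tt A}$'') and $\scol BD < \scol AC$ (``$\alpha$ ends in ${\tt B}$''). In the first case $A\le B$ and $A>0$ (from $AD=BC+1$), so $n:=\divi BA\ge 1$, and Theorem~\ref{substasmurf} delivers either $\alpha={\tt A}^n$ (a single block) or $\alpha' := \alpha\cdot {\tt A}^{-n}=\beta{\tt B}$ with $\beta$ in the special linear monoid. In the latter subcase, setting $f:=\rema BA$, we have $f<A\le B$, so Theorem~\ref{dagobertsmurf} gives $\norm{\alpha'}=\norm{A,f}<\norm{B,A}=\norm\alpha$. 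The inductive hypothesis yields an alternating decomposition of $\alpha'$; because $\alpha'$ ends in ${\tt B}$, its last block must be of the form ${\tt B}^Q$, so appending ${\tt A}^n$ preserves alternation. The case $\scol BD < \scol AC$ is dual, extracting ${\tt B}^m$ with $m=\divi AB$ via the transposed form of Theorem~\ref{substasmurf}; the special subcase $B=0$ forces $A=D=1$ and $\alpha={\tt B}^C$, a single block.

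For uniqueness, I would argue that in any alternating decomposition $\alpha={\tt X}_1^{P_1}\cdots {\tt X}_k^{P_k}$ the rightmost block ${\tt X}_k^{P_k}$ is forced. The letter ${\tt X}_k$ equals the last letter of $\alpha$, which is well-defined by Theorem~\ref{hippesmurf} and determined by the dichotomy of Lemma~\ref{keukensmurf}. The exponent $P_k$ is also forced: if $\alpha=\gamma{\tt X}_k^{P'}$ with $P'>P_k$ and $\gamma$ ending in ${\tt X}_{k-1}\ne {\tt X}_k$ (or $\gamma=\oslash$), then right-cancellation gives $\gamma = \gamma'\, {\tt X}_k^{P'-P_k}$ for $\gamma'$ the penultimate prefix, making $\gamma$ end in ${\tt X}_k$, contradicting the alternation. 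With the rightmost block pinned down, the induction hypothesis on $\alpha\cdot{\tt X}_k^{-P_k}$ closes the uniqueness argument.

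The main technical hurdle, I expect, is checking that the matrix remaining after block-extraction genuinely satisfies the opposite dichotomy of Lemma~\ref{keukensmurf}---i.e.\ ends in the opposite letter---so that the alternation propagates through the induction. This is already packaged into Theorem~\ref{substasmurf} via the constraint $f<A$, but in the dual direction it must be reconfirmed by a short determinant calculation. A minor secondary point is applying Theorem~\ref{dagobertsmurf} in the ${\tt B}$-extraction case, which is legitimate thanks to the symmetry $\norm{X,Y}=\norm{Y,X}$.
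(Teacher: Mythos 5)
Your strategy is the paper's: existence by a Euclidean-algorithm descent that peels the rightmost maximal block off $\alpha$ and terminates because $\norm{\cdot}$ drops (Theorem~\ref{dagobertsmurf}), and uniqueness by showing the rightmost block is forced and cancelling it, which is exactly the paper's appeal to an argument ``in the style of Theorem~\ref{unitweesmurf}''. The one place you genuinely diverge is the ${\tt B}$-extraction case, and there your bookkeeping is off. The paper extracts ${\tt B}^{\divi AB}$ and computes the candidate remainder $\sspl{\rema AB}{B}{C-{\divi AB}D}{D}$ directly; this matrix can fail to lie in $\mathbb M_2$ (the lower-left entry can be negative), and the paper needs a separate sub-case ($\rema AB=0$ or $B=0$) that terminates with an explicit short normal form. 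Your route via the order-preserving, letter-swapping automorphism $\sspl ABCD\mapsto\sspl DCBA$ is legitimate and would in fact sidestep that sub-case entirely, but the exponent it yields when Theorem~\ref{substasmurf} is transported along it is $\divi CD$, not $\divi AB$: for $\alpha={\tt A}{\tt B}^2=\sspl 3121$ one has $\divi AB=3$ while the correct rightmost block is ${\tt B}^2={\tt B}^{\divi CD}$. So you must either keep $\divi AB$ and reproduce the paper's negative-entry sub-case, or use $\divi CD$ together with the dualized theorem; as written, your step pairs the wrong exponent with a guarantee that only the right one provides. A secondary point: with the $\divi CD$ exponent the remainder is $\sspl{A-B{\divi CD}}{B}{\rema CD}{D}$, whose top-left entry need not be $\rema AB$ (it can equal $B$, which forces $B=1$), so the norm decrease there is not literally an instance of Theorem~\ref{dagobertsmurf} plus the symmetry $\norm{X,Y}=\norm{Y,X}$ and needs a one-line supplement. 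None of this changes the architecture, which coincides with the paper's.
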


\begin{proof}
We provide an algorithm that calculates the promised string representation.
This algorithm is  a variation on the Euclidean Algorithm. We specify the steps.
We consider $\alpha = \sspl ABCD$.
\begin{itemize}
\item
If $\alpha = \emp$, we are done.
\item
Suppose $\scol AC < \scol BD$. In other words, suppose $\alpha$ ends with {\tt A}.
Let $\alpha' := \sspl A{\rema BA}C{D-{\divi BA}C}$. By Theorem~\ref{substasmurf}, we have
$\alpha' \in \mathbb M_2$ and $\alpha'{\tt A}^{\divi BA} = \alpha$. Clearly,
$\alpha'$ is either $\emp$ or it ends with {\tt B}. Finally, by Theorem~\ref{dagobertsmurf},
$\norm {\alpha'} < \norm \alpha$.
\item
Suppose $\scol BD < \scol AC$. In other words, suppose $\alpha$ ends with {\tt B}.
Let $\alpha' := \sspl {\rema AB}{B}{C-{\divi AB}D}D$.
Here $\alpha'$ need not be in $\mathbb M_2$, but we do have $\alpha' {\tt B}^{\divi AB} = \alpha$.
Moreover, the determinant of $\alpha' $ is 1.
\begin{itemize}
\item
In case $C-{\divi AB}D \geq 0$, we find that $\alpha'$ is in $\mathbb M_2$.
Also, $\alpha'$ is either $\emp$ or ends with {\tt A}. Moreover,  by Theorem~\ref{dagobertsmurf},
$\norm {\alpha'} < \norm \alpha$.
\item
Suppose $C' := C-{\divi AB}D < 0$. We have $(\rema AB)D - BC'=1$. It follows that either $\rema AB=0$ or $B=0$.
In case $B=0$, we have $\alpha = {\tt B}^C$ and we are done. Suppose $\rema AB=0$. We find that\\
\hspace*{0.2cm} $\alpha = \sspl 0 1{-1} D {\tt B}{\tt B}^{-1} {\tt B}^{\divi AB} = \sspl 11{D-1}D {\tt B}^{\divi AB-1} = {\tt B}^{D-1}{\tt A}{\tt B}^{\divi AB-1}$.\\
We note that both $D>0$ and $\divi AB >0$, so that  ${\tt B}^{D-1}$ and ${\tt B}^{\divi AB-1}$ are indeed in $\mathbb M_2$.
\end{itemize}
\end{itemize}
Thus each of our steps either lowers $\norm \alpha$ or leads directly to termination. This means that our procedure terminates. It clearly delivers the promised normal form.

It is easy to see that our normal form will be unique. We view normal forms as alternating sequences of ${\tt A}^P$ and ${\tt B}^Q$ for
varying non-empty $P$ and $Q$.  Suppose, for example, that we have normal forms $\mc A$ and $\mc A'$, where
$\mc A =\mc B{\tt A}^P$ and $\mc A' = \mc B'{\tt A}^{P'}$. Here $P$ and $P'$ are non-empty and $\mc B$ and $\mc B'$ are normal forms that are either empty or end with {\tt B}.
Suppose the values of $\mc B$ and $\mc B'$ are respectively $\beta$ and $\beta'$. We have: 
$\beta {\tt A}^P=\beta' {\tt A}^{P'}$,
where $\beta$ is either empty or ends with {\tt B} and, similarly, for $\beta'$.
Using an argument in the style of the proof of Theorem~\ref{unitweesmurf}, we find that $P=P'$ and $\beta = \beta'$.
  So, $\mc B$ and $\mc B'$ have the same values. Thus, we have reduced our claim to the same claim about
shorter normal forms.
\end{proof} 

\begin{cor}[${\mathbb M}_2$]
The container strings of ${\mathbb M}_2$ codify precisely the finite sets of elements. 
\end{cor}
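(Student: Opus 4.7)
The plan is to lean directly on Theorem~\ref{geweldigesmurf}. Let $\alpha \in {\sf D}$ be any ur-string. If $\alpha = \oslash$, it codifies the empty set, so assume $\alpha \neq \oslash$. By the normal-form theorem, $\alpha$ admits a unique expression as an externally-finite alternating product of ${\tt A}$-blocks and ${\tt B}$-blocks; since $\alpha \in {\sf D}$ begins with ${\tt B}$, this product takes the form $\alpha = {\tt B}^{Q_0}{\tt A}^{P_0}{\tt B}^{Q_1}{\tt A}^{P_1}\cdots$ with all $P_i, Q_i$ positive in $\mathbb{M}_2$, ending either with some ${\tt A}^{P_m}$ or with some pure block ${\tt B}^{Q_m}$. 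The crucial point here is that the number of blocks is a meta-natural number, since the algorithm in the proof of Theorem~\ref{geweldigesmurf} strictly decreases the rank $\norm{\cdot}$ at each step.

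Next I would rewrite $\alpha$ as a $\star$-product of the ur-string atoms $[n] = {\tt B}{\tt A}^n$ by regrouping each segment ${\tt B}^{Q_i}{\tt A}^{P_i}$ as $(Q_i - 1)$ copies of $[0]$ followed by a single copy of $[P_i]$; a trailing pure block ${\tt B}^{Q_m}$, if present, contributes $Q_m$ copies of $[0]$. The set of distinct atoms entering this decomposition, and hence the set of distinct elements codified by $\alpha$, is a subset of $\{0, P_0, P_1, \ldots, P_m\}$, and so externally finite. For the converse, any externally-finite subset $S = \{n_0, \ldots, n_{k-1}\}$ of $\mathbb{M}_2$ is trivially codified by the ur-string $[n_0] \star [n_1] \star \cdots \star [n_{k-1}] \in {\sf D}$.

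The main obstacle, in my view, is almost purely expository: pinning down what \emph{codify} means in this corollary. The natural reading, consistent with the introduction's framing of ur-strings as multisets with ordered occurrences whose underlying \emph{set} forgets order and multiplicity, is that the set codified by $\alpha$ is the collection of distinct $n \in \mathbb{M}_2$ such that $[n]$ appears in some (equivalently, any) atomic decomposition of $\alpha$. Once this is fixed, Theorem~\ref{geweldigesmurf} does essentially all the work. It is worth flagging that although the multiplicities of individual atoms in the decomposition can be genuinely non-standard elements of $\mathbb{M}_2$ (for instance, ${\tt B}^Q$ decomposes into $Q$ copies of $[0]$ when $Q$ is non-standard), the \emph{number} of distinct atoms is always externally finite, bounded by the externally-finite block-count of the normal form.
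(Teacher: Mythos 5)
Your proof is correct and follows exactly the route the paper intends: the corollary is stated there without proof as an immediate consequence of Theorem~\ref{geweldigesmurf}, and your regrouping of the externally finite block decomposition into the atoms $[0]$ and $[P_i]$ is the right way to make that explicit, as is the observation that multiplicities may be nonstandard while the number of distinct atoms stays externally finite. The only step you gloss is the parenthetical ``equivalently, any'': since membership is the existential condition $\alpha=\gamma\star[n]\star\delta$ with $\gamma,\delta$ in the ur-string domain, one must still check that such a witness forces ${\tt A}^n$ (for $n>0$) to be one of the ${\tt A}$-blocks of the canonical normal form — this follows by normalising $\gamma$ and $\delta$, noting that ${\tt A}^n$ is flanked by ${\tt B}$'s or the end of the string so no blocks merge, and appealing to the uniqueness clause of Theorem~\ref{geweldigesmurf}, which you already invoke.
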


\begin{cor}[${\mathbb M}_2$]
The standard natural numbers can be defined in ${\mathbb M}_2$.
\end{cor}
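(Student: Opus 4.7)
The plan is to exploit the preceding corollary: ur-strings in $\mathbb{M}_2$ codify precisely the finite sets of elements. Using this, I will exhibit a single first-order formula $\phi(a)$ in the language of $\pamo$ such that $\mathbb{M}_2 \models \phi(a)$ iff $a \in \mathbb{N}$. Concretely, I take
\[
\phi(a) \;:\iff\; \exists \alpha\; \bigl(\alpha \in \mathsf{D}\;\wedge\;\forall b\,(b \leq a \to [b] \preceq \alpha)\bigr),
\]
where $\mathsf{D}$, $[\cdot]$, and the substring relation $\preceq$ on ur-strings are unpacked via the Markov-style translation $\uptheta$ of Section~\ref{luckysmurf}. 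The intuition is just that standardness of $a$ is equivalent to the set of $\mathbb{M}_2$-predecessors of $a$ being finite, and ``finite'' is precisely what ur-strings capture in $\mathbb{M}_2$.

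For the easy direction I would take $n \in \mathbb{N} \subseteq \mathbb{M}_2$ and exhibit the explicit witness $\alpha := [0] \star [1] \star \cdots \star [n]$. This lies in $\mathsf{D}$ by construction, and its element set is exactly $\{0, 1, \ldots, n\}$. To see this covers every $b \leq n$ in $\mathbb{M}_2$, I use the explicit form $\mathbb{M}_2 = (\mathbb{Q}[\X]\cdot \X + \mathbb{Z})^{\mathsmaller{\geq 0}}$: every non-standard element is a polynomial of degree $\geq 1$ with positive leading coefficient, and hence strictly exceeds every rational constant.

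For the converse I would suppose that $a \in \mathbb{M}_2$ is non-standard. The same observation about the ordering yields $n < a$ for every standard $n \in \mathbb{N}$. Any putative witness $\alpha$ for $\phi(a)$ would therefore be an ur-string satisfying $[n] \preceq \alpha$ for every $n \in \mathbb{N}$, so the element set that $\alpha$ codifies would contain all of $\mathbb{N}$ and hence be infinite. This directly contradicts the preceding corollary, which tells us that every ur-string in $\mathbb{M}_2$ codifies a \emph{finite} set of elements.

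The only delicate point is verifying that ``$[b] \preceq \alpha$'' really unfolds, under $\uptheta$, to a first-order condition in the language of $\pamo$. This is not a genuine obstacle: it follows from the existential description of matrix divisibility in the special linear monoid, in the same spirit as Theorem~\ref{nozelsmurf} for the initial-segment relation. Apart from this bookkeeping, the substantive content of the theorem is already encapsulated by the preceding corollary, and no further obstacle arises.
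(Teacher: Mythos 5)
Your proof is correct and rests on the same key fact as the paper's: the preceding corollary that the ur-strings of $\mathbb{M}_2$ codify precisely the finite sets. The paper's defining formula is slightly different---it declares $x$ standard iff $x$ belongs to some Markov set closed under predecessor on non-zero elements---but this is only a cosmetic variant of your downset-containment formula, and both reduce immediately to the finiteness of Markov sets together with the observation that the $\leq$-downset of a standard element of $\mathbb{M}_2$ consists exactly of the standard numbers below it.
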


There are many ways to prove this. We just add a new one here.

\begin{proof}
We work in ${\mathbb M}_2$. We have: $x$ is a standard natural number if it is
an element of a Markov set which is closed under predecessor for all non-zero elements.
\end{proof}

We end the list of corollaries with a result that is really a corollary to the proof of Theorem~\ref{geweldigesmurf}(ii).

\begin{cor}[${\mathbb M}_2$]\label{sustitutesmurf}
Suppose $P$ is in ${\mathbb M}_2$ and $P$ has non-zero degree. Let $\alpha$ be a Markov string of ${\mathbb M}_2$. 
Suppose $\alpha$'s  normal form is given by an alternating sequence $\sigma$ of {\tt A}'s and {\tt B}'s with 
exponents $Q_0$, \dots, $Q_{n-1}$. Then, $\alpha[\X := P]$ has the normal form given by $\sigma$
with exponents $Q_i[\X := P]$.   
\end{cor}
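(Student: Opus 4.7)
The plan is to exploit the fact that the substitution $\X := P$ is a ring homomorphism on $\mathbb Q[\X]$, and hence extends entry-wise to a homomorphism on $2\times 2$ matrices that commutes with matrix multiplication. With that in hand, I just unwind the normal form and appeal to its uniqueness (Theorem~\ref{geweldigesmurf}).

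First I would record two small observations. The tally matrices behave transparently under substitution: since ${\tt A}^Q = \sspl 1Q01$ and ${\tt B}^Q = \sspl 10Q1$, entry-wise substitution gives $({\tt A}^Q)[\X := P] = {\tt A}^{Q[\X := P]}$ and $({\tt B}^Q)[\X := P] = {\tt B}^{Q[\X := P]}$. And because each entry of a product of matrices is a polynomial expression in the entries of the factors, substitution commutes with matrix multiplication: $(\beta\gamma)[\X := P] = \beta[\X := P]\cdot \gamma[\X := P]$. Combining these, if $\alpha$'s normal form is the product of factors $\gamma_0,\dots,\gamma_{n-1}$ alternating between ${\tt A}^{Q_i}$ and ${\tt B}^{Q_i}$ in the pattern $\sigma$, then $\alpha[\X := P]$ equals the product of $\gamma_i[\X := P]$ in the same order, which is precisely the alternating product dictated by $\sigma$ with exponents $Q_i[\X := P]$.

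What remains is to show that this resulting expression is a bona fide normal form, so that uniqueness in Theorem~\ref{geweldigesmurf} identifies it with the normal form of $\alpha[\X := P]$. The pattern $\sigma$ is already alternating by hypothesis and is untouched by substitution, so the only real point to check is that each exponent $Q_i[\X := P]$ is a non-zero positive element of ${\mathbb M}_2$. If $Q_i$ is a positive integer, then $Q_i[\X := P] = Q_i$ is unchanged; otherwise $Q_i$ has positive degree with positive leading coefficient, and since $P$ has non-zero degree with positive leading coefficient, the composition again has positive leading coefficient and so is positive in ${\mathbb M}_2$.

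The main obstacle I expect is exactly this last verification that positivity and membership in ${\mathbb M}_2$ are preserved for each $Q_i[\X := P]$; once that is in place, everything else is formal, and the corollary drops out of the uniqueness of the normal form together with the commutation of substitution with matrix multiplication.
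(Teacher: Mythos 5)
Your argument is correct in outline but takes a genuinely different route from the paper. The paper's proof is a one-line instruction to re-run the normal-form algorithm from the proof of Theorem~\ref{geweldigesmurf} on $\alpha[\X:=P]$ and check that every step survives the substitution; concretely this means verifying that the Euclidean divisions commute with $\X:=P$, e.g.\ that $B=QA+R$ with $0\leq R<A$ yields $0\leq R[\X:=P]<A[\X:=P]$, so that $\divi BA$ and $\rema BA$ go over into $\divi{B[\X:=P]}{A[\X:=P]}$ and $\rema{B[\X:=P]}{A[\X:=P]}$. You instead substitute into the already-computed factorisation, use that $\X:=P$ is a ring endomorphism of $\mathbb Q[\X]$ (hence commutes with matrix multiplication and with $Q\mapsto{\tt A}^Q,{\tt B}^Q$), and then invoke the uniqueness half of Theorem~\ref{geweldigesmurf}. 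This bypasses the algorithm entirely and reduces everything to the single claim that each $Q_i[\X:=P]$ is a non-zero element of $\mathbb M_2$; it is the cleaner of the two arguments.

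On that single claim, however, your verification covers only half of what you yourself flag, and the missing half is a genuine issue with the corollary as stated. The leading-coefficient argument gives positivity and non-vanishing, but membership in $\mathbb M_2=(\mathbb Q[\X]\cdot\X+\mathbb Z)^{\geq 0}$ also requires the constant term of $Q_i[\X:=P]$, namely $Q_i(P(0))$ with $P(0)\in\mathbb Z$, to be an integer, and this can fail: for $Q_0=\frac12\X+1$ and $P=\X+1$ one gets $Q_0[\X:=P]=\frac12\X+\frac32\notin\mathbb M_2$, so already $\alpha={\tt A}^{Q_0}$ has $\alpha[\X:=P]$ outside the Markov strings of $\mathbb M_2$. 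The statement therefore needs a side condition (e.g.\ $P(0)=0$, or that the exponents $Q_i$ take an integer value at $P(0)$), which does hold in both of the paper's applications ($P=3\X$, and $P=\X-1$ applied to a matrix over $\mathbb Z[\X]$). The paper's own proof is equally silent on this point, so this is a defect of the corollary rather than of your route; with the side condition granted, your proof is complete.
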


\begin{proof}
We verify that each step of the proof of Theorem~\ref{geweldigesmurf}(ii) remains valid
after the substitution.
\end{proof}

What do the profiles of strings in ${\mathbb M}_2$ look like?
We define $\polo := \upomega + \mathbb Z \cdot \mathbb Q + \breve \upomega$.
Here we interpret the product of orderings antilexicographically. 
Thus,
 $\mathbb Z \cdot \mathbb Q$ is $\mathbb Q$ copies of $\mathbb Z$.
 We use $\breve{(\cdot)}$ for the reverse ordering.
 Let the degree of $P$ be non-zero. The profile of $\sspl 1P01$ is
$\overbrace{{\tt A}\dots {\tt A}}^{\polo}$ and the profile of
$\sspl 10P0$ is $\overbrace{{\tt B}\dots {\tt B}}^{\polo}$.
If the degree of $P$ is 0, then the picture of ${\tt A}^P$ is a finite string of {\tt A}'s and, similarly,
for ${\tt B}^P$. The profile of $\alpha$ is the concatenation of the pictures associated with
the components of its normal form.

\subsection{The Second and the Third Standard Model of \pamo}\label{alitersmurf}
In this subsection we will have a brief look at two further models. These will be submodels of $\mathbb M_2$.
This means that, in these models we can only define the finite sets via container strings.

\begin{theorem}\label{gnapgnapsmurf}
Let $\mc R$ be an ordered subring of $\mathbb M_2$. Then, the sets based on the
container strings of $\mc R^{\mathsmaller{\geq 0}}$ are precisely the finite subsets of $\mc R^{\mathsmaller{\geq 0}}$.
As a consequence, the natural numbers are definable in $\mc R^{\mathsmaller{\geq 0}}$.
\end{theorem}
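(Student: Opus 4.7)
The plan is to reduce the theorem to the preceding Corollary for $\mathbb M_2$ by a submodel-transfer argument, leveraging the fact from Section~\ref{luckysmurf} that the interpretation $\uptheta$ of \futc1\ in \pamo\ is entirely quantifier-free. Since $\mc R^{\mathsmaller{\geq 0}} \subseteq \mathbb M_2$, every matrix in ${\sf SL}_2(\mc R^{\mathsmaller{\geq 0}})$ is also in ${\sf SL}_2(\mathbb M_2)$. The ur-string domain ${\sf D}$ is cut out by a quantifier-free condition on the matrix entries, so ${\sf D}^{\mc R^{\mathsmaller{\geq 0}}} = {\sf D}^{\mathbb M_2} \cap {\sf SL}_2(\mc R^{\mathsmaller{\geq 0}})$. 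The relation $[x]\preceq \alpha$ is defined by an existential formula whose witnesses $u,v$ must themselves be ur-strings, and any witnessing pair with entries in $\mc R^{\mathsmaller{\geq 0}}$ is automatically a witnessing pair with entries in $\mathbb M_2$. Hence, for every ur-string $\alpha$ over $\mc R^{\mathsmaller{\geq 0}}$, the set coded by $\alpha$ in $\mc R^{\mathsmaller{\geq 0}}$ is contained in the set coded by $\alpha$ in $\mathbb M_2$.

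The easy half is now immediate: if $\alpha$ is an ur-string over $\mc R^{\mathsmaller{\geq 0}}$, then its $\mc R^{\mathsmaller{\geq 0}}$-coded set is contained in its $\mathbb M_2$-coded set, which is finite by the preceding Corollary, and it is trivially contained in $\mc R^{\mathsmaller{\geq 0}}$. For the converse, given any finite $S=\{a_0,\ldots,a_{k-1}\}\subseteq \mc R^{\mathsmaller{\geq 0}}$, I form the ur-string $\alpha := [a_0]\star\cdots\star[a_{k-1}] \in {\sf D}^{\mc R^{\mathsmaller{\geq 0}}}$. Each $a_i$ belongs to the coded set via the obvious decomposition whose prefix and suffix already lie over $\mc R^{\mathsmaller{\geq 0}}$. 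For the reverse inclusion, I combine the containment above with an analysis of the alternating normal form of $\alpha$ given by Theorem~\ref{geweldigesmurf}: in any decomposition $\alpha = u \cdot {\tt B}{\tt A}^x \cdot v$ in $\mathbb M_2$ with $v$ either empty or beginning with ${\tt B}$, the leading ${\tt B}$ of our substring must be the head of some atom $[a_i]$ in $\alpha$, and the constraint on $v$ forces the ensuing ${\tt A}^x$ to be exactly the ${\tt A}^{a_i}$-tail of that atom, so $x = a_i$.

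The corollary on natural numbers then follows by verbatim adaptation of the argument already given for $\mathbb M_2$: an element $x \in \mc R^{\mathsmaller{\geq 0}}$ is a standard natural iff it belongs to some Markov set closed under predecessor on its non-zero elements. Since any such set is finite by the main clause, the descending chain $x, x-1, x-2,\ldots$ inside such a set must terminate at $0$, forcing $x$ to be standard; conversely, for standard $x=n$, the ur-string $[0]\star[1]\star\cdots\star[n]$ furnishes a witnessing set.

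The only non-routine step is the final part of the converse inclusion: the inspection of how ${\tt B}{\tt A}^x$-substrings can appear inside $\alpha$, which hinges on the normal-form theorem. Everything else is routine transfer along a quantifier-free interpretation, once one observes that ur-string membership is downward absolute from $\mathbb M_2$ to $\mc R^{\mathsmaller{\geq 0}}$.
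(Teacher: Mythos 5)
Your proposal is correct and follows essentially the same route as the paper: the key step in both is that ur-string elementhood is (purely) existential, hence upward absolute from $\mc R^{\mathsmaller{\geq 0}}$ to $\mathbb M_2$, so the coded set embeds into a finite set by the $\mathbb M_2$ corollary. Your extra normal-form analysis showing that $[a_0]\star\cdots\star[a_{k-1}]$ codes \emph{exactly} $\{a_0,\dots,a_{k-1}\}$, and the predecessor-closure argument for definability of $\mathbb N$, just spell out what the paper dismisses as clear or delegates to the corresponding $\mathbb M_2$ corollaries.
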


\begin{proof}
Clearly all finite subsets are Markov-definable in $\mc R^{\mathsmaller{\geq 0}}$.
Conversely, every Markov representation of a set $X$ in $\mc R^{\mathsmaller{\geq 0}}$ defines
an extension $Y$ of $X$ in $\mathbb M_2$, since the definition of element-hood is purely existential. 
Since $Y$ is finite, so must be $X$.
\end{proof}

Let ${\sf Int}(\mathbb Z)$ be the ring of integer-valued polynomials over $\mathbb Z$.
This ring consists of  the polynomials $P(\X)$ of $\mathbb Q[\X]$ such that, for all integers
$z$, we have $P(z)$ is an integer.
See \cite{cahen:what16} for an informative discussion of  ${\sf Int}(\mathbb Z)$.
We add the usual ordering to make it an ordered ring. It is easy to see that this delivers a discrete ordering.
The non-negative part of
${\sf Int}(\mathbb Z)$, thus, will be a model of $\pamo$.

Here are our promised models.
\begin{itemize}
\item
 $\mathbb M_0 :=\mathbb Z[\X]^{\mathsmaller{\geq 0}}$,
 \item
 $\mathbb M_1 :={\sf Int}(\mathbb Z)^{\mathsmaller{\geq 0}}$,
 \end{itemize}
 
 We note that $\mathbb M_0$ and $\mathbb M_1$ are models of \pamo.
We show that \axtcu\ref{urthor} fails in the Markov strings of $\mathbb M_0$. 
We define:

\medskip
\begin{itemize}
\item
$\mf A := \spl{9}{3\X+2}{3\X+4}{\X^2+2\X+1}$.
\end{itemize}

\medskip
\noindent
We can easily see that $\mf A$ is indeed a Markov string of $\mathbb M_0$

\begin{theorem}[$\mathbb M_0$]\label{habermassmurf}
The element $\mf A$ is neither of the form
${\tt A}^P$ nor of the form $\beta[P]$. Thus, 
Theorem~\ref{substasmurf} is not derivable in  \pamo\ and the interpretation of  \axtcu\ref{urthor} fails.
\end{theorem}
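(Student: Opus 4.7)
The plan is a direct computation in $\mathbb M_0=\mathbb Z[\X]^{\mathsmaller{\geq 0}}$. First I will check that $\mf A$ really lies in the special linear monoid, by computing
\[\det(\mf A)=9(\X+1)^2-(3\X+2)(3\X+4)=(9\X^2+18\X+9)-(9\X^2+18\X+8)=1.\]
Ruling out the form ${\tt A}^P$ is immediate: every ${\tt A}^P=\sspl 1P01$ has lower-left entry $0$, whereas the lower-left entry of $\mf A$ is $3\X+4$, which is nonzero.

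For the form $\beta[P]$, my plan is to suppose for contradiction that $\mf A=\beta[P]$ with $\beta=\sspl abcd$ in the special linear monoid over $\mathbb M_0$ and $P\in \mathbb M_0$. Using $[P]={\tt B}{\tt A}^P=\sspl 1P1{P+1}$, a routine multiplication gives
\[\beta[P]=\spl{a+b}{(a+b)P+b}{c+d}{(c+d)P+d}.\]
Comparing the top row with that of $\mf A$ yields $a+b=9$ and $9P+b=3\X+2$. The crucial observation is that in $\mathbb M_0$ an element is either $0$ or a polynomial with \emph{positive} leading coefficient, so a sum of two elements of $\mathbb M_0$ of positive degree is again a polynomial of positive degree. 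Hence $a+b=9$ forces both $a$ and $b$ to be non-negative integers. Then $9P=3\X+2-b$ with $b\in\mathbb Z$, and reading off the coefficient of $\X$ gives $9P_1=3$, where $P_1\in\mathbb Z$ is the coefficient of $\X$ in $P$. This has no integer solution, a contradiction.

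Since $\mathbb M_0$ is a model of \pamo, this refutes the conclusion of Theorem~\ref{substasmurf} in $\mathbb M_0$, so that theorem is not derivable in \pamo. The equivalence of the two Stack-Principle formulations established at the beginning of Subsection~\ref{reversosmurf} then yields that \axtcu\ref{urthor} also fails under the interpretation $\uptheta$ in $\mathbb M_0$. There is essentially no obstacle here: the only subtlety is the observation about the leading-coefficient ordering of $\mathbb Z[\X]$ that pins $a$ and $b$ down to being constants; after that, the divisibility check on the coefficient of $\X$ closes the argument in one line.
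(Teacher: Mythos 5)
Your proof is correct, and the underlying obstruction is the same one the paper exploits: the pair $(9,3\X+2)$ is a B\'ezout pair of $\mathbb M_0$ that is not Euclidean, because $9$ cannot divide $3\X+2$ with remainder when coefficients must stay in $\mathbb Z$. The difference is in how that obstruction is deployed. The paper's proof is a two-line appeal to the reverse-mathematics machinery of Subsection~\ref{reversosmurf} (Theorems~\ref{marathonsmurf} and \ref{haasjesmurf}), which already establish that $\alpha=\beta[P]$ forces the top row of $\alpha$ to be a Euclidean pair. You instead re-derive exactly that implication for the concrete matrix $\mf A$ by multiplying out $\beta[P]=\sspl{a+b}{(a+b)P+b}{c+d}{(c+d)P+d}$ and extracting $a+b=9$, $9P+b=3\X+2$; the observation that nonzero elements of $\mathbb M_0$ have positive leading coefficient (so $a+b=9$ pins $a,b$ down to integers) is precisely the point where the specific model enters, and the coefficient count $9P_1=3$ is the non-Euclideanness made explicit. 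What your route buys is self-containedness — the theorem can be read without the reverse-mathematics subsection — at the cost of redoing a computation the paper has already packaged as a lemma; the paper's route makes visible that the counterexample is an instance of the general equivalence between \axpam\ref{plimp16}$^-$ and the matrix decomposition property. Both conclusions you draw at the end (non-derivability of Theorem~\ref{substasmurf} and failure of \axtcu\ref{urthor}) are justified as you state them; for the latter one can also just note directly that $\mf A$ lies in the domain {\sf D}.
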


\begin{proof}
It is easily seen that the pair $(9, 3\X+2)$ is not Euclidean. The desired result is now immediate by Theorem~\ref{haasjesmurf}.
\end{proof}

What does $\mf A$ look like when we consider it in $\mathbb M_2$? Here is the computation:
\begin{eqnarray*}
\sspl{9}{3\X+2}{3\X+4}{\X^2+2\X+1} & \redux & \sspl{9}{2}{3\X+4}{\frac 23\X+1}{\tt A}^{\frac 13 \X}  \\
& \redux &  \sspl{1}{2}{\frac 13\X}{\frac 23\X+1}{\tt B}^4{\tt A}^{\frac 13 \X} \\
& \redux & \sspl{1}{0}{\frac 13\X}{1}{\tt A}^2{\tt B}^4{\tt A}^{\frac 13 \X} \\
& \redux & {\tt B}^{\frac 13 \X}{\tt A}^2{\tt B}^4{\tt A}^{\frac 13 \X} 
\end{eqnarray*}

\begin{theorem}[$\mathbb M_0$]
The string $\mf A$ has the profile
\[\overbrace{{\tt B}{\tt B} \dots}^{\omega\times}\overbrace{\dots {\tt A}{\tt A}}^{\breve\omega\times}.\]
\end{theorem}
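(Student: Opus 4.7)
The plan is to use the normal form computation in $\mathbb M_2$ displayed just before the theorem, together with the fact that $\mathbb M_0 \subseteq \mathbb M_2$, to enumerate the initial substrings of $\mf A$ that actually live in $\mathbb M_0$. From this enumeration the profile is read off.

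First, since $\mathbb M_0$ embeds in $\mathbb M_2$, every initial substring of $\mf A$ in $\mathbb M_0$ is also an initial substring of $\mf A$ in $\mathbb M_2$ (the witnessing suffix is the same matrix). By the normal form $\mf A = {\tt B}^{\frac 13 \X}{\tt A}^2{\tt B}^4{\tt A}^{\frac 13 \X}$ in $\mathbb M_2$ and the uniqueness of the $\preceq_{\sf ini}$-chain under it, every such initial substring is one of
\[{\tt B}^k,\qquad {\tt B}^{\frac 13 \X}{\tt A}^j,\qquad {\tt B}^{\frac 13 \X}{\tt A}^2{\tt B}^k,\qquad {\tt B}^{\frac 13 \X}{\tt A}^2{\tt B}^4{\tt A}^\ell,\]
where $k,\ell$ range over $\mathbb M_2^{\mathsmaller{\geq 0}}$ with the appropriate bounds and $j \in \{1,2\}$. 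I would then filter each family to keep only those matrices whose four entries are all in $\mathbb Z[\X]^{\mathsmaller{\geq 0}}$.

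The four cases proceed by direct matrix multiplication. For ${\tt B}^k = \sspl 10k1$, being in $\mathbb M_0$ forces $k \in \mathbb Z[\X]^{\mathsmaller{\geq 0}}$, and combined with $k \leq \frac 13 \X$ in $\mathbb M_2$ (leading-coefficient comparison) this pins $k$ down to a non-negative integer, giving the $\omega$-chain $\emp, {\tt B}, {\tt B}^2, \ldots$ . For ${\tt B}^{\frac 13 \X}{\tt A}^j$ and ${\tt B}^{\frac 13 \X}{\tt A}^2{\tt B}^k$ one multiplies out and reads off entries $\frac 13 \X + \cdots$ and $\frac 23 \X + 1$ respectively, neither of which lies in $\mathbb Z[\X]$, so these families contribute nothing. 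Finally, for ${\tt B}^{\frac 13 \X}{\tt A}^2{\tt B}^4{\tt A}^\ell$, an explicit computation gives
\[\spl{9}{9\ell+2}{3\X+4}{(3\X+4)\ell+\frac{2}{3}\X+1},\]
and requiring the bottom-right entry to lie in $\mathbb Z[\X]$ forces, after writing $\ell = q\X + z$ with $q \in \mathbb Q$ and $z \in \mathbb Z$ and matching coefficients mod $\frac{1}{3}$, that $q = \frac 13$; the constraint $\ell \leq \frac 13 \X$ then forces $z \leq 0$. Thus this family consists exactly of $\mf A\cdot {\tt A}^{-c}$ for $c = 0,1,2,\ldots$, forming a $\breve\omega$-chain ending at $\mf A$.

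Having identified all initial substrings, I label them via Lemma~\ref{ducksmurf}: column comparison shows each ${\tt B}^k$ with $k \geq 1$ ends in {\tt B}, and each $\mf A\cdot{\tt A}^{-c}$ ends in {\tt A} (the two column-leading-coefficient inequalities $\scol{9}{3\X+4} < \scol{3\X+2-9c}{\X^2+(2-3c)\X+(1-4c)}$ hold since the $\X$- and $\X^2$-leading coefficients on the right are positive). Concatenating the two chains in $\preceq_{\sf ini}$-order yields the profile $\overbrace{{\tt B}{\tt B}\dots}^{\omega}\overbrace{\dots{\tt A}{\tt A}}^{\breve\omega}$.

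The main obstacle is the coefficient analysis in the last family: showing that the requirement $(3\X+4)\ell+\frac 23\X+1 \in \mathbb Z[\X]$ together with $\ell \leq \frac 13 \X$ pins $\ell$ down to $\frac 13 \X + z$ with $z \leq 0$ integer, so that the $\breve\omega$-chain has exactly the claimed order type (no "in-between" members, and no accumulation point at the bottom). Everything else is routine matrix arithmetic in $\mathbb Z[\X]$.
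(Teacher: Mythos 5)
Your proposal is correct and follows essentially the same route as the paper: both pass to the normal form ${\tt B}^{\frac 13 \X}{\tt A}^2{\tt B}^4{\tt A}^{\frac 13 \X}$ in $\mathbb M_2$, observe that every occurrence in $\mathbb M_0$ is an occurrence in $\mathbb M_2$, and then filter the $\mathbb M_2$-occurrences by membership in $\mathbb Z[\X]$. You simply carry out the case check more exhaustively (all four families plus the labelling via Lemma~\ref{ducksmurf}) than the paper, which only spot-checks the critical family ${\tt B}^{\frac 13 \X}{\tt A}^2{\tt B}^4{\tt A}^{q\X+z}$.
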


\begin{proof}
Every occurrence in $\mf A$ in $\mathbb M_0$ is also an occurrence in $\mathbb M_2$.
We now simply check which occurrences in  $\mathbb M_2$ are also in $\mathbb M_0$.
For example, we have occurrences of the form ${\tt B}^{\frac 13 \X}{\tt A}^2{\tt B}^4{\tt A}^{ q\X+z}$, where
$q< \frac 13$.  We have, in the lower right corner, the polynomial $\frac 23 \X+1 + (q\X+z)(3\X+4)$.
The coefficient of $\X^2$ in that polynomial is $<1$ and, thus, it is not in $\mathbb M_0$.
Clearly, all the occurrences of the form ${\tt B}^{\frac 13 \X}{\tt A}^2{\tt B}^4{\tt A}^{  \frac 13 \X+z}$, for $z\leq 0$
are in $\mathbb M_0$.
\end{proof}

We note that $\mf A' := \mf A[\X := \X-1]$ is $\sspl 9 {3\X-1} {3\X+1} {\X^2}$.
The Markov string $\mf A'$ has $\mathbb M_2$ normal form  ${\tt B}^{\frac 13 \X-\frac 13}{\tt A}^2{\tt B}^4{\tt A}^{\frac 13 \X -\frac 13}$.
The string $\mf A'' := \mf A[\X := 3\X]$ is   $\sspl 9 {9\X+2} {9\X+4} {9\X^2+6\X+1}$. This string has normal form
${\tt B}^{\X}{\tt A}^2{\tt B}^4{\tt A}^{ \X}$. Thus, $\mf A''$ has profile
\[\overbrace{{\tt B}{\tt B} \dots}^{\omega\times}\overbrace{\dots {\tt B}{\tt B}}^{\breve\omega\times}{\tt A^2}{\tt B}^4
  \overbrace{ {\tt A}{\tt A}\dots}^{\omega\times}\overbrace{\dots {\tt A}{\tt A}}^{\breve\omega\times}.\] 
  Thus, we see that substitution does not preserve profile.

\begin{ques}{\small
Can we informatively characterise the normal forms  in $\mathbb M_2$ of the Markov strings from $\mathbb M_0$?
}
\end{ques} 

We briefly discuss a second Markov string that delivers an {\tt A}-string that is not of the form ${\tt A}^P$. Thus, $\pamo$ does not prove
that all {\tt A}-strings are of this form. 

\medskip
\begin{itemize}
\item
$\mf B := \spl{5\X+7}{\X^2-2}{25}{5\X -7}$.
\end{itemize}

\medskip
We reduce $\mf B$ to its normal form in $\mathbb M_2$.
\begin{eqnarray*}
\sspl{5\X+7}{\X^2-2}{25}{5\X -7} & \redux & \sspl{5\X+7}{3\frac 35 \X+5}{25}{18}{\tt A}^{\frac 15 \X-1}  \\
& \redux &   \sspl{1\frac 25 \X+2}{3\frac 35\X+5}{7}{18}{\tt B}{\tt A}^{\frac 15 \X-1} \\ 
& \redux & \sspl {1\frac 25\X+2}{\frac 45\X+1}{7}{4}{\tt A}^2{\tt B}{\tt A}^{\frac 15 \X-1} \\
& \redux & \sspl {\frac 35\X+1}{\frac 45\X+1}{3}{4}{\tt B}{\tt A}^2{\tt B}{\tt A}^{\frac 15 \X-1} \\
& \redux & \sspl {\frac 35\X+1}{\frac 15\X}{3}{1}{\tt A}{\tt B}{\tt A}^2{\tt B}{\tt A}^{\frac 15 \X-1} \\
& \redux & \sspl 1{\frac 15\X}{0}{1}{\tt B}^3{\tt A}{\tt B}{\tt A}^2{\tt B}{\tt A}^{\frac 15 \X-1} \\
& = & {\tt A}^{\frac 15\X}{\tt B}^3{\tt A}{\tt B}{\tt A}^2{\tt B}{\tt A}^{\frac 15 \X-1} 
\end{eqnarray*}

\begin{theorem}[$\mathbb M_0$]\label{maatjassmurf}
The string $\mf B$ has the profile
\[\overbrace{{\tt A}{\tt A} \dots}^{\omega\times}\overbrace{\dots {\tt A}{\tt A}}^{\breve\omega\times}.\]
So, $\mf B$ is a Markov {\tt A}-string that is not of the from ${\tt A}^P$.
\end{theorem}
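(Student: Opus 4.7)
The strategy is to enumerate the $\preceq_{\sf i}$-initial substrings of $\mf B$ in $\mathbb M_2$ using the normal form $\mf B = {\tt A}^{\frac 15\X}{\tt B}^3{\tt A}{\tt B}{\tt A}^2{\tt B}{\tt A}^{\frac 15 \X-1}$ just computed, and then check which of these have all four entries in $\mathbb Z[\X]$, hence lie in $\mathbb M_0$. Because the $\preceq_{\sf i}$-relation is quantifier-free (Theorem~\ref{nozelsmurf}), it is absolute between $\mathbb M_0$ and $\mathbb M_2$, so this exhaustive check produces the $\mathbb M_0$-profile of $\mf B$.

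Group the $\mathbb M_2$-initial substrings of $\mf B$ into three families: (i) the left tally block ${\tt A}^P = \sspl 1 P 0 1$ for $0 < P \leq \frac 15 \X$; (ii) the seven intermediate matrices from ${\tt A}^{\frac 15 \X}{\tt B}$ up to $\alpha := {\tt A}^{\frac 15 \X}{\tt B}^3{\tt A}{\tt B}{\tt A}^2{\tt B}$; (iii) the right tally block $\alpha{\tt A}^P$ for $0 < P \leq \frac 15 \X - 1$. For (i), ${\tt A}^P \in \mathbb M_0$ requires $P \in \mathbb Z[\X]^{\mathsmaller{\geq 0}}$, and $P \leq \frac 15 \X$ then forces $P$ to be a positive standard integer, giving an $\omega$-chain ${\tt A}, {\tt A}^2, \ldots$ of {\tt A}-labelled occurrences. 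For (ii), each of the seven matrices displayed in the normal-form reduction carries at least one entry with denominator $5$, so none lies in $\mathbb M_0$.

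Family (iii) is the main calculation. With $\alpha = \sspl{5\X+7}{\tfrac{18}{5}\X+5}{25}{18}$, write $P = a\X + b$ where $a \in \mathbb Q_{\mathsmaller{\geq 0}}$ and $b \in \mathbb Z$. A direct computation gives the $(1,2)$-entry of $\alpha{\tt A}^P$ as $5a\X^2 + (5b + 7a + \tfrac{18}{5})\X + (7b + 5)$, and the $(2,2)$-entry as $25a\X + 25b + 18$. Membership in $\mathbb Z[\X]$ successively forces $5a \in \mathbb Z$ and then $7a + \tfrac{18}{5} \in \mathbb Z$; writing $a = m/5$, the latter condition becomes $7m + 18 \equiv 0 \pmod 5$, i.e.\ $m \equiv 1 \pmod 5$. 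Combined with $0 \leq a \leq \frac 15$ (so $m \in \{0, 1\}$), this pins down $m = 1$, whence $a = \frac 15$ and $P = \frac 15 \X + b$ with $b \leq -1$. Family (iii) thus contributes a $\breve\omega$-chain of {\tt A}-labelled occurrences $\dots, \alpha{\tt A}^{\frac 15 \X - 2}, \alpha{\tt A}^{\frac 15 \X - 1} = \mf B$.

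To finish: each ${\tt A}^n$ lies below every $\alpha{\tt A}^{\frac 15 \X + b}$ in $\preceq_{\sf i}^{\mathbb M_0}$, because the bridging matrix ${\tt A}^{-n}\alpha{\tt A}^{\frac 15 \X + b}$ has entries in $\mathbb Z[\X]^{\mathsmaller{\geq 0}}$ (a short computation in which $\tfrac 75$ and $\tfrac{18}{5}$ happen to sum to $5$, clearing all denominators). So the $\mathbb M_0$-profile of $\mf B$ is exactly $\omega + \breve\omega$ copies of {\tt A}, making $\mf B$ a Markov {\tt A}-string. That $\mf B \neq {\tt A}^P$ for any $P$ is immediate, since ${\tt A}^P = \sspl 1 P 0 1$ has bottom-left entry $0$ whereas $\mf B$'s is $25$. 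The main obstacle is the diophantine analysis in family (iii); once the congruence $m \equiv 1 \pmod 5$ is extracted, everything else is routine matrix arithmetic guided by the displayed normal-form reduction.
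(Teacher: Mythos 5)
Your proposal is correct and follows essentially the same route as the paper: compute the $\mathbb M_2$-normal form ${\tt A}^{\frac 15\X}{\tt B}^3{\tt A}{\tt B}{\tt A}^2{\tt B}{\tt A}^{\frac 15 \X-1}$, enumerate the $\mathbb M_2$-occurrences of $\mf B$, and use the quantifier-freeness/absoluteness of $\preceq_{\sf i}$ to check which of them lie in $\mathbb M_0$. Your congruence analysis for the right-hand tally block is in fact slightly more careful than the paper's one-line justification (which appeals only to the $\X^2$-coefficient of the upper-right entry being ${<}\,1$ and so does not literally cover the exponents $q\X+z$ with $q=0$, handled in your argument by the condition $m\equiv 1 \pmod 5$); the only blemishes are cosmetic — the middle block ${\tt B}^3{\tt A}{\tt B}{\tt A}^2{\tt B}$ has eight non-empty prefixes rather than seven, and your final bridging computation is already subsumed by the absoluteness of $\preceq_{\sf i}$ invoked at the outset.
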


\begin{proof}
We check which occurrences of $\mf B$ in $\mathbb M_2$ are in $\mathbb M_0$.
The only interesting cases are the occurrences ${\tt A}^{\frac 15\X}{\tt B}^3{\tt A}{\tt B}{\tt A}^2{\tt B}{\tt A}^{q \X+z}$, where $q<\frac 15$.
In these cases, the coefficient $q'$ of $\X^2$ in the upper right corner will be strictly smaller than 1. So, the occurrences are not in $\mathbb M_0$. 
\end{proof}

\noindent We note that ${\tt B}\mf B$ is an atom among the container strings in $\mathbb M_0$. However, it is not of the form
$[P]$.

Let $(\cdot)^{\sf t}$ be the transpose and let  $(\cdot)^{\sf at}$ be the anti-transpose. We have:
\begin{theorem}
Let $\mc M$ be any model of \pam. Then, we have in $\mc M$:
${\sf pro}(\alpha^{\sf at})$ is the reverse labeled order of ${\sf pro}(\alpha)$.
Moreover, ${\sf pro}(\alpha^{\sf t})$ is the result of reversing the order of ${\sf pro}(\alpha)$ and interchanging {\tt A}'s and {\tt B}'s.  
\end{theorem}

\begin{proof}
Suppose $\alpha = \beta_0 \xi \beta_1$, where $\xi$ is an atom. Note that $\xi$ and $\beta_1$ are uniquely determined by $\beta_0$ and $\alpha$.
We map the occurrence $\beta_0\xi$ in $\alpha$ to  $\beta_1^{\sf at}\xi$ in $\alpha^{\sf at}$.
Let $\gamma_0\eta$ be any other occurrence in $\alpha$, where $\eta$ is an atom. Let
$\alpha = \gamma_0\eta\gamma_1$. Suppose
$\gamma_0\eta \preceq_{\sf ini} \beta_0\xi$. It follows that $\xi\beta_1 \preceq_{\sf end} \eta\gamma_1$, and,
 thus, $\beta_1^{\sf at} \xi\preceq_{\sf ini} \gamma_1^{\sf at}\eta$. The other direction is similar.
 
 The argument for the transpose is analogous.
 \end{proof}

It follows that, in $\mathbb M_0$, ${\sf pro}(\mf B) = {\sf pro}(\mf B^{\sf at})$. So, the profile does not determine the string.
We can  find a similar example from $\mf A$ using the transpose.

The counter-examples $\mf A$ and $\mf B$ to the Stack Principle for container strings \axtcu\ref{urthor} also work in $\mathbb M_1$. We briefly consider a
counter-example that is not in $\mathbb M_0$.
We use $\scol \X n$ locally for the following polynomial:
\[\frac{\X \dots (\X-n+1)} {n\,!}.\] This is, of course, not to be confused with our use of
column matrices elsewhere in the paper.
We consider an example given by Skolem. 
The example is intended to illustrate that we do not have \emph{the Skolem property} in $\mathbb Z[\X]$, but we do have
it in ${\sf Int}(\mathbb Z)$. See \cite{cahen:what16} for an explanation. Here is the example.
\[ \mf S := \spl {\X^2-6\X+10}{8\scol \X 4 +3}{3}{\X^2+1} \]
We easily verify that $\mf S$ is a Markov string in $\mathbb M_1$ and that
$3$ does not divide $\X^2+1$ with remainder in $\mathbb M_1$. So, by Theorem~\ref{haasjesmurf}, we are done.

We compute the normal form of $\mf S$ in $\mathbb M_2$.

\begin{eqnarray*}
\sspl {\X^2-6\X+10}{8\scol \X 4 +3}{3}{\X^2+1} & = & \sspl{\X^2-6\X+10}
{\frac 13 \X^4 -2\X^3+ 3\frac 23\X^2 -2\X+3}
{3}{\X^2+1}\\
& \redux & \sspl{\X^2-6\X+10}
{\frac 13\X^2-2\X +3}
{3}{1} {\tt A}^{\frac 13 \X^2}\\
& \redux & \sspl{1}
{\frac 13\X^2-2\X +3}
{0}{1} {\tt B}^3{\tt A}^{\frac 13 \X^2}\\
& \redux & 
{\tt A}^{\frac 13\X^2-2\X +3\,} {\tt B}^3{\tt A}^{\frac 13 \X^2}\\
\end{eqnarray*}

\begin{theorem}[$\mathbb M_1$]
The profile of $\mf S$ is:
\[\overbrace{{\tt A}{\tt A} \dots}^{\omega\times}\overbrace{\dots {\tt A}{\tt A}}^{\breve\omega\times}.\]
So, $\mf S$ is a Markov {\tt A}-string that is not of the from $[P]$.
\end{theorem}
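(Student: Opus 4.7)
My plan is to follow the strategy used in the proofs of the profile theorems for $\mf A$ and $\mf B$: use the $\mathbb M_2$-normal form of $\mf S$ to classify all possible initial segments, then check which ones actually lie in $\mathbb M_1$. Since $\mathbb M_1 \subseteq \mathbb M_2$, every $\mathbb M_1$-initial segment of $\mf S$ must appear as an $\mathbb M_2$-initial segment. Using the normal form ${\tt A}^P {\tt B}^3 {\tt A}^R$ with $P = \frac{1}{3}\X^2 - 2\X + 3$ and $R = \frac{1}{3}\X^2$ computed in the excerpt, the $\mathbb M_2$-initial segments fall into three families: the {\tt A}-labeled ${\tt A}^Q$ with $0 < Q \leq P$ from the first block; the {\tt B}-labeled ${\tt A}^P {\tt B}^k$ for $k = 1, 2, 3$; and the {\tt A}-labeled ${\tt A}^P {\tt B}^3 {\tt A}^S$ with $0 \leq S \leq R$ from the final block.

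Next I would eliminate the {\tt B}-labeled family by the same argument as in the proof of Theorem~\ref{habermassmurf}. Each such initial segment requires ${\tt A}^P = \sspl 1 P 0 1$ to lie in ${\sf SL}_2(\mathbb M_1)$, hence $P \in {\sf Int}(\mathbb Z)$. But $P(1) = \frac{1}{3} - 2 + 3 = \frac{4}{3} \notin \mathbb Z$, so no {\tt B}-labeled $\mathbb M_2$-initial segment descends to $\mathbb M_1$, and $\mf S$ is an {\tt A}-string in $\mathbb M_1$. I would then exhibit the required initial $\omega$ and terminal $\breve\omega$ chains. For the initial $\omega$, take ${\tt A}^c = \sspl 1 c 0 1$ for positive integers $c$; a direct computation gives $({\tt A}^c)^{-1} \mf S = \sspl{\X^2-6\X+10-3c}{8\scol \X 4 + 3 - c(\X^2+1)}{3}{\X^2+1}$, whose entries are integer-valued polynomials with positive leading coefficients (the upper-right having leading $\frac{1}{3}\X^4$), hence lie in $\mathbb M_1$, so ${\tt A}^c \preceq_{\sf ini} \mf S$ in $\mathbb M_1$. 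For the terminal $\breve\omega$, take $\beta_c := \mf S \cdot {\tt A}^{-c}$ for non-negative integers $c$; an analogous computation shows every $\beta_c$ lies in ${\sf SL}_2(\mathbb M_1)$, and $\beta_0 \succ_{\sf ini} \beta_1 \succ_{\sf ini} \beta_2 \succ_{\sf ini} \cdots$ is a descending $\preceq_{\sf ini}$-chain of {\tt A}-labeled initial segments of order type $\breve\omega$. The ${\tt A}^c$ and $\beta_c$ chains are disjoint because the former have lower-left entry $0$ while the latter have lower-left entry $3$, so together they realize the claimed $\omega + \breve\omega$ pattern. Finally, $\mf S$ is not of the form $[P] = {\tt B}{\tt A}^P = \sspl 1 P 1 {P+1}$, since the upper-left entry of $\mf S$ is $\X^2 - 6\X + 10$, not $1$.

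The main technical obstacle is the arithmetic bookkeeping --- verifying in each case that the proposed initial-segment matrix and its complementary tail both have entries in ${\sf Int}(\mathbb Z)^{\geq 0}$. These checks reduce to integer-valuedness (using that $8 \scol \X 4 + 3$, $\X^2 - 6\X + 10$, and $\X^2 + 1$ are integer-valued polynomials and that ${\sf Int}(\mathbb Z)$ is closed under addition and multiplication by $\mathbb Z[\X]$) together with positivity of leading coefficients, which are routine once the normal form is in hand.
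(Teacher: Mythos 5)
Your elimination of the {\tt B}-labelled occurrences is exactly the paper's (one-sentence) argument: every $\mathbb M_2$-occurrence of {\tt B} in $\mf S$ has the form ${\tt A}^P{\tt B}^k$ with $P=\frac 13\X^2-2\X+3$ sitting in the upper right corner, and $P(1)=\frac 43\notin\mathbb Z$, so $P\notin{\sf Int}(\mathbb Z)$ and none of these descends to $\mathbb M_1$. That is all the paper proves, and it is all that the final clause of the theorem needs (together with the observation that the lower-left entry of $\mf S$ is $3$, so $\mf S$ is neither ${\tt A}^P$ nor $[P]$). Your two chains ${\tt A}^c$ and $\mf S{\tt A}^{-c}$ are also correctly verified.

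The gap is in the last step, where you say the two chains ``together realize the claimed $\omega+\breve\omega$ pattern.'' Exhibiting them only shows the profile \emph{contains} a suborder of type $\omega+\breve\omega$; for equality you must show that no \emph{other} $\mathbb M_2$-occurrence of {\tt A} descends to $\mathbb M_1$, and you give no argument for this. Moreover, that completeness claim seems to fail. In the $\mathbb M_0$ cases the exclusion works because every non-constant element of $\mathbb Z[\X]$ has leading coefficient at least $1$, which is blocked by the fractional leading coefficients of the normal-form exponents; but ${\sf Int}(\mathbb Z)$ contains the linear polynomials $b\X+c$ with $b\geq 1$, and these fit under the degree-two exponent $P$. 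Concretely, ${\tt A}^{\X}=\sspl 1\X01$ is in ${\sf SL}_2(\mathbb M_1)$ and
\[({\tt A}^{\X})^{-1}\mf S=\sspl{\X^2-9\X+10}{\,8\scol \X 4+3-\X^3-\X\,}{3}{\X^2+1}\]
has all entries in ${\sf Int}(\mathbb Z)^{\geq 0}$ (the upper-right entry has leading term $\frac 13\X^4$), so ${\tt A}^{\X}\preceq_{\sf ini}\mf S$ in $\mathbb M_1$, strictly above every ${\tt A}^c$ and strictly below every $\mf S{\tt A}^{-c}$. Hence the profile is strictly larger than $\omega+\breve\omega$. The paper's own proof shares this lacuna---it only treats the {\tt B}-occurrences---so if you want a correct, complete argument you should either restrict yourself to the conclusion that $\mf S$ is an {\tt A}-string not of the form ${\tt A}^P$ (which is presumably what ``$[P]$'' is meant to say, in parallel with Theorem~\ref{maatjassmurf}), or determine the actual profile, which requires classifying all integer-valued $Q$ with ${\tt A}^Q$ and $({\tt A}^Q)^{-1}\mf S$ both in ${\sf SL}_2(\mathbb M_1)$.
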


\begin{proof}
The occurrences of {\tt B} in $\mf S$ in $\mathbb M_2$ have $\frac 13 \X^2 -2\X+3$ in the upper right
corner, but  $\frac 13 \X^2 -2\X+3$ is not $\mathbb M_1$. 
\end{proof}

\begin{ques}{\small
The {\tt A}-strings \textup(a.k.a{.} the {\tt B}-free strings\textup) of $\mathbb M_0$ and $\mathbb M_1$ are closed under matrix multiplication/concatenation since we have
the Editors Principle. We note that these extend the strings of the form $[{\tt A}]^n$. We can consider matrix multiplication/concatenation
as an addition analogue.
Can something informative be said about the theory of addition in both these cases?
Can anything like multiplication of {\tt A}-strings be defined?}
\end{ques}

\begin{ques}{\small
Can we informatively characterise the normal forms in $\mathbb M_2$ of the Markov strings of $\mathbb M_0$?
Can we do the same for $\mathbb M_1$?
}
\end{ques}

\section{Coda}
In this paper, we developed with some care two ways of coding container strings in arithmetic that use
the interpretation of a string theory as intermediary. The aim was to get all the pieces in place and
to push our knowledge of these two strategies a bit further.
Many questions remain.

Given Smullyan's idea of coding strings in arithmetic, there are still many ways to proceed and code container strings.
We have studied one specific way. The natural arithmetical  theory to study our strategy turned out to be 
\pamsmu. What about other the other ways of coding described in Section~\ref{stus} (and, possibly, some I missed)?
Under what precise assumptions do they work?
A second question here is to develop the model theory of $\pamsmu$ of which next to nothing is known.

We studied the Markov strings for two base theories. The first one was $\pamo$. 
The second one is $\pamres$ in which we have the Stack Principle for container strings as an extra.
Can we develop the model theory of
\pamo\ a bit further with an eye on the Markov-style container strings of the models?

We hope the reader will feel inspired to look into some of the many questions raised in the paper.


\appendix

\section{{\sf Iopen} does not imply the Powers Existence Principle}\label{luiesmurf}
In this Appendix, we show that {\sf IOpen} does not imply \axpam\ref{smurfin}.

\medskip
The Shepherdson model $\mathbb S$ of {\sf IOpen} consists of all forms \[ P(\X) := a_n\X^{n/q}+ a_{n-1}\X^{(n-1)/q}+\dots +a_1\X^{1/q}+a_0,\] where $q>0$,
the $a_i$ are real algebraic over $\mathbb Q$, $a_0\in \mathbb Z$, and $a_n>0$ if $n>0$ and
$a_n\geq 0$, if $n=0$. The operations and ordering are the obvious ones. See \cite{shep:nons64} or, e.g., \cite{marker:ende91}.
We note that we have Euclidean Division in this model. So, we can use the Tarski definition of power of 2.

We  show that no non-standard element of $\mathbb S$ is a power of 2.
Consider any non-standard element $\alpha = P(\X) $. The fact $\alpha$ is non-standard
means that $n \neq 0$. Suppose $a_0=0$. In this case $a$ is divisible by 3, and we are done.
Suppose $a_0 \neq 0$.
In this case, \[\alpha = P(\X) = |a_0| \cdot (2\cdot \frac{\frac{P(\X)}{|a_0|}-3}{2}+3)\] 
We can easily see that the second term of the product is in $\mathbb S$, and we are done.

The result also follows from \cite[Example 4.9]{jera:theo24}. \jer\ shows that the Principle
\[\forall x\, \exists u \mathbin{\geq} x\, \forall y\, (0<y<x \to \exists v\, (v\leq y < 2v \wedge v\mid u)).\]
does not follow from {\sf IOpen}. The model $\mf M$ of {\sf IOpen} considered by \jer\ has Unique Factorisation,
i.o.w, it satisfies \axpam\ref{supersmurf}. Its extension with a negative part 
is, thus, a GCD domain.\footnote{This model was constructed by Stuart J. Smith.
See \cite{smit:buil93}.}
It follows, by Theorem~\ref{mangasmurf}, that we have \axpam\ref{babysmurf} in the model.
So, the example tells us that ${\sf IOpen}+\axpam\ref{supersmurf}$ does not prove \axpam\ref{smurfin}.

\section{More on Partitions}\label{smulpaapsmurf}
If we have Left Cancellation, our category of partitions collapses into a partial order.

\begin{theorem}
In a model of \tcc1\ we have the following.
Suppose $f,g: \alpha\to \beta$. Then $f=g$.
\end{theorem}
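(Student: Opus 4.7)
The plan is to show that the preimages $f^{-1}(i)$ and $g^{-1}(i)$ coincide for every $i<k$. By definition of morphisms between partitions, each preimage is a non-empty interval of consecutive indices in $\{0,\ldots,n-1\}$, say $f^{-1}(i)=[s_i,\ell_i]$ and $g^{-1}(i)=[s'_i,\ell'_i]$, and moreover $w_i=u_{s_i}\ast\cdots\ast u_{\ell_i}=u_{s'_i}\ast\cdots\ast u_{\ell'_i}$. Once we know $s_i=s'_i$ and $\ell_i=\ell'_i$ for every $i$, the functions $f$ and $g$ agree on each $j$ (both send $j$ to the unique $i$ with $j\in[s_i,\ell_i]$), and we are done.

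I would proceed by induction on $i$. For the base case, weak monotonicity and surjectivity force $f(0)=g(0)=0$, so $s_0=s'_0=0$. For the step, assume $s_i=s'_i=s$. Then
\[u_s\ast u_{s+1}\ast\cdots\ast u_{\ell_i}\;=\;w_i\;=\;u_s\ast u_{s+1}\ast\cdots\ast u_{\ell'_i}.\]
Without loss of generality assume $\ell_i\leq\ell'_i$. Iterating Left Cancellation \axtc\ref{cancellsmurf} (cancelling $u_s$, then $u_{s+1}$, and so on up to $u_{\ell_i}$, with associativity used to bracket the products appropriately) yields $\estr=u_{\ell_i+1}\ast\cdots\ast u_{\ell'_i}$. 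If $\ell_i<\ell'_i$, the right-hand side is a product of non-empty components of the partition $\alpha$, and iterated application of \axtc{2} ($x\ast y=\estr$ implies $x=\estr$ and $y=\estr$) contradicts the fact that each $u_j$ is non-empty. Hence $\ell_i=\ell'_i$, and therefore $s_{i+1}=\ell_i+1=\ell'_i+1=s'_{i+1}$, completing the induction.

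There is really no hard step here; the only mild point is the bookkeeping for the iterated cancellation, which is clean because associativity (\axtc3) lets us rebracket freely, and Weak Left Cancellation \axtc\ref{wcancellsmurf}, which is of course a consequence of \axtc\ref{cancellsmurf}, could alternatively be invoked to derive $\estr=u_{\ell_i+1}\ast\cdots\ast u_{\ell'_i}$ after seeing that $w_i$ followed by the tail equals $w_i$. The non-emptiness of the $u_j$, built into the very notion of a partition, is what makes \axtc2 bite.
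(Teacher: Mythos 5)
Your proof is correct and uses the same essential mechanism as the paper's: the common prefix of two interval-products equal to the same component of $\beta$ is removed by Left Cancellation, leaving a non-empty product of non-empty partition components equal to $\estr$, which is absurd. The paper organizes this around the first index in the domain where $f$ and $g$ disagree rather than your induction over the codomain indices, but the two arguments are the same in substance.
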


\begin{proof}
Suppose $\alpha= (u_0,\dots, u_{k-1})$ and $\beta = (v_0,\dots v_{n-1})$.
Clearly $f$ and $g$ cannot differ at $u_0$.
Suppose $i$ is the smallest number such that $f(u_{i+1}) \neq g(u_{i+1})$. 
Say $f(u_i)=g(u_i)=v_j$.
Clearly, one of $f$, $g$ must yield a different value going from $u_i$ to $u_{i+1}$.
Without loss of generality, we may suppose this is $f$. 
We find $f(u_{i+1}) = v_{j+1}$. If $g$ also delivered a different value, we must have
$g(u_{i+1})=v_{j+1}$. \emph{Quod non.} So, $g(u_{i+1}) = v_j$.
Suppose $s$ is the smallest number such that $f(u_s)=g(u_s)=v_j$ and $t$ is the largest number so that
$g(u_t)= v_j$. We have seen that $t>i$. Then $u_s\dots u_i = v_j = u_s\dots u_i u_{i+1}\dots u_t$.
Ergo, by Left Cancellation,    $u_{i+1}\dots u_t= \estr$. But this is impossible, since $u_{i+1}\dots u_t$
is a non-empty product of non-empty strings.
\end{proof}

In case we have Left Cancellation, our construction of refinements in the proof of Theorem~\ref{plezantesmurf} delivers a pull-back
in the category of partitions and partition embeddings.

\begin{theorem}
In the category of partitions and partition embeddings, in a model of \tcc1, any two arrows to $(w)$ have a pull-back.
\end{theorem}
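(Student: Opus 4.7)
The plan is to exploit the previous theorem: under Left Cancellation, the category of partitions of $w$ with partition embeddings has at most one morphism between any two objects, so it is a poset under refinement. In a poset, a pull-back is simply a meet (greatest lower bound), and note that $(w)$ is terminal, so ``two arrows to $(w)$'' is just two partitions of $w$. Hence the task reduces to showing that any two partitions of $w$ have a greatest lower bound in the refinement order, and specifically that the common refinement $\gamma$ produced in Theorem~\ref{plezantesmurf} is such a greatest lower bound.

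The organising idea is to identify a partition $\alpha = (w_0,\ldots,w_{k-1})$ with its set of \emph{breakpoints}, namely the initial segments $w_0, w_0w_1, \ldots, w_0\cdots w_{k-2}$ of $w$. First I would verify, using the Editors Axiom, that the set of all initial segments of $w$ is linearly ordered by $\preceq_{\sf ini}$, and, using Left Cancellation, that this ordering is antisymmetric. In particular each initial segment is uniquely represented. So partitions of $w$ correspond to finite $\preceq_{\sf ini}$-chains of internal initial segments of $w$.

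Next I would prove the dictionary lemma: a partition embedding $\delta \to \alpha$ exists iff the breakpoint set of $\alpha$ is contained in that of $\delta$. The forward direction is immediate from the definition of a partition embedding. For the converse, given such an inclusion, one reconstructs the embedding by writing each block of $\alpha$ as a concatenation of consecutive blocks of $\delta$, using Left Cancellation to pin this decomposition down uniquely. With the dictionary in hand, the meet of $\alpha$ and $\beta$ in the poset corresponds to the \emph{union} of their breakpoint sets; and an inspection of the recursive construction of $\gamma$ in Theorem~\ref{plezantesmurf} shows that $\gamma$'s breakpoints are exactly $\mathrm{bp}(\alpha)\cup \mathrm{bp}(\beta)$. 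Hence $\gamma$ is the meet, and therefore a pull-back.

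The main obstacle will be the dictionary lemma, which is intuitive at the set-theoretic level but requires Left Cancellation and \axtc\ref{editors} to be executed rigorously inside \tcc1. A subsidiary nuisance is deciding how to speak of ``breakpoint sets'' internally: concretely I would avoid sets altogether and state everything in terms of strictly increasing $\preceq_{\sf ini}$-chains of fixed length, then argue by induction on the length of $\alpha$. Once the dictionary is established, the pull-back property itself is essentially a one-line observation about unions of chains.
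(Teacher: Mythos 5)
Your proposal is correct, but it takes a genuinely different route from the paper. The paper re-runs the recursive construction of the common refinement from Theorem~\ref{plezantesmurf}, now made deterministic by Theorem~\ref{lokismurf}, and proves universality by an induction on the sum of lengths: given any common refinement $\delta$ with witnesses $f,g$, it explicitly builds the factoring embedding of $\delta$ into $\alpha\times\beta$. You instead set up an order-isomorphism between the poset of partitions of $w$ (under refinement) and the poset of finite strictly increasing $\preceq_{\sf ini}$-chains of internal initial segments of $w$ under reverse inclusion; the pull-back is then the meet, i.e.\ the partition whose chain is the union of the two given chains. The two arguments rest on the same two axioms --- the Editors Axiom gives linearity of $\preceq_{\sf ini}$ below $w$, and Left Cancellation gives antisymmetry and the uniqueness needed for your dictionary lemma --- but your version isolates the order-theoretic content once and for all, after which existence and uniqueness of the meet are immediate (a union of two finite chains in a linear order is a finite chain), whereas the paper's version buys an explicit description of the factoring map and a direct identification of the pull-back with the object $\alpha\times\beta$ constructed earlier. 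Two small remarks: for the theorem as stated you do not actually need to check that the $\gamma$ of Theorem~\ref{plezantesmurf} has breakpoint set $\mathrm{bp}(\alpha)\cup\mathrm{bp}(\beta)$ --- you may simply take the partition determined by the union chain --- though that check is a routine induction if you want to identify the two; and in the converse direction of the dictionary lemma you should note that strictness of the chain of $\delta$ together with non-emptiness of the blocks of $\alpha$ forces the indices $\ell_i$ to be strictly increasing, so that the reconstructed map is a genuine weakly monotonic surjection.
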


\begin{proof}
Fix any model of \tcc1. We run through the proof of Theorem~\ref{plezantesmurf} again and show that 
  we obtain a pull-back. 

We prove our theorem by course-of-values induction on the sum of the lengths of our partitions, where we construct
$\gamma$ recursively. Let $\alpha\times\beta$ be the result of our construction. Here the presupposition for definedness
is $\eva\alpha=\eva\beta$.

The case that the sum of the lengths of $\alpha$ and $\beta$ is 0 is immediate. If the sum is not zero, both lengths
must be non-zero.
Suppose $\alpha=\alpha_0\pcomp (u)$ and $\beta = \beta_0\pcomp (v)$.
By Theorem~\ref{lokismurf}, we find a unique $z$, such that  
such that 
(a) 
$\eva{\alpha_0} z = \eva{\beta_0}$  and  $u= z v$, or
(b) $\eva{\alpha_0} = \eva{\beta_0} z$   and $z u= v$.
Moreover, if both cases apply, then $z=\estr$.
We take:
\begin{itemize}
\item
$\alpha \times \beta := ((\alpha_0 \pcomp \psing z) \times \beta_0)\pcomp (v)$, if Case (a) applies.
\item
$\alpha \times \beta := (\alpha_0  \times (\beta_0\pcomp \psing z))\pcomp (u)$, if Case (b) applies.
\end{itemize}

We note that, if both cases apply, $z=\estr$ and, hence $u=v$. So the values delivered by the cases are the same. 

Suppose $\delta= (d_0,\dots, d_{p-1})$ is a common refinement of $\alpha$ and $\beta$.
Let the  witnessing functions for the fact that
$\delta$ is a common refinement be $f$ and $g$.
We show that there is an embedding of $\delta$ in  $\alpha \times \beta$. This embedding will be automatically unique. 
Without loss of generality, we may assume that Case (a) applies. Let $z$ be the witnessing number. 
Let $s$ be the largest number such that, for all $s'<s$,  $f(d_{s'})$ is in $\alpha_0$ and let $t$ be the largest number such that
for all $t'<t$, $g(d_{t'})$ is in $\beta_0$. Suppose $s \leq t$. Then $d_0\dots d_{s-1} = \eva{\alpha_0}$ and
$d_0\dots d_{t-1}=\eva{\beta_0}$. It follows that $d_{s}\dots d_{t-1} = z$. 
We can embed $(d_0,\dots, d_{t-1})$ in $\alpha_0 \pcomp \psing z$ by embedding
$(d_0,\dots, d_{s-1})$ in $\alpha_0$ via $f$ and by mapping $d_{s}$, \dots $d_{t-1}$ to $z$.
We can embed $(d_0,\dots, d_{t-1})$ in $\beta_0 $ via $g$.
So,  by the induction hypothesis, we can embed $(d_0,\dots, d_{t-1})$ in 
$((\alpha_0 \pcomp \psing z) \times \beta_0)$, say via $h$.
We note that $zd_{t}\dots d_{p-1} = d_s\dots d_{p-1} = u = zv $, so $d_{t}\dots d_{p-1}=v$.
It follows that we can embed $\delta$ in $((\alpha_0 \pcomp \psing z) \times \beta_0)\pcomp (v)$
by first sending $d_0$, \dots, $d_{t-1}$ to $(\alpha_0 \pcomp \psing z)\times \beta_0$ via $h$ and
then mapping $d_t$, \dots, $d_{p-1}$ to $v$.
\end{proof}

\section{Bi-Cancelation}\label{bismurf}
A principle that we did not introduce in the main body of the paper is Bi-Cancellation.
\begin{itemize}
\item[\axtc12]
$\vdash x=u\ast x\ast v \to (u= \estr \wedge v = \estr)$
\end{itemize}
We indicate the presence of Bi-Cancellation by a superscript {\sf b}.

We show that Bi-Cancellation implies Left Cancellation (and by symmetry also Right Cancellation) over \tc1.
\begin{theorem}
The theory \tcb 1\ extends \tcc 1. In other words, over \tc1, Bi-Cancellation, \axtc12, implies Left Cancellation, \axtc\ref{cancellsmurf}. 
\end{theorem}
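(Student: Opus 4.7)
The plan is to apply the Editors Principle to the hypothesis of Left Cancellation and then use Bi-Cancellation to force the intermediate witness produced by the Editors Principle to be the empty string.

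Concretely, I would start by assuming $x\ast y = x \ast z$. By the Editors Principle, \axtc{\ref{editors}}, applied with $u := x$ and $v := z$, there exists a $w$ such that either (i) $x\ast w = x$ and $y = w\ast z$, or (ii) $x = x\ast w$ and $w\ast y = z$. In both cases, one of the conjuncts has the form $x = x\ast w$ (using \axtc1 for the symmetry between $x\ast w = x$ and $x = x\ast w$).

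The key step is now to rewrite $x = x\ast w$ as $x = \estr \ast x \ast w$, using \axtc1. By Bi-Cancellation, \axtc{12}, applied with $u := \estr$ and $v := w$, we obtain $w = \estr$ (the conclusion $u = \estr$ is vacuously available). Substituting $w = \estr$ back, in case (i) we get $y = \estr \ast z = z$, and in case (ii) we get $z = \estr \ast y = y$. Either way, $y = z$, which is Left Cancellation, \axtc{\ref{cancellsmurf}}.

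I do not expect a genuine obstacle here: the whole argument amounts to recognising that the Editors witness $w$ must be absorbed by the same string on one side, and that Bi-Cancellation is tailor-made to kill such an absorbed factor. The only small subtlety is that Bi-Cancellation is stated with both a left and a right piece, while our situation only involves a right piece; this is handled trivially by taking the left piece to be $\estr$.
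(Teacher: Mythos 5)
Your proof is correct and is essentially the paper's argument: the paper reduces to Weak Left Cancellation via Theorem~\ref{villeinesmurf} (whose proof is exactly your Editors-Principle step) and then kills the witness by rewriting $x\ast w = x$ as $\estr\ast x\ast w = x$ and invoking Bi-Cancellation with left piece $\estr$, precisely as you do. The only difference is that you inline the reduction instead of citing the earlier theorem.
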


\begin{proof}
In the light of Theorem~\ref{villeinesmurf} it suffices to prove Weak Left Cancellation:
if $xv=x$, then $v=\estr$. This follows immediately noting that $xv=x$ is equivalent to
$\estr x v=x$.
\end{proof}

\begin{theorem}[\pamsmu]
We have  Bi-Cancellation via the translation $\upbeta$ of Section~\ref{graaftelsmurf}.
\end{theorem}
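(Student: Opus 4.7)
The plan is to unfold the translation and reduce the claim to elementary arithmetic in \pamsmu. Under $\upbeta$, since $\estr_\upbeta = 0$, what must be shown is: if $x = u \ast_\upbeta x \ast_\upbeta v$ in \pamsmu, then $u = 0$ and $v = 0$. Using the associativity of $\ast_\upbeta$ (already verified) together with the multiplicativity $\ell(x\ast v) = \ell(x)\cdot\ell(v)$ from Theorem~\ref{spoorsmurf}, the hypothesis unfolds to
\[ x \;=\; u\cdot \ell(x)\cdot \ell(v) \;+\; x\cdot \ell(v) \;+\; v. \tag{$\ast$} \]

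I would first dispose of the case $x = 0$. Then $u\ast x\ast v = u\ast v = 0$, and the \axtc2\ clause already proved for $\upbeta$ forces $u = v = 0$.

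The main case is $x \neq 0$. Here the plan is to first pin down $\ell(v) = 1$. Reading off $(\ast)$, we have $x \geq x\cdot \ell(v)$. If $\ell(v) \geq 2$, then $x\cdot \ell(v) \geq x + x$, so $x + x \leq x$, which by the additive cancellation in Lemma~\ref{vreugdigesmurf}(ii) yields $x \leq 0$ and hence $x = 0$, contradicting the case assumption. Therefore $\ell(v) = 1$, and the defining inequality $\ell(v) \leq v+1 < 2\cdot\ell(v)$ collapses to $v+1 < 2$, so $v = 0$. Substituting back into $(\ast)$ leaves $x = u\cdot \ell(x) + x$, whence $u\cdot \ell(x) = 0$ by another application of Lemma~\ref{vreugdigesmurf}(ii). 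Since $\ell(x)$ is a power of $2$ and therefore nonzero, we conclude $u = 0$.

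There is no real obstacle here: all the work has already been done in Section~\ref{graaftelsmurf}. The only thing to handle with care is the passage from $x \geq x\cdot \ell(v)$ and $\ell(v) \geq 2$ to $x = 0$, which is exactly the sort of cancellation collected in Lemma~\ref{vreugdigesmurf}.
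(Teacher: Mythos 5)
Your proof is correct, and it rests on the same machinery as the paper's: the multiplicativity of $\ell$ from Theorem~\ref{spoorsmurf} and elementary cancellation in \pamsmu. The only difference is mechanical: the paper simply applies $\ell$ to both sides of $u\ast x\ast v=x$ to get $\ell(u)\cdot\ell(x)\cdot\ell(v)=\ell(x)$, cancels $\ell(x)$, and concludes $\ell(u)=\ell(v)=1$, hence $u=v=0$; you instead unfold the concatenation into the explicit identity $x=u\cdot\ell(x)\cdot\ell(v)+x\cdot\ell(v)+v$ and extract $v=0$ by an order argument and $u=0$ by cancellation. Both are sound; the paper's version is marginally shorter because it never needs the case split on $x=0$ or the dichotomy $\ell(v)=1$ versus $\ell(v)\geq 2$ (the latter being an implicit but harmless appeal to the fact that a power of $2$ other than $1$ is even, used elsewhere in the paper as well).
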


\begin{proof}
We reason in \pamsmu. Suppose $u\ast x\ast v = x$. Then, $\ell(u)\cdot \ell(x) \cdot \ell(v) = \ell(x)$.
It follows that $\ell(u)=\ell(v) = 1$. So, $u=v=0= \estr$.
\end{proof}

\begin{theorem}[\pamo]
We have Bi-Cancellation via the translation $\upeta$ of Section~\ref{matersmurf}.
\end{theorem}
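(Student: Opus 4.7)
The plan is to introduce the sum-of-entries function
\[
P\sspl abcd \;:=\; a+b+c+d,
\]
and show that, for matrices $M,N$ in the special linear monoid over $\mc M^{\mathsmaller{\geq 0}}$, multiplication makes $P$ strictly increase in a controlled way. Concretely, I would first do the direct calculation
\[
P(MN) \;=\; (a+c)(p+q)+(b+d)(r+s),
\]
where $M=\sspl abcd$ and $N=\sspl pqrs$. This identity is just the expansion of the entries of $MN$ regrouped; it uses no inverses and so is available directly in $\pamo$, with no need to pass to the ring $\mf Z(\mc M)$.

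Next I would record the simple fact that for any matrix $M=\sspl abcd$ in the special linear monoid one has $a\geq 1$ and $d\geq 1$: since $ad=bc+1\geq 1$ with all entries non-negative, neither $a$ nor $d$ can be zero, so in the discrete ordering both are $\geq 1$. Hence $a+c\geq 1$, $b+d\geq 1$; symmetrically $p+q\geq 1$ and $r+s\geq 1$. Subtracting gives the two comparisons
\begin{eqnarray*}
P(MN)-P(N) & = & (a+c-1)(p+q)+(b+d-1)(r+s),\\
P(MN)-P(M) & = & (a+c)(p+q-1)+(b+d)(r+s-1),
\end{eqnarray*}
both of which are sums of products of non-negative factors, hence non-negative. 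The first is $0$ only if $a+c=1$ and $b+d=1$ (because $p+q$ and $r+s$ are at least $1$), which combined with $a,d\geq 1$ forces $a=d=1$, $b=c=0$, i.e.\ $M=\oslash$. Dually, the second is $0$ only if $N=\oslash$.

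To finish, assume $\alpha\beta\gamma=\beta$. Then
\[
P(\beta)\;=\;P\bigl(\alpha(\beta\gamma)\bigr)\;\geq\;P(\beta\gamma)\;\geq\;P(\beta),
\]
so both inequalities are equalities. The first equality, applied with $M:=\alpha$ and $N:=\beta\gamma$, yields $\alpha=\oslash$; the second, applied with $M:=\beta$ and $N:=\gamma$, yields $\gamma=\oslash$. I do not expect any genuine obstacle: the entire argument is a bilinear expansion plus the observation that $ad=bc+1$ forces $a,d\geq 1$, all of which is formalisable in $\pamo$ without invoking the extension to $\mf Z(\mc M)$ and without using the Editors Principle.
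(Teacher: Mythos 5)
Your argument is correct and is formalisable in \pamo\ exactly as you claim: the identity $P(MN)=(a+c)(p+q)+(b+d)(r+s)$ is a direct expansion, the diagonal entries of a special linear monoid element are indeed $\geq 1$ (from $ad=bc+1$ and discreteness), and the subtractions $a+c-1$, $b+d-1$ are licensed by the Subtraction Principle. The paper proves the same theorem by a closely related but not identical monotonicity argument: instead of aggregating into the scalar potential $P$, it tracks all four entries separately, showing that each entry of $\alpha\beta$ weakly dominates the corresponding entry of $\alpha$ and that at least one strictly grows unless $\beta$ is the identity (using $f>0$ or $g>0$ for a non-identity factor). Your version buys a cleaner equality analysis — $P(MN)=P(N)$ holds \emph{iff} $M=\oslash$, packaged in a single exact bilinear identity — and a tidy two-line finish via the sandwich $P(\beta)\geq P(\beta\gamma)\geq P(\beta)$, whereas the paper's entrywise version requires a case split on which off-diagonal entry of the factor is positive. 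Both proofs avoid the passage to $\mf Z(\mc M)$ and the Editors Principle. The only point worth making explicit in a write-up is that $\beta\gamma$ is again in the special linear monoid (Lemma~\ref{minismurf}), which you need in order to apply your diagonal-entry observation to $N:=\beta\gamma$.
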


\begin{proof}
Let $\alpha = \sspl abcd$ and $\beta = \sspl efgh$. Consider $\alpha\beta$.
We have $a \leq ae+bg$, $b \leq af+bh$, $c\leq ce+dg$, $d \leq cf+dh$. 
If $f> 0$, then, $b < af+bh$. If $g>0$, then $c< ce+dg$. So, unless $\beta$ is the identity,
we have weak growth in all components and strict growth in onbe. Similarly, for $\beta\alpha$.

It follows that unless $\beta$ and $\gamma$ are both the identity, one of the components of $\beta\alpha\gamma$ is strictly
larger that the corresponding component of $\alpha$.
\end{proof}

\begin{theorem}\label{termhersmurf}
\tc1\ plus both left and right cancellation  does not prove bi-Can\-cel\-lation.
\end{theorem}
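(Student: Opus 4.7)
The plan is to exhibit a concrete counter-model. Consider the set
\[M := \{(k,I) : k\in \mathbb N,\ I\in \mathbb Z,\ \text{with } I\geq 0 \text{ when } k=0\},\]
with unit $\estr := (0,0)$ and product
\[(k_1,I_1)\ast (k_2,I_2) := \bigl(k_1+k_2,\ I_1+(-1)^{k_1}I_2\bigr).\]
Intuitively, $(k,I)$ codes the element $a^{n_0}ba^{n_1}b\cdots ba^{n_k}$ of the quotient monoid $\langle a,b\mid aba=b\rangle^+$ by recording the number $k$ of $b$'s and the alternating sum $I=\sum_{i=0}^{k}(-1)^i n_i$. The rewrite $aba\to b$ decrements two adjacent $n_i$'s by one and hence preserves both $k$ and $I$, while a short lattice argument shows these are complete invariants; the product formula reflects the merging of the two adjoining $a$-blocks at a concatenation boundary.

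First I would verify the $\tc0$-axioms and both cancellation laws directly from the product formula. Associativity and the identity laws are routine, and $\axtc 2$ holds because $k_1+k_2=0$ forces $k_1=k_2=0$, whereupon $I_1+I_2=0$ with $I_i\geq 0$ yields $I_i=0$. Left Cancellation follows from $(k_1,I_1)(k_2,I_2) = (k_1,I_1)(k_3,I_3)$ giving $k_2=k_3$ on the first coordinate and then $(-1)^{k_1}I_2=(-1)^{k_1}I_3$ on the second; Right Cancellation is dual.

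The main obstacle will be the Editors Axiom $\axtc{\ref{editors}}$. Given $(k_1,I_1)(k_2,I_2)=(k_3,I_3)(k_4,I_4)$, I would assume $k_1\leq k_3$ without loss of generality and set $w:=\bigl(k_3-k_1,\ (-1)^{k_1}(I_3-I_1)\bigr)$. A short computation using the two coordinate equalities verifies both $(k_1,I_1)\ast w = (k_3,I_3)$ and $w\ast(k_4,I_4) = (k_2,I_2)$, so the first Editors disjunct succeeds whenever $w\in M$. If $k_1<k_3$, then $w$'s first coordinate is positive and $w\in M$ automatically. When $k_1=k_3$ we have $w=(0,H)$ with $H = (-1)^{k_1}(I_3-I_1)$; if $H\geq 0$ this $w$ still works, otherwise the symmetric choice $w' := (0,-H)$ witnesses the second Editors disjunct. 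Since $H$ and $-H$ cannot both be strictly negative, one branch always applies.

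Finally, Bi-Cancellation fails on $u := (0,1)$, $x := (1,0)$, $v := (0,1)$:
\[u\ast x\ast v = (0,1)(1,0)(0,1) = (1,1)(0,1) = \bigl(1,\ 1+(-1)^{1}\cdot 1\bigr) = (1,0) = x,\]
while neither $u$ nor $v$ equals $\estr$. Under the word interpretation this is just the defining equation $a\cdot b\cdot a=b$, making the failure transparent.
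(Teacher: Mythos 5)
Your proof is correct, and it takes a genuinely different route from the paper's. The paper builds its counter-model from a string rewriting system on the three-letter alphabet $\{{\tt a},{\tt b},{\tt c}\}$ with the single rule ${\tt abc}\redux{\tt b}$: the rule is orthogonal and length-decreasing, so normal forms exist and are unique; the domain is the set of normal forms, concatenation is ``concatenate, then normalise'', bi-cancellation fails because ${\tt a}\ast{\tt b}\ast{\tt c}$ reduces to ${\tt b}$, and the bulk of the work is a case analysis on where the contracted redexes sit in order to verify the Editors Principle \axtc{\ref{editors}}. You instead exhibit a concrete coordinatisation of the two-generator quotient monoid $\langle a,b\mid aba=b\rangle^+$ as pairs $(k,I)$ with a twisted addition, and the pay-off is that every axiom --- including Editors --- is checked by a short closed-form computation: the witness $w$ is given by an explicit formula in the coordinates, the identity $w\ast(k_4,I_4)=(k_2,I_2)$ follows by multiplying the hypothesis through by $(-1)^{k_1}$, and the only genuine case split is on the sign of the single integer $H$ (I verified all of these computations; they are right, as is the failure of bi-cancellation at $(0,1)(1,0)(0,1)=(1,0)$). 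What your approach buys is brevity and transparency in the Editors verification; what the paper's buys is a reusable technique (confluent SRS presentations of concatenation-like structures) and a model that stays recognisably string-theoretic. One small caveat: your parenthetical assertion that $(k,I)$ is a complete invariant for the word problem of $\langle a,b\mid aba=b\rangle^+$ is never used --- the structure on pairs is self-contained --- so you should either prove that claim or demote it explicitly to motivation, since the rewriting system $aba\redux b$ is not confluent and the identification of words with pairs is not as immediate as the phrase ``a short lattice argument'' suggests.
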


\begin{proof}
We construct a model of $\tc1$ plus both cancellation axioms, that
refutes \axtc12. Our model will be given using a string rewrite system or SRS
that is strongly normalising and satisfies Church-Rosser.

We consider the strings in the alphabet with {\tt a}, {\tt b}, and {\tt c}. Our single reduction rule is ${\tt abc} \redux {\tt b}$.
We note that different reductions in the same string are non-overlapping and that reductions diminish the length. 
Thus, we have an orthogonal, length-deceasing SRS.
It follows that each string $x$ has a unique normal form, which we denote by ${\sf nf}(x)$.  

We take as the domain of our structure the normal forms of our reduction system. These are, of course, precisely the {\tt abc}-free strings.
The empty string is our unit element and as concatenation we take $x \diamond y := {\sf nf}(x\ast y) $, where $\ast$ is ordinary concatenation of strings.
We follow our usual convention of suppressing $\ast$.

We will use the following symmetry. Let $x^{\sf r}$ be the result of reading $x$ backward and interchanging {\tt a} and {\tt c}. Then,
$\estr^{\sf r}=\estr$ and  $(x\diamond y)^{\sf r} = y^{\sf r} \diamond x^{\sf r}$. 

Trivially, our model refutes  \axtc{12}.  We will verify  that it satisfies \tcc1. Satisfaction of \tc0\ is immediate from the strong normalisation and
from the fact that reduction of a non-empty string can never give the empty string.  

We verify \axtc\ref{editors}. Suppose $x\diamond y = u \diamond v$. We will call the Editors Property for strings here: \emph{the ordinary Editors Property}.
\begin{itemize}
\item
In case neither $xy$ nor $uv$ contain {\tt abc}, we have $x\diamond y = xy$ and $u\diamond v = uv$, so we are done by the
ordinary Editors Property.  
\item
Suppose $xy$ contains {\tt abc} and $uv$ does not. In this case, we have $x=x_0{\tt a}^n{\tt b}$ and $y={\tt c}^ny_0$,
 or $x=x_0{\tt a}^n$ and $y={\tt b}{\tt c}^ny_0$, where either $x_0$ does not end with {\tt a} or $y_0$ does not start with {\tt c}.
 By symmetry, we only need to study the first case.
Suppose, after contraction to normal form,  the occurrence {\tt b} of the child {\tt abc} lies in $u$. We replace this occurrence in $u$ by ${\tt a}^n{\tt b}{\tt c}^n$
obtaining $u^+$. Clearly, $u^+v= xy$. We apply the ordinary Editors Property to $u^+v$ and $xy$. Via reduction, we obtain the desired
witness of the Editors Property for   $x\diamond y = u \diamond v$. The case where the occurence {\tt b} is in $v$ is symmetrical.
\item
Both $xy$ and $uv$ contain {\tt abc} and the child {\tt b} of the occurrence of {\tt abc} in $xy$ after reduction is not identical to the    
 child {\tt b} of the occurrence of {\tt abc} in $uv$. Without loss of generality, we may assume that 
 the $xy$-child comes first. Let the active parts of the reductions be ${\tt a}^n{\tt b}{\tt c}^n$, resp.  ${\tt a}^m{\tt b}{\tt c}^m$.
 The $xy$-child will be in $u$. We replace it by ${\tt a}^n{\tt b}{\tt c}^n$ obtaining $u^+$. The $uv$-child will be in
 $y$. We replace it by ${\tt a}^m{\tt b}{\tt c}^m$ obtaining $y^+$. We have $xy^+ = u^+v$. Applying the ordinary Editors Property
 to this equation and reducing gives us the desired result.
 \item
 Both $xy$ and $uv$ contain {\tt abc} and the child {\tt b} of the occurrence of {\tt abc} in $xy$ after reduction is  identical to the    
 child {\tt b} of the occurrence of {\tt abc} in $uv$. By symmetry, we may assume   $x= z_0{\tt a}^n{\tt b}$ and $y = {\tt c}^nz_1$.
 We  consider the cases that (1) 
  $u= z_0{\tt a}^m{\tt b}$ and $y = {\tt c}^mz_1$ and (2)   $u= z_0{\tt a}^m$ and $y = {\tt b} {\tt c}^mz_1$. 
 
 In case (1), we may assume without loss of generality that $m\leq n$. We take $w:={\tt c}^{n-m}$.
 Then, $xw = z_0{\tt a}^n{\tt b}{\tt c}^{n-m} \redux z_0{\tt a}^m{\tt b} = u$ and $wv ={\tt c}^{n-m}{\tt c}^mz_1 = {\tt c}^nz_1=y$. 
 
 In case (2), by symmetry, we may assume that $m \leq n$. We take $w := {\tt a}^{n-m}{\tt b}$.
 Then, $uw= z_0{\tt a}^m{\tt a}^{n-m}{\tt b}= z_0{\tt a}^n{\tt b}=x$ and $wy= {\tt a}^{n-m}{\tt b}{\sf c}^nz_1= {\tt b}{\tt c}^mz_1 =v$.
\end{itemize}

Finally, we verify the cancellation laws. We can easily do this directly, but since we already verified the Editors Axiom, by
Theorem~\ref{villeinesmurf}, it suffices to derive the Weak Cancellation Axioms.
We treat \axtc\ref{wcancellsmurf}. The other weak cancellation follows by symmetry.
 Suppose $x\diamond z = x$. If $z= \estr$, we are done. If $z\neq \estr$, an interaction must
take place. Whether $x$ ends with {\tt b} or not, an interaction between $x$ and $z$ will put a {\tt b} on a place where $x$ has an {\tt a}.
A contradiction.
\end{proof}

\section{The Sequentiality of \pamp}\label{negatievesmurf}
The theory \pamp\ is the theory \pamj\ plus the following principle.
\begin{itemize}
\item[{\sf uc}]
$\vdash (y \leq x\wedge v\leq u) \to x \cdot v+ y\cdot u \leq x\cdot u + y \cdot v$ 
\end{itemize} 

\begin{theorem}
The theories  \pamp,  and \pam\ do not coincide.
\end{theorem}

\begin{proof}
It is easily seen that $\mathbb N[{\sf X}]$ equipped with the standard lexicographical ordering satisfies
\pamp, but not \pam.
\end{proof}

\noindent Emil \jer\ has provided an example that shows that \pamj\ and \pamp\ do not coincide.

We use \docr\ for the theory of ordered commutative rings.
We  provide translations carrying interpretations as shown in the diagram below.
We will verify that these translations do indeed support the desired interpretations. 

\[
\begin{tikzcd}[row sep = large, column sep = large]
& \docr \arrow{dl}{\sf z}   \\
 \pamp \arrow{r}{\sf emb}    &  \pam\arrow{u}{\sf nn} 
  \end{tikzcd}
  \]

This diagram commutes in  ${\sf INT}_1$ with the exception that the arrows along the path \pamp\ to \pam\ to \docr\ to \pamp\
do not deliver the identity arrow.\footnote{The category ${\sf INT}_1$ is the category of theories and interpretations in which two interpretations
are the same iff there is, verifiably in the target theory, a definable isomorphism between them. See e.g. \cite{fried:biin25} for details.}
It will follows that  \pam\ and \docr\
are bi-interpretable. We will explain that we can improve this last result to the
definitional equivalence of \pam\ and \docr.  

The translation {\sf nn} is relativisation to the non-negative numbers. Clearly {\sf nn} carries an interpretation of \pam\ in \docr.
The translation {\sf z} is simply the usual pairs construction
of the natural numbers. 

\begin{itemize}
\item
The domain of {\sf z} consists of all pairs $(x,y)$.\footnote{We have polynomial pairing in \pamj, but for the
moment we are content to use `syntactic pairs'.}
\item
$(x,y) =_{\sf z} (u,v)$ iff $x+v = u+y$.
\item
$(x,y) \leq_{\sf z} (u,v)$ iff $x+v \leq u+y$.
\item
$0_{\sf z} = (0,0)$.
\item
$1_{\sf z} = (1,0)$.
\item
$(x,y)+_{\sf z}(u,v) = (x+u,y+v)$.
\item
$(x,y) \times_{\sf z} (u,v) = (x\cdot u + y\cdot v, x\cdot v+ y \cdot u)$. 
\item
$-_{\sf z}(x,y) = (y,x)$.
\end{itemize}

The verification that {\sf z} does indeed carry  an interpretation of \docr\ in \pamp\ proceeds for the most part in
\pamj\ and is entirely routine. We treat the single place where the extra axiom {\sf uc} is used. We need it to show that
the product of non-negative numbers is non-negative.

Suppose $0_{\sf z}\leq_{\sf z}(x,y)$ and $0_{\sf z} \leq (u,v)$.
This means $y \leq x$ and $v \leq u$.
We have to show that 
$ 0_{\sf Z} \leq_{\sf z} (x,y) \times_{\sf z} (u,v)$.
I.e., $x \cdot v + y\cdot u \leq   x\cdot u + y\cdot v$.
But this is precisely what {\sf uc} tells us.

In the context of \docr, we define {\sf F} as follows.
 \begin{itemize}
 \item
  ${\sf F}(x,y,z) : \iff y \geq 0 \wedge z\geq 0 \wedge y-z=x$. 
 \end{itemize}
 It is routine to show  that {\sf F} defines an isomorphism in \docr\ from
 ${\sf id}$ to ${\sf nn}\circ {\sf z}$. 
 
We turn to the mapping {\sf G}.
 The domain of the translation
  $ {\sf z} \circ {\sf emb} \circ{\sf nn}$ is formed by the $(x,y)$ such that $y\leq x$
 Let us call the domain
  ${\sf N}^\ast$.
 The operations and relations are simply the operations and relations of {\sf z} restricted to ${\sf N}^\ast$.
 
 We note that ${\sf z} \circ {\sf emb} \circ {\sf nn}$ is equal to ${\sf z}\circ {\sf nn}$. Moreover,
 $(x,0)$ is in ${\sf N}^\ast$ by \axpam\ref{plimp12}.
 
  We define  {\sf G} in the context of \pamp. This formula will carry several morphisms. 
  We define {\sf G}.
 \begin{itemize}
 \item
  ${\sf G}(x,y,z) : \iff (y,z) \in {\sf N}^\ast \wedge z+x = y$.  
 \end{itemize}
 
 In \pamp\ we may verify that {\sf G} defines an injective morphism from {\sf id} to ${\sf z} \circ {\sf nn}$.
 We conclude that in any model $\mc M$ of $\pamp$, we have a definable internal model $\mc N$ of $\pam$ and that ${\sf G}$ is
a definable embedding of $\mc M$ in $\mc N$. In other words, $\mc N$ is an extension of $\mc M$. It follows that:
\begin{theorem}
For any $\phi$ we have $\pamp$ proves $\phi$ iff the universal consequences of $\pamp$. 
\end{theorem}

\begin{proof}
Suppose $\pamp \vdash \phi$. Then, certainly, the universal consequences of $\pam$ prove $\phi$, since
\pam\ extends \pamp\ and \pamp\ is axiomatised by universal formulas. We turn to the converse direction.
Suppose $\psi$ is a universal consequence of $\pam$ and $\pamp+ \psi \vdash \phi$. It is sufficient to show that
$\pamp \vdash \psi$. Suppose not. Then, there is a model $\mc M$ of $\pamp+\neg\,\psi$. We can extend $\mc M$
to a model $\mc N$ of $\pam$. Since $\neg\, \psi$ is existential, $\mc N\models \neg\,\psi$. \emph{Quod non.}
\end{proof}

\begin{theorem}
The theory \pamp\ is sequential.
\end{theorem}

\begin{proof}
Consider the interpretation of \pam\ in \pamp. 
We can modify this to a one-dimensional interpretation since we have extensional pairing in \pamj, and, thus,
\emph{a fortiori} in \pamp. Since we have an embedding {\sf G} of the \pamp-numbers into the numbers
of the interpretation, we can use the sequentiality of \pam, to provide the desired sequences (or adjunctive sets)
for \pamp. These sequences are coded in \pamp.
\end{proof}

It is easy to see that {\sf G} defines a bijection in $\pam$.  
 It follows that {\sf G} carries an isomorphism between {\sf id} and ${\sf z} \circ {\sf nn}$ in \pam.
 Thus, we find that $\docr$ and $\pam$ are bi-interpretable. By the main result of
 \cite{fried:biin25} in combination with the fact that \pam\ has an extensional pairing function, we
 find that  $\docr$ and $\pam$ are definitionally equivalent.

\end{document}